\documentclass[11pt]{article}
\usepackage{amsfonts,amsmath,amssymb,amsthm}
\usepackage{xcolor}
\usepackage[skins,breakable]{tcolorbox}
\usepackage{graphicx}
\usepackage[numbers,sort&compress]{natbib}

\definecolor{extensionteal}{HTML}{99E0D9}
\definecolor{extensiongrey}{HTML}{F2F2F2}
\usepackage[a4paper,margin=26mm]{geometry}
\usepackage{mathtools,microtype}
\usepackage{enumerate}
\definecolor{darktopaz}{HTML}{0B5F7A}
\definecolor{blackpurple}{HTML}{1D0242}
\definecolor{blackred}{HTML}{750D2C}
\definecolor{blackblue}{HTML}{01033B}
\definecolor{blackgreen}{HTML}{072612}
\definecolor{quasipseudodarkgreen}{HTML}{022B19}
\definecolor{midgreen}{HTML}{0B5C17}
\definecolor{midred}{HTML}{570432}
\definecolor{midpurple}{HTML}{6B0941}

\usepackage{algorithm,algpseudocode}

\usepackage[
    unicode,
    breaklinks,
    colorlinks=true,
    linkcolor=darktopaz,
    citecolor=darktopaz,
    urlcolor=darktopaz
]{hyperref}

\usepackage{enumitem}

\usepackage{booktabs}
\usepackage{tabularx}
\usepackage{array}

\usepackage{float}

\usepackage{verbatim}

\tcbset{
    greytheorem/.style={
        enhanced,
        breakable,
        colback=black!5,       % Pale-grey background.
        colframe=black!75,     % Dark-grey border.
        boxrule=0.5mm,         % Border thickness.
        arc=1mm,              % Gently rounded corners.
        boxsep=1mm,
        left=4mm,
        right=4mm,
        top=2mm,
        bottom=2mm,
        before skip=12pt,
        after skip=12pt
    }
}
\tcbset{
    lightgreytheorem/.style={
        enhanced,
        breakable,
        colback=black!2,       % Very light grey background.
        colframe=black!20,     % Light grey border.
        boxrule=0.5mm,
        arc=1mm,
        boxsep=1mm,
        left=4mm,
        right=4mm,
        top=2mm,
        bottom=2mm,
        before skip=12pt,
        after skip=12pt
    }
}

\tcbset{
    superlightgreytheorem/.style={
        enhanced,
        breakable,
        colback=black!0.6,     % Extremely light grey background.
        colframe=black!8,      % Very subtle grey border.
        boxrule=0.35mm,
        arc=1mm,
        boxsep=1mm,
        left=4mm,
        right=4mm,
        top=2mm,
        bottom=2mm,
        before skip=12pt,
        after skip=12pt
    }
}
\newtcolorbox{extensionbox}{
    colback=extensionteal!12!extensiongrey,
    colframe=black!70,
    boxrule=0.8pt,
    arc=2mm,
    boxsep=0pt,
    left=4mm,
    right=4mm,
    top=2mm,
    bottom=2mm,
    before skip=10pt,
    after skip=10pt,
    before upper={
        \setlength{\abovedisplayskip}{0pt}
        \setlength{\belowdisplayskip}{0pt}
        \setlength{\abovedisplayshortskip}{0pt}
        \setlength{\belowdisplayshortskip}{0pt}
    }
}

\allowdisplaybreaks[2]
\numberwithin{equation}{section}
\newtheorem{setting}{Setting}[section]
\newtheorem{dataformat}{Data Format}[section]
\newtheorem{theorem}{Theorem}[section]
\newtheorem{lemma}[theorem]{Lemma}
\newtheorem{proposition}[theorem]{Proposition}
\newtheorem{highlightedproposition}[theorem]{Proposition}
\newtheorem{corollary}[theorem]{Corollary}

\theoremstyle{definition}
\newtheorem{definition}[theorem]{Definition}

\theoremstyle{remark}
\newtheorem{remark}[theorem]{Remark}
\tcolorboxenvironment{theorem}{greytheorem}
\tcolorboxenvironment{corollary}{superlightgreytheorem}
\tcolorboxenvironment{highlightedproposition}{lightgreytheorem}

\newcommand{\R}{\mathbb{R}}
\newcommand{\N}{\mathbb N}
\newcommand{\Nzero}{\mathbb N_0}
\newcommand{\dd}{\mathrm d}
\newcommand{\Lip}{\operatorname{Lip}}
\newcommand{\Var}{\operatorname{Var}}
\newcommand{\KL}{\operatorname{KL}}
\newcommand{\Cov}{\operatorname{Cov}}
\newcommand{\conv}{\operatorname{conv}}
\newcommand{\op}{\mathrm{op}}
\newcommand{\eqdef}{\ensuremath{\stackrel{\mbox{\upshape\tiny def.}}{=}}}

\newcommand{\norm}[1]{\left\lVert#1\right\rVert}

\usepackage{amsmath,amssymb}
\usetikzlibrary{arrows.meta,positioning,calc}

\title{%
    A Computationally-Efficient Closed-Form $C^{s,1}$-Extension Formula
    \\[0.6em]
    {\large
    Using
        $\mathcal{O}(N)$ storage and compilation cost
        and $\mathcal{O}(\log N)$ query time
    \\
    Which is $\mathbb{R}_{\operatorname{exp}}$-Definable and a.e.\ Real-Analytic
    \par
    }%
}
\author{%
    Anastasis Kratsios\thanks{Corresponding author.}
    \,\thanks{Department of Mathematics and Statistics; 
        McMaster University, the Vector Institute, and The Fields Institute's Centre for Mathematical AI.
        \href{mailto:kratsioa@mcmaster.ca}
        {\nolinkurl{kratsioa@mcmaster.ca}}.%
    }
    \and
    Philipp Zimmermann\thanks{Institute of Mathematics;
        \'{E}cole Polytechnique F\'{e}d\'{e}rale de Lausanne (EPFL).
        \href{mailto:philipp.zimmermann@epfl.ch}
        {\nolinkurl{philipp.zimmermann@epfl.ch}}.%
    }
}
\date{}
\date{ }

\begin{document}
\maketitle
\vspace{-2.5em}

\begin{abstract}
We identify explicit \textit{closed-form} and variational \textit{formulae} for reconstructing a $C^{s,1}$-function $f:\mathbb{R}^d\to\mathbb{R}$ from its exact values and derivatives through order $s$ observed at $N$ distinct points in $[0,1]^d$, with bounds on the reconstruction's global $C^{s,1}$-seminorm and Lipschitz constant on $[0,1]^d$ that are \textit{independent of the sample size $(N)$} (the latter is the ``sharp Whitney'' condition).  For $s\ge2$, our formula is real-analytic away from the datapoints and is definable in the o-minimal structure $\mathbb{R}_{\operatorname{exp}}$.
Our \textit{nonlinear} extension formula  coincides with the $C^{0,1}$-formula of~\cite{McShane1934} and the $C^{1,1}$-formula of~\cite{AzagraLeGruyerMudarra2018,le2015sup}, and is new in the $s\ge 2$ regime.

In the $s\ge 2$ regime, our formula is computable by a parallelized circuit (using only elementary unary and binary real-operations) with $\mathcal{O}(N)$ gates (storage space) which evaluates (forward pass) in $\mathcal{O}(\log(N))$ time-complexity; initialized (one-time work) with time-complexity $\mathcal{O}(N)$.  Thus, although our formula is nonlinear it improves on the initialization cost (one-time work)  of~\cite{FeffermanKlartag2009II} by a \textit{logarithmic factor}; which matching its run-time costs.
\end{abstract}

\noindent
\textbf{Keywords:} Whitney Extension Formula, Circuit Complexity, Real-Analytic Functions, Inverse-Problems, o-Minimality, Taylor's Theorem.

\section{Introduction}
\label{s:Intro}

In modern computational disciplines such as machine learning, high dimensional statistics, and scientific computing; one is typically faced with the task on reconstructing an unobserved function $f:\mathbb{R}^d\to \mathbb{R}$ from finitely many ($N$) samples $\mathbb{X}^N\eqdef \{X_n\}_{n=1}^N\subset \mathbb{R}^d$; that is, identifying a function which \textit{coincides with the sample data}.  
While inferring a function which coincides with the sample measurements is rather straightforward using finitely parameterized function families; e.g.\ splines or neural networks~\cite{vardi2022on,hong2026closed}; doing so while \textit{match} the order of regularity of $f$ is exceedingly complicated.  
For instance, if $f$ is $1$-Lipschitz then the McShane extension formula~\cite{McShane1934} provides such a $1$-Lipschitz reconstruction, and there are also neural networks which exist which provide a $\mathcal{O}_d(1)$-Lipschitz reconstruction~\cite{hong2024bridging} or approximate reconstruction~\cite{murari2026approximation}.

The situation complicates significantly $f$ is of $C^{s,1}$-regularity and we wish to match the size of its order $s$-derivatives, while also \textit{matching} the observed partial derivatives of $f$ at the sample points $\mathbb{X}^N$.  When $s\ge 1$, we have access to the data 
\begin{equation}
\label{eq:the_data}
 \mathcal D^N\eqdef\{(X_n,P_n)\}_{n=1}^N,
 \qquad
 P_n(x)=J_{X_n}^sf(x)
 \eqdef\sum_{k=0}^s\frac1{k!}D^kf(X_n)[(x-X_n)^k].
\end{equation}
Here $D^kf(X_n)[h^k]=D^kf(X_n)[h,\ldots,h]$ for $k\ge1$, and the $k=0$ summand is $f(X_n)$.
While~\cite{AzagraLeGruyerMudarra2018} recently identified a formula for the $s=1$ case%
\footnote{The formula in~\cite{AzagraLeGruyerMudarra2018} becomes finitely parameterized in our setting as $\mathbb{X}^N$ is finite.}, no known neural network approximator of $C^{s,1}$-functions is known to interpolate derivative data while enjoying first partial derivatives on the order of $\mathcal{O}_{d,s}(1)$; indeed, this is because, they glue together Taylor polynomials of the function using bump functions~\cite{lu2021deep} or a partitions of unity~\cite{de2021approximation} both of which already create an explosion in the neural network's first-order derivatives as the number of samples $N$ diverge.

\paragraph{Our Closed-Form Formula (the $s\ge 2$ case):}
We now present our formula in the case where $s\ge 2$.  
This is because, our construction exhibits a \textit{phase transition}, wherein our extension formula is both: \textit{analytic} away from the data and \underline{closed form} formula in this regime is simply
\begin{extensionbox}
%%% LONGER FORM
% \begin{equation}
% \label{eq:ExtenstionFormula_Expression}
% \resizebox{0.925\hsize}{!}{$%
%     F_{\operatorname{ext}}(x)
% \eqdef
%     % \begin{cases}
%     (1-s)MR(x)^{s+1}
%     \,
%     \log
%     \biggl(
%         \sum_{i=1}^N
%         \,
%         % \theta_i(x)
%         \frac{
%             \mu_i|x-X_i|^{-q}
%             %%%
%             \,
%             \exp\big(
%                 % \frac{
%                     \big(
%                         \tfrac{P_i(x)}{MR(x)^{s+1}}
%                         +
%                         A_s
%                         \big(
%                         \tfrac{|x-X_i|}{R(x)}
%                         \big)^{2s}
%                     \big)
%                 % }{1-s}
%                 /(1-s)
%             % }
%             \big)
%         %%%
%         }{
%             \sum_{k=1}^N\,
%                 \mu_k|x-X_k|^{-q}
%         }
%     \biggr)
%     %     & \mbox{ : if  } x\not\in \{X_i\}_{i=1}^N
%     % \\
%     % \sum_{i=1}^N\,
%     %     P_i(X_i)\, I_{x=X_i}
%     %     & \mbox{ : if  } x\in \{X_i\}_{i=1}^N
%     % \end{cases}
% $}
% \end{equation}
%%%% SHORTER FORM
\begin{equation}
\label{eq:ExtenstionFormula_Expression}
\resizebox{0.925\hsize}{!}{$%
    F_{\operatorname{ext}}(x)
\eqdef
    % \begin{cases}
    (1-s)MR(x)^{s+1}
    \,
    \log
    \Big(
        % \sum_{i=1}^N
        \,
        \theta(x)^{\top}
        % \frac{
            % \mu_i|x-X_i|^{-q}
            %%%
            \Big(
            \exp\big(
                % \frac{
                    \big(
                        \tfrac{P_i(x)}{MR(x)^{s+1}}
                        +
                        A_s
                        \big(
                        \tfrac{|x-X_i|}{R(x)}
                        \big)^{2s}
                    \big)
                % }{1-s}
                /(1-s)
            % }
            \big)
            \Big)_{i=1}^N
        %%%
        % }{
        %     \sum_{k=1}^N\,
        %         \mu_k|x-X_k|^{-q}
        % }
    \Big)
    %     & \mbox{ : if  } x\not\in \{X_i\}_{i=1}^N
    % \\
    % \sum_{i=1}^N\,
    %     P_i(X_i)\, I_{x=X_i}
    %     & \mbox{ : if  } x\in \{X_i\}_{i=1}^N
    % \end{cases}
$}
\end{equation}
\end{extensionbox}
\noindent
and $F_{\operatorname{ext}}(X_i)\eqdef P_i(X_i)$ for every $i\in[N]$. Here $D\ge1$ and $\mu\in\Delta_N^\circ$ are hyperparameters, and
\begin{equation}
% \label{eq:ExtenstionFormula_HelperFunctions___introversion}
    \theta(x)
    % & 
    \eqdef 
    \Big(
        \tfrac{\mu_n|x-X_n|^{-q}}
        {\sum_{k=1}^{N}\mu_k|x-X_k|^{-q}}
    \Big)_{n=1}^N
% $
%%%
 % and 
\quad\mbox{ and }\quad
%%%
% $
R(x)^{2s}
\eqdef
\sum_{i=1}^N\theta_i(x)|x-X_i|^{2s}
% $ 
\end{equation}
and the the following explicit parameters induced by the regularity of the problem $
% \begin{equation}
% \label{eq:ExtenstionFormula_Parameters}
    \beta\eqdef s-1$,  
    % \qquad
    $p\eqdef 2s$, 
    % \qquad
    $A_s\eqdef \frac{1}{(s+1)!}+\frac{s-1}{2}$, 
    % \qquad
    $q\eqdef 2s+\left\lceil\log_2D\right\rceil+2$.
% \end{equation}
% \noindent
In contrast, in the ``low regularity'' regime $s=0,1$ the reconstruction is available only in explicit albeit \textit{variational form}; namely, it is explicit up to a minimum over a convex set.  This is also true of the $C^{1,1}$-formula of~\cite{AzagraMudarra2017}, with which our construction coincides when $s=1$, and of an equivalent variational expression for the McShane extension formula~\cite{McShane1934} when $s=0$.  
For $s\ge2$, Proposition~\ref{prop:entropy-only-reconstruction} identifies the function in~\eqref{eq:ExtenstionFormula_Expression} with the entropy-only at $F_{\operatorname{ext}}=F_0$ of~\eqref{eq:entropy-variance-extension__primal}. 
% When $D=2^{46d}$ and $\mu\in\Delta_{N:D}$, Proposition~\ref{prop:variance-parameter-family} gives it the same interpolation and $\dot C^{s,1}$ bounds as the unifying variational formulation.

\subsection{Our Contribution}
\label{s:Intro__ss:Ourcontrib}
We identify an \textit{closed-form} and \textit{finitely parameterized} (nonlinear) formula $F:\mathbb{R}^d\to \mathbb{R}$, cf.~\eqref{eq:entropy-variance-extension__primal} given below, for reconstructing a real-valued function $C^{s,1}$-function the data in~\eqref{eq:the_data}.  
Our main result, namely Theorem~\ref{thrm:main}, guarantees that $F$ has the following properties:
\subsubsection{Analytic Properties}
\begin{enumerate}
    \item[(i)] \textbf{Reconstruction:} $D^rF(X_n)=D^rP_n(X_n)$ for every $n\in[N]$ and $r=0,\ldots,s$,
    \item[(ii)] \textbf{Whitney-Extension Property:} $[F]_{s,1}\le K_{d,s}M$, where $K_{d,s}$ is independent of the number, locations, and separation of the sites.
    
    \item[(iii)] 
    \textbf{Analyticity Away from the Data:} If $s\ge2$, then $F$ is real-analytic on $\mathbb{R}^d\setminus\{X_n\}_{n=1}^N$.
    \item[(iv)] \textbf{Combinatorial Tameness:} $F$ is definable in the o-minimal structure $\mathbb{R}_{\operatorname{exp}},
    $
\end{enumerate}
where we write $\mathbb R_{\operatorname{exp}}\eqdef(\mathbb R,<,+,\cdot,0,1,\exp)$, which is o-minimal by~\cite[Theorem~5.2, p.~140]{Wilkie2009OMinimal}.

\subsubsection{Computational Properties}
We quantify circuit assembly using the exact-real word-RAM model specified in Section~\ref{s:Background_Complexity__ss:ER_WR}. For evaluation, we bound the size and depth of bounded-fan-in circuits over the gate set~\eqref{eq:Gates}, recording total gate work and parallel gate-evaluation time separately. 

\begin{table}[H]
\centering
\renewcommand{\arraystretch}{1.28}
\begin{tabularx}{\textwidth}{
    @{}
    >{\raggedright\arraybackslash}X
    >{\centering\arraybackslash}p{0.3\textwidth}
    >{\centering\arraybackslash}p{0.27\textwidth}
    @{}
}
\toprule
\textbf{Complexity measure}
&
\textbf{Closed-Form Formula}
&
\textbf{Algorithm of~\cite{FeffermanKlartag2009II}}
\\
\midrule

Initialization cost
% \newline
{\small (One-time work)}
&${\color{blackgreen}O_{d,s}(N)}$
&${\color{black}O_{d,s}(N\log N)}$
\\
\midrule
Parallel Query Time (Forward pass)
&$\color{blackgreen} O_{d,s}(\log N)$
& $\mathcal{O}_{d,s}(\log N)$
\\[1mm]
Storage Cost {\small (Space Complexity)}
&
$\color{black} \mathcal{O}_{d,s}(N)$
&
$\color{black} O_{d,s}(N)$
\\
\bottomrule
\end{tabularx}
\caption{Computational costs for fixed $d,s$ under the respective computational models. Our bounds are stated in Theorems~\ref{thrm:complexity} and~\ref{thrm:complexity__Assembly}; assembly takes the weights and integer $q$ as supplied inputs. The Fefferman--Klartag bounds are from~\cite[Theorem~6(C), p.~188]{FeffermanKlartag2009II}.}
\label{tab:complexity-comparison}
\end{table}

For fixed $d,s$ with $s\ge2$, Theorems~\ref{thrm:complexity} and~\ref{thrm:complexity__Assembly} give a circuit computing $F_{\operatorname{ext}}=F_0$ with $\mathcal O_{d,s}(N)$ total gate work and storage, $\mathcal O_{d,s}(1+\log N)$ parallel depth, and $\mathcal O_{d,s}(N)$ assembly work from supplied sites, polynomial coefficients, $M$, weights, and integer $q$. The assembly bound excludes constructing weights satisfying the doubling condition.

For comparison, the algorithm of~\cite[Theorem~6(C), p.~188]{FeffermanKlartag2009II} uses $\mathcal O_{d,s}(N\log N)$ preprocessing work, $\mathcal O_{d,s}(N)$ storage, and $\mathcal O_{d,s}(\log N)$ sequential query time. Under the respective computational models, our circuit therefore attains the same logarithmic upper bound on query latency with sufficient parallel resources, while using linear total work per query. Its linear assembly bound concerns supplied parameters and does not establish an end-to-end preprocessing improvement from the original derivative data.

\paragraph{Role of Parallelism and Modern (Manycore) GPU-Based Computing:}
Parallelism plays a critical role in the computational efficiency of our formula, as reported in Table~\ref{tab:complexity-comparison}.  This schema aligns with modern manycore GPU programming, cf.~\cite{NickollsBuckGarlandSkadron2008}, computing which is centred around massive parallelism.
If one followed a na\"{i}ve sequential implementation of our formula then it would exhibit a \textit{sequential} query time of $\mathcal{O}(N)$; however, it would be un-intuitive to implement it sequentially (at-least when $N$ is large).

\subsection*{Organization of the Paper}

Section~\ref{s:Notation} fixes the notation used throughout.  
Section~\ref{s:TheFormula} introduces the variational family and, for $s\ge2$, identifies its entropy-only endpoint with our closed-form extension formula.  
Section~\ref{s:MainResults} states the interpolation, regularity, definability, and off-site analyticity guarantees.  
Section~\ref{s:Coincidences} shows that the construction recovers the McShane formula when $s=0$ and the Azagra--Le Gruyer--Mudarra formula when $s=1$.  
The proofs of the main results are collected in Section~\ref{s:Proofs}, while Appendix~\ref{a:NonAnalticityChecks} contains the auxiliary non-analyticity comparisons used in the introduction.

\subsection{Related Work}
\label{s:Related Work}
We briefly discuss how our result fits into the broader Whitney extension theory literature.  

\paragraph{Linear Extension Operators}
For higher-order prescribed jets, a classical approach is the linear Whitney construction~\cite{Whitney1934,Fefferman2006}, which blends Taylor polynomials using a partition of unity:
\begin{equation}
\label{eq:classicalWhitney}
        F_{\operatorname{Whitney}}(x)
    =
        \begin{cases}
        \sum_{Q\in \mathcal{W}_{\mathbb{X}^N}}\,
            \theta_Q(x)
            \,
            P_{x(Q)}(x)
        %%%
            & \mbox{ : if } x \not\in \mathbb{X}^N
        \\
        \sum_{n=1}^N\, P_n(x)\, I_{x=x_n} 
        %%%
            & \mbox{ : if } x\in \mathbb{X}^N
        \end{cases}
\end{equation}
where $\mathcal{W}_{\mathbb{X}^N}$ denotes the Whitney decomposition of $[0,1]^d$ into \textit{infinitely} many ``Whitney cubes''%
\footnote{See e.g.~\cite[Section 2.1.1]{fefferman2020fitting} for a short and accessible overview.}%
, $(\theta_Q)_{Q\in \mathcal{W}_{\mathbb{X}^N}}$ is the associated partition of unity where each smooth-enough bump function $\theta_Q$ is supported on its respective Whitney cube $Q$, and $P_{x(Q)}$ denotes a closest polynomial to any given Whitney cube $Q$.
In the special case, perhaps most relevant to data-driven domains,~\cite{FeffermanKlartag2009I,FeffermanKlartag2009II} showed that~\eqref{eq:classicalWhitney} can be computed relatively efficiently.  
Related finite geometric constructions include the well-separated pair decomposition of~\cite[Theorem~4.3, p.~73]{callahan1995decomposition}.
Indeed, in~\cite{FeffermanKlartag2009II} it is shown that for our finite-data setting,~\eqref{eq:classicalWhitney} admits a ``sparse'' representation
\begin{equation}
\label{eq:classicalWhitney__sparse}
        F_{\operatorname{Whitney}}(x)
    =
        \sum_{n=1}^N\,
            f(X_n)
            \,
            \phi_{n,\mathbb{X}^N}(x)
\end{equation}
which determined using $\mathcal{O}_{d,s}(N)$ parameters and such that each $\phi_{n,\mathbb{X}^N}$ can be computed using $\mathcal{O}_{d,s}(\log(N))$ queries on an idealized ``Von Neumann'' computer.  Nevertheless, the representation of each $\phi_{n,\mathbb{X}^N}$ in~\eqref{eq:classicalWhitney__sparse} remains relatively complicated and is \underline{not} available in a \textit{closed-form}.

\paragraph{Non-Analyticity Away from the Data $\{X_n\}_{n=1}^N$}
The classical (sharp) Whitney formula in~\eqref{eq:classicalWhitney} does not by itself guarantee membership in $C^\omega(\mathbb{R}^d\setminus\{X_n\}_{n=1}^N)$, i.e.\ real analyticity in a neighbourhood of every point outside $\{X_n\}_{n=1}^N$.
For the finite-data construction in~\cite{FeffermanKlartag2009II}, the global gluing formula in~\cite[Section~34, equation~(25), p.~180]{FeffermanKlartag2009II} is a \textit{finite} sum of compactly supported terms, and hence the resulting extension is compactly supported. If $d\ge2$, any nonzero extension obtained in this way therefore cannot belong to $C^\omega(\mathbb{R}^d\setminus\{X_n\}_{n=1}^N)$.
A similar line of reasoning\footnote{See Proposition~\ref{prop:parusinski_rainer_nonanalyticity} in the appendix.} implies that the finite-set extension of~\cite{ParusinskiRainer2024} also cannot belong to $C^\omega(\mathbb{R}^d\setminus\{X_n\}_{n=1}^N)$ when $d\ge2$ and the prescribed jets are nonzero. Likewise, for $d\ge2$, the finite-depth linear extension operator recalled in~\cite[equations~(0.2)-(0.5), pp.~430-431]{fefferman2009fitting} produces, for at least one choice of interpolation data, an extension that is not real analytic away from the sites whenever $N$ exceeds its depth bound.\footnote{See Proposition~\ref{prop:NO_Analytic_FIII} in the appendix.}

\paragraph{Combinatorial Tameness}
Membership in $C^{s,1}(\mathbb{R}^d)\cap C^\omega(\mathbb{R}^d\setminus\{X_n\}_{n=1}^N)$ does not imply definability\footnote{For instance, $\sin$ is analytic but not definable in any o-minimal expansion of $(\mathbb{R},+,<)$.} in an o-minimal expansion of the ordered real field $(\mathbb{R},<,+,\cdot,0,1)$, which precludes pathological behaviour such as infinitely many isolated zeros in one variable; a property useful in contemporary machine learning~\cite{kinn2026fast,kratsios2026every}. While~\cite[Theorem~1.3]{ParusinskiRainer2024} exhibits definable $C^{s,1}$-Whitney extensions with uniform norm bounds within each fixed definable bounded family of jets, for $s\ge2$ these bounds are not asserted with constants depending only on $d,s$, uniformly in $N$; their dependence on the number of sample points $N$ is therefore left unspecified.

\paragraph{Known Formulae}
The known $C^{0,1}$-~\cite{McShane1934} and $C^{1,1}$-\cite{AzagraLeGruyerMudarra2018} extension formulae depend nonlinearly on their data, as does our entropy-variance construction. For $s\ge1$, the auxiliary rational interpolant in~\eqref{eq:explicit-whitney-witness} is linear in the polynomials $P_i$ when the sites and geometry weights are fixed. 
It supplies the explicit calibration constant in Lemma~\ref{lem:finite-whitney-seminorm__bound}.
Theorem~\ref{thrm:main} concerns the variational family and its quantitative regularity bounds, with the exact low-order specializations established in Section~\ref{s:Coincidences}.
It is also worth noting that the best available constant on the $C^{s,1}$-norm, cf.~\cite{chang2017whitney}.
For each fixed $m\in\mathbb N_+$, \cite[Theorem~1.3, p.~624]{chang2017whitney} gives a linear $C^m$ Whitney extension operator with norm $\mathcal O_m(d^{5m/2})$, using the norms in Definitions~1.1--1.2 of that paper.
Another algorithm for the $s=0,1$ cases is also identified in~\cite{HerbertVossHirnMcCollum2017}.

\paragraph{Organization of the Paper.}
Section~\ref{s:Notation} introduces the notation and terminology, while Section~\ref{s:TheFormula} defines the variational family and identifies its closed-form specialization. Section~\ref{s:MainResults} presents the main analytic and computational results, and Section~\ref{s:Coincidences} establishes coincidence with the known low-regularity formulae. 
All proofs are collected in Sections~\ref{s:Proofs} and~\ref{s:Proofs_Complexity}. 
Appendix~\ref{s:Background_Complexity} reviews the formal models of computation used herein. Appendix~\ref{a:NonAnalticityChecks} proves the auxiliary non-analyticity claims made in the introduction.

\section{Notation}
\label{s:Notation}
We now collect the notation used in our manuscript; both in the main body and in its proofs.  
The non-negative integers are denoted by $\mathbb{N}$, $\mathbb{N}_+$ denotes the positive integers and for every $n\in \mathbb{N}_+$ we write $[n]\eqdef \{m\in \mathbb{N}_+:\,m\le n\}$.   For any $t\in \mathbb{R}$, we write $t_+ \eqdef \max\{t,0\}$, we denote its integer ceiling by $\lceil t\rceil$, and if $t>0$ then we use $\log(t)$ to denote its natural logarithm.  
\paragraph{Norms}
For any $d,k \in \mathbb{N}_+$ and $x\in \mathbb{R}^d$ we write $|x|$ for its Euclidean norm, we denote the \textit{operator norm} of any $k$-linear map $T:(\mathbb{R}^d)^k\to\mathbb{R}$, by
% \begin{equation}
% \label{eq:multilinear}
$
        \|T\|_{\mathrm{op}}
    \eqdef
        \sup_{\substack{h_1,\ldots,h_k\in\mathbb{R}^d\\ |h_1|,\ldots,|h_k|\le1}} 
            \, 
                \big|T[h_1,\ldots,h_k]\big|
$.
% \end{equation}
For $r\in\mathbb{N}$, the space $C^r(\mathbb{R}^d)$ consists of real-valued functions with continuous derivatives through order $r$. 
For $f\in C^r(\mathbb{R}^d)$, we  write $\nabla f$ for its gradient when $r\ge1$.
For $s\in\mathbb N$ and $g\in C^s(\mathbb R^d)$, put $D^0g=g$. For $s\ge1$, $D^sg(x)$ is a symmetric $s$-linear form on $\mathbb R^d$, equipped with the operator norm $\|\cdot\|_{\mathrm{op}}$ above. At order zero, this norm means absolute value. Define
\begin{equation}
\label{eq:Lip_Seminorm}
 [g]_{s,1}\eqdef
 \sup_{\substack{x,y\in\mathbb R^d\\x\ne y}}
 \frac{\|D^sg(x)-D^sg(y)\|_{\mathrm{op}}}{|x-y|}.
\end{equation}
Thus $[g]_{0,1}=\operatorname{Lip}(g)$. The homogeneous class $\dot C^{s,1}(\mathbb R^d)$ consists of $g\in C^s(\mathbb R^d)$ with $[g]_{s,1}<\infty$; it imposes no global boundedness requirement on $g$ or its derivatives.
The \textit{homogeneous class}\footnote{E.g. $x^2\in \dot{C}^{s,1}(\mathbb{R})$ for $s=1,2$ but not for $s=0$.} $\dot{C}^{s,1}(\mathbb{R}^d)$ consists of all $g\in \dot{C}^{s}(\mathbb{R}^d)$ for which the seminorm in~\eqref{eq:Lip_Seminorm} is finite;  i.e.\ the finiteness of $[g]_r$ for any integer $0<r<s$ is not required.  
Finally, for the restrictions of $g\in C^s(\mathbb R^d)$ to $[0,1]^d$, use the full norm
\begin{equation}
\label{eq:full-Cs1-norm-cube}
 \|g\|_{C^{s,1}([0,1]^d)}\eqdef
 \sum_{r=0}^s\sup_{x\in[0,1]^d}\|D^rg(x)\|_{\op}
 +\Lip_{[0,1]^d}(D^sg).
\end{equation}
Now, for $d,s\in\mathbb N_+$, $\alpha=(\alpha_1,\ldots,\alpha_d)\in\mathbb N_0^d$, and $z\in\mathbb R^d$, write
$
|\alpha|\eqdef\sum_{j=1}^d\alpha_j$,  $z^\alpha\eqdef\prod_{j=1}^dz_j^{\alpha_j}$, and $\Lambda_{d,s}\eqdef\{\alpha\in\mathbb N_0^d:|\alpha|\le s\}$ and with $t^0=1$ for every $t\in\mathbb R$.

\paragraph{Probabilistic Notation}
For any $N\in\mathbb{N}_+$, we denote the (probability) $n$-simplex by $
        \Delta_N
         \eqdef 
         \{u \in[0,1]^N:
        \sum_{n=1}^N\,u_n=1\}
    $ and its relative interior to be $\Delta_N^{\circ} \eqdef  \{u\in \Delta_N:\,u_n>0\,\forall n\in [N]\}$.
These, respectively, parameterize the set of probability measures $\mathcal{P}(\{z_n\}_{n=1}^N)$ on an $N$-point set $\{z_n\}_{n=1}^N$; and the subset of the probability measures which place mass on all points; the correspondence being $\Delta_N\ni u\leftrightarrow \sum_{n=1}^N\, u_n\,\delta_{z_n}\in \mathcal{P}(\{z_n\}_{n=1}^N)$.  
Now, for any $\lambda\in\Delta_N$ and any weight (matrix) $w=(w_n)_{n=1}^N\in (\mathbb{R}^d)^N$, we define the $\lambda$-\textit{weighted variance} of $w$ by\footnote{
Note that, when $d=1$ then,~\eqref{eq:weighted_variance} reduces to the familiar variance of a real-valued random variable $X$ with $\mathbb{P}(X=w_n)=\lambda_n$; i.e.\ 
$\operatorname{Var}_{\lambda}(w) = 
\mathbb{E}_{\mathbb{P}}\big[
    \big(X-\mathbb{E}_{\mathbb{P}}[X]\big)^2
\big]$ and $\mathbb{E}_{\mathbb{P}}[X]=\sum_{n=1}^N\, w_n\,\lambda_n$.
}
\begin{equation}
\label{eq:weighted_variance}
    \operatorname{Var}_{\lambda}(w)
 \eqdef 
    \sum_{n=1}^{N}
    \,
    \lambda_n|w_n|^2
        -\Big\|
            \sum_{n=1}^{N}\lambda_n\,w_n
        \Big\|^2_2
.
\end{equation}
For any $\bar{\lambda}\in \Delta_N^{\circ}$the ``relative entropy'' of $\lambda$ with respect to $\bar{\lambda}$ is
$
    \operatorname{KL}(\lambda\mid\bar{\lambda})
 \eqdef 
    \sum_{n=1}^{N}\lambda_n\,
        \log\tfrac{\lambda_n}{\bar{\lambda}_n}
$ 
where we \textit{notationally} convene that $0\log(0)\eqdef 0$.

For any finite set $\{X_n\}_{n=1}^N\subseteq \mathbb{R}^d$ and any $D\ge 1$, we designate the space $\Delta_{N:D}\subseteq \Delta_N^{\circ}$ consisting of those ``weights'' $\mu\in \Delta_N^{\circ}$ and a constant $D\ge 1$ satisfying the doubling-type condition on~\cite[page (1.2)]{han2021geometric}: for every $i\in[N]$ and each $r>0$
\begin{equation}
\label{eq:doubling}
        \sum_{\substack{j\in[N]\\|X_j-X_i|<2r}}\mu_j
    \le
        D\sum_{\substack{j\in[N]\\|X_j-X_i|<r}}\mu_j
.
\end{equation}
Note that, for any such $\mu\in \Delta_N^{\circ}$ such as a $D$ always exists: since, we may always that left-hand side of~\eqref{eq:doubling} is at most $1$, while its right-hand side is at least $D\mu_i$; thus,  $D=(\min_{n\in[N]}\mu_n)^{-1}$ suffices; moreover $D$ is finite since $\mu\in \Delta_N^{\circ}$ not only $\Delta_N$%%
\footnote{The minimal $D$ an upper-bound for the \textit{upper-dimension} of $\sum_{n=1}^N\,\mu_n\delta_{X_n}$; cf.~\citep[Equation (1.3)]{han2021geometric}.}.

\section{The Variational Formulation}
\label{s:TheFormula}
We now define a $1$-parameter family of \textit{variational} formulae, which we will show coincides with our extension formula; in the high-regularity regime where $s\ge 2$.  However, we will prove our Whitney-type extension results for this broader family, and then specialize our computational claimed for the closed-form formula $F_{\operatorname{ext}}$ later.
We maintain the setting:
\begin{setting}[Training Data]
\label{TheSetting}
Fix $d,N\in \mathbb{N}_+$, $s\in \mathbb{N}$, $M>0$, distinct sites $X_1,\dots,X_N\in \mathbb{R}^d$, an (unknown) $f\in \dot{C}^{s,1}(\mathbb{R}^d)$, any polynomials $P_1,\dots,P_N\in \mathbb{R}[x_1,\dots,x_d]$ of degree at-most $s$ such that: for each $n\in [N]$
\begin{equation}
\label{eq:training_data}
P_n = J_{X_n}^s f
\qquad
\mbox{ and } \operatorname{Lip}(D^s\,f)\le M
.
\end{equation}
\end{setting}
\noindent
We refer to $\mathbb{D}^N \eqdef  \{(X_n,J_{X_n}^s f)\}_{n=1}^N$ as the training data %
 and $\mathbb{X}^N \eqdef  \{X_n\}_{n=1}^N$ as the measurement sites.  
The \textit{fill radius} of our data is
$
    h\eqdef\sup_{x\in(0,1)^d}
        \min_{n\in[N]}\|x-X_n\|_2
$.
Fix $D\ge 1$ and $\mu\in \Delta_{N:D}$.  
Once $s$, and $D$ are fixed; then so are the constants
\begin{equation}
\label{eq:fancy_constants}
\begin{aligned}
    \beta&  \eqdef (s-1)_+,
    \qquad
    p \eqdef s+1+\beta,
    \qquad
    A_s \eqdef \frac{1}{(s+1)!}+\frac{\beta}{2}
    ,
    \qquad
    q  \eqdef p+\lceil\log_2D\rceil+2
.
\end{aligned}
\end{equation}
Define the helper functions, for $x\in\mathbb{R}^d\setminus\mathbb{X}^N$, by
\begin{equation}
\label{eq:entropy-variance-extension__primal__THEVARIABLES}
\resizebox{0.9\hsize}{!}{$%
\begin{aligned}
    \theta(x)
    % & 
    \eqdef 
    \left(
        \frac{\mu_n|x-X_n|^{-q}}
        {\sum_{k=1}^{N}\mu_k|x-X_k|^{-q}}
    \right)_{n=1}^N
,\quad
    R(x)^p
     \eqdef 
        \sum_{i=1}^{N}\theta_i(x)|x-X_i|^p
% \\
,\quad
    v(x)
    % &
    \eqdef 
    \left(
        \nabla P_i(x)
        +\frac{M}{s!}|x-X_i|^{s-1}(x-X_i)
    \right)_{i=1}^N
\end{aligned}
$}
\end{equation}
We define the admissible variance-parameter set
% \begin{equation}
% \label{eq:entropy-variance-admissible-parameters}
$\mathcal I_s
\eqdef \{1\}$ for $s=0,1$ and $\mathcal I_s=[0,1]$ when $s\ge 2$.
% \begin{cases}
% \{1\},&s\in\{0,1\},\\
% [0,1],&s\ge2.
% \end{cases}
% \end{equation}
Analogously to the classical (linear) Whitney formula, cf.~\eqref{eq:classicalWhitney}, for every fixed $\vartheta\in\mathcal I_s$, define $F_\vartheta:\mathbb R^d\to\mathbb R$ at the measurement sites by
\begin{equation}
\label{eq:entropy-variance-extension__primal___SITES}
F_\vartheta(X_i)\eqdef P_i(X_i),
\qquad i\in[N],
\end{equation}
and, for $x\in\mathbb R^d\setminus\mathbb X^N$, by
\begin{extensionbox}
\begin{equation}
\label{eq:entropy-variance-extension__primal}
\resizebox{0.925\hsize}{!}{$%
\begin{aligned}
F_\vartheta(x)
\eqdef
\min_{\lambda\in\Delta_N}
\Bigg\{
&\sum_{i=1}^N\lambda_i
\left[
P_i(x)+\frac{A_sM|x-X_i|^p}{R(x)^\beta}
\right]-\frac{
    \vartheta s
    \,
    \operatorname{Var}_\lambda(v(x))
}{4MR(x)^{s-1}}
+\beta MR(x)^{s+1}
\operatorname{KL}\bigl(\lambda\mid\theta(x)\bigr)
\Bigg\}.
\end{aligned}
$}
\end{equation}
\end{extensionbox}
We set
\begin{equation}
\label{eq:entropy-variance-extension__specialization}
F\eqdef F_1.
\end{equation}
The main result tying our \textit{closed-form formula} $F_{\operatorname{ext}}$ to this family of variational formulae is the following identity in the ``high regularity'' regime where $s\ge 2$.
Thus, we will present all our analytic arguments for the variational formula in~\eqref{eq:entropy-variance-extension__primal} which is valid for all regimes
$s\in \mathbb{N}$ and then specialize our computational arguments for the closed-form version in~\eqref{eq:ExtenstionFormula_Expression}.
%%%
\begin{highlightedproposition}[Closed Form of the Entropy-Only Variational Problem]
\label{prop:entropy-only-reconstruction}
Let $s\ge 2$.  
Under Setting~\ref{TheSetting}, let $D\ge1$ and $\mu\in\Delta_N^\circ$, and $q$ be as in~\eqref{eq:fancy_constants}, $\theta$ and $R$ be as in~\eqref{eq:entropy-variance-extension__primal__THEVARIABLES}.  
\hfill\\
Then, $F_0$ (cf.~\eqref{eq:entropy-variance-extension__primal}) with $\vartheta=0$ and $
F_{\operatorname{ext}}
$ (cf.~\eqref{eq:ExtenstionFormula_Expression}) satisfy: for every $x\in\mathbb R^d$
\[
F_{\operatorname{exp}}(x)=F_0(x).
\]
\end{highlightedproposition}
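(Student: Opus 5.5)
The plan is to recognize that, at $\vartheta=0$, the variational problem in~\eqref{eq:entropy-variance-extension__primal} is a linear functional of $\lambda$ plus an entropic penalty. Its minimum is therefore given by the Gibbs (Donsker--Varadhan) variational principle, i.e.\ a log-sum-exp. I will verify that this log-sum-exp is exactly~\eqref{eq:ExtenstionFormula_Expression}. (The statement's $F_{\operatorname{exp}}$ is read as $F_{\operatorname{ext}}$.)

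\emph{At the sites.} For $x=X_i$ both functions are defined to equal $P_i(X_i)$, by~\eqref{eq:entropy-variance-extension__primal___SITES} and the convention following~\eqref{eq:ExtenstionFormula_Expression}. So there is nothing to prove there.

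\emph{Setup off the sites.} Fix $x\notin\mathbb X^N$. Since $\mu\in\Delta_N^\circ$ and $|x-X_n|>0$ for all $n$, we have $\theta(x)\in\Delta_N^\circ$ and $R(x)>0$. Because $s\ge2$, $\beta=s-1>0$ and $p=s+1+\beta=2s$, so the constants agree with those in the introduction. Set
\[
c_i\eqdef P_i(x)+\frac{A_sM|x-X_i|^{p}}{R(x)^{\beta}},
\qquad
\tau\eqdef\beta MR(x)^{s+1}>0 .
\]
With $\vartheta=0$ the objective becomes $\Phi(\lambda)=\sum_i\lambda_ic_i+\tau\,\KL(\lambda\mid\theta(x))$.

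\emph{Key step (Gibbs principle).} I will show
\[
\min_{\lambda\in\Delta_N}\Phi(\lambda)=-\tau\log\sum_{i=1}^N\theta_i(x)e^{-c_i/\tau}.
\]
Let $S=\{i:\lambda_i>0\}$. Then
\[
\Phi(\lambda)=-\tau\sum_{i\in S}\lambda_i\log\frac{\theta_i e^{-c_i/\tau}}{\lambda_i}.
\]
By concavity of $\log$ (Jensen with weights $\lambda_i$ on $S$),
\[
\Phi(\lambda)\ge-\tau\log\sum_{i\in S}\theta_ie^{-c_i/\tau}\ge-\tau\log\sum_{i=1}^N\theta_ie^{-c_i/\tau},
\]
where the second inequality uses monotonicity of $\log$ and positivity of the dropped terms. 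Equality holds at $\lambda^*_i\propto\theta_ie^{-c_i/\tau}$, which lies in $\Delta_N^\circ$, so the minimum is attained.

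\emph{Algebraic identification.} Compute
\[
\frac{c_i}{\tau}
=\frac{P_i(x)}{(s-1)MR^{s+1}}+\frac{A_s|x-X_i|^{2s}}{(s-1)R^{\beta}R^{s+1}}.
\]
Since $R^{\beta}R^{s+1}=R^{2s}$, this gives
\[
-\frac{c_i}{\tau}=\Big(\frac{P_i(x)}{MR^{s+1}}+A_s\Big(\frac{|x-X_i|}{R}\Big)^{2s}\Big)\Big/(1-s).
\]
Moreover $-\tau=(1-s)MR^{s+1}$. Substituting both into the Gibbs formula reproduces~\eqref{eq:ExtenstionFormula_Expression} term by term.

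\emph{Main obstacle.} There is no real difficulty; the only points needing care are bookkeeping. First, $\lambda$ may have zero entries, which the convention $0\log0=0$ and the restriction to the support $S$ handle. Second, the exponents must match, namely $p-\beta=s+1$ and $\beta+s+1=2s$. Third, the value of $q$ in~\eqref{eq:fancy_constants} coincides with the one in the introduction (both equal $2s+\lceil\log_2D\rceil+2$), which matters only because $\theta$ must be the same object in both formulas.
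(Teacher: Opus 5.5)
Your proof is correct and follows essentially the same route as the paper. The paper rewrites the $\vartheta=0$ objective as $\tau\KL(\lambda\mid\nu(x))-\tau\log Z(x)$, where $\nu_i\propto\theta_i e^{-a_i/\tau}$ is the Gibbs vector, and then uses nonnegativity of relative entropy; this is exactly the Gibbs inequality you derive via Jensen on the support of $\lambda$, and the paper then performs the same identification of $a_i/\tau$ and $-\tau=(1-s)MR^{s+1}$ (it also records uniqueness of the minimizer, which the statement does not need).
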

%%%
\begin{proof}
See Section~\ref{s:Proofs__ss:EntropyOnly}.
\end{proof}

\section{Main Results}
\label{s:MainResults}
We now present our paper's main results; we begin with the analytic and model theoretic result, and then progress to the theoretic computer science (TCS) results.
\subsection{Analytic Results}
\label{s:MainResults__ss:Analtic}
we emphasize that the value $D=2^{46d}$ is only sufficient.
\begin{theorem}[Variational Extension Formula]
\label{thrm:main}
Under Setting~\ref{TheSetting}, let $D=2^{46d}$ and choose $\mu\in\Delta_{N:D}$%
\footnote{Note that such a $\mu$ must exist by Lemma~\ref{lem:three-doubling-weights}.}.  %  
Then the calibrated reconstruction $F=F_1$ defined by~\eqref{eq:entropy-variance-extension__primal___SITES}--\eqref{eq:entropy-variance-extension__specialization} is definable in $\mathbb{R}_{\operatorname{exp}}$ and satisfies:
\begin{enumerate}
    \item[(i)] \textbf{Interpolation:} For every $n\in [N]$ and $j\in\{0,\dots,s\}$ we have
\[
    D^jF(X_n)
=
    D^jP_n(X_n)
.
\]
\item[(ii)] \textbf{Regularity:} $F\in\dot{C}^{s,1}(\mathbb{R}^d)$ and there is a constant\footnote{Depending only on $d,s$.} $K_{d,s}\ge1$ satisfying
\[
    [F]_{s,1}\le K_{d,s}M,
\]
Moreover, $K_{d,s}
    \in
    \mathcal{O}
    (
        \mathrm{e}^{cd}(d+s)^{cs}
    )$ for some absolute $c>0$ and 
$K_{d,s}=1$ when $s<2$.
%%%
\item[(iv)] \textbf{Extrapolation:} If, moreover, $s\ge 2$ then, $F$ is real-analytic on $\mathbb{R}^d\setminus\{X_n\}_{n=1}^N$.
\end{enumerate}
\end{theorem}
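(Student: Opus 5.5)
The plan is to treat the three claims separately. Definability and analyticity are structural, interpolation is a local asymptotic statement at each site, and the $[F]_{s,1}$ bound is the quantitative core. For $s\in\{0,1\}$ I would not redo any analysis: I would invoke the coincidence results of Section~\ref{s:Coincidences}, which identify $F_1$ with the McShane and Azagra--Le Gruyer--Mudarra extensions. Those extensions are known to interpolate and to satisfy $[F]_{s,1}\le M$, which gives $K_{d,s}=1$. Everything below is for $s\ge2$, so that $\beta=s-1>0$.

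\emph{Step 1 (structure: definability, interiority, analyticity).} Every ingredient of \eqref{eq:entropy-variance-extension__primal} is $\mathbb R_{\operatorname{exp}}$-definable: $|x-X_i|$ is a square root of a polynomial, and $\theta$, $R$, $v$ and $\operatorname{KL}$ are built from powers, $\log$ and $\exp$. A minimum over the semialgebraic set $\Delta_N$ of a definable function is again definable, and adjoining the finitely many site values preserves definability. For analyticity off $\mathbb X^N$, I first check that the objective is strictly convex in $\lambda$, which needs a lower bound on its curvature. The KL term is $\beta MR^{s+1}$-strongly convex in $\ell^1$ (Pinsker), while the variance term is concave with curvature at most $\frac{s}{4MR^{s-1}}\max_{i,j}|v_i-v_j|^2$. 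I would bound $|v_i-v_j|\lesssim MR^s$ on the indices that carry Gibbs mass, using the Whitney compatibility $|D^k(P_i-P_j)(x)|\le C_sM(|x-X_i|+|X_i-X_j|)^{s+1-k}$ that follows from Taylor's theorem for $f$. The constant $A_s$ is chosen so that the KL term wins. Once strict convexity holds, the entropy barrier forces the minimizer into $\Delta_N^\circ$. The first-order conditions on the interior are then real-analytic in $(x,\lambda)$ with nondegenerate Hessian, so the implicit function theorem gives an analytic minimizer $\lambda^*(x)$, and hence $F$ is analytic on $\mathbb R^d\setminus\mathbb X^N$.

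\emph{Step 2 (interpolation).} Fix $n$ and let $x\to X_n$. Since $q>p$, we have $\theta_i(x)=O(|x-X_n|^{q})$ for $i\ne n$. Hence $R(x)^p=|x-X_n|^p(1+O(|x-X_n|^{q-p}))$. Taking $\lambda=e_n$ as a competitor, and comparing with the Gibbs lower bound (drop the concave term after bounding it by $O(MR^{s+1})$), gives $|F(x)-P_n(x)|\lesssim M|x-X_n|^{s+1}$. I would upgrade this to $|D^k(F-P_n)(x)|\lesssim M|x-X_n|^{s+1-k}$ for $k\le s+1$ using the scale-invariant derivative bounds of Step 3. Together with continuity this gives $F\in C^s$ near $X_n$ with $D^jF(X_n)=D^jP_n(X_n)$.

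\emph{Step 3 (uniform $C^{s,1}$ bound; main obstacle).} Off the sites I aim for $\|D^{s+1}F(x)\|_{\op}\le K_{d,s}M$. Then, since $F\in C^s(\mathbb R^d)$ and a finite set can be avoided by segments up to a limiting argument, $\Lip(D^sF)\le K_{d,s}M$. I would write $F$ through the envelope/Legendre structure as a Gibbs-weighted soft-min of $g_i=P_i+A_sM|x-X_i|^p/R^\beta$ at temperature $\beta MR^{s+1}$, plus the variance correction. I would then differentiate $s+1$ times using Faà di Bruno together with cumulant expansions of the Gibbs measure.

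Two estimates are needed. The first is scale-invariant bounds $\|D^k\theta\|,\ \|D^kR\|/R\lesssim_{d,s}R^{-k}$, uniform in $N$. This is exactly where the doubling condition $\mu\in\Delta_{N:D}$ with $D=2^{46d}$ enters. Summing $\mu_j|x-X_j|^{-q}$ over dyadic annuli, the ratio of mass grows by at most $D=2^{\log_2D}$ per annulus while the kernel decays by $2^{-q}$. Because $q>p+\log_2D$, the series $\sum_j\mu_j|x-X_j|^{p-q}$ and its derivatives are dominated by the annuli at scale $\asymp\operatorname{dist}(x,\mathbb X^N)$, and this also yields $R(x)\asymp_{d,s}\operatorname{dist}(x,\mathbb X^N)$. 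The second is a localization bound: the penalty $A_sM|x-X_i|^p/R^\beta$ makes the Gibbs weights of indices with $|x-X_i|\gg R$ super-exponentially small. On the surviving indices, Whitney compatibility gives $|D^k(g_i-g_j)|\lesssim MR^{s+1-k}$. Every cumulant of order $m$ then contributes $M R^{s+1}\cdot R^{-(s+1)}$ after dividing by the temperature powers, which yields the claimed $O(M)$ bound on $D^{s+1}F$.

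Tracking the combinatorial constants in these two estimates is what produces $K_{d,s}=\mathcal O(e^{cd}(d+s)^{cs})$. I expect this step, and specifically the $N$-independence of the derivative bounds on $\theta$ and $R$ via doubling, to be the main obstacle; I would attack it first.
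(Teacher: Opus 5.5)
Your architecture matches the paper's. For $s\in\{0,1\}$ you use the McShane and convex-envelope identities. For $s\ge2$ you normalize at the local scale $R\asymp\rho_{\mathbb X^N}$, localize the Gibbs weights, bound local derivative errors by $M\rho^{s+1-j}$, glue at the sites, and use a segment argument for $\Lip(D^sF)$. Definability and the analytic implicit-function route to off-site analyticity are also fine. There is, however, a sign error in Step~1 that propagates, and a missing idea in Step~3.

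In \eqref{eq:entropy-variance-extension__primal} the variance enters with a minus sign, and $-\Var_\lambda(v)=-\sum_i\lambda_i|v_i|^2+\bigl|\sum_i\lambda_iv_i\bigr|^2$ is linear plus convex in $\lambda$. The objective is therefore convex, and strict convexity comes for free from the relative entropy. Your Pinsker-domination argument is unnecessary, and as written it would fail. Restricting $|v_i-v_j|\lesssim MR^s$ to ``indices that carry Gibbs mass'' presupposes the minimizer, and distant sites have $|v_i-v_j|\gg MR^s$.

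For the same reason, $A_s$ is not there to make the KL term win. Its role is to keep the normalized costs coercive, $w_i\ge u_i^2/12-1$ as in Lemma~\ref{lem:three-normalization}. The costs must stay coercive after absorbing the linear piece $-c\sum_i\lambda_i|b_i|^2$ of the variance and the possibly negative Taylor remainder of $P_i$. That coercivity is the actual source of your super-exponential localization. It is also what your Step~2 lower bound needs: a nonpositive term cannot simply be dropped from a lower bound and must instead be absorbed by the penalty.

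The substantive gap is Step~3 for $F=F_1$. The variance is not an additive ``correction'' to a soft-min; it changes the optimizer. The minimizer is a Gibbs measure tilted by a vector $z\in\R^d$, namely $\lambda_i\propto\theta_i\mathrm e^{-w_i-2c\,z\cdot b_i}$, where $z=\sum_i\lambda_ib_i$ solves the fixed-point equation $z=\mathbb E_{\pi(z)}[b]$. Bounding $D^{s+1}F$ therefore requires $N$-uniform control of $s$ derivatives of $z_*(x)$.

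That control comes from the stationarity map having $z$-Jacobian $I+2c\Cov_{\pi}(b)\succeq I$, whose inverse has norm at most one (Lemmas~\ref{lem:three-entropy} and~\ref{lem:three-gibbs-majorants}). The paper propagates it through the majorant fixed-point equation $W=\mathcal R(t,W)$, whose radius is $\gtrsim1/q$ (Lemma~\ref{lem:three-majorant-radius}). Fa\`a di Bruno plus Gibbs cumulants, as you describe them, covers only the entropy-only endpoint $F_0$. To reach $F_1$ you need this reduction to $d$ dimensions, or some equivalent $N$-uniform bound on the inverse of the primal Hessian.

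You have also misplaced the main obstacle. The scale-$\rho$ derivative bounds on $\log\theta_i$ and $\log R$ hold for arbitrary positive weights, simply because $\rho\le r_i$ (Lemma~\ref{lem:three-primitives}). Doubling is used only to get $R\le L_{s,D}\rho$ (Lemma~\ref{lem:three-scale}), which turns $MR^{s+1}\rho^{-(s+1)}$ into $\mathcal O(M)$.
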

\begin{proof}[{Proof of Theorem~\ref{thrm:main}}]
Part (i) is proved in Section~\ref{s:Proofs__ss:Interpolation}, part (ii) in Section~\ref{s:Proofs__ss:Cs1_Smoothness}, 
and part (iv) in Lemma~\ref{lem:offsite-analyticity}.  The proof of the definability of $F$ in $\mathbb{R}_{\operatorname{exp}}$ is given in Lemma~\ref{lem:reconstruction-definability} in Section~\ref{s:Proofs__ss:Definability}.
\end{proof}

Now, the interpolation property (i) and the homogeneous seminorm bound (ii) in Theorem~\ref{thrm:main} over the entire hypercube $[0,1]^d$ yields full control of the $C^{s,1}([0,1]^d)$-norm.

\begin{corollary}[Full Norm Control on the Cube]
\label{cor:full-Cs1-control}
In the setting of Theorem~\ref{thrm:main}, suppose that $\mathbb X^N\subseteq[0,1]^d$. Then, for every $s\in\mathbb N$,
\begin{equation}
\label{eq:full-Cs1-control}
 \|F\|_{C^{s,1}([0,1]^d)}
 \le \mathrm e^{\sqrt d}\left(
 \min_{i\in[N]}\sum_{r=0}^s\|D^rP_i(X_i)\|_{\op}
 +K_{d,s}M\right).
\end{equation}
In particular, the bound has no dependence on the sample size or separation beyond the displayed measured jet values.
\end{corollary}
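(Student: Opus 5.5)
The plan is to anchor every derivative of $F$ at a single site using interpolation, and then propagate that control across the cube with Taylor's theorem, using only the seminorm bound of Theorem~\ref{thrm:main}(ii). Fix an arbitrary index $i\in[N]$; at the end we take the minimum over $i$.

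\emph{Step 1 (anchoring).} By Theorem~\ref{thrm:main}(i), $D^rF(X_i)=D^rP_i(X_i)$ for $r=0,\dots,s$.

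\emph{Step 2 (top order).} By Theorem~\ref{thrm:main}(ii), $\Lip(D^sF)\le K_{d,s}M$ on all of $\mathbb R^d$, hence also on $[0,1]^d$. This already controls the last summand in~\eqref{eq:full-Cs1-norm-cube}.

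\emph{Step 3 (lower orders by Taylor expansion).} For $x\in[0,1]^d$, write $\rho\eqdef|x-X_i|\le\sqrt d$, which holds because $X_i\in[0,1]^d$. Expanding $D^rF$ around $X_i$ to order $s-r$, with the remainder written as an integral of $D^sF(X_i+t(x-X_i))-D^sF(X_i)$, gives
\[
\|D^rF(x)\|_{\op}\le\sum_{k=r}^{s}\frac{\rho^{k-r}}{(k-r)!}\|D^kP_i(X_i)\|_{\op}+\frac{K_{d,s}M\rho^{s-r+1}}{(s-r+1)!}.
\]
For $r=s$ this is just the Lipschitz bound from Step 2.

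\emph{Step 4 (summing over $r$).} Add the bounds from Step 3 for $r=0,\dots,s$, together with the Lipschitz term from Step 2. For each fixed $k$, the total coefficient multiplying $\|D^kP_i(X_i)\|_{\op}$ is
\[
\sum_{j=0}^{k}\frac{\rho^j}{j!}\le e^{\rho}\le e^{\sqrt d}.
\]
The total coefficient multiplying $K_{d,s}M$ is
\[
1+\sum_{r=0}^{s}\frac{\rho^{s-r+1}}{(s-r+1)!}\le e^{\rho}\le e^{\sqrt d}.
\]
Here the leading $1$ comes from the Lipschitz term and fills the $j=0$ slot of the exponential series. Taking the supremum over $x\in[0,1]^d$ and then the minimum over $i\in[N]$ yields~\eqref{eq:full-Cs1-control}.

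The main obstacle is this last bookkeeping step. The point to check is that the coefficients collapse into a single factor $e^{\sqrt d}$, not $(s+1)e^{\sqrt d}$. I will handle it by grouping the terms by the derivative order $k$ of the jet, rather than by $r$, so that each group is a truncation of one exponential series. Two smaller points also need checking. First, the remainder form uses only the Lipschitz bound on $D^sF$, which holds on the segment from $X_i$ to $x$ because $[0,1]^d$ is convex. Second, the case $s=0$ reduces directly to $|F(x)|\le|P_i(X_i)|+M\rho$, with $K_{d,0}=1$.
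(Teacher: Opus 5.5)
Your proposal is correct and follows the paper's proof essentially step for step. You anchor the jet at a fixed $X_i$ via Theorem~\ref{thrm:main}(i), bound $\|D^rF(x)\|_{\op}$ using the Taylor remainder estimate of Lemma~\ref{lem:three-taylor} with $[F]_{s,1}\le K_{d,s}M$, and use $|x-X_i|\le\sqrt d$. You then regroup by jet order so that each coefficient is a truncated exponential series bounded by $\mathrm{e}^{\sqrt d}$, and minimize over $i$.

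One cosmetic point: the norm is a sum of suprema. So replace $\rho$ by $\sqrt d$ in each pointwise bound first (your bounds are increasing in $\rho$), then take $\sup_x$ for each $r$ separately, and only then sum.
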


\begin{remark}[Control of the global $C^{s,1}(\mathbb{R}^d)$ Norm]
\label{rem:full_control}
We note that, for control of the norm on all of $\mathbb{R}^d$ then one simply need to consider the product $\bar{F}\eqdef \chi F$ where $\chi$ is a plateau function which is $1$ on $[0,1]^d$ and vanishes outside a neighbourhood of $[0,1]^d$.  However, doing so implies that $\hat{F}$ is real-analytic on $(0,1)^d\setminus\{X_n\}_{n=1}^N$.
\end{remark}

\subsection{Complexity Theoretic Results}
\label{s:MainResults__ss:TCS}

For fixed data, a circuit evaluates $F_{\operatorname{ext}}$ at a query point. A uniform compiler constructs that circuit from the supplied data. We specify separately the gate model for evaluation and the exact-real word-RAM model for compilation.
\subsection{Computational Complexity of \texorpdfstring{$F_{\operatorname{ext}}$}{The Extension Formula}}
\label{s:TCS_Guarantees_ss:complexity_Fext}

We establish $O_{d,s}(1+\log N)$ parallel gate-evaluation time and $\Theta_{d,s}(N)$ storage for the circuit realizing~\eqref{eq:ExtenstionFormula_Expression}. A sequential evaluation of every gate uses $\Theta_{d,s}(N)$ gate operations.
The circuit viewpoint records not only the input--output function but also the elementary operations, their compositional structure, and the resulting size and depth of the computation.   Background on the circuit complexity formalism used in this section in reviewed in Appendix~\ref{s:Background_Complexity__ss:Circuits}.

We use unit-cost exact real-valued gates.
The admissible elementary operations, or \emph{gates}, belong to the language
\begin{equation}
\label{eq:Gates}
\mathbb G
\eqdef
\{\mathsf c_z:z\in\mathbb R\}
\cup
\{+_2,\times_2,\div_2,\exp,\log\}
\cup
\{\mathsf{pow}_r:r>0\}
\cup
\{I_{\cdot=b}:\, b\in \mathbb{R}\}
,
\end{equation}
% where:
\begin{enumerate}
\item[(i)] \textbf{Binary - Arithmetic operations:} the binary addition and multiplication gates are
$
+_2,\times_2:\mathbb R^2\to\mathbb R$ 
and the binary division gate is
$
\div_2:\mathbb R\times(\mathbb R\setminus\{0\})\to\mathbb R
$ and $\div_2(u,v)\eqdef\frac uv$.  
\item[(ii)] \textbf{Unary - Power and radical operations:} for every $r>0$, the unary gate
$
\mathsf{pow}_r:[0,\infty)\to[0,\infty)
$ mapping any $t\in \mathbb{R}$ to $\mathsf{pow}_r(t)\eqdef t^r$.
% ; using the unique non-negative real branch.
\item[(iii)] \textbf{Unary - Exponential operations:} $
\exp:\mathbb R\to(0,\infty)$ and $
\log:(0,\infty)\to\mathbb R$.  
\item[(iv)] \textbf{Unary -  Constants:} for every $z\in\mathbb R$ let $
\mathsf c_z:\mathbb R\ni x \to z$,
\item[(v)] \textbf{Unary - Equality Satisfaction:} For every $b\in \mathbb{R}$, the map $I_{\cdot=b}:\mathbb{R}\mapsto \{0,1\}$ returning $1$ if and only if $\cdot=b$ and $0$ otherwise.
\end{enumerate}

Our closed-form formula, i.e.\ in the $s\ge 2$ setting, has the the following circuit complexity:

\begin{theorem}[Circuit Complexity of the Closed-Form Formula]
\label{thrm:complexity}
Fix $d,N,s\in\mathbb N_+$ with $s\ge2$, $M>0$, pairwise distinct $X_1,\ldots,X_N\in\mathbb R^d$, polynomials $P_i\in\mathbb R[x_1,\ldots,x_d]$, $i\in[N]$, of degree at most $s$, $D\ge1$, and $\mu\in\Delta_N^\circ$. Set $q\eqdef2s+\lceil\log_2D\rceil+2$ and $U\eqdef\mathbb R^d\setminus\{X_1,\ldots,X_N\}$. There exists a bounded-fan-in $\mathbb G$-circuit $\widehat{\mathcal C}_{\operatorname{ext}}$, with $d$ input vertices and lifting channel $I_d$, which computes $F_{\operatorname{ext}}$; i.e.\
\[
    f_{\hat{\mathcal{C}}_{\operatorname{ext}}}(x) = F_{\operatorname{ext}}(x)
\qquad
\mbox{ for all } x\in \mathbb{R}^d
\]
satisfying:
\begin{enumerate}[label=(\roman*), ref=(\roman*)]
    \item \label{eq:closed-form-circuit-size}
    \textbf{Storage Cost:} Size of $\mathcal O_s(Nd^s)$,
    \item \label{eq:closed-form-circuit-depth}
    \textbf{Parallel Query Time:} $\mathcal O_s(\log(d+N))$,
    \item \label{eq:closed-form-circuit-width}
    \textbf{Parallelism Used:} Width of $\mathcal O_s(Nd^s)$.
\end{enumerate}
\end{theorem}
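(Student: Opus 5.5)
We build the circuit explicitly, following the closed form \eqref{eq:ExtenstionFormula_Expression}. Every intermediate quantity is either a per-site quantity, computed in parallel over $i\in[N]$ by constant-depth (in $N$) subcircuits, or an $N$-fold sum, computed by a balanced binary tree of $+_2$ gates of depth $\lceil\log_2 N\rceil$. The final output is produced by an equality-indicator switch, so the circuit returns $P_i(X_i)$ at the sites.

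\textbf{Per-site layer.} First we produce, for each $i$, the differences $x-X_i$ using $d$ constant gates and $d$ additions. Then we form the squared norm $|x-X_i|^2$ with $d$ multiplications and a depth-$\lceil\log_2 d\rceil$ addition tree. The quantities $|x-X_i|^{-q}$ and $|x-X_i|^{2s}$ follow from $\mathsf{pow}_{q/2}$, $\mathsf{pow}_{s}$ and $\div_2$.

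The polynomial $P_i(x)$ is evaluated in the monomial basis $\sum_{|\alpha|\le s}c_{i,\alpha}(x-X_i)^\alpha$. There are $\binom{d+s}{s}=\mathcal O_s(d^s)$ monomials. Each monomial is a product of at most $s$ factors, so its depth is $\mathcal O(\log s)$. The sum over monomials has depth $\mathcal O_s(\log d)$. Hence this layer has size $\mathcal O_s(Nd^s)$, width $\mathcal O_s(Nd^s)$, and depth $\mathcal O_s(\log d)$.

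\textbf{Aggregation layer.} We multiply by the constants $\mu_i$ and sum via a balanced tree to get $S(x)=\sum_k\mu_k|x-X_k|^{-q}$, which gives $\theta_i=\mu_i|x-X_i|^{-q}/S$. A second tree gives $R^{2s}=\sum_i\theta_i|x-X_i|^{2s}$. Then $R^{s+1}=\mathsf{pow}_{(s+1)/(2s)}(R^{2s})$ and $(|x-X_i|/R)^{2s}=|x-X_i|^{2s}/R^{2s}$.

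For each $i$ we form the exponent $\big(P_i/(MR^{s+1})+A_s(\cdot)^{2s}\big)/(1-s)$, apply $\exp$, and multiply by $\theta_i$. A third tree computes the inner product $\theta(x)^\top(\cdots)$. We then apply $\log$ and multiply by $(1-s)MR^{s+1}$. Each tree contributes depth $\mathcal O(\log N)$ and $\mathcal O(N)$ gates, so the totals are depth $\mathcal O_s(\log d+\log N)=\mathcal O_s(\log(d+N))$ and size $\mathcal O_s(Nd^s)$.

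\textbf{Handling the sites (the main obstacle).} The above is only well defined on $U$: at $x=X_i$ one has $\div_2$ by $0$ and $\mathsf{pow}$ applied to $0$ with negative effective exponent. The gates are partial, so we must make every gate receive admissible inputs for all $x\in\mathbb R^d$.

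First we compute $e_i=I_{\cdot=0}(|x-X_i|^2)\in\{0,1\}$ and $E=\sum_ie_i$, the latter by a tree of depth $\mathcal O(\log N)$. Since the sites are distinct, $E\in\{0,1\}$. We then replace each $|x-X_i|^2$ by the guarded value $\rho_i=|x-X_i|^2+e_i$, which is strictly positive. We run the whole off-site circuit on the $\rho_i$ and call its output $G(x)$; every gate now receives admissible inputs. The final output is
\[
(1-E)\,G(x)+\sum_i e_i\,c_{i,0},
\qquad c_{i,0}=P_i(X_i).
\]
This equals $F_{\operatorname{ext}}(x)$ on $U$ and equals $P_i(X_i)$ at $X_i$.

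One point needs checking: the argument of $\log$ must be positive. It is a positive combination of exponentials, and the weights $\theta_i$ are positive because $\mu\in\Delta_N^\circ$, so this holds. Likewise $R^{2s}>0$, since it is a positive combination of positive terms. The switch adds only $\mathcal O(N)$ gates and $\mathcal O(\log N)$ depth, so all three bounds are preserved. Finally, the bounded-fan-in requirement holds because every sum is implemented by a binary tree.
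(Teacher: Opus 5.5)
Your construction is correct. It shares the paper's architecture: per-site subcircuits run in parallel, balanced $+_2$ trees handle the $N$-fold sums (giving the $\mathcal O(\log N)$ depth), and an indicator switch returns $P_i(X_i)$ at the sites. Where you genuinely differ is in how the partial gates are made admissible at the sites.

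The paper treats the off-site circuit $H=F_{\operatorname{ext}}|_U$ as a black box and redirects its input. It forms $\chi(x)=1-\sum_iI_{\cdot=X_i}(x)$, with each $I_{\cdot=X_i}$ a product of $d$ coordinate indicators, and fixes a safe point $a\in U$. It then outputs $\sum_iI_{\cdot=X_i}(x)P_i(x)+\chi(x)\,H(a+\chi(x)(x-a))$, so $H$ is only ever evaluated at points of $U$. You instead open the box and guard the singular quantities directly, replacing $|x-X_i|^2$ by $|x-X_i|^2+e_i>0$.

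The paper's redirection is generic: it globalizes any circuit whose realization domain contains $U$, without inspecting which gates are partial. Its price is a second bank of $N$ polynomial subcircuits (evaluated at $x$ for the site term) and $\mathcal O(d)$ gates per site indicator. Your guard relies on an observation that is correct here: every domain restriction ($\div_2$, $\mathsf{pow}_r$, $\log$) is met as soon as the squared distances entering the off-site formula are positive. In exchange your route is leaner. It uses one indicator per site, reusing the squared distance, hard-wires the site values as the constants $c_{i,0}=P_i(X_i)$, and needs no safe point. Both routes give the same $\mathcal O_s(Nd^s)$ size and width and $\mathcal O_s(\log(d+N))$ depth.

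One formal caveat. In the paper's circuit model a gate's parents form a set, and $\mathsf{pow}_r$ is defined only on $[0,\infty)$. So squaring a possibly negative $x_j-X_{i,j}$, or a repeated factor in $(x-X_i)^\alpha$, cannot be a literal $\times_2(u,u)$. The paper writes it as $u\cdot(u+\mathsf c_0(u))$. This costs $\mathcal O(1)$ extra gates per square and leaves your bounds intact.
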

\begin{proof}
See Section~\ref{s:Proofs_Complexity}.
\end{proof}

We next construct the circuit from its supplied site coordinates, polynomial coefficients, positive weights, and integer $q$.

\subsection{The Complexity of Assembling \texorpdfstring{$F_{\operatorname{ext}}$}{The Extension Formula}}
\label{s:TCS_Guarantees__ss:Assembly}

We use the following exact-real word-RAM model for compilation. The bound below concerns compilation from the supplied coefficients, weights, and integer $q$; it excludes finding admissible weights and computing $q$ from a real-valued $D$.

\begin{dataformat}
\label{dataformat}
Fix $d,s\in\mathbb N_+$. The word inputs are $N\in\mathbb N_+$ and an integer $q\ge2s+2$. The exact-real inputs are $M>0$ and $(X_i,\mu_i,(c_{i,\alpha})_{\alpha\in\Lambda_{d,s}})_{i\in[N]}$, where the sites $X_i\in\mathbb R^d$ are pairwise distinct and $\mu=(\mu_i)_{i\in[N]}\in\Delta_N^\circ$. Each polynomial $P_i\in\mathbb R[x_1,\ldots,x_d]$, of degree at most $s$, is represented by its centered coefficients:
\[
P_i(x)\eqdef\sum_{\alpha\in\Lambda_{d,s}}c_{i,\alpha}(x-X_i)^\alpha,\qquad i\in[N].
\]
\end{dataformat}

\begin{theorem}[Circuit Assembly Complexity]
\label{thrm:complexity__Assembly}
Fix $d,s\in\mathbb N_+$ with $s\ge2$. There exists a uniform exact-real word-RAM algorithm $\texttt{Compile-Circuit}$ (Algorithm~\ref{alg:compile-circuit})\footnote{The program depends only on $d,s$.} such that, for every input satisfying Data Format~\ref{dataformat} 
on a word length 
% $w\in\mathbb N_+$ satisfying $2^w>q+C_{\rm mem}(d,s)(N+1)$
of $\mathcal O_{d,s}\!\bigl(\log(q+N)\bigr)$%
, the algorithm returns a $\mathbb G$-circuit $\widehat{\mathcal C}_{\operatorname{ext}}$\footnote{Encoded by the decorated adjacency representation $\operatorname{Adj}(\widehat{\mathcal C}_{\operatorname{ext}})$ specified in~\eqref{eq:compiler-record}.} satisfying Theorem~\ref{thrm:complexity} with $D=2^{q-2s-2}$. Moreover,
\[
\begin{aligned}
\operatorname{Work}(\texttt{Compile-Circuit})&=\mathcal O_s(Nd^s),\\
\operatorname{OutputStorage}(\texttt{Compile-Circuit})&=\mathcal O_s(Nd^s).
\end{aligned}
\]
\end{theorem}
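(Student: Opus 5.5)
We will construct the circuit explicitly, one layer at a time, and let the compiler emit each gate exactly once. We write it as a sequential word-RAM program that follows the layered structure of the circuit in Theorem~\ref{thrm:complexity}, so that each gate record is written in $\mathcal O(1)$ work. Work and output storage are then both bounded by the circuit size, $\mathcal O_s(Nd^s)$.

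\textbf{Step 1 (per-site blocks).} For each $i\in[N]$ we emit a constant-depth block with $\mathcal O_s(d^s)$ gates. It contains:
\begin{itemize}
\item constant gates $\mathsf c_{X_{i,j}}$, $\mathsf c_{c_{i,\alpha}}$ and $\mathsf c_{\mu_i}$;
\item gates computing $x-X_i$ coordinatewise;
\item the monomials $(x-X_i)^\alpha$ for $\alpha\in\Lambda_{d,s}$, each obtained from a previously built monomial of degree one lower by a single $\times_2$ gate, in graded lexicographic order, so the depth is $\mathcal O(s)$ and there is $\mathcal O(1)$ work per monomial;
\item $P_i(x)$ as a balanced $+_2$-tree over the $|\Lambda_{d,s}|=\binom{d+s}{s}=\mathcal O_s(d^s)$ terms;
\item $|x-X_i|^2$ as a balanced tree over $d$ terms, followed by $\mathsf{pow}_{-q/2}$ implemented as $\div_2(1,\mathsf{pow}_{q/2}(\cdot))$, and likewise $|x-X_i|^{2s}$.
\end{itemize}
The exponent $q$ enters only as the label of a power gate. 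Since $q$ fits in a word of length $\mathcal O(\log(q+N))$, writing that label costs $\mathcal O(1)$.

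\textbf{Step 2 (global reductions).} We emit balanced binary $+_2$-trees over $i\in[N]$ for three sums: $Z(x)=\sum_k\mu_k|x-X_k|^{-q}$, the numerator of $R(x)^{2s}$, namely $\sum_i\mu_i|x-X_i|^{2s-q}$, and the final sum $\sum_i\theta_i(x)\exp(\cdots)$. Each tree has $N-1$ gates. The compiler writes it bottom-up with an explicit index array of length $\mathcal O(N)$, and every address fits in $\mathcal O(\log(q+N))$ bits. Once $Z$ and $R^{2s}$ are available, each site $i$ needs $\mathcal O(1)$ further gates:
\begin{itemize}
\item $\theta_i$ and $R$ via $\div_2$ and $\mathsf{pow}_{1/(2s)}$;
\item the exponent argument $\bigl(P_i/(MR^{s+1})+A_s(|x-X_i|/R)^{2s}\bigr)/(1-s)$;
\item the gate $\exp$.
\end{itemize}
An outer $\log$ and a multiplication by $(1-s)MR^{s+1}$ finish the off-site formula.

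\textbf{Step 3 (on-site values).} We handle $F_{\operatorname{ext}}(X_i)=P_i(X_i)=c_{i,0}$ with equality gates, using the same gadget as in the proof of Theorem~\ref{thrm:complexity}. We compute $\iota_i=I_{\cdot=0}(|x-X_i|^2)$ and output
\[
(1-\textstyle\sum_i\iota_i)\cdot(\text{off-site value})+\sum_i\iota_i c_{i,0}.
\]
This adds $\mathcal O(N)$ gates, whose records are again written in $\mathcal O(1)$ work each.

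\textbf{The main obstacle.} The difficulty is not the counting but the undefined branches. At $x=X_i$ the off-site branch involves gates applied outside their domains, such as $\div_2$ by $0$ and $\mathsf{pow}_{q/2}(0)$ in the denominator. We have to ensure that the emitted circuit matches the one analysed in Theorem~\ref{thrm:complexity}, including its convention for guarding these branches, for example by replacing $|x-X_i|^2$ with $|x-X_i|^2+\sum_i\iota_i$ before any inversion. The claim in Theorem~\ref{thrm:complexity} that the circuit computes $F_{\operatorname{ext}}$ then transfers verbatim.

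\textbf{Accounting.} We sum the per-block costs and use that the random-access addressing of the decorated adjacency representation~\eqref{eq:compiler-record} is unit cost at word length $\mathcal O_{d,s}(\log(q+N))$. This gives $\operatorname{Work}=\mathcal O_s(Nd^s)$ and $\operatorname{OutputStorage}=\mathcal O_s(Nd^s)$. The program text depends only on $d$ and $s$, because the loops over $i$ and the tree construction are uniform in $N$. Finally, setting $D=2^{q-2s-2}$ makes $q=2s+\lceil\log_2D\rceil+2$, so the output is exactly the circuit of Theorem~\ref{thrm:complexity}.
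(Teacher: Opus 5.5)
Your outline is correct and uses the same accounting as the paper: each gate is emitted once at $O(1)$ word-RAM cost, so the work and the $(5K+2)$-cell output are both $O(K)=O_s(Nd^s)$. Where you differ is in how the sites are guarded.

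The paper leaves the off-site subcircuit $H=F_{\operatorname{ext}}|_U$ untouched. It forms $\chi=1-\sum_i I_{\cdot=X_i}$, feeds $H$ the safe input $y=a+\chi(x)(x-a)$ with $a=(1+\sum_i|X_{i,1}|)e_1\in U$, and outputs $\sum_i I_{\cdot=X_i}(x)P_i(x)+\chi(x)H(y)$. Thus $H$ is reused as a black box that is only ever evaluated on $U$. The price is a compile-time safe point (sign tests and $O(N+d)$ real arithmetic) and a second copy of all $N$ polynomial subcircuits.

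Your additive guard $\sigma_i=|x-X_i|^2+\sum_k\iota_k$ avoids both costs, since your on-site values are just the constants $c_{i,0}$. However, it alters $H$ internally, so correctness does not ``transfer verbatim'' from Theorem~\ref{thrm:complexity}; you must check it directly. The check is short:
\begin{itemize}
\item off the sites, every $\iota_k=0$, so nothing changes;
\item at $X_j$, every $\sigma_i\ge1$, so $Z$, $R$, $MR^{s+1}$ and the argument of $\log$ are positive, and the finite off-site value is annihilated by $1-\sum_k\iota_k=0$.
\end{itemize}
This requires the guard to keep $R>0$ at the sites. For example, forming the numerator of $R^{2s}$ as an unguarded $|x-X_i|^{2s}$ times the guarded $\sigma_i^{-q/2}$ gives $R(X_1)=0$ when $N=1$, and hence a division by zero. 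So let $\sigma_i$ replace the squared distance in \emph{every} downstream use.

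Several formalism points that the paper handles explicitly are missing from your sketch, though none changes the bounds.
\begin{itemize}
\item Operand order is fixed by numerical labels, so each $\div_2$ needs its numerator labelled before its denominator. The paper relabels inputs first, then constants, then the remaining gates in creation order. With creation-order labels, you must emit the shared constant $1$ and every numerator first. In particular, the root of the tree for $\sum_i\mu_i|x-X_i|^{2s-q}$ must precede that of $Z$, which is the reverse of the order you list.
\item The rule $\operatorname{ar}(g_v)=\#\operatorname{pa}_v$ excludes $\times_2(u,u)$, and it forces every constant gate $\mathsf c_z$ to have a parent (the paper anchors constants at $\mathbf x_1$).
\item Since $\mathsf{pow}_2$ is only defined on $[0,\infty)$, the squares $(x_j-X_{i,j})^2$, both in the distances and in the degree-two monomials, need the gadget $u\cdot(u+\mathsf c_0(u))$ or a duplicate copy of $u$.
\item Your per-site blocks have depth $O\bigl(s+\log\binom{d+s}{s}\bigr)$ rather than constant depth. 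This is still within $O_s(\log(d+N))$.
\end{itemize}
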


We close our discussion by comparing our construction to known extension formulae.

\section{Coincidence with the Known \texorpdfstring{$s=0,1$}{Low-Regularity} Formulae}
\label{s:Coincidences}
We wrap-up our results by relating our formula to the other two known nonlinear closed-form and finitely parameterized Whitney-extension formulae.  These handle the cases $s=0,1$.
We have the following coincidence result:
\begin{proposition}[Coincidence $s=0$: McShane Formula]
\label{prop:McShane}
Let $d,N\in \mathbb{N}_+$, and $s=0$; then, for every $x\in\mathbb{R}^d$
\[
    F(x)
=
    \min_{n\in [N]}\,
        \left\{f(X_n) + M\|x-X_n\|_2\right\}
.
\]
\end{proposition}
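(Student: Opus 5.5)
The plan is to specialize every constant in~\eqref{eq:fancy_constants} and~\eqref{eq:entropy-variance-extension__primal} to $s=0$. Doing so should collapse the variational problem to a linear program over the simplex, whose value is the McShane minimum. The sites $x\in\mathbb X^N$ are handled separately through~\eqref{eq:entropy-variance-extension__primal___SITES}.

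\emph{Off the sites.} With $s=0$ we have $\beta=(s-1)_+=0$, $p=s+1+\beta=1$ and $A_0=\frac{1}{1!}+0=1$. Since $\mathcal I_0=\{1\}$, we get $F=F_1$, and the variance term carries the prefactor $\vartheta s=0$, so it vanishes identically. The vector $v(x)$ is well defined for $x\notin\mathbb X^N$, so nothing singular is being multiplied by zero. The entropy term has prefactor $\beta M R(x)^{s+1}=0$, and the relative entropy is finite for every $\lambda\in\Delta_N$ because $\theta(x)\in\Delta_N^\circ$; hence this term also drops out. Finally, $R(x)>0$ for $x\notin\mathbb X^N$, so $R(x)^\beta=R(x)^0=1$. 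Each $P_i$ has degree at most $0$, so $P_i\equiv f(X_i)$. Therefore
\[
F(x)=\min_{\lambda\in\Delta_N}\sum_{i=1}^N\lambda_i\bigl(f(X_i)+M|x-X_i|\bigr).
\]
This is a linear functional minimized over the simplex, so its minimum is attained at a vertex and equals $\min_{n\in[N]}\{f(X_n)+M\|x-X_n\|_2\}$.

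\emph{At the sites.} By definition $F(X_i)=P_i(X_i)=f(X_i)$. The $n=i$ term of the McShane minimum equals $f(X_i)$. For every $n$, the condition $\operatorname{Lip}(D^0f)=\operatorname{Lip}(f)\le M$ from Setting~\ref{TheSetting} gives $f(X_i)\le f(X_n)+M|X_i-X_n|$. Hence the minimum equals $f(X_i)$, and the two expressions agree at the sites as well.

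There is no real obstacle here. The only points needing care are checking that the vanishing coefficients multiply finite quantities (the finite KL term and the well-defined $v(x)$ off the sites), and invoking the Lipschitz hypothesis at the sites.
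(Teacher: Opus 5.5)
Your proposal is correct and follows the paper's own argument (the proof of Lemma~\ref{lem:three-zero-order}) essentially step for step. You specialize $\beta=0$, $p=1$, $A_0=1$ and $\mathcal I_0=\{1\}$ so that the variance and entropy terms drop out, minimize the resulting linear functional over $\Delta_N$ at a vertex, and use $\operatorname{Lip}(f)\le M$ to match the assigned site values. Your extra checks that the vanishing coefficients multiply finite quantities (a finite KL term since $\theta(x)\in\Delta_N^\circ$, and a well-defined $v(x)$ and $R(x)>0$ off the sites) are a welcome bit of care that the paper leaves implicit.
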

Each term $f(X_n)+M\|x-X_n\|_2$ is an upper bound on the value at $x$ imposed by the measurement at $X_n$ and the Lipschitz constraint. Taking the smallest of these bounds gives the pointwise largest $M$-Lipschitz function agreeing with all the measured values.
For $s=1$ we recover the Azagra-Le Gruyer-Mudarra formulae, cf.~\cite{AzagraMudarra2017}:
\begin{proposition}[{Coincidence $s=1$: Azagra-Le Gruyer-Mudarra Formula~\cite[Theorem~3.4]{AzagraLeGruyerMudarra2018}}]
\label{prop:Carlos}
Let $d,N\in \mathbb{N}_+$, and $s=1$; then, for every $x\in\mathbb{R}^d$
\[
\begin{aligned}
    F(x)
& = 
    \operatorname{conv}\!\left(
        g
    \right)(x)
    -\frac{M}{2}|x|^2,
\\
g(y)
&  \eqdef 
        \min_{1\le i\le N}
        \left\{P_i(y)+\frac{M}{2}|y-X_i|^2\right\}
        +\frac{M}{2}|y|^2
.
\end{aligned}
\]
Here $\operatorname{conv}$ denotes the lower-semicontinuous convex envelope.
\end{proposition}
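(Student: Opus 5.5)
We will work directly from the definitions. For $s=1$ the constants in~\eqref{eq:fancy_constants} are $\beta=0$, $p=2$ and $A_1=\tfrac12$, and $\mathcal I_1=\{1\}$, so $\vartheta=1$. The entropy term therefore disappears: its coefficient is $\beta M R(x)^{2}=0$. Hence, for $x\notin\mathbb X^N$,
\[
F(x)=\min_{\lambda\in\Delta_N}\Big\{\sum_{i}\lambda_i\big[P_i(x)+\tfrac M2|x-X_i|^2\big]-\tfrac{1}{4M}\operatorname{Var}_\lambda(v(x))\Big\},
\qquad v_i(x)=\nabla P_i+M(x-X_i),
\]
where $\nabla P_i$ is constant because $P_i$ is affine. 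Put $h_i(y)\eqdef P_i(y)+\tfrac M2|y-X_i|^2+\tfrac M2|y|^2$, so that $g=\min_i h_i$. Each $h_i$ is a convex quadratic with Hessian $2M\,I$, and we write $h_i(y)=M|y|^2+\langle a_i,y\rangle+c_i$. The plan is to compute $\operatorname{conv}(g)$ in a form that can be matched with the display above.

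\emph{Step 1 (envelope of a finite minimum).} The convex envelope of the minimum of finitely many convex functions satisfies
\[
\operatorname{conv}(g)(x)=\inf\Big\{\sum_i\lambda_i h_i(y_i):\lambda\in\Delta_N,\ \sum_i\lambda_i y_i=x\Big\}.
\]
This follows from Carathéodory's theorem together with the convexity of each $h_i$, which lets us merge several points carrying the same index $i$ into one. The infimum is finite and attained by coercivity, so the right-hand side is a finite convex function, hence continuous. The lower-semicontinuous regularization therefore changes nothing.

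\emph{Step 2 (inner minimization for fixed $\lambda$).} The problem in $(y_i)_i$ is strictly convex with a linear constraint. The Lagrange condition $2My_i+a_i=\xi$ together with $\sum_i\lambda_iy_i=x$ gives $y_i=x+\delta_i$, where $\delta_i=(\bar a-a_i)/(2M)$ and $\bar a=\sum_i\lambda_ia_i$. Expanding each $h_i$ exactly to second order about $x$ and using $\sum_i\lambda_i\delta_i=0$, we obtain
\[
\sum_i\lambda_i h_i(y_i)=\sum_i\lambda_i h_i(x)-M\sum_i\lambda_i|\delta_i|^2=\sum_i\lambda_i h_i(x)-\tfrac1{4M}\operatorname{Var}_\lambda\big((a_i)_i\big).
\]
Next, $v_i(x)$ and $a_i$ differ by a vector independent of $i$, since $\nabla h_i(x)=a_i+2Mx=v_i(x)+Mx$. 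Variance is translation invariant, so $\operatorname{Var}_\lambda(a)=\operatorname{Var}_\lambda(v(x))$. Finally,
\[
\sum_i\lambda_i h_i(x)-\tfrac M2|x|^2=\sum_i\lambda_i\big[P_i(x)+\tfrac M2|x-X_i|^2\big].
\]
Minimizing over $\lambda\in\Delta_N$ gives $\operatorname{conv}(g)(x)-\tfrac M2|x|^2=F(x)$ for $x\notin\mathbb X^N$.

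\emph{Step 3 (the sites).} At $x=X_i$ the paper sets $F(X_i)=P_i(X_i)$, so we must show that the envelope formula returns the same value. The jets come from $f$ with $\operatorname{Lip}(\nabla f)\le M$, so they satisfy the $C^{1,1}$ compatibility condition of~\cite{AzagraLeGruyerMudarra2018}. Their Theorem~3.4 then gives $\operatorname{conv}(g)(X_i)-\tfrac M2|X_i|^2=P_i(X_i)$. Alternatively, the identity at sites follows from continuity of both sides: the right side is continuous, and we take continuity of $F$ at the sites from Theorem~\ref{thrm:main}.

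I expect the main obstacle to be Step~1, namely justifying the representation of the convex envelope and the irrelevance of the lower-semicontinuous closure. The computation in Step~2 is routine, and Step~3 reduces to a citation.
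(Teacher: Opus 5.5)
Your proof is correct and follows the paper's route. The off-site reduction to the variance form is the first display in the proof of Lemma~\ref{lem:three-first-order}, and your Steps~1--2 are Lemma~\ref{lem:three-quadratic-envelope}: the paper shows directly that the grouped infimum is a continuous convex minorant of $g$ dominating every lower-semicontinuous convex minorant, then completes the square with the same minimizers $y_i=x-(v_i(x)-\bar v)/(2M)$ that your Lagrange computation produces.

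The only difference is at the sites. You cite Azagra--Le Gruyer--Mudarra, or alternatively Theorem~\ref{thrm:main}, whose $s=1$ case is itself proved through Lemma~\ref{lem:three-first-order}, so that route adds nothing new. The paper instead verifies $\operatorname{conv}(g)(X_i)-\tfrac M2|X_i|^2=P_i(X_i)$ directly: it derives the compatibility inequality~\eqref{eq:three-sharp-compatibility} from $f$, uses it to show that the affine function $\ell_i$ tangent to $g_i$ at $X_i$ lies below every $g_j$, and concludes $\ell_i\le\operatorname{conv}(g)\le g_i$ with equality at $X_i$.
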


Here $\operatorname{conv}(g)$ is the largest convex function lying below $g$. Adding the common quadratic before taking this envelope, and subtracting it afterwards, combines the local models into a function that matches both the measured values and gradients while satisfying $\operatorname{Lip}(\nabla F)\le M$.

\section{Future Work}
\label{s:Conclusion}
In future work, it would be very nice to find sharpened version of our formula which achieves the minimal possible constant.  We also hope others are inspired by it, and apply it to algorithmic foundations of machine learning.

\section{\texttt{LEAN} Certificate}
\label{s:LEAN}
Theorems~\ref{thrm:main} (except the definability in $\mathbb{R}_{\operatorname{exp}}$ due to the unavailability of that model in \texttt{LEAN} currently), Theorem~\ref{thrm:complexity}, and all supporting results were certifiably-verified in \texttt{LEAN} (see ArXiV upload for the \texttt{LEAN} code and the \texttt{LEAN} certificate).

\section{Acknowledgement and Funding}
\label{s:Funding}
\noindent
A.\ Kratsios gratefully acknowledges financial support from an NSERC Discovery Grant No.\ RGPIN-2023-04482 and No.\ DGECR-2023-00230.  They acknowledge that resources used in preparing this research were provided, in part, by the Province of Ontario, the Government of Canada through CIFAR, and companies sponsoring the Vector Institute\footnote{\href{https://vectorinstitute.ai/partnerships/current-partners/}{https://vectorinstitute.ai/partnerships/current-partners/}}.
\hfill\\
\noindent
P.\ Zimmermann gratefully acknowledges the support of the Swiss National Science Foundation (SNSF) under Grant No.\ 214500.

\subsection{Thanks}
\label{s:Funding_ss:Thanks}
A.\ Kratsios would like to thank Charles Fefferman for his short remarks which greatly reshaped the trajectory of the paper.  The authors would also like to thank Carlos Mudarra for his amazing feedback and explanations of the ideas behind the $C^{1,1}$ extension formulae.

\subsection{AI Declaration}
\label{s:Funding_ss:AI}
The following is aligned to the \href{https://leidendeclaration.ai/}{Leiden Declaration on Artificial Intelligence and Mathematics}.

\paragraph{Tool and computational resource disclosure}
The authors used \texttt{Chat GPT 6, Astra} to polish the manuscript, to assist in refining the original variational formula, and to proofread mathematical arguments, and formalizing the proof of Theorem~\ref{thrm:complexity__Assembly}. 
AI tools were also used to compile the \texttt{LEAN} certificate of their results.
The authors take full responsibility for the correctness and adequacy of the mathematical arguments and results.

\section{Proofs}
\label{s:Proofs}

We trace the following \textit{proof strategy:} 
We first establish an equivalent dual representation and construct geometric weights whose doubling constant depends only on the dimension. The cases $s=0,1$ are treated directly. For $s\ge2$, we fix an arbitrary $\vartheta\in[0,1]$ and carry the family $F_\vartheta$ through the local argument without differentiating the parameter. The main difficulty is to control derivatives of the optimized value near the measurement sites, where the inverse-distance factors are singular. We subtract a fixed Taylor polynomial and normalize by the local distance scale, obtaining estimates uniform in $\vartheta$, the number of sites, and their separation. Lemma~\ref{lem:three-local-errors} gives the resulting derivative-error bounds, and Lemma~\ref{lem:three-site-gluing} turns them into exact interpolation. A further derivative bound yields global Lipschitz control of $D^sF_\vartheta$. The specialization $\vartheta=1$ proves the higher-order part of Theorem~\ref{thrm:main}, while $\vartheta=0$ supplies the interpolation and regularity conclusions of Proposition~\ref{prop:entropy-only-reconstruction}. Taylor expansion at a nearest site then gives the approximation estimates, while the final calibration chooses the regularity parameter from the measured data.

\subsection{{Proof Theorem~\ref{thrm:main} (i):  The Jet Interpolation Property}}
\label{s:Proofs__ss:Interpolation}

\paragraph{Proof setting and notation.}
The data and the global witness satisfy \eqref{eq:training_data}; the geometry weights and their certificate remain fixed as in \eqref{eq:doubling}. 
For the higher-order part, fix an arbitrary $\vartheta\in[0,1]$ and use $F_\vartheta$ from~\eqref{eq:entropy-variance-extension__primal___SITES}--\eqref{eq:entropy-variance-extension__primal}. The parameter is fixed throughout spatial differentiation, and every estimate below is uniform in this fixed $\vartheta$. The original reconstruction is recovered at $\vartheta=1$ by~\eqref{eq:entropy-variance-extension__specialization}.
For a map $u$ on a set $\Omega$ into a normed finite-dimensional space, $\Lip_\Omega(u)$ denotes the supremum of $\|u(x)-u(y)\|/|x-y|$ over distinct $x,y\in\Omega$. Derivatives of vector- or tensor-valued maps carry the operator norm induced by the specified codomain norm. Partial tensor contractions retain the unfilled slots. Write $\mathrm{e}=\exp(1)$ and, for $a=X_i$, $P_a=P_i$. The distance functions $r_i$ and $\rho_{\mathbb{X}^N}$ below are defined for all $x\in\R^d$:
\begin{equation}
\label{eq:three-aux-geometry}
 m \eqdef s+1,\qquad r_i(x) \eqdef |x-X_i|,\qquad
 \rho_{\mathbb{X}^N}(x) \eqdef \min_{i\in[N]}r_i(x),\qquad
 S_\gamma(x) \eqdef \sum_{i=1}^N\mu_i r_i(x)^{-\gamma}
\end{equation}
(where, clearly, the sum $S_\gamma$ is defined for $x\notin\mathbb{X}^N$ and $\gamma\ge0$).
For $a\in\mathbb{X}^N$ and $r>0$, let $B_{\mathbb{X}^N}(a,r) \eqdef \{X_i:|X_i-a|<r\}$ and identify the fixed vector $\mu$ with the atomic measure $\mu(A) \eqdef \sum_{X_i\in A}\mu_i$. The definitions in \S\ref{s:TheFormula} implies that:
if $R^p=\frac{S_{q-p}}{S_q}$, $m-p=-\beta$, and $s\ge2 $ then 
\begin{equation}
\label{eq:three-aux-parameters}
(\beta,p,q)=\bigl(s-1,2s,2s+\lceil\log_2D\rceil+2\bigr).
\end{equation}
Here $q-p\ge2$, all distances are positive off the sites, and $R>0$. In particular the entropy convention makes the objective continuous on the compact simplex: each summand $\lambda_i\log\lambda_i$ extends continuously at zero, and the other terms are polynomials in $\lambda$. Indeed, writing $t=\mathrm{e}^{-u}$ gives $|t\log t|=u\mathrm{e}^{-u}\to0$ as $t\downarrow0$, since $\mathrm{e}^u\ge u^2/2$ for $u>0$. A coordinatewise convergent subsequence of a minimizing sequence therefore attains the minimum. No value of a singular helper function is assigned at a site. In the higher-order proof $\mathfrak h_\vartheta(x)$ will denote the normalized optimal value associated with $F_\vartheta$; it is distinct from the fill radius $h$.

To obtain constants independent of how the sites are distributed, we construct positive weights for which doubling a ball's radius increases its total weight by at most a factor depending only on $d$. The next two lemmas achieve this by transferring controlled weights between successively finer subsets of the sites.

\begin{lemma}
\label{lem:three-weight-transfer}
Fix $d\in\mathbb N_+$ and $r>0$, and put $A \eqdef 16$ and $B \eqdef A^{2d}$. Let $S,T\subset\mathbb{R}^d$ be finite nonempty sets, with distinct points of $S$ at distance at least $r$ and distinct points of $T$ at distance at least $r/A$. Suppose that $\varphi:T\to S$ is onto and $|t-\varphi(t)|<r$ for every $t\in T$. Let $(\nu_e)_{e\in S}$ be positive weights of sum one satisfying
\begin{equation}
\label{eq:three-parent-weight-comparison}
 \nu_e\le B^2\nu_{e'}\qquad
 (e,e'\in S,\ |e-e'|<4r).
\end{equation}
Then there are positive weights $(\nu'_t)_{t\in T}$ of sum one such that
\begin{equation}
\label{eq:three-child-weight-bounds}
 B^{-1}\nu_{\varphi(t)}\le\nu'_t\le\nu_{\varphi(t)},\qquad
 \nu'_t\le B^2\nu'_{t'}\quad
 (t,t'\in T,\ |t-t'|<4r/A).
\end{equation}
Moreover, there are numbers $\pi_{e,t}\ge0$ with row sums $\nu_e$, column sums $\nu'_t$, and $\pi_{e,t}>0$ implies that 
\begin{equation}
\label{eq:three-one-step-transport}
 |e-t|<(1+4/A)r.
\end{equation}
\end{lemma}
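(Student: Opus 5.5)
The plan is to build $\nu'$ by splitting each parent's mass among its children, then fix the comparison estimate, which the naive split does not deliver on its own.

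\textbf{Counting children.} For $e\in S$ let $C_e\eqdef\varphi^{-1}(e)$ and $n_e\eqdef|C_e|$. Since $\varphi$ is onto, $n_e\ge1$. Every $t\in C_e$ satisfies $|t-e|<r$, and distinct points of $T$ are at least $r/A$ apart. The balls $B(t,r/(2A))$, $t\in C_e$, are therefore disjoint and lie in $B(e,r+r/(2A))$. Comparing volumes gives
\[
n_e\le(2A+1)^d=33^d\le A^{2d}=B .
\]
This margin of $(256/33)^d$ between the packing count and $B$ is the slack used later.

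\textbf{First attempt: uniform splitting.} Put $\nu'_t\eqdef\nu_{\varphi(t)}/n_{\varphi(t)}$, and let $\pi_{e,t}\eqdef\nu'_t$ if $\varphi(t)=e$ and $\pi_{e,t}\eqdef0$ otherwise.
\begin{itemize}
\item Row sums are $\nu_e$ and column sums are $\nu'_t$, so $\sum_t\nu'_t=1$ and $\nu'_t>0$.
\item Whenever $\pi_{e,t}>0$ we have $|e-t|<r<(1+4/A)r$, which is~\eqref{eq:three-one-step-transport}.
\item The bound $n_e\le B$ gives $B^{-1}\nu_{\varphi(t)}\le\nu'_t\le\nu_{\varphi(t)}$.
\end{itemize}
For the comparison, take $|t-t'|<4r/A$. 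If $\varphi(t)=\varphi(t')$, then $\nu'_t=\nu'_{t'}$. Otherwise
\[
|\varphi(t)-\varphi(t')|<2r+4r/A<4r,
\]
so~\eqref{eq:three-parent-weight-comparison} applies and yields
\[
\frac{\nu'_t}{\nu'_{t'}}\le B^2\,\frac{n_{\varphi(t')}}{n_{\varphi(t)}}\le B^2\,33^d .
\]
This misses the target $B^2$ by the factor $33^d$.

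\textbf{Main obstacle and proposed fix.} Closing this gap is the crux. The only freedom available is to use non-uniform splitting ratios within each $C_e$. Transport to non-children is ruled out: the radius $(1+4/A)r$ in~\eqref{eq:three-one-step-transport} is too small to reach the children of a neighbouring parent, and the upper bound $\nu'_t\le\nu_{\varphi(t)}$ forbids importing mass.

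I would therefore set
\[
\nu'_t=\nu_e\,\frac{w_t}{\sum_{u\in C_e}w_u},\qquad t\in C_e,
\]
with each $w_t\in[B^{-1/2},1]$ chosen so that children lying near a boundary with a neighbouring family get weights comparable across families. A natural candidate is
\[
w_t\eqdef\min_{e':\,\operatorname{dist}(t,C_{e'})<4r/A}\Bigl(\frac{\nu_{e'}}{\nu_e}\Bigr)^{\alpha}
\]
for a suitable exponent $\alpha$, truncated to $[B^{-1/2},1]$.

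The verification would go in two steps, each splitting into the cases $\varphi(t)=\varphi(t')$ and $\varphi(t)\neq\varphi(t')$.
\begin{enumerate}
\item Show $\sum_{u\in C_e}w_u\ge1$. This keeps $\nu'_t\le\nu_{\varphi(t)}$, and the packing bound $n_e\le33^d$ keeps $\nu'_t\ge B^{-1}\nu_{\varphi(t)}$.
\item Use the slack $B/33^d=(256/33)^d$ to absorb the ratio of normalizing sums across neighbouring families.
\end{enumerate}
I expect this bookkeeping to be the step where the specific choices $A=16$ and $B=A^{2d}$ are actually used, and where the proof is most delicate. I have not verified that this choice of $w_t$ closes the gap.
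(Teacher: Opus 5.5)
Your packing bound and your analysis of the uniform split are correct. The proposed fix, however, cannot work, because the reason you give for confining yourself to within-family reweighting is false. You claim that the radius $(1+4/A)r$ is too small to reach children of a neighbouring parent. In fact, if $g\in C_e$ and $|g-t|<4r/A$, then $|e-t|\le|e-g|+|g-t|<r+4r/A=(1+4/A)r$. So the radius is tailored precisely to let $e$ send mass to any child of another parent lying within $4r/A$ of one of $e$'s children. These are exactly the pairs the comparison condition constrains. Likewise, the upper bound $\nu'_t\le\nu_{\varphi(t)}$ does not forbid importing mass. Children within $8r/A$ have parents within $2r+8r/A<4r$. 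Hence if $u_g>B^2u_h$ for nearby children, then $u_h<u_g/B^2\le\nu_{\varphi(g)}/B^2\le\nu_{\varphi(h)}$, so there is room to add mass to $h$.

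In fact, every within-family scheme fails, including your ansatz $\nu'_t=\nu_e w_t/\sum_{u\in C_e}w_u$, since it forces $\sum_{t\in C_e}\nu'_t=\nu_e$. Consider the following instance:
\begin{itemize}
\item $d=1$ and $r=1$, with $S=\{0,3/2\}$ and $T=\{7/10,4/5,2\}$;
\item $\varphi(7/10)=0$ and $\varphi(4/5)=\varphi(2)=3/2$;
\item $\nu_0=B^2/(B^2+1)$ and $\nu_{3/2}=1/(B^2+1)$.
\end{itemize}
All hypotheses of the lemma hold. The only child $7/10$ of $0$ must receive $\nu'_{7/10}=\nu_0$. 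Since $|7/10-4/5|<1/4=4r/A$, the comparison then forces $\nu'_{4/5}\ge\nu_0/B^2=\nu_{3/2}$. This leaves nothing positive for $\nu'_2$, so no choice of $w_t$ closes the gap.

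The missing idea is cross-family rebalancing, which is what the paper does.
\begin{itemize}
\item It starts from your uniform split $u^0$ and passes once through all pairs $\{a,b\}$ with $|a-b|<4r/A$.
\item Whenever $u_g>B^2u_h$ on such a pair, it moves mass from $g$ to $h$ until $u_g=B^2u_h$.
\item Using $\nu_{\varphi(a)}\le B^2\nu_{\varphi(b)}$ for $|a-b|<8r/A$, it shows that the invariant $\nu_{\varphi(t)}/B\le u_t\le\nu_{\varphi(t)}$ persists.
\item The same estimate shows that pairs already balanced stay balanced, so the final weights satisfy the comparison.
\item Each transfer $g\to h$ satisfies $\nu_{\varphi(g)}>B\nu_{\varphi(h)}$. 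A two-hop chain $g\to h\to t$ would then give $\nu_{\varphi(g)}>B^2\nu_{\varphi(t)}$ with $|g-t|<8r/A$, which is impossible.
\item Hence mass moves at most one hop. The plan $\pi_{e,t}$ (the retained own mass of $t$, plus transfers into $t$ from children of $e$) is therefore supported in $|e-t|<(1+4/A)r$.
\end{itemize}
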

\begin{proof}
Write $\mathbb B(z,\rho)\eqdef\{y\in\mathbb R^d:|y-z|<\rho\}$, $v_d\eqdef\mathcal L^d(\mathbb B(0,1))>0$, $T_e\eqdef\varphi^{-1}(\{e\})$, $n_e\eqdef|T_e|$, and $w_t\eqdef\nu_{\varphi(t)}$. For $e\in S$, surjectivity and separation give
\[
\begin{gathered}
1\le n_e,\qquad \mathbb B(t,r/(2A))\cap\mathbb B(t',r/(2A))=\varnothing\quad(t,t'\in T_e,\ t\ne t'),\\
\bigcup_{t\in T_e}\mathbb B(t,r/(2A))\subseteq\mathbb B(e,r+r/(2A)),\\
n_ev_d(r/(2A))^d\le v_d(r+r/(2A))^d,\qquad n_e\le(2A+1)^d<A^{2d}=B.
\end{gathered}
\]
For $a,b\in T$, \eqref{eq:three-parent-weight-comparison} yields
\[
\begin{aligned}
|a-b|<8r/A&\Longrightarrow |\varphi(a)-\varphi(b)|\le|\varphi(a)-a|+|a-b|+|b-\varphi(b)|\\
&\hspace{37mm}<(2+8/A)r<4r\Longrightarrow w_a\le B^2w_b.
\end{aligned}
\]

Enumerate $\mathcal E\eqdef\{\{a,b\}:a,b\in T,\ a\ne b,\ |a-b|<4r/A\}=\{E_1,\ldots,E_L\}$ without repetition, allowing $L=0$, and put $u_t^0\eqdef w_t/n_{\varphi(t)}$. Recursively, for $1\le k\le L$ and $E_k=\{a,b\}$, set $u^k=u^{k-1}$ if $\max(u_a^{k-1},u_b^{k-1})\le B^2\min(u_a^{k-1},u_b^{k-1})$. Otherwise, let $(g_k,h_k)$ be the ordering of $E_k$ satisfying $u_{g_k}^{k-1}>B^2u_{h_k}^{k-1}$ and set
\begin{equation}
\label{eq:three-pair-weight-update}
\begin{aligned}
\delta_k&\eqdef\frac{u_{g_k}^{k-1}-B^2u_{h_k}^{k-1}}{B^2+1},&u_t^k&\eqdef u_t^{k-1}+\delta_k\bigl(\mathbf1_{\{t=h_k\}}-\mathbf1_{\{t=g_k\}}\bigr),\\
u_{g_k}^k&=\frac{B^2(u_{g_k}^{k-1}+u_{h_k}^{k-1})}{B^2+1},&u_{h_k}^k&=\frac{u_{g_k}^{k-1}+u_{h_k}^{k-1}}{B^2+1}.
\end{aligned}
\end{equation}
Let $\mathcal I\subseteq\{1,\ldots,L\}$ consist of the nonidentity updates. We prove by induction that
\begin{equation}
\label{eq:three-transfer-intervals}
0<w_t/B\le u_t^k\le w_t\quad(t\in T),\qquad \sum_{t\in T}u_t^k=1\qquad(0\le k\le L).
\end{equation}
At $k=0$, $1\le n_e<B$ and $\sum_{t\in T}u_t^0=\sum_{e\in S}n_e\nu_e/n_e=1$. For $k\in\mathcal I$, abbreviate $g=g_k$, $h=h_k$. The inductive hypothesis and \eqref{eq:three-pair-weight-update} imply $0<\delta_k<u_g^{k-1}$, $u_g^k=B^2u_h^k$, and
\[
\begin{aligned}
w_g/B&\le Bw_h\le B^2u_h^{k-1}\le B^2u_h^k=u_g^k\le u_g^{k-1}\le w_g,\\
w_h/B&\le u_h^{k-1}\le u_h^k=u_g^k/B^2\le u_g^{k-1}/B^2\le w_g/B^2\le w_h,\\
\sum_{t\in T}u_t^k&=\sum_{t\in T}u_t^{k-1}+\delta_k(1-1)=1.
\end{aligned}
\]
The remaining coordinates and the steps $k\notin\mathcal I$ are unchanged.

Put $\mathcal Q_k\eqdef\{\{a,b\}\in\mathcal E:\max(u_a^k,u_b^k)\le B^2\min(u_a^k,u_b^k)\}$. Fix $k\in\mathcal I$, $g=g_k$, $h=h_k$, and $t\in T\setminus\{g,h\}$. For $\{g,t\}\in\mathcal Q_{k-1}$, $|t-h|\le|t-g|+|g-h|<8r/A$, so
\[
u_g^k\le u_g^{k-1}\le B^2u_t^{k-1}=B^2u_t^k,\qquad u_t^k\le w_t\le B^2w_h\le Bu_g^k\le B^2u_g^k.
\]
For $\{h,t\}\in\mathcal Q_{k-1}$, $|g-t|\le|g-h|+|h-t|<8r/A$, so
\[
u_t^k=u_t^{k-1}\le B^2u_h^{k-1}\le B^2u_h^k,\qquad u_h^k=u_g^k/B^2\le w_g/B^2\le w_t\le Bu_t^k\le B^2u_t^k.
\]
Moreover, $u_g^k=B^2u_h^k$ and $E\cap\{g,h\}=\varnothing$ implies $(u_t^k)_{t\in E}=(u_t^{k-1})_{t\in E}$. Consequently,
\[
k\notin\mathcal I\ \Longrightarrow\ \mathcal Q_k=\mathcal Q_{k-1}\ni E_k,\qquad \mathcal Q_k\supseteq\mathcal Q_{k-1}\cup\{E_k\}\quad(1\le k\le L),\qquad \mathcal Q_L\supseteq\bigcup_{k=1}^L\{E_k\}=\mathcal E.
\]
Set $\nu'_t\eqdef u_t^L$. Then $\mathcal Q_L=\mathcal E$, $B\ge256>1$, and~\eqref{eq:three-transfer-intervals} give
\[
\sum_{t\in T}\nu'_t=1,\qquad 0<B^{-1}\nu_{\varphi(t)}\le\nu'_t\le\nu_{\varphi(t)},\qquad \nu'_t\le B^2\nu'_{t'}\quad(|t-t'|<4r/A),
\]
where $t=t'$ uses $B^2\ge1$.

Define $a_{gh}\eqdef\sum_{k\in\mathcal I:(g_k,h_k)=(g,h)}\delta_k$ for $g,h\in T$ and $\mathcal D\eqdef\{(g,h)\in T^2:a_{gh}>0\}$. For every $k\in\mathcal I$, \eqref{eq:three-transfer-intervals} gives
\begin{equation}
\label{eq:three-transfer-parent-decrease}
w_{g_k}\ge u_{g_k}^{k-1}>B^2u_{h_k}^{k-1}\ge Bw_{h_k}.
\end{equation}
If $(g,h),(h,t)\in\mathcal D$, then $|g-t|\le|g-h|+|h-t|<8r/A$ and $w_g>Bw_h>B^2w_t$, contradicting $w_g\le B^2w_t$. Hence
\[
a_{gh}a_{ht}=0\quad(g,h,t\in T),\qquad O_t\eqdef\sum_{h\in T}a_{th},\quad I_t\eqdef\sum_{g\in T}a_{gt},\quad O_tI_t=\sum_{g,h\in T}a_{gt}a_{th}=0.
\]
Summing \eqref{eq:three-pair-weight-update} over $k$ gives $u_t^L=u_t^0-O_t+I_t$. In particular,
\[
c_t\eqdef u_t^0-O_t=\begin{cases}u_t^0,&O_t=0,\\u_t^L,&O_t>0,\end{cases}\qquad c_t>0,\qquad \pi_{e,t}\eqdef\mathbf1_{\{e=\varphi(t)\}}c_t+\sum_{g\in T:\varphi(g)=e}a_{gt}\ge0.
\]
For $e\in S$ and $t\in T$,
\[
\begin{aligned}
\sum_{t\in T}\pi_{e,t}&=\sum_{t\in T_e}(u_t^0-O_t)+\sum_{g\in T_e}\sum_{t\in T}a_{gt}=\sum_{t\in T_e}u_t^0=\nu_e,\\
\sum_{e\in S}\pi_{e,t}&=c_t+\sum_{g\in T}a_{gt}=u_t^0-O_t+I_t=u_t^L=\nu'_t.
\end{aligned}
\]
Finally,
\[
\begin{aligned}
\pi_{e,t}>0&\ \Longrightarrow\ (e=\varphi(t))\ \lor\ (\exists g\in T:\ e=\varphi(g),\ (g,t)\in\mathcal D),\\
e=\varphi(t)&\ \Longrightarrow\ |e-t|<r<(1+4/A)r,\\
e=\varphi(g),\ (g,t)\in\mathcal D&\ \Longrightarrow\ |e-t|\le|\varphi(g)-g|+|g-t|<r+4r/A=(1+4/A)r.
\end{aligned}
\]
\end{proof}

\begin{lemma}[Positive Geometry Weights with a Dimension-Only Certificate]
\label{lem:three-doubling-weights}
For every $d,N\in\mathbb N_+$ and distinct sites $X_1,\ldots,X_N\in\mathbb{R}^d$, there exists $\mu\in\Delta_N^\circ$ satisfying \eqref{eq:doubling} with $D=2^{46d}$.
\end{lemma}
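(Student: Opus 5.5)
Since $D=2^{46d}$ is required to be independent of $N$, the natural approach is a multiscale net construction, with the weights pushed from coarse to fine scales by Lemma~\ref{lem:three-weight-transfer}. Let $\Delta\eqdef\operatorname{diam}(\mathbb X^N)$; if $N=1$ then $\mu=(1)$ works trivially. Put $A=16$, $B=A^{2d}=2^{8d}$, and choose radii $r_k\eqdef r_0A^{-k}$ with $r_0>\Delta$. At each scale pick a maximal $r_k$-separated subset $S_k\subseteq\mathbb X^N$, nested so that $S_0\subseteq S_1\subseteq\cdots$: greedily extend $S_{k}$ to a maximal $r_{k+1}$-separated set. Then $S_0$ is a single point, and $S_K=\mathbb X^N$ once $r_K<\min_{i\ne j}|X_i-X_j|$. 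Maximality gives every $t\in S_{k+1}$ some $e\in S_k$ with $|t-e|<r_k$; let $\varphi_k(t)$ be such a point, with $\varphi_k(e)=e$ on $S_k$, so that $\varphi_k$ is onto. Start from $\nu^{(0)}=(1)$ on $S_0$, where \eqref{eq:three-parent-weight-comparison} holds trivially. Apply Lemma~\ref{lem:three-weight-transfer} inductively with $(S,T,r)=(S_k,S_{k+1},r_k)$; its output condition $\nu'_t\le B^2\nu'_{t'}$ for $|t-t'|<4r_{k+1}$ is exactly the input hypothesis at the next scale. Finally set $\mu\eqdef\nu^{(K)}$, which lies in $\Delta_N^\circ$.

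To verify \eqref{eq:doubling}, fix a site $X_i$ and a radius $r>0$. If $r$ is large compared with $\Delta$, both sides contain all of $\mu$ and the inequality is trivial. Otherwise choose $k$ with $r_k$ comparable to $r$, say $r_{k+1}\le c\,r<r_k$ for a suitable absolute $c$. The key tool is the transport plans $\pi^{(j)}$ of Lemma~\ref{lem:three-weight-transfer}. Composing them from scale $k$ down to $K$ gives a coupling between $\nu^{(k)}$ and $\mu$, and every unit of mass travels a total distance at most $\sum_{j\ge k}(1+4/A)r_j\le \tfrac{5}{4}\cdot\tfrac{16}{15}r_k<2r_k$. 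Hence, for any ball $\mathbb B$ of radius $\rho$ around a site, $\mu(\mathbb B)$ is bounded above by $\nu^{(k)}$ of the concentric ball of radius $\rho+2r_k$, and bounded below by $\nu^{(k)}$ of the concentric ball of radius $\rho-2r_k$.

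\emph{Upper bound.} The numerator $\mu(B(X_i,2r))$ is at most $\nu^{(k)}$ of the points of $S_k$ in $B(X_i,2r+2r_k)$. There are at most $(C)^d$ such points by the packing argument used in Lemma~\ref{lem:three-weight-transfer}, and any two of them are within $O(r_k)$ of each other.

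\emph{Lower bound.} The denominator is at least $\mu(\{X_i\})=\nu^{(K)}_{X_i}$, and this alone is too small. Instead I would go to a coarser scale $k'=k+\ell$ with $\ell$ a fixed constant, chosen so that $2r_{k'}\le r/2$. Let $e\in S_{k'}$ be the scale-$k'$ ancestor of $X_i$, at distance less than $2r_{k'}$. The lower transport bound gives $\mu(B(X_i,r))\ge\nu^{(k')}_e$, since the mass of $e$ lands in $B(e,2r_{k'})\subseteq B(X_i,r)$. It remains to compare $\nu^{(k')}_e$ with the at most $C^d$ weights $\nu^{(k)}_{e'}$ counted in the numerator. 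Following ancestors upward, each step loses at most a factor $B$ by \eqref{eq:three-child-weight-bounds}, so $\nu^{(k')}_e\ge B^{-\ell}\nu^{(k)}_{\text{anc}(e)}$. Comparability of neighbouring parents (factor $B^2$, applied a bounded number of times along chains of $4r_k$-close points) then bounds each $\nu^{(k)}_{e'}$ by $B^{O(1)}\nu^{(k)}_{\text{anc}(e)}$. Together this gives $D\le C^dB^{O(1)}$.

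The main obstacle is the bookkeeping of constants needed to land exactly on $2^{46d}$. One has to choose $\ell$, and the number of $B^2$-comparison hops between points of $S_k$ within distance $2r+4r_k$, so that the exponents total at most $46d$. With $B=2^{8d}$, the estimates above give roughly $B^{\ell}\cdot B^{2h}\cdot 2^{O(d)}$, so $\ell$ and $h$ must be kept very small. Achieving this may require a sharper choice of $k$ relative to $r$, and using the already-established exact comparability $\nu'_t\le B^2\nu'_{t'}$ at scale $k'$ directly rather than climbing ancestors. I would first try $\ell=2$ or $\ell=3$ and one comparison hop, then adjust.
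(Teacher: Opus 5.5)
Your architecture is the paper's: multiscale nets with $A=16$, $B=2^{8d}$, Lemma~\ref{lem:three-weight-transfer} applied scale by scale, composed transport plans, a packing count, a neighbour comparison and a parent--child chain. Your nested nets, used in place of the paper's greedy nets with nearest-point maps, cause no problem.

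The step you defer is where the proof actually lives, and as written it does not go through. With your rounded estimates (transport displacement $<2r_k$, ancestor distance $<2r_k$), a point $e'\in S_k$ counted in the numerator is only known to satisfy $|e'-\mathrm{anc}_k(X_i)|<2r+4r_k$. So the hypothesis $|e-e'|<4r_k$ of \eqref{eq:three-child-weight-bounds} is never guaranteed. Your fallback, ``chains of $4r_k$-close points'', need not exist either. The set $S_k$ is a net of the finite set $\mathbb X^N$, not of $\R^d$: two sites $5r_k$ apart with nothing in between give two net points that admit no comparison at scale $k$.

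The repair is to keep the sharp constants and to fix the two scales as the paper does. First extend the construction by $S_j=\mathbb X^N$, $\nu^{(j)}=\mu$, $\varphi_j=\mathrm{id}$ for $j\ge K$; otherwise radii below the site separation are not covered. Take $r_0\ge2\Delta$; for $r>\Delta$ both sides of \eqref{eq:doubling} equal $1$. For $r\le\Delta$ put $k\eqdef\max\{j:r_j\ge2r\}$ and $k'\eqdef\min\{j:r_j\le r\}+1$. Then $r_{k'}\le r/A$ and $k'-k\le3$.

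Write $a_j$ for the scale-$j$ ancestor of $X_i$. Then $|a_j-X_i|<\tfrac{16}{15}r_j$, and the composed plan out of scale $j$ moves mass by less than $\tfrac54\cdot\tfrac{16}{15}r_j=\tfrac43r_j$.

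\emph{Lower bound.} The mass of $a_{k'}$ lands within $\tfrac{12}{5}r_{k'}<r$ of $X_i$. Hence $\mu(\mathbb B(X_i,r))\ge\nu^{(k')}_{a_{k'}}\ge B^{-3}\nu^{(k)}_{a_k}$.

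\emph{Upper bound.} Any $e'\in S_k$ sending mass into $\mathbb B(X_i,2r)$ satisfies $|e'-X_i|<2r+\tfrac43r_k\le\tfrac73r_k$. Hence $|e'-a_k|<\tfrac{17}{5}r_k<4r_k$, and a single hop gives $\nu^{(k)}_{e'}\le B^2\nu^{(k)}_{a_k}$. Packing bounds the number of such $e'$ by $(17/3)^d$.

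Altogether $\mu(\mathbb B(X_i,2r))\le(17/3)^dB^5\,\mu(\mathbb B(X_i,r))<2^{43d}\mu(\mathbb B(X_i,r))$, which is within $2^{46d}$.

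One wording slip: your scale $k'=k+\ell$ is finer than $k$, not coarser.
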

\begin{proof}
We use the finite redistribution construction of~\cite{VolbergKonyagin1988}. If $N=1$, set $\mu_1=1$; for every $r>0$, both sums in~\eqref{eq:doubling} equal $1\le2^{46d}$. Suppose $N\ge2$. Put $c\eqdef4(1+\operatorname{diam}(\mathbb X^N))$, $Y_i\eqdef(X_i-X_1)/c$, and $X\eqdef\{Y_i:i\in[N]\}$. Then
\[
c>0,\qquad |Y_i-Y_k|=c^{-1}|X_i-X_k|,\qquad \operatorname{diam}(X)=\frac{\operatorname{diam}(\mathbb X^N)}{4(1+\operatorname{diam}(\mathbb X^N))}<\frac14.
\]
Write $\mathcal B(z,t)\eqdef\{y\in\mathbb R^d:|y-z|<t\}$ and $B_X(z,t)\eqdef X\cap\mathcal B(z,t)$. Set $A\eqdef16$, $B\eqdef A^{2d}$, $r_j\eqdef A^{-j}$ for $j\in\mathbb N_0$, $\sigma\eqdef\min_{i\ne k}|Y_i-Y_k|>0$, and $J\eqdef\min\{j\in\mathbb N_0:r_j<\sigma\}<\infty$.

For $0\le j\le J$, define $S_j^{(0)}\eqdef\varnothing$ and, with $\operatorname{dist}(z,\varnothing)\eqdef\infty$,
\[
S_j^{(i)}\eqdef\begin{cases}S_j^{(i-1)}\cup\{Y_i\},&\operatorname{dist}(Y_i,S_j^{(i-1)})\ge r_j,\\S_j^{(i-1)},&\operatorname{dist}(Y_i,S_j^{(i-1)})<r_j,\end{cases}\qquad i\in[N],\qquad S_j\eqdef S_j^{(N)}.
\]
For $i<k$ with $Y_i,Y_k\in S_j$, the insertion rule gives $|Y_i-Y_k|\ge r_j$; $Y_i\in S_j$ gives $\operatorname{dist}(Y_i,S_j)=0<r_j$, whereas $Y_i\notin S_j$ gives $\operatorname{dist}(Y_i,S_j)\le\operatorname{dist}(Y_i,S_j^{(i-1)})<r_j$. Consequently,
\[
\min_{z\in S_j}|y-z|<r_j\quad(y\in X),\qquad S_0=\{Y_1\},\qquad r_J<\sigma\Longrightarrow S_J=X.
\]
Set $S_j\eqdef X$ for $j>J$. For every $j\in\mathbb N_0$, let $\varphi_j(t)$ be the least-indexed minimizer of $e\mapsto|t-e|$ on $S_j$, for $t\in S_{j+1}$. Thus $|t-\varphi_j(t)|<r_j$, and $\varphi_j=\operatorname{Id}_X$ for $j\ge J$. For $e\in S_j$, choose $t\in S_{j+1}$ with $|t-e|<r_{j+1}$; then
\[
|e-\varphi_j(t)|\le|e-t|+|t-\varphi_j(t)|\le2|e-t|<2r_{j+1}=\frac{r_j}{8}<r_j\Longrightarrow\varphi_j(t)=e\Longrightarrow\varphi_j(S_{j+1})=S_j.
\]

Set $\nu_0(Y_1)\eqdef1$; then $\nu_0(Y_1)=1\le B^2\nu_0(Y_1)$. Apply Lemma~\ref{lem:three-weight-transfer} inductively with
\[
(S,T,r,\varphi,\nu)=(S_j,S_{j+1},r_j,\varphi_j,\nu_j)\quad(0\le j<J),\qquad 4r_j/A=4r_{j+1}.
\]
Its conclusions give positive probability weights $\nu_j$ on $S_j$, $0\le j\le J$, satisfying
\begin{equation}
\label{eq:three-net-weight-properties}
\begin{gathered}
\sum_{e\in S_j}\nu_j(e)=1,\qquad \nu_j(e)>0,\qquad \nu_j(e)\le B^2\nu_j(e')\quad(e,e'\in S_j,\ |e-e'|<4r_j),\\
B^{-1}\nu_j(\varphi_j(t))\le\nu_{j+1}(t)\le\nu_j(\varphi_j(t))\quad(0\le j<J,\ t\in S_{j+1}),
\end{gathered}
\end{equation}
and, for $0\le j<J$, numbers $\pi_j(e,t)\ge0$ satisfying
\[
\begin{gathered}
\sum_{t\in S_{j+1}}\pi_j(e,t)=\nu_j(e),\qquad \sum_{e\in S_j}\pi_j(e,t)=\nu_{j+1}(t),\\
\pi_j(e,t)>0\Longrightarrow|e-t|<Cr_j,\qquad C\eqdef1+4/A.
\end{gathered}
\]
Put $\mu(y)\eqdef\nu_J(y)$ for $y\in X$ and $\nu_j\eqdef\mu$ for $j\ge J$. Since $r_j\le r_J$, for $e,e'\in X$,
\[
\begin{gathered}
|e-e'|<4r_j\Longrightarrow|e-e'|<4r_J\Longrightarrow\nu_j(e)=\nu_J(e)\le B^2\nu_J(e')=B^2\nu_j(e'),\\
B^{-1}\nu_j(\varphi_j(t))\le\nu_{j+1}(t)=\nu_j(\varphi_j(t)).
\end{gathered}
\]

Set
\begin{equation}
\label{eq:three-net-transport-constants}
C_1\eqdef\frac{1+4/A}{1-1/A}=\frac43,\qquad \frac1{1-1/A}=\frac{16}{15},\qquad C+C_1/A=C_1.
\end{equation}
For $j\ge J$, put $\Pi_j(e,y)\eqdef\mu(e)\mathbf1_{\{e=y\}}$. For $j=J-1,\ldots,0$, recursively put
\[
\Pi_j(e,y)\eqdef\sum_{t\in S_{j+1}}\pi_j(e,t)\frac{\Pi_{j+1}(t,y)}{\nu_{j+1}(t)}\qquad(e\in S_j,\ y\in X).
\]
For $j\ge J$,
\[
\begin{gathered}
\sum_{y\in X}\Pi_j(e,y)=\mu(e)=\nu_j(e),\qquad\sum_{e\in X}\Pi_j(e,y)=\mu(y),\\
\Pi_j(e,y)>0\Longrightarrow e=y\Longrightarrow|e-y|=0<C_1r_j.
\end{gathered}
\]
Since every $\nu_{j+1}(t)>0$, the backward induction step is
\[
\begin{aligned}
\sum_{y\in X}\Pi_j(e,y)&=\sum_{t\in S_{j+1}}\frac{\pi_j(e,t)}{\nu_{j+1}(t)}\sum_{y\in X}\Pi_{j+1}(t,y)=\sum_{t\in S_{j+1}}\pi_j(e,t)=\nu_j(e),\\
\sum_{e\in S_j}\Pi_j(e,y)&=\sum_{t\in S_{j+1}}\frac{\Pi_{j+1}(t,y)}{\nu_{j+1}(t)}\sum_{e\in S_j}\pi_j(e,t)=\sum_{t\in S_{j+1}}\Pi_{j+1}(t,y)=\mu(y),\\
\Pi_j(e,y)>0&\Longrightarrow\exists t\in S_{j+1}:\ \pi_j(e,t)\Pi_{j+1}(t,y)>0\\
&\Longrightarrow|e-y|\le|e-t|+|t-y|<Cr_j+C_1r_{j+1}=C_1r_j.
\end{aligned}
\]

Fix $x\in X$ and $r>0$, and write $\mu(E)\eqdef\sum_{y\in E}\mu(y)$ for $E\subseteq X$. If $r\ge1/2$, then $|y-x|\le\operatorname{diam}(X)<1/4<r$ for every $y\in X$, so $\mu(B_X(x,2r))=\mu(B_X(x,r))=1$. If $0<r<1/2$, put $j_0\eqdef\max\{j\in\mathbb N_0:r_j\ge2r\}$ and $j_1\eqdef\min\{j\in\mathbb N_0:r_j\le r\}$. Since $r_0=1>2r>r>0$ and $r_j\downarrow0$, these indices exist with $j_1\ge1$; their extremal properties give
\begin{equation}
\label{eq:three-net-scale-indices}
\begin{gathered}
r_{j_0}\ge2r>r_{j_0+1}=r_{j_0}/A,\qquad r_{j_1}\le r<r_{j_1-1}=Ar_{j_1},\qquad j_0<j_1,\\
2r\le r_{j_0}<2Ar,\qquad r/A<r_{j_1}\le r,\qquad A^{j_1-j_0}=r_{j_0}/r_{j_1}<2A^2.
\end{gathered}
\end{equation}
Choose $g\in S_{j_1+1}$ with $|g-x|<r_{j_1+1}$, set $g_{j_1+1}\eqdef g$, and recursively set $g_j\eqdef\varphi_j(g_{j+1})$ for $j_0\le j\le j_1$. Then
\begin{equation}
\label{eq:three-net-ancestor-distance}
|g_{j_0}-x|\le|g-x|+\sum_{j=j_0}^{j_1}|g_j-g_{j+1}|<\sum_{j=j_0}^{j_1+1}r_j\le\frac{r_{j_0}}{1-1/A}.
\end{equation}
Writing $h\eqdef j_1+1$, we have
\[
\Pi_h(g,y)>0\Longrightarrow|y-x|\le|y-g|+|g-x|<(C_1+1)r_h\le\frac{C_1+1}{A}r=\frac7{48}r<r.
\]
Hence
\begin{equation}
\label{eq:three-net-inner-mass}
\mu(B_X(x,r))=\sum_{e\in S_h}\sum_{y\in B_X(x,r)}\Pi_h(e,y)\ge\sum_{y\in B_X(x,r)}\Pi_h(g,y)=\sum_{y\in X}\Pi_h(g,y)=\nu_h(g)>0.
\end{equation}

Set $E_0\eqdef\{e\in S_{j_0}:|e-x|<2r+C_1r_{j_0}\}$. For $y\in B_X(x,2r)$,
\[
\Pi_{j_0}(e,y)>0\Longrightarrow|e-x|\le|e-y|+|y-x|<C_1r_{j_0}+2r\Longrightarrow e\in E_0.
\]
For $e\in E_0$, \eqref{eq:three-net-scale-indices} and~\eqref{eq:three-net-ancestor-distance} give
\[
|e-g_{j_0}|\le|e-x|+|x-g_{j_0}|<\left(1+C_1+\frac1{1-1/A}\right)r_{j_0}=\frac{17}{5}r_{j_0}<4r_{j_0}.
\]
Since $|e-e'|\ge r_{j_0}$ for distinct $e,e'\in E_0$, the balls $\mathcal B(e,r_{j_0}/2)$ are pairwise disjoint and their union lies in $\mathcal B(g_{j_0},9r_{j_0}/2)$. Thus, with $\omega_d\eqdef|\mathcal B(0,1)|>0$ denoting Lebesgue volume,
\[
|E_0|\omega_d(r_{j_0}/2)^d=\left|\bigcup_{e\in E_0}\mathcal B(e,r_{j_0}/2)\right|\le\omega_d(9r_{j_0}/2)^d\Longrightarrow|E_0|\le9^d\le12^d.
\]
Consequently, \eqref{eq:three-net-weight-properties} gives
\begin{equation}
\label{eq:three-net-outer-mass}
\mu(B_X(x,2r))=\sum_{e\in E_0}\sum_{y\in B_X(x,2r)}\Pi_{j_0}(e,y)\le\sum_{e\in E_0}\nu_{j_0}(e)\le12^dB^2\nu_{j_0}(g_{j_0}).
\end{equation}
The parent--child bounds and~\eqref{eq:three-net-scale-indices} imply
\[
\nu_{j_0}(g_{j_0})=\nu_h(g)\prod_{j=j_0}^{h-1}\frac{\nu_j(g_j)}{\nu_{j+1}(g_{j+1})}\le B^{h-j_0}\nu_h(g),\qquad B^{j_1-j_0}=\bigl(A^{j_1-j_0}\bigr)^{2d}\le2^{2d}B^2.
\]
Using~\eqref{eq:three-net-inner-mass} and~\eqref{eq:three-net-outer-mass}, we obtain
\begin{equation}
\label{eq:three-dimension-doubling-certificate}
\begin{aligned}
\mu(B_X(x,2r))&\le12^dB^{j_1-j_0+3}\nu_h(g)\\
&\le12^dB^5 2^{2d}\mu(B_X(x,r))=\left(\frac34\right)^d2^{46d}\mu(B_X(x,r))\le2^{46d}\mu(B_X(x,r)).
\end{aligned}
\end{equation}
Finally, set $\mu_i\eqdef\mu(Y_i)$ for $i\in[N]$. Then $\mu_i>0$, $\sum_i\mu_i=1$, and, for every $i\in[N]$ and $\rho>0$,
\[
\sum_{\substack{k\in[N]\\|X_k-X_i|<2\rho}}\mu_k=\mu(B_X(Y_i,2\rho/c))\le2^{46d}\mu(B_X(Y_i,\rho/c))=2^{46d}\sum_{\substack{k\in[N]\\|X_k-X_i|<\rho}}\mu_k.
\]
\end{proof}

\paragraph{Taylor estimates.}

\begin{lemma}[Taylor Remainders and Polarization]
\label{lem:three-taylor}
Fix $d\in\mathbb{N}_+$, $s\in\N$, and $M\ge0$. Let $\Omega\subset\R^d$ be open and convex, and let $u\in C^s(\Omega)$ satisfy $\Lip_\Omega(D^su)\le M$. For $a,x\in\Omega$ and $0\le j\le s$,
\begin{equation}
\label{eq:three-taylor-bound}
 \norm{D^ju(x)-D^jJ_a^su(x)}_{\op}
 \le \frac{M}{(s+1-j)!}|x-a|^{s+1-j}.
\end{equation}
If a symmetric $j$-linear form $T$, $j\ge1$, satisfies $|T[\xi^j]|\le B$ for every $|\xi|=1$, then
\begin{equation}
\label{eq:three-polarization-bound}
 \norm{T}_{\op}\le\frac{j^j}{j!}B.
\end{equation}
\end{lemma}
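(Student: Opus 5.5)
The statement has two parts: a Taylor remainder bound and a polarization inequality. I will treat them separately.

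\emph{Taylor part.} Fix $a,x\in\Omega$ and $0\le j\le s$, and put $h\eqdef x-a$. The segment $a+th$, $t\in[0,1]$, stays in $\Omega$ by convexity. Set $g\eqdef D^ju-D^jJ_a^su$. Since $D^jJ_a^su$ is exactly the degree-$(s-j)$ Taylor polynomial of $D^ju$ at $a$, the tensor-valued function $g$ satisfies $D^kg(a)=0$ for $0\le k\le s-j$.

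Moreover $D^{s-j}g(y)=D^su(y)-D^su(a)$, because $D^sJ_a^su\equiv D^su(a)$ is constant. Hence
\[
\norm{D^{s-j}g(a+th)}_{\op}\le Mt|h|
\]
for every $t\in[0,1]$.

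Now apply the integral form of Taylor's formula to $g$ along the segment, to order $s-j-1$, so that the top derivative appears only inside the remainder:
\[
g(x)=\int_0^1\frac{(1-t)^{s-j-1}}{(s-j-1)!}\,D^{s-j}g(a+th)[h^{s-j}]\,\dd t .
\]
Here the unfilled $j$ slots are retained and norms are operator norms. This uses only $C^s$ regularity, so no further differentiability is needed.

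Bounding the integrand gives
\[
\norm{g(x)}_{\op}\le M|h|^{s-j+1}\int_0^1\frac{(1-t)^{s-j-1}t}{(s-j-1)!}\,\dd t .
\]
The Beta integral equals $\tfrac{(s-j-1)!\,1!}{(s-j+1)!}$, so the right-hand side is $M|h|^{s+1-j}/(s+1-j)!$, as claimed.

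The case $j=s$ needs no integral: the bound is $\norm{D^su(x)-D^su(a)}_{\op}\le M|h|$, which is the Lipschitz hypothesis itself.

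\emph{Polarization part.} For symmetric $T$, the polarization identity reads
\[
T[h_1,\dots,h_j]=\frac1{2^jj!}\sum_{\epsilon\in\{\pm1\}^j}\epsilon_1\cdots\epsilon_j\,T\Bigl[\Bigl(\sum_i\epsilon_ih_i\Bigr)^j\Bigr].
\]
By $j$-homogeneity, $|T[\xi^j]|\le B|\xi|^j$ for all $\xi$.

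The crude triangle inequality gives $j^j/j!$ only after one normalizes. The cleanest route is to use instead
\[
T[h_1,\dots,h_j]=\frac1{j!}\,\mathbb E_\epsilon\Bigl[\epsilon_1\cdots\epsilon_j\,T\Bigl[\Bigl(\sum_i\epsilon_ih_i\Bigr)^j\Bigr]\Bigr].
\]
For $|h_i|\le1$ we have $|\sum_i\epsilon_ih_i|\le j$, so each term has absolute value at most $Bj^j$. Hence $|T[h_1,\dots,h_j]|\le Bj^j/j!$.

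The main thing to check is this averaged polarization identity. Expand $(\sum_i\epsilon_ih_i)^j$ multilinearly using symmetry. The average of $\epsilon_1\cdots\epsilon_j\,\epsilon_{i_1}\cdots\epsilon_{i_j}$ is nonzero (and then equal to $1$) only when $(i_1,\dots,i_j)$ is a permutation of $(1,\dots,j)$. There are $j!$ such index tuples, each contributing $T[h_1,\dots,h_j]$ by symmetry, which gives the factor $j!$.
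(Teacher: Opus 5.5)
Your proposal is correct and follows essentially the same route as the paper. For the Taylor part, you use the integral form of the remainder for $D^ju$ with the constant $D^su(a)$ subtracted inside the integrand, and the Beta integral $\int_0^1 t(1-t)^{k-1}\,\dd t=1/(k(k+1))$ yields the factor $1/(s+1-j)!$. For the polarization part, you use the $\pm1$ identity combined with homogeneity, the bound $|\sum_i\epsilon_ih_i|\le j$, and the triangle inequality. Your two displayed polarization identities are in fact the same identity, since $\mathbb E_\epsilon$ is just $2^{-j}\sum_\epsilon$, so no extra normalization step is needed.
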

\begin{proof}
Put $\eta=x-a$ and $k=s-j$. If $k=0$, \eqref{eq:three-taylor-bound} is the defining Lipschitz bound on $D^su$. If $k\ge1$, applying the fundamental theorem of calculus $k$ times to the tensor-valued map $t\mapsto D^ju(a+t\eta)$ gives
\begin{equation}
\label{eq:three-taylor-integral}
 D^ju(a+\eta)-D^jJ_a^su(a+\eta)
 =\frac1{(k-1)!}\int_0^1(1-t)^{k-1}
 \bigl(D^su(a+t\eta)-D^su(a)\bigr)[\eta^k] \,\dd t.
\end{equation}
For completeness, the iterated integral formula follows inductively from $g(1)-g(0)=\int_0^1g'(t)\,\dd t$: integration over $0<t_k<\cdots<t_1<1$ reduces to integration against $(1-t_k)^{k-1}/(k-1)!$. Subtracting the constant value $D^su(a)$ accounts for the last Taylor term. The norm of the integrand in \eqref{eq:three-taylor-integral} is at most $Mt|\eta|^{k+1}$, and $\int_0^1t(1-t)^{k-1}\,\dd t=1/(k(k+1))$. This proves \eqref{eq:three-taylor-bound}.

For \eqref{eq:three-polarization-bound}, multi-linearity gives
\begin{equation}
\label{eq:three-polarization-identity}
 T[v_1,\ldots,v_j]
 =\frac1{2^jj!}\sum_{\varepsilon\in\{-1,1\}^j}
 \left(\prod_{i=1}^j\varepsilon_i\right)
 T\left[\left(\sum_{i=1}^j\varepsilon_i v_i\right)^j\right].
\end{equation}
Indeed, after expansion, summation in $\varepsilon_i$ kills every term in which $v_i$ occurs an even number of times. Since the total degree is $j$, the surviving terms contain every $v_i$ exactly once; there are $j!$ such terms. For $|v_i|\le1$, each vector in \eqref{eq:three-polarization-identity} has norm at most $j$. The triangle inequality now gives \eqref{eq:three-polarization-bound}.
\end{proof}

The measured Taylor polynomials impose a minimum amount of variation on the highest derivative of any compatible extension. Lemma~\ref{lem:finite-whitney-seminorm}, proved below, quantifies this requirement and gives an explicit calibration constant.
We remark that, $W_{\mathbb{D}^N}\le M$ where
    %%
    %%
    %%
    %%% HERE IS THE MOVED REPLACEMENT --- Unless we put backthe claibration from finite data thing
    %%
    %%
    %%
     \begin{equation}
    \label{eq:finite-whitney-seminorm}
     W_{\mathbb{D}^N}
     \eqdef
     \begin{cases}
     \displaystyle
     \max_{\substack{i,j\in[N]\\i\ne j}}
     \max_{\substack{\alpha\in\mathbb N^d\\|\alpha|\le s}}
     \frac{|\partial^\alpha(P_i-P_j)(X_j)|}
     {|X_i-X_j|^{s+1-|\alpha|}},&N\ge2,\\[2mm]
     0,&N=1.
     \end{cases}
    \end{equation}

    \begin{lemma}[Finite-Jet seminorm]
    \label{lem:finite-whitney-seminorm}
    Fix $d,N\in\mathbb N_+$ and $s\in\mathbb N$. Let
    $X_1,\ldots,X_N\in\mathbb{R}^d$ be distinct, and let
    $P_1,\ldots,P_N$ be real polynomials of degree at most $s$.
    For these data $\mathbb{D}^N\eqdef\{(X_i,P_i)\}_{i=1}^N$, define $W_{\mathbb{D}^N}$ as in~\eqref{eq:finite-whitney-seminorm}.
    There is a constant $\kappa_{d,s}\ge1$, depending only on $d,s$,
    such that:
    \begin{enumerate}
    \item[(i)] Every $f\in\dot{C}^{s,1}(\mathbb{R}^d)$ with
    $J_{X_i}^sf=P_i$ for all $i\in[N]$ satisfies
    $W_{\mathbb{D}^N}\le[f]_{s,1}$.
    \item[(ii)] There exists $g\in\dot{C}^{s,1}(\mathbb{R}^d)$ with
    $J_{X_i}^sg=P_i$ for all $i\in[N]$ and
    $[g]_{s,1}\le\kappa_{d,s}W_{\mathbb{D}^N}$.
    \item[(iii)] $W_{\mathbb{D}^N}=0$ if and only if all $P_i$
    are the same polynomial. In that case one may take $g=P_1$.
    \end{enumerate}
    \end{lemma}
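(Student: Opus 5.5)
Parts (i) and (iii) follow from Taylor's theorem. Part (ii) is the substantive one and needs an explicit sharp-Whitney construction.

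\emph{Part (i).} Fix $i\ne j$ and $|\alpha|=k\le s$. Since $P_i=J^s_{X_i}f$ and $P_j=J^s_{X_j}f$, we have $\partial^\alpha P_j(X_j)=\partial^\alpha f(X_j)$. Also $\partial^\alpha P_i(X_j)=\partial^\alpha J^s_{X_i}f(X_j)$. Hence $\partial^\alpha(P_i-P_j)(X_j)$ is the $\alpha$-th derivative of the Taylor remainder of $f$ about $X_i$, evaluated at $X_j$. Lemma~\ref{lem:three-taylor} with $u=f$, $\Omega=\mathbb R^d$ and $M=[f]_{s,1}$ bounds the operator norm of $D^k$ of that remainder by $[f]_{s,1}|X_i-X_j|^{s+1-k}/(s+1-k)!$. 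A single partial derivative is dominated by the operator norm (unit coordinate vectors), and $1/(s+1-k)!\le1$. Dividing by $|X_i-X_j|^{s+1-k}$ and taking the maximum gives $W_{\mathbb D^N}\le[f]_{s,1}$.

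\emph{Part (iii).} If all $P_i$ coincide, every numerator vanishes and $W=0$. Conversely, if $W=0$ then $\partial^\alpha(P_i-P_j)(X_j)=0$ for all $|\alpha|\le s$. A polynomial of degree at most $s$ whose full $s$-jet vanishes at one point is identically zero, so $P_i=P_j$. The case $N=1$ is trivial. In that case $g=P_1$ works, since $D^sP_1$ is constant and so $[P_1]_{s,1}=0$.

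\emph{Part (ii).} This is the sharp finite Whitney extension theorem with constant depending only on $(d,s)$. I would prove it directly by the classical Whitney cube construction, applied with $E=\mathbb X^N$ and jet field $(P_i)$. The first step is to check that $W$ controls the Whitney compatibility conditions on all pairs: $\|D^k(P_i-P_j)(X_j)\|_{\op}\le d^{k}W|X_i-X_j|^{s+1-k}$, obtained from the coordinate bounds by expanding in the standard basis. Re-expanding the polynomial $P_i-P_j$ around any nearby point then gives $\|D^k(P_i-P_j)(y)\|_{\op}\le C_{d,s}W(|X_i-X_j|+|y-X_j|)^{s+1-k}$.

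Next, take a Whitney decomposition of $\mathbb R^d\setminus\mathbb X^N$ with a smooth partition of unity satisfying $|\partial^\beta\phi_Q|\lesssim_{d,s}\operatorname{diam}(Q)^{-|\beta|}$ and bounded overlap. Attach to each cube $Q$ a nearest site $X_{i(Q)}$ and set $g=\sum_Q\phi_QP_{i(Q)}$ off the sites, with $g=P_i$-jets on the sites.

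The standard estimates then apply. Fix $y\notin\mathbb X^N$ with nearest site $X_a$ and $\delta=\operatorname{dist}(y,\mathbb X^N)$. Since $\sum_Q\partial^\beta\phi_Q=0$ for $\beta\neq0$, we can write $D^{s+1}g(y)=\sum_Q\sum_{\beta}c_\beta\,\partial^\beta\phi_Q\,D^{s+1-|\beta|}(P_{i(Q)}-P_a)$. Every site $X_{i(Q)}$ with $Q\ni y$ lies within $O(\delta)$ of $X_a$, so each term is $\lesssim W\delta^{-|\beta|}\delta^{|\beta|}=W$. This gives $|D^{s+1}g|\le CW$ off the sites.

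Near the sites, the same bounds show $|D^k(g-P_a)(y)|\lesssim W|y-X_a|^{s+1-k}$. This yields $g\in C^s$ with jets $P_i$ at $X_i$. Because $\mathbb X^N$ is finite and each segment meets it in finitely many points, the a.e.\ bound on $D^{s+1}g$ integrates along segments to $[g]_{s,1}\le\kappa_{d,s}W$.

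The main obstacle is keeping every constant independent of $N$ and of the geometry of the sites. The Whitney decomposition and the partition-of-unity bounds are purely dimensional, so this is where the care goes. As a shortcut, I would cite the classical Whitney--Glaeser theorem for closed sets. Failing that, I would run the above argument written out for the finite set, noting that finiteness removes all convergence issues.
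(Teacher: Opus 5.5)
Your parts (i) and (iii) coincide with the paper's. For (ii), you and the paper rest on the same classical fact: Whitney extension of the finite jet field, with $W=W_{\mathbb D^N}$ as the compatibility constant. The difference is that the paper does not rerun the Whitney-cube construction. It first normalizes, setting $a=X_1$, $R=\max_i|X_i-a|$, $z_i=(X_i-a)/R$ and $Q_i(z)=(P_i-P_1)(a+Rz)/(WR^{s+1})$. The pair $(1,i)$ in the definition of $W$ then gives the size bounds $|\partial^\alpha Q_i(z_i)|\le1$, and the other pairs give $|\partial^\alpha(Q_i-Q_j)(z_j)|\le|z_i-z_j|^{s+1-|\alpha|}$. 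Next, \cite[Lemma~3.6]{Fefferman2006} supplies $H\in C^{s+1}(\mathbb R^d)$ with the prescribed jets and full $C^{s+1}$-norm at most $B_{d,s}$. Finally, $g=P_1+WR^{s+1}H((\cdot-a)/R)$ satisfies $D^{s+1}g=W\,D^{s+1}H((\cdot-a)/R)$.

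Your homogeneous construction needs no such normalization, since every estimate involves only differences $P_{i(Q)}-P_a$ at the scale $\delta=\operatorname{dist}(y,\mathbb X^N)$. It is self-contained and makes the $(d,s)$-dependence visible. The price is redoing the partition-of-unity estimates and the gluing step, which the paper has anyway as Lemmas~\ref{lem:three-site-gluing} and~\ref{lem:smoothness-finite-sites}; the paper's route takes a few lines.

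One caution about your shortcut. The closed-set Whitney theorem in its usual form (e.g.\ Stein's extension operator on $\mathrm{Lip}(s+1,E)$) is inhomogeneous. It bounds the extension by a multiple of $W+\max_{i,|\alpha|\le s}|\partial^\alpha P_i(X_i)|$, and $W$ does not control the second term. To extract $[g]_{s,1}\le\kappa_{d,s}W$ from it, you need precisely the paper's subtraction of $P_1$ and rescaling to the unit ball, or else a version stated for the homogeneous seminorm.

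The paper also reproves the statement with an explicit constant in Lemma~\ref{lem:finite-whitney-seminorm__bound}. There it replaces the Whitney partition of unity by the single inverse-distance average $\sum_i\mu_i|x-X_i|^{-q}P_i/\sum_i\mu_i|x-X_i|^{-q}$ with doubling weights $\mu$.
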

    \begin{proof}
    For (i), the assertion is immediate from the definition when $N=1$.
    For $i\ne j$ and $|\alpha|\le s$, apply
    Lemma~\ref{lem:three-taylor} to $f$ on $\mathbb{R}^d$, with base
    point $X_i$, evaluation point $X_j$, and derivative order $|\alpha|$.
    Since $D^{|\alpha|}P_j(X_j)=D^{|\alpha|}f(X_j)$ and a coordinate
    component is bounded by the operator norm, this gives
    \[
     |\partial^\alpha(P_i-P_j)(X_j)|
     \le \frac{[f]_{s,1}}{(s+1-|\alpha|)!}
           |X_i-X_j|^{s+1-|\alpha|}
     \le [f]_{s,1}|X_i-X_j|^{s+1-|\alpha|}.
    \]
    Division by the positive denominator and maximization prove (i).
    
    If $W_{\mathbb{D}^N}=0$ and $N\ge2$, every derivative through
    order $s$ of $P_i-P_j$ vanishes at $X_j$. The exact Taylor
    expansion of this polynomial, whose degree is at most $s$, is
    therefore zero. Hence $P_i=P_j$. The assertion for $N=1$ is
    automatic, and the converse follows directly from the definition.
    A polynomial of degree at most $s$ has constant order-$s$
    derivative and thus has seminorm zero. This proves (iii) and
    also proves (ii) when $W_{\mathbb{D}^N}=0$.
    
    Suppose $W\eqdef W_{\mathbb{D}^N}>0$. Then $N\ge2$, and
    \[
     a\eqdef X_1,\qquad
     R\eqdef\max_{i\in[N]}|X_i-a|>0,\qquad
     z_i\eqdef\frac{X_i-a}{R},\qquad
     Q_i(z)\eqdef\frac{P_i(a+Rz)-P_1(a+Rz)}{WR^{s+1}}
    \]
    are well defined. Regard $Q_i$, of degree at most $s$, as an
    $(s+1)$-jet at $z_i$; its derivatives of order $s+1$ are zero.
    For $i\ne1$ and $|\alpha|\le s$, the ordered pair $(1,i)$ in~\eqref{eq:finite-whitney-seminorm} yields
    \[
     |\partial^\alpha Q_i(z_i)|
     =\frac{|\partial^\alpha(P_i-P_1)(X_i)|}
            {WR^{s+1-|\alpha|}}
     \le\left(\frac{|X_i-a|}{R}\right)^{s+1-|\alpha|}
     \le1.
    \]
    For $i=1$, this inequality holds because $Q_1=0$; for
    $|\alpha|=s+1$, it holds because the derivative is zero.
    For $i\ne j$ and $|\alpha|\le s$, one also has
    \[
     |\partial^\alpha(Q_i-Q_j)(z_j)|
     =\frac{|\partial^\alpha(P_i-P_j)(X_j)|}
            {WR^{s+1-|\alpha|}}
     \le |z_i-z_j|^{s+1-|\alpha|}.
    \]
    When $i=j$ the difference is zero; when $|\alpha|=s+1$
    both derivatives vanish. Thus all the hypotheses of
    \cite[Lemma~3.6, p.~327]{Fefferman2006} hold with
    $n=d$, $m=s+1$, $S=\{z_i:i\in[N]\}$, $P^{z_i}=Q_i$,
    and $A=1$. The sites remain distinct under the rescaling.
    That lemma supplies $H\in C^{s+1}(\mathbb{R}^d)$ satisfying
    \[
     J_{z_i}^{s+1}H=Q_i\quad(i\in[N]),\qquad
     \max_{|\alpha|\le s+1}
     \sup_{z\in\mathbb{R}^d}|\partial^\alpha H(z)|
     \le B_{d,s},
    \]
    where $B_{d,s}\ge1$ depends only on $d,s$. Here any fixed
    conversion from the source's $C^{s+1}$ norm to the displayed
    maximum is absorbed in $B_{d,s}$.
    For unit vectors $u_1,\ldots,u_{s+1}$, expansion in coordinates
    and the inequality $\sum_{k=1}^d|(u_\ell)_k|\le\sqrt d$ give
    \[
     |D^{s+1}H(z)[u_1,\ldots,u_{s+1}]|
     \le B_{d,s}\prod_{\ell=1}^{s+1}
                  \sum_{k=1}^d|(u_\ell)_k|
     \le d^{(s+1)/2}B_{d,s}.
    \]
    Set $\kappa_{d,s}\eqdef d^{(s+1)/2}B_{d,s}$ and
    $
     g(x)\eqdef P_1(x)+WR^{s+1}H\!\left(\frac{x-a}{R}\right)
    $.
    For every $i\in[N]$ and $|\alpha|\le s$, the chain rule and
    the jet identity for $H$ give
    \[
     \partial^\alpha g(X_i)
     =\partial^\alpha P_1(X_i)
      +WR^{s+1-|\alpha|}\partial^\alpha Q_i(z_i)
     =\partial^\alpha P_i(X_i).
    \]
    Thus $J_{X_i}^sg=P_i$. Since $D^{s+1}P_1=0$, the same
    chain rule yields
    $\sup_x\|D^{s+1}g(x)\|_{\op}\le\kappa_{d,s}W$.
    For $x,y\in\mathbb{R}^d$, integration along their segment gives
    \[
     D^sg(y)-D^sg(x)
     =\int_0^1D^{s+1}g(x+t(y-x))[y-x,\,\cdot\,]\,\mathrm dt,
    \]
    where the expression is scalar when $s=0$. Taking operator
    norms gives
    \[
     \|D^sg(y)-D^sg(x)\|_{\op}\le\kappa_{d,s}W|y-x|.
    \]
    This proves (ii). 
    % Neither the number of sites nor their diameter or minimum separation enters $\kappa_{d,s}$.
    \end{proof}
We next treat $s=0,1$ separately, since the entropy term vanishes at these orders. The following lemmas identify the resulting classical formulas and establish interpolation together with the unchanged regularity bound $M$.

\begin{lemma}[Order-Zero Values and Lipschitz Bound]
\label{lem:three-zero-order}
Under \eqref{eq:training_data}, if $s=0$, the defined extension satisfies
\begin{equation}
\label{eq:three-zero-envelope}
 F(x)=\min_{i\in[N]}\{f(X_i)+M|x-X_i|\},\qquad x\in\R^d.
\end{equation}
It is continuous, $F(X_i)=P_i(X_i)$, and $\Lip(F)\le M$.
\end{lemma}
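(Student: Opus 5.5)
When $s=0$ the constants in~\eqref{eq:fancy_constants} are $\beta=(s-1)_+=0$, $p=s+1+\beta=1$ and $A_0=1/1!+0=1$. The plan is to substitute these into~\eqref{eq:entropy-variance-extension__primal} and observe that the problem reduces to a linear program over the simplex.

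\textbf{Step 1: the objective at $s=0$.} Since $\mathcal I_0=\{1\}$, the only parameter is $\vartheta=1$. The variance term carries the prefactor $\vartheta s=0$, so it drops out. The entropy term carries the prefactor $\beta M R(x)^{s+1}=0$, so it also drops out, and no well-definedness issue with $\operatorname{KL}$ arises. The factor $R(x)^\beta=R(x)^0=1$ because $R(x)>0$ off the sites.

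\textbf{Step 2: $P_i$ is constant.} A polynomial of degree at most $0$ is constant, and $P_i=J^0_{X_i}f$ gives $P_i(x)=f(X_i)$ for all $x$.

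\textbf{Step 3: the formula off the sites.} For $x\notin\mathbb X^N$ the objective is the linear functional
\[
\lambda\mapsto\sum_i\lambda_i\bigl(f(X_i)+M|x-X_i|\bigr).
\]
A linear functional on $\Delta_N$ attains its minimum at a vertex. Hence $F(x)=\min_i\{f(X_i)+M|x-X_i|\}$, which is~\eqref{eq:three-zero-envelope} off the sites.

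\textbf{Step 4: the formula at the sites.} At $x=X_j$ the definition~\eqref{eq:entropy-variance-extension__primal___SITES} gives $F(X_j)=P_j(X_j)=f(X_j)$, so we must show the min-formula also returns $f(X_j)$. The $j$-th term equals $f(X_j)$, so the minimum is at most $f(X_j)$. For the reverse inequality, Setting~\ref{TheSetting} gives $\operatorname{Lip}(f)=\operatorname{Lip}(D^0f)\le M$. Therefore $f(X_i)+M|X_j-X_i|\ge f(X_j)$ for every $i$, and the minimum equals $f(X_j)$. This is the one step that actually uses the hypothesis: compatibility of the data with an $M$-Lipschitz witness is needed to make the site values match the envelope.

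\textbf{Step 5: continuity and the Lipschitz bound.} Each map $x\mapsto f(X_i)+M|x-X_i|$ is $M$-Lipschitz, and a pointwise minimum of finitely many $M$-Lipschitz functions is $M$-Lipschitz, hence continuous. Since~\eqref{eq:three-zero-envelope} now holds on all of $\mathbb R^d$, this gives $\Lip(F)\le M$.

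The only real obstacle is bookkeeping: checking that every $s$-dependent coefficient vanishes or equals one, and that the helper quantities are evaluated only off the sites.
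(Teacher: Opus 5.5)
Your proof is correct and matches the paper's argument step for step. The factors $s=0$ and $\beta=0$ remove the variance and entropy terms, the objective becomes a linear functional on $\Delta_N$ minimized at a vertex, the $M$-Lipschitz witness makes the envelope agree with $P_j(X_j)$ at the sites, and a minimum of finitely many $M$-Lipschitz functions is $M$-Lipschitz. One detail is left implicit: at $s=0$ the variance is finite off the sites (its denominator is $4M/R(x)$, and $v_i(x)$ involves $|x-X_i|^{-1}$), so multiplying it by $0$ is legitimate.
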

\begin{proof}
For $s=0$ one has $\mathcal I_0=\{1\}$, $\beta=0$, $p=m=1$, $A_s=1$, and $P_i(x)=f(X_i)$. Hence $F=F_1$, the variance coefficient vanishes because it contains the factor $s=0$, and the entropy coefficient vanishes because $\beta=0$. 
% Off the sites, the objective in \eqref{eq:entropy-variance-extension} is consequently a convex combination 
Off the sites $\{X_i\}_{i=1}^N$, the objective defining $F_1$ in~\eqref{eq:entropy-variance-extension__primal} is consequently a convex combination
of the finitely many numbers $f(X_i)+M|x-X_i|$. Every such combination is at least their minimum, and a simplex vertex attaining that minimum realizes equality. At $x=X_n$, \eqref{eq:training_data} gives $f(X_i)+M|X_n-X_i|\ge f(X_n)$ for every $i$, with equality at $i=n$. Hence \eqref{eq:three-zero-envelope} also equals the assigned value at each site.
For arbitrary $x,y\in\R^d$, choose an index $i$ minimizing \eqref{eq:three-zero-envelope} at $y$. Then
\[
 F(x)-F(y)\le M\bigl(|x-X_i|-|y-X_i|\bigr)\le M|x-y|.
\]
Choosing instead an index minimizing at $x$ gives $F(y)-F(x)\le M|x-y|$. Thus $|F(x)-F(y)|\le M|x-y|$, which proves the asserted Lipschitz bound and continuity, including at the sites.
\end{proof}

\begin{lemma}[Quadratic Envelope as Variance]
\label{lem:three-quadratic-envelope}
Fix $d,N\in\mathbb{N}_+$ and $M>0$. Let $X_i,G_i\in\R^d$ and $f_i\in\R$ for $i=1,\ldots,N$, and set $P_i(x)=f_i+G_i\cdot(x-X_i)$. Define
\begin{equation}
\label{eq:three-quadratic-data}
g_i(y)\eqdef P_i(y)+\frac M2|y-X_i|^2+\frac M2|y|^2,\qquad g(y)\eqdef\min_i g_i(y),\qquad v_i(x)\eqdef G_i+M(x-X_i).
\end{equation}
Then, for every $x\in\R^d$,
\begin{equation}
\label{eq:three-variance-envelope-identity}
\conv(g)(x)-\frac M2|x|^2=\min_{\lambda\in\Delta_N}\left\{\sum_i\lambda_i\left(P_i(x)+\frac M2|x-X_i|^2\right)-\frac1{4M}\Var_\lambda(v(x))\right\}.
\end{equation}
\end{lemma}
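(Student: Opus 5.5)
The plan is to compute $\conv(g)$ explicitly, exploiting that each $g_i$ is a strongly convex quadratic with Hessian $2M\,\mathrm{Id}$.

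\textbf{Step 1: representation of the convex envelope.} The first step is the classical representation of the convex envelope of a minimum of finitely many convex functions:
\[
\conv(g)(x)=\inf\Big\{\sum_{i}\lambda_i g_i(y_i):\ \lambda\in\Delta_N,\ y_i\in\R^d,\ \sum_i\lambda_i y_i=x\Big\}.
\]
To justify it, note that the epigraph of $g$ is the union of the convex epigraphs of the $g_i$. Its convex hull therefore consists of convex combinations of points from these epigraphs. Points taken from the same epigraph can be merged into a single point of that epigraph, since the epigraph is convex. This gives at most one point per index $i$. The right-hand side above is then the largest convex minorant of $g$. Since it is finite everywhere, it is a finite convex function on $\R^d$, hence continuous, so it also coincides with the lower-semicontinuous envelope. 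Indices with $\lambda_i=0$ contribute nothing, so they may be ignored throughout.

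\textbf{Step 2: the inner minimization for fixed $\lambda$.} Fix $\lambda$ and write $y_i=x+z_i$ with $\sum_i\lambda_i z_i=0$. Exact quadratic expansion gives
\[
g_i(x+z)=g_i(x)+\nabla g_i(x)\cdot z+M|z|^2,
\qquad
\nabla g_i(x)=G_i+M(x-X_i)+Mx=v_i(x)+Mx.
\]
Minimizing $\sum_i\lambda_i\bigl(\nabla g_i(x)\cdot z_i+M|z_i|^2\bigr)$ under the linear constraint is done via a Lagrange multiplier, or equivalently by completing the square. The minimizer is $z_i=-(\nabla g_i(x)-c)/(2M)$ with $c=\sum_j\lambda_j\nabla g_j(x)$. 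This choice satisfies the constraint, and by strict convexity it is the unique minimizer on the support of $\lambda$. The resulting minimal value is
\[
-\frac1{4M}\sum_i\lambda_i|\nabla g_i(x)-c|^2=-\frac1{4M}\Var_\lambda\bigl(\nabla g(x)\bigr)=-\frac1{4M}\Var_\lambda\bigl(v(x)\bigr).
\]
The last equality holds because the variance is invariant under the common shift by $Mx$.

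\textbf{Step 3: assembling the identity.} Using $\sum_i\lambda_i=1$, one has
\[
\sum_i\lambda_i g_i(x)-\frac M2|x|^2=\sum_i\lambda_i\Bigl(P_i(x)+\frac M2|x-X_i|^2\Bigr).
\]
Combining this with Step 2, the infimum over $\lambda\in\Delta_N$ in Step 1 becomes exactly the right-hand side of~\eqref{eq:three-variance-envelope-identity}. The objective there is continuous in $\lambda$ on the compact simplex, so the infimum is attained, which justifies writing $\min$.

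\textbf{Main obstacle.} The delicate point is Step 1: showing that the finite convex-combination formula genuinely is the (lower-semicontinuous) convex envelope. The other steps are routine algebra. I would handle Step 1 through the epigraph argument above, together with the observation that a finite convex function on $\R^d$ is automatically continuous.
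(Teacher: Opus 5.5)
Your proposal is correct and follows the same route as the paper. Both represent $\conv(g)(x)$ as the infimum of $\sum_i\lambda_i g_i(y_i)$ over $\lambda\in\Delta_N$ and $\sum_i\lambda_i y_i=x$, complete the square for fixed $\lambda$ to get $-\frac1{4M}\Var_\lambda(v(x))$, and minimize over the compact simplex. The only difference is that the paper proves the envelope representation by hand, rather than through the convex hull of the epigraph, using a lower bound $H\ge\frac M2|x|^2-B$, an explicit merging inequality for convexity, a cube argument for continuity, and a Jensen bound against any lsc convex minorant. Note that the finiteness you assert in Step 1 is only established later by your Step 2 computation, so you should point to it there.
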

\begin{proof}
Define
\begin{equation}
\label{eq:three-grouped-envelope}
H(x)\eqdef\inf\left\{\sum_i\lambda_i g_i(y_i):\lambda\in\Delta_N,\ (y_i)_{i=1}^N\in(\R^d)^N,\ \sum_i\lambda_i y_i=x\right\}.
\end{equation}
Put $\ell_i\eqdef G_i-MX_i$, $c_i\eqdef f_i-G_i\cdot X_i+\frac M2|X_i|^2$, and $B\eqdef\max\{0,\max_i(|\ell_i|^2/(2M)-c_i)\}$. Then
\[
g_i(y)=M|y|^2+\ell_i\cdot y+c_i=\frac M2|y|^2+\frac M2\left|y+\frac{\ell_i}{M}\right|^2+c_i-\frac{|\ell_i|^2}{2M}\ge\frac M2|y|^2-B.
\]
For every admissible $(\lambda,(y_i))$, the identity $\sum_i\lambda_i|y_i|^2=|x|^2+\sum_i\lambda_i|y_i-x|^2$ gives
\[
\sum_i\lambda_i g_i(y_i)\ge\frac M2|x|^2+\frac M2\sum_i\lambda_i|y_i-x|^2-B\ge\frac M2|x|^2-B.
\]
Taking $\lambda=e_j$ and $y_i=x$ for every $i$ also gives $H(x)\le g_j(x)$; hence $\frac M2|x|^2-B\le H(x)\le g(x)<\infty$.
Fix $x,x'\in\R^d$, $t\in(0,1)$, and $\varepsilon>0$. Choose admissible $(\lambda,(y_i))$ and $(\lambda',(y_i'))$ at $x$ and $x'$, respectively, with $\sum_i\lambda_i g_i(y_i)\le H(x)+\varepsilon$ and $\sum_i\lambda_i'g_i(y_i')\le H(x')+\varepsilon$. Set
\[
\widehat\lambda_i\eqdef t\lambda_i+(1-t)\lambda_i',\qquad
\widehat y_i\eqdef
\begin{cases}
\displaystyle\frac{t\lambda_i y_i+(1-t)\lambda_i'y_i'}{\widehat\lambda_i},&\widehat\lambda_i>0,\\
0,&\widehat\lambda_i=0.
\end{cases}
\]
Then $\widehat\lambda_i\ge0$, $\sum_i\widehat\lambda_i=t+(1-t)=1$, and
\[
\sum_i\widehat\lambda_i\widehat y_i=t\sum_i\lambda_i y_i+(1-t)\sum_i\lambda_i'y_i'=tx+(1-t)x'.
\]
For every $i\in[N]$,
\[
t\lambda_i g_i(y_i)+(1-t)\lambda_i'g_i(y_i')-\widehat\lambda_i g_i(\widehat y_i)
=M\bigl(t\lambda_i|y_i-\widehat y_i|^2+(1-t)\lambda_i'|y_i'-\widehat y_i|^2\bigr)\ge0.
\]
Consequently,
\[
\begin{aligned}
H(tx+(1-t)x')&\le\sum_i\widehat\lambda_i g_i(\widehat y_i)\\
&\le t\sum_i\lambda_i g_i(y_i)+(1-t)\sum_i\lambda_i'g_i(y_i')\\
&\le tH(x)+(1-t)H(x')+\varepsilon.
\end{aligned}
\]
Letting $\varepsilon\downarrow0$ proves convexity; the cases $t=0,1$ are equalities.

Fix $x\in\R^d$ and $r>0$. Set $V\eqdef x+r\{-1,1\}^d$ and $A\eqdef\max_{v\in V}H(v)$. Since $x+[-r,r]^d$ is the convex hull of $V$, convexity gives $H\le A$ on this cube. For $0<\|h\|_\infty<r$, put $t\eqdef\|h\|_\infty/r$ and $u_\pm\eqdef x\pm h/t$. Then $u_\pm\in x+[-r,r]^d$ and
$
x+h=(1-t)x+tu_+$ implies that $H(x+h)\le(1-t)H(x)+tA$; while 
$x=\frac{x+h}{1+t}+\frac{t}{1+t}u_-$ implies that $H(x)\le\frac{H(x+h)+tA}{1+t}$.  
Thus $|H(x+h)-H(x)|\le r^{-1}(A-H(x))\|h\|_\infty\to0$ as $h\to0$, so $H$ is continuous. If $u:\R^d\to\R$ is any lower-semicontinuous convex minorant of $g$, every admissible representation gives
\[
u(x)\le\sum_i\lambda_i u(y_i)\le\sum_i\lambda_i g(y_i)\le\sum_i\lambda_i g_i(y_i).
\]
Taking the infimum yields $u\le H$. Since $H$ is itself a continuous convex minorant of $g$, $H=\conv(g)$.

Fix $x\in\R^d$ and $\lambda\in\Delta_N$. Write $y_i=x+h_i$ and $\bar v\eqdef\sum_i\lambda_i v_i(x)$, so that $\sum_i\lambda_i y_i=x$ is equivalent to $\sum_i\lambda_i h_i=0$. Under this constraint,
\begin{equation}
\label{eq:three-quadratic-expansion}
\begin{aligned}
\sum_i\lambda_i g_i(x+h_i)
&=\sum_i\lambda_i g_i(x)+\sum_i\lambda_i\bigl((v_i(x)+Mx)\cdot h_i+M|h_i|^2\bigr)\\
&=\sum_i\lambda_i\left(P_i(x)+\frac M2|x-X_i|^2\right)+\frac M2|x|^2+\sum_i\lambda_i\bigl(v_i(x)\cdot h_i+M|h_i|^2\bigr).
\end{aligned}
\end{equation}
The identities $\sum_i\lambda_i(v_i(x)-\bar v)=0$, $\sum_i\lambda_i\bar v\cdot h_i=0$, and $\Var_\lambda(v(x))=\sum_i\lambda_i|v_i(x)-\bar v|^2$ give
\begin{equation}
\label{eq:three-variance-completion}
\begin{aligned}
\sum_i\lambda_i\bigl(v_i(x)\cdot h_i+M|h_i|^2\bigr)
&=M\sum_i\lambda_i\left|h_i+\frac{v_i(x)-\bar v}{2M}\right|^2-\frac1{4M}\Var_\lambda(v(x)).
\end{aligned}
\end{equation}
For $h_i^*\eqdef-(v_i(x)-\bar v)/(2M)$, one has $\sum_i\lambda_i h_i^*=-(\sum_i\lambda_i v_i(x)-\bar v)/(2M)=0$, and every square in \eqref{eq:three-variance-completion} vanishes. Therefore,
\[
\inf_{\substack{(h_i)\in(\R^d)^N\\ \sum_i\lambda_i h_i=0}}\sum_i\lambda_i g_i(x+h_i)
=\sum_i\lambda_i\left(P_i(x)+\frac M2|x-X_i|^2\right)+\frac M2|x|^2-\frac1{4M}\Var_\lambda(v(x)).
\]
The right-hand side is polynomial in $\lambda$, and $\Delta_N$ is nonempty and compact. Taking its minimum over $\Delta_N$ in \eqref{eq:three-grouped-envelope}, and using $H=\conv(g)$, proves \eqref{eq:three-variance-envelope-identity}.
\end{proof}

\begin{lemma}[First-Order Jet and Gradient Control]
\label{lem:three-first-order}
Under \eqref{eq:training_data}, if $s=1$, then $F\in C^1(\R^d)$,
\begin{equation}
\label{eq:three-first-order-conclusions}
F(X_i)=P_i(X_i),\qquad \nabla F(X_i)=\nabla P_i(X_i),\qquad |\nabla F(x)-\nabla F(y)|\le M|x-y|
\end{equation}
for $i\in[N]$ and $x,y\in\R^d$.
\end{lemma}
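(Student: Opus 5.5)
For $s=1$ we have $\beta=0$, $p=2$, $A_1=\tfrac12$, $\vartheta=1$, and $v_i(x)=\nabla P_i(x)+M(x-X_i)$. The entropy term and the factor $R(x)^\beta$ both disappear. The objective in~\eqref{eq:entropy-variance-extension__primal} off the sites is therefore
\[
\sum_i\lambda_i\Bigl(P_i(x)+\tfrac M2|x-X_i|^2\Bigr)-\tfrac1{4M}\Var_\lambda(v(x)).
\]
This is exactly the right-hand side of~\eqref{eq:three-variance-envelope-identity}. By Lemma~\ref{lem:three-quadratic-envelope}, $F(x)=\conv(g)(x)-\frac M2|x|^2$ for $x\notin\mathbb X^N$, with $g$ as in~\eqref{eq:three-quadratic-data} and $G_i=\nabla f(X_i)$. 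The plan is to prove that the same identity holds at the sites, and then to deduce every regularity claim from the convex envelope.

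\textbf{Step 1: site values.} The formula~\eqref{eq:three-variance-envelope-identity} holds at every $x\in\R^d$. It therefore suffices to show $\conv(g)(X_n)-\frac M2|X_n|^2=P_n(X_n)=f(X_n)$. The key inequality is the $C^{1,1}$ Taylor bound from Lemma~\ref{lem:three-taylor} with $j=0$ and $s=1$:
\[
f(y)\le P_i(y)+\tfrac M2|y-X_i|^2\quad\text{for all }i,y.
\]
This gives $g\ge f+\frac M2|\cdot|^2$. The function $\phi\eqdef f+\frac M2|\cdot|^2$ is not convex in general, so I use the lower bound $f(y)\ge P_n(y)-\frac M2|y-X_n|^2$ instead. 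Then
\[
g(y)\ge P_n(y)-\tfrac M2|y-X_n|^2+\tfrac M2|y|^2,
\]
and the right-hand side is affine in $y$, because the quadratic terms cancel to $M\langle y,X_n\rangle-\frac M2|X_n|^2$. An affine minorant of $g$ lies below $\conv(g)$. So $\conv(g)(X_n)\ge P_n(X_n)+\frac M2|X_n|^2$. The reverse inequality is immediate from $\conv(g)\le g\le g_n$.

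\textbf{Step 2: differentiability and gradient.} The same affine function $\ell_n(y)=P_n(y)-\frac M2|y-X_n|^2+\frac M2|y|^2$ is a minorant of $\conv(g)$ that touches it at $X_n$. Also $\conv(g)\le g_n$, and $g_n$ is smooth and touches at $X_n$. So $\conv(g)$ is squeezed between two $C^1$ functions that agree to first order at $X_n$. This makes $\conv(g)$ differentiable at $X_n$ with gradient $\nabla P_n(X_n)+MX_n$. Hence $\nabla F(X_n)=\nabla P_n(X_n)$.

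\textbf{Step 3 (the main obstacle): global $C^{1,1}$ bound.} I need $|\nabla F(x)-\nabla F(y)|\le M|x-y|$ everywhere. The function $u\eqdef\conv(g)$ is convex, and $u-\frac M2|\cdot|^2=F$. The claim is equivalent to saying that both $u-\frac M2|\cdot|^2+\frac M2|\cdot|^2=u$ is convex and $\frac{3M}2|\cdot|^2-u$ is convex, i.e.\ $0\le D^2u\le 2M$ in the semiconcavity sense. For semiconcavity I will use~\eqref{eq:three-grouped-envelope}. Given an almost-optimal representation $x=\sum_i\lambda_iy_i$, I translate all $y_i$ by $\pm h$. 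Each $g_i$ is a quadratic with Hessian $2M\,\mathrm{Id}$, so
\[
u(x+h)+u(x-h)-2u(x)\le\sum_i\lambda_i\bigl[g_i(y_i+h)+g_i(y_i-h)-2g_i(y_i)\bigr]+\varepsilon=2M|h|^2+\varepsilon.
\]
A convex function with this second-difference bound is $C^{1,1}$ with $\Lip(\nabla u)\le 2M$. Together with convexity, this places the eigenvalues of $D^2F$ in $[-M,M]$ almost everywhere, which gives $\Lip(\nabla F)\le M$. The standard fact that a convex, $2M$-semiconcave function is $C^1$ with $2M$-Lipschitz gradient (equivalently, $\nabla F$ is $M$-Lipschitz) is the step that needs care. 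I would prove it through the two one-sided quadratic bounds on $F$ at each point, followed by the usual symmetrization argument. In particular this yields $F\in C^1(\R^d)$, which completes~\eqref{eq:three-first-order-conclusions}.
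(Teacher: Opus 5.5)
Your proof is correct, but it gets the global regularity from a different place than the paper does.

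\textbf{The paper's route.} The paper uses the envelope only at the sites. For the global estimate it works instead from the variance formula. It writes $F(x)=\frac M2|x|^2+\psi(x)$ with $\psi(x)=\min_\lambda\Phi_x(\lambda)$, where $\Phi_x$ is affine in $x$, and shows that the optimal mean $b_x$ is unique. First-order optimality then gives the co-coercivity inequality \eqref{eq:three-first-order-monotonicity}. From this the paper reads off $\nabla F(x)=Mx+b_x$ and obtains the sharp bound by expanding a square. At the sites it computes the minimum $C_{ij}$ of $g_j-\ell_i$ and shows $C_{ij}\ge0$ via \eqref{eq:three-sharp-compatibility}. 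It then gets the gradient from the $C^1$ function $\conv(g)-\ell_i\ge0$, which vanishes at $X_i$.

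\textbf{Your route.} Your site argument is leaner. Chaining the two Taylor bounds pointwise, $g_j\ge f+\frac M2|\cdot|^2\ge\ell_n$, gives $\ell_n\le\conv(g)$ without computing any $C_{ij}$. The squeeze $\ell_n\le\conv(g)\le g_n$ then gives differentiability and the gradient at $X_n$ without first knowing that $F\in C^1$. For the global step you use the envelope representation \eqref{eq:three-grouped-envelope} together with a standard convex-analysis fact. The paper's computation is self-contained and gives an explicit gradient formula in terms of the optimizer. Yours is more conceptual and reuses Lemma~\ref{lem:three-quadratic-envelope} more fully.

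\textbf{Two points to tighten.} First, you wrote $\frac{3M}2|\cdot|^2-u$. The correct function is $\frac M2|\cdot|^2-F=M|\cdot|^2-u$; with $\frac{3M}2$ you would only get $\Lip(\nabla F)\le 2M$. Your second-difference bound $2M|h|^2$ is exactly convexity of $M|\cdot|^2-u$ (midpoint convexity plus continuity), so this is only a slip.

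Second, the last step must use both one-sided bounds, since $\Lip(\nabla u)\le 2M$ alone gives only $3M$. Here is a clean closure that avoids Rademacher and a.e.\ Hessians.
\begin{enumerate}
\item Take subgradients of the finite convex functions $u$ and $M|\cdot|^2-u$ at $x$. They give an affine minorant and a quadratic majorant of $u$ touching at $x$, necessarily with the same slope.
\item Hence $u$ is differentiable at $x$, and $|F(y)-F(x)-\langle\nabla F(x),y-x\rangle|\le\frac M2|y-x|^2$ for all $x,y$.
\item Evaluate the upper bound based at $x$ and the lower bound based at $y$ at the point $z=\frac{x+y}2+\frac{\nabla F(y)-\nabla F(x)}{2M}$. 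This reproduces \eqref{eq:three-sharp-compatibility} with $F$ in place of $f$.
\item Add it to its $x\leftrightarrow y$ counterpart. This gives $|\nabla F(x)-\nabla F(y)|\le M|x-y|$, and in particular $F\in C^1(\R^d)$.
\end{enumerate}
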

\begin{proof}
For $s=1$, one has $\mathcal I_1=\{1\}$, $F=F_1$, $p=2$, $\beta=0$, and $A_s=1/2$. Put $f_i\eqdef f(X_i)$ and $G_i\eqdef\nabla f(X_i)$; then \eqref{eq:training_data} gives $P_i(x)=f_i+G_i\cdot(x-X_i)$. Let $g_i,g,v_i$ be as in Lemma~\ref{lem:three-quadratic-envelope}, and define $\widetilde F(x)\eqdef\conv(g)(x)-M|x|^2/2$ for $x\in\R^d$. By \eqref{eq:entropy-variance-extension__primal} and \eqref{eq:three-variance-envelope-identity},
\[
F(x)=\min_{\lambda\in\Delta_N}\left\{\sum_i\lambda_i\left(P_i(x)+\frac M2|x-X_i|^2\right)-\frac1{4M}\Var_\lambda(v(x))\right\}=\widetilde F(x),\qquad x\notin\mathbb X^N.
\]

Fix $a,b\in\mathbb X^N$, and put $G_a\eqdef\nabla f(a)$, $G_b\eqdef\nabla f(b)$, $\eta\eqdef b-a$, $\Delta\eqdef G_b-G_a$, and $z\eqdef(a+b)/2+\Delta/(2M)$. Applying Lemma~\ref{lem:three-taylor} with $\Omega=\R^d$, $u=f$, $s=1$, and $j=0$ gives
\[
f(b)+G_b\cdot(z-b)-\frac M2|z-b|^2\le f(z)\le f(a)+G_a\cdot(z-a)+\frac M2|z-a|^2.
\]
Since $z-a=\eta/2+\Delta/(2M)$ and $z-b=-\eta/2+\Delta/(2M)$,
\begin{equation}
\label{eq:three-sharp-compatibility}
\begin{aligned}
f(b)-f(a)
&\le G_a\cdot(z-a)-G_b\cdot(z-b)+\frac M2\bigl(|z-a|^2+|z-b|^2\bigr)\\
&=\frac12(G_a+G_b)\cdot\eta-\frac{|\Delta|^2}{2M}+\frac M4|\eta|^2+\frac{|\Delta|^2}{4M}\\
&=\frac12(G_a+G_b)\cdot(b-a)+\frac M4|a-b|^2-\frac1{4M}|G_a-G_b|^2.
\end{aligned}
\end{equation}

Put $b_i\eqdef G_i-MX_i$, $d_i\eqdef f_i-G_i\cdot X_i+\frac M2|X_i|^2-\frac{|b_i|^2}{4M}$, and $\bar b(\lambda)\eqdef\sum_i\lambda_i b_i$. For $x\in\R^d$ and $\lambda\in\Delta_N$,
\[
\begin{aligned}
P_i(x)+\frac M2|x-X_i|^2&=\frac M2|x|^2+x\cdot b_i+d_i+\frac{|b_i|^2}{4M},\\
\Var_\lambda(v(x))&=\sum_i\lambda_i|b_i+Mx|^2-|\bar b(\lambda)+Mx|^2=\sum_i\lambda_i|b_i|^2-|\bar b(\lambda)|^2.
\end{aligned}
\]
Substitution in \eqref{eq:three-variance-envelope-identity} gives
\begin{equation}
\label{eq:three-first-order-potential}
\begin{aligned}
\widetilde F(x)&=\frac M2|x|^2+\psi(x),\qquad \psi(x)\eqdef\min_{\lambda\in\Delta_N}\Phi_x(\lambda),\\
\Phi_x(\lambda)&\eqdef\sum_i\lambda_i d_i+x\cdot\bar b(\lambda)+\frac{|\bar b(\lambda)|^2}{4M}.
\end{aligned}
\end{equation}
Since $\Phi_x$ is continuous and $\Delta_N$ is nonempty and compact, $\Lambda_x\eqdef\operatorname*{argmin}_{\lambda\in\Delta_N}\Phi_x(\lambda)\ne\varnothing$. For $\lambda,\lambda'\in\Lambda_x$,
\[
\psi(x)\le\Phi_x\left(\frac{\lambda+\lambda'}2\right)=\frac{\Phi_x(\lambda)+\Phi_x(\lambda')}2-\frac{|\bar b(\lambda)-\bar b(\lambda')|^2}{16M}=\psi(x)-\frac{|\bar b(\lambda)-\bar b(\lambda')|^2}{16M}.
\]
Thus $\bar b(\lambda)=\bar b(\lambda')$, and $b_x\eqdef\bar b(\lambda)$, $\lambda\in\Lambda_x$, is well defined.

Fix $x,y\in\R^d$ and choose $\lambda_x\in\Lambda_x$, $\lambda_y\in\Lambda_y$. For $t\in(0,1]$, $(1-t)\lambda_x+t\lambda_y\in\Delta_N$ and
\[
\begin{aligned}
0&\le\frac{\Phi_x((1-t)\lambda_x+t\lambda_y)-\Phi_x(\lambda_x)}{t}\\
&=\sum_i(\lambda_{y,i}-\lambda_{x,i})d_i+\left(x+\frac{b_x}{2M}\right)\cdot(b_y-b_x)+\frac{t}{4M}|b_y-b_x|^2.
\end{aligned}
\]
Letting $t\downarrow0$ and interchanging $x,y$ gives
\[
\begin{aligned}
0&\le\sum_i(\lambda_{y,i}-\lambda_{x,i})d_i+\left(x+\frac{b_x}{2M}\right)\cdot(b_y-b_x),\\
0&\le\sum_i(\lambda_{x,i}-\lambda_{y,i})d_i+\left(y+\frac{b_y}{2M}\right)\cdot(b_x-b_y).
\end{aligned}
\]
Adding yields
\begin{equation}
\label{eq:three-first-order-monotonicity}
(x-y)\cdot(b_x-b_y)\le-\frac{|b_x-b_y|^2}{2M}.
\end{equation}
Consequently,
\[
|b_x-b_y|^2\le-2M(x-y)\cdot(b_x-b_y)\le2M|x-y|\,|b_x-b_y|,
\qquad |b_x-b_y|\le2M|x-y|.
\]
Moreover,
\[
\begin{aligned}
\psi(y)&\le\Phi_y(\lambda_x)=\psi(x)+b_x\cdot(y-x),\\
\psi(x)&\le\Phi_x(\lambda_y)=\psi(y)+b_y\cdot(x-y).
\end{aligned}
\]
Hence, for $h\in\R^d$,
\[
-2M|h|^2\le(b_{x+h}-b_x)\cdot h\le\psi(x+h)-\psi(x)-b_x\cdot h\le0.
\]
It follows that $\lim_{h\to0,\ h\ne0}|\psi(x+h)-\psi(x)-b_x\cdot h|/|h|=0$, so $\nabla\psi(x)=b_x$. The Lipschitz estimate for $x\mapsto b_x$ gives $\psi,\widetilde F\in C^1(\R^d)$ and $\nabla\widetilde F(x)=Mx+b_x$. By \eqref{eq:three-first-order-monotonicity},
\begin{equation}
\label{eq:three-first-order-sharp-gradient}
\begin{aligned}
|\nabla\widetilde F(x)-\nabla\widetilde F(y)|^2
&=M^2|x-y|^2+2M(x-y)\cdot(b_x-b_y)+|b_x-b_y|^2\\
&\le M^2|x-y|^2.
\end{aligned}
\end{equation}

Fix $i\in[N]$ and define $\ell_i(z)\eqdef f_i+\frac M2|X_i|^2+(G_i+MX_i)\cdot(z-X_i)$. For $j\in[N]$, put $z_{ij}\eqdef(X_i+X_j)/2+(G_i-G_j)/(2M)$ and $C_{ij}\eqdef g_j(z_{ij})-\ell_i(z_{ij})$. Completing the square gives, for $z\in\R^d$,
\begin{equation}
\label{eq:three-first-order-contact}
\begin{aligned}
g_j(z)-\ell_i(z)&=M|z-z_{ij}|^2+C_{ij},\\
\min_{z\in\R^d}(g_j(z)-\ell_i(z))
&=C_{ij}=f_j-f_i+\frac12(G_i+G_j)\cdot(X_i-X_j)\\
&\quad+\frac M4|X_i-X_j|^2-\frac1{4M}|G_i-G_j|^2\ge0,
\end{aligned}
\end{equation}
where the inequality is \eqref{eq:three-sharp-compatibility} with $(a,b)=(X_j,X_i)$. Therefore,
\[
\ell_i\le\conv(g)\le g_i,\qquad
\ell_i(X_i)=g_i(X_i)=f_i+\frac M2|X_i|^2,
\qquad \widetilde F(X_i)=f_i.
\]
Set $\varphi_i\eqdef\conv(g)-\ell_i$. Since $\conv(g)=\widetilde F+M|\cdot|^2/2$, one has $\varphi_i\in C^1(\R^d)$, $\varphi_i\ge0$, and $\varphi_i(X_i)=0$. For every $e\in\R^d$ and both signs,
\[
0\le\lim_{t\downarrow0}\frac{\varphi_i(X_i\pm te)-\varphi_i(X_i)}{t}
=\pm\nabla\varphi_i(X_i)\cdot e.
\]
Thus $\nabla\varphi_i(X_i)=0$ and $\nabla\widetilde F(X_i)=\nabla\ell_i(X_i)-MX_i=G_i$. The site assignment \eqref{eq:entropy-variance-extension__primal___SITES} and the off-site identity give
\begin{equation}
\label{eq:three-first-order-envelope-global}
F(x)=\widetilde F(x)=\conv(g)(x)-\frac M2|x|^2,\qquad x\in\R^d.
\end{equation}
Consequently, $F\in C^1(\R^d)$, $F(X_i)=f_i=P_i(X_i)$, $\nabla F(X_i)=G_i=\nabla P_i(X_i)$, and \eqref{eq:three-first-order-sharp-gradient} yields $|\nabla F(x)-\nabla F(y)|\le M|x-y|$ for all $x,y\in\R^d$.
\end{proof}

\paragraph{Higher-order geometry and entropy.}

For $s\ge2$, the next two lemmas supply the geometric and optimization controls needed for differentiation. The first compares $R(x)$ with the nearest-site distance; the second gives a unique optimizer that varies smoothly away from the sites, together with a bounded inverse for the derivative of its optimality equation.

\begin{lemma}[Distance Scale from Doubling]
\label{lem:three-scale}
Under \eqref{eq:training_data} and~\eqref{eq:doubling}, suppose $s\ge2$ and put
\begin{equation}
\label{eq:three-L-definition}
 L_{s,D} \eqdef \left(\frac43\,2^qD^2\right)^{1/(2s)}.
\end{equation}
Then, for $x\notin \mathbb{X}^N$,
\begin{equation}
\label{eq:three-R-comparison}
 \rho_{\mathbb{X}^N}(x)\le R(x)\le L_{s,D}\rho_{\mathbb{X}^N}(x),
 \qquad
 L_{s,D}^{s+1}\le8\,2^{s+1}D^3.
\end{equation}
\end{lemma}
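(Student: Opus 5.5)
The plan is to rewrite $R$ as a ratio of inverse-power sums and compare both sums at the scale $\rho\eqdef\rho_{\mathbb X^N}(x)$. By \eqref{eq:three-aux-parameters}, off the sites we have $R(x)^p=S_{q-p}(x)/S_q(x)$ with $p=2s$ and $q-p=\lceil\log_2D\rceil+2$. The lower bound $\rho\le R(x)$ is immediate. By definition, $R(x)^p=\sum_i\theta_i(x)r_i(x)^p$ is a $\theta(x)$-weighted average with $\theta(x)\in\Delta_N$, and every $r_i(x)^p\ge\rho^p$.

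For the upper bound, fix $x\notin\mathbb X^N$ and pick a nearest site $a\in\mathbb X^N$, so that $|x-a|=\rho>0$.

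\emph{Denominator.} Every site $X_i\in B_{\mathbb X^N}(a,\rho)$ satisfies $r_i(x)\le|X_i-a|+\rho<2\rho$. Keeping only these terms in $S_q$ gives
\[
S_q(x)\ge(2\rho)^{-q}\,\mu\bigl(B_{\mathbb X^N}(a,\rho)\bigr).
\]

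\emph{Numerator.} Since $q-p>0$, the map $t\mapsto t^{p-q}$ is decreasing. Partition the sites into dyadic shells $\{i:2^k\rho\le r_i(x)<2^{k+1}\rho\}$ for $k\in\mathbb N_0$; this covers all sites because $r_i(x)\ge\rho$. A site in shell $k$ satisfies $|X_i-a|<2^{k+1}\rho+\rho\le2^{k+2}\rho$. Iterating the doubling condition \eqref{eq:doubling} $k+2$ times at the site $a$ gives
\[
\mu\bigl(B_{\mathbb X^N}(a,2^{k+2}\rho)\bigr)\le D^{k+2}\mu\bigl(B_{\mathbb X^N}(a,\rho)\bigr).
\]
Hence shell $k$ contributes at most $D^{k+2}\mu(B_{\mathbb X^N}(a,\rho))(2^k\rho)^{p-q}$ to $S_{q-p}(x)$. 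The choice of $q$ is exactly what makes this geometric series summable: $2^{q-p}\ge4D$, so $D\,2^{p-q}\le1/4$. Summing over $k$ therefore gives
\[
S_{q-p}(x)\le\tfrac43D^2\rho^{p-q}\mu\bigl(B_{\mathbb X^N}(a,\rho)\bigr).
\]

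\emph{Ratio.} The positive factor $\mu(B_{\mathbb X^N}(a,\rho))\ge\mu_a>0$ cancels. This yields $R(x)^{2s}\le\frac43\,2^qD^2\rho^{2s}$, which is $R(x)\le L_{s,D}\rho$ with $L_{s,D}$ as in \eqref{eq:three-L-definition}.

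\emph{Numerical bound on $L_{s,D}^{s+1}$.} Since $2^{\lceil\log_2D\rceil}\le2D$, we get $2^q\le2^{2s+3}D$, so the base of $L_{s,D}$ satisfies $1\le\frac43\,2^qD^2\le\frac{32}3\,4^sD^3$. Raise this to the power $(s+1)/(2s)$, which lies in $[1/2,3/4]$ for $s\ge2$. Each factor is then bounded separately: $4^{s(s+1)/(2s)}=2^{s+1}$, $(32/3)^{3/4}<8$, and $D^{3(s+1)/(2s)}\le D^3$ because $D\ge1$. This gives $L_{s,D}^{s+1}\le8\,2^{s+1}D^3$.

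The only step requiring care is the shell estimate: $a$ must be centred at a site so that \eqref{eq:doubling} applies, and the radius inflation $2^{k+1}\rho+\rho\le2^{k+2}\rho$ must be handled correctly. That inflation produces the extra $D^2$ and is why $q$ carries the $+2$ beyond $\lceil\log_2D\rceil$.
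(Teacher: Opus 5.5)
Your proof is correct and follows the paper's argument essentially step for step: the same nearest-site lower bound $S_q(x)\ge(2\rho)^{-q}\mu_0$, the same dyadic-shell decomposition with $k+2$ doubling iterations giving $S_{q-2s}(x)\le\frac43D^2\mu_0\rho^{2s-q}$, and the same numerical bound via $2^q\le2^{2s+3}D$ and $(s+1)/(2s)\le\frac34$. One minor quibble concerns only your closing commentary, not the proof. The $+2$ in $q$ is what forces the geometric ratio $D2^{p-q}\le\frac14$ and hence the factor $\frac43$. The $D^2$ prefactor comes from the radius inflation in the shell inclusion, independently of that choice of $q$.
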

\begin{proof}
Fix $x\notin \mathbb{X}^N$, set $\rho=\rho_{\mathbb{X}^N}(x)$, and choose a site $a$ at distance $\rho$. Write $\mu_0=\mu(B_{\mathbb{X}^N}(a,\rho))>0$. Every point of this ball is at distance less than $2\rho$ from $x$, whence $S_q(x)\ge(2\rho)^{-q}\mu_0$. For $j\ge0$, the sites with $2^j\rho\le r_i(x)<2^{j+1}\rho$ belong to $B_{\mathbb{X}^N}(a,2^{j+2}\rho)$ and have total mass at most $D^{j+2}\mu_0$. Since $q>2s$,
\begin{equation}
\label{eq:three-annulus}
 S_{q-2s}(x)
 \le D^2\mu_0\rho^{2s-q}
 \sum_{j=0}^\infty(D2^{2s-q})^j
 \le\frac43D^2\mu_0\rho^{2s-q}.
\end{equation}
The last step uses $D2^{2s-q}\le1/4$, which follows from \eqref{eq:three-aux-parameters}. Divide \eqref{eq:three-annulus} by the lower bound for $S_q$ and use $R^{2s}=S_{q-2s}/S_q$. The lower bound for $R$ follows from $r_i\ge\rho$ and $\sum_i\theta_i=1$.
Finally, $2^q\le2^{2s+3}D$ and $(s+1)/(2s)\le3/4$. Consequently,
\begin{equation}
\label{eq:three-L-simplification}
 L_{s,D}^{s+1}
 \le2^{s+1}(32/3)^{(s+1)/(2s)}D^{3(s+1)/(2s)}
 \le8\,2^{s+1}D^3,
\end{equation}
since $32/3<16$ and $D\ge1$.
\end{proof}

\begin{lemma}[Entropy Duality and Smoothness]
\label{lem:three-entropy}
Fix $N,d\in\mathbb{N}_+$. Let $w_i\in\R$, $b_i\in\R^d$, $\theta_i>0$ with $\sum_i\theta_i=1$, and $c>0$. The function
\begin{equation}
\label{eq:three-abstract-primal}
 \mathcal L(\lambda) \eqdef \sum_i\lambda_i w_i+c\left|\sum_i\lambda_i b_i\right|^2
 +\KL(\lambda\mid\theta)
\end{equation}
has a unique minimizer $\lambda^*\in\Delta_N$, and every coordinate of $\lambda^*$ is positive. If $z_*=\sum_i\lambda_i^*b_i$, then
\begin{equation}
\label{eq:three-gibbs-dual}
 \begin{aligned}
 \lambda_i^*&=\frac{\theta_i \mathrm{e}^{-w_i-2c z_*\cdot b_i}}
 {\sum_j\theta_j \mathrm{e}^{-w_j-2c z_*\cdot b_j}},\\
 \min_{\lambda\in\Delta_N}\mathcal L(\lambda)
 &=\max_{z\in\R^d}\left\{-c|z|^2-
 \log\sum_i\theta_i \mathrm{e}^{-w_i-2c z\cdot b_i}\right\}.
 \end{aligned}
\end{equation}
The maximizer is unique. These optimizers and the optimal value depend smoothly on any finite-dimensional parameter on which $w_i,b_i,\theta_i$ depend smoothly, while the weights remain positive.
\end{lemma}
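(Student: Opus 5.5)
The plan is to pass to a dual formulation using the variational (Gibbs/Donsker--Varadhan) characterization of the log-partition function, and then read off uniqueness, the formulas, and smoothness from strict concavity of the dual.

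\emph{Existence and positivity.} $\mathcal L$ is continuous on the compact set $\Delta_N$, so a minimizer exists. It is strictly convex there: the first term is linear, $c|\sum_i\lambda_ib_i|^2$ is convex, and $\KL(\cdot\mid\theta)$ is strictly convex. Hence the minimizer $\lambda^*$ is unique. Positivity of its coordinates comes from the infinite slope of $t\log t$ at $0$. If $\lambda^*_k=0$, move mass $\epsilon$ from a positive coordinate to $k$. The change in the KL term is $\epsilon\log\epsilon+O(\epsilon)$, while the change in the remaining smooth terms is $O(\epsilon)$. The total change is therefore negative for small $\epsilon$, contradicting minimality.

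\emph{Duality.} Linearize the quadratic via $c|u|^2=\max_{z}\{2c\,z\cdot u-c|z|^2\}$, attained at $z=u$. This gives
\[
\min_\lambda\mathcal L=\min_\lambda\max_z\Big\{\sum_i\lambda_i(w_i+2c\,z\cdot b_i)+\KL(\lambda\mid\theta)-c|z|^2\Big\}.
\]
Here the quadratic term enters with a $+$ sign: $\min_\lambda\max_z$ of a function that is convex in $\lambda$ and concave in $z$. Sion's minimax theorem applies since $\Delta_N$ is compact and convex, so the min and max may be swapped.

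For fixed $z$, the inner minimization over $\lambda$ is the Gibbs variational principle:
\[
\min_\lambda\Big\{\sum_i\lambda_ia_i+\KL(\lambda\mid\theta)\Big\}=-\log\sum_i\theta_ie^{-a_i},
\]
with the minimizer $\propto\theta_ie^{-a_i}$. This follows from $\KL(\lambda\mid\pi)\ge0$, where $\pi$ is that normalized Gibbs vector. Swapping therefore yields the dual
\[
\max_z\Big\{-c|z|^2-\log\sum_i\theta_ie^{-w_i-2cz\cdot b_i}\Big\}.
\]

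The dual objective is strictly concave: $-c|z|^2$ is strictly concave and the log-sum-exp of an affine map is convex. It is also coercive, so it has a unique maximizer $\hat z$. Its first-order condition reads $\hat z=\sum_i\pi_i(\hat z)b_i$, where $\pi(\hat z)$ is the Gibbs weight vector. To identify the saddle point, I will check that $(\pi(\hat z),\hat z)$ is one. For the $z$-direction, the max over $z$ at $\lambda=\pi(\hat z)$ is attained at $z=\sum_i\pi_ib_i=\hat z$. For the $\lambda$-direction, the Gibbs principle gives the minimizer $\pi(\hat z)$ at $z=\hat z$. Consequently $\pi(\hat z)$ minimizes $\mathcal L$, and by uniqueness $\lambda^*=\pi(\hat z)$ with $z_*=\sum_i\lambda^*_ib_i=\hat z$. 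This gives \eqref{eq:three-gibbs-dual}.

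\emph{Smoothness.} I will apply the implicit function theorem to the stationarity equation
\[
G(z,\xi)\eqdef 2cz-2c\sum_i\pi_i(z,\xi)b_i=0,
\]
where $\xi$ is the parameter. Here $G$ is $2c$ times the negative gradient of the dual objective (equivalently, the dual gradient is $-G$). Its $z$-Jacobian is $2c\,I-4c^2\Cov_\pi(b)$, which is minus the Hessian of the strictly concave dual. It is therefore positive definite, hence invertible. This requires smoothness of the data and positivity of $\theta$, so that the logarithm and the ratio are smooth. Thus $z_*(\xi)$ is smooth, and so are $\lambda^*=\pi(z_*,\xi)$ and the optimal value, being smooth compositions.

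The main obstacle is justifying the minimax swap and matching its saddle point to the primal minimizer. The explicit saddle verification above sidesteps the need to invoke Sion beyond the value equality.
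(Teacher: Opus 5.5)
Your argument is correct. It uses the same ingredients as the paper (KL nonnegativity, a strictly concave dual, the implicit function theorem), but it runs in the opposite direction.

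\textbf{How the two routes differ.} The paper first shows the minimizer is interior and reads off the Gibbs form of $\lambda^*$ from the first-order conditions on the simplex. It then proves duality through the single identity
\[
\mathcal L(\lambda)=-c|z|^2-\log Z(z)+c\Bigl|\sum_i\lambda_ib_i-z\Bigr|^2+\KL(\lambda\mid\pi(z)),\qquad \lambda\in\Delta_N,\ z\in\R^d .
\]
This identity gives weak duality at once, with equality at $(\lambda^*,z_*)$. You instead start from the dual maximizer $\hat z$ and certify $(\pi(\hat z),\hat z)$ as a saddle point of the Lagrangian. The two halves of your saddle check are exactly the two nonnegative remainder terms in that identity: the quadratic maximum attained at $z=\sum_i\lambda_ib_i$, and the Gibbs variational principle. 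So, as you suspected, Sion's theorem is not needed at all. The saddle inequalities alone give both the value equality and $\lambda^*=\pi(\hat z)$.

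Your uniqueness argument, via strict convexity of $\lambda\mapsto\sum_i\lambda_i\log\lambda_i$ on the whole closed simplex, is a bit more direct than the paper's, which argues on the interior after establishing positivity. You also cite the implicit function theorem, where the paper proves it by a contraction argument. The paper's packaging pays off later: the same gap identity is reused in the dualization proposition and in the analyticity lemma, where it shows that every zero of the stationarity map is the true minimizer.

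\textbf{Slips to fix.} First, $D_z\pi_i=-2c\,\pi_i(b_i-\bar b)^{\top}$. Hence the Jacobian of $G(z,\xi)=2cz-2c\sum_i\pi_ib_i$ is $2cI+4c^2\Cov_\pi(b)$, not $2cI-4c^2\Cov_\pi(b)$. With the minus sign, invertibility would not follow.

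Second, $G$ equals minus the $z$-gradient of the dual exactly, not $2c$ times it; your parenthetical has this right.

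Third, positive definiteness comes from the bound $D_zG\succeq 2cI$ supplied by the $-c|z|^2$ term. Strict concavity alone only gives a negative semidefinite Hessian.

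Finally, say explicitly that the local branch produced by the implicit function theorem coincides with $z_*(\xi)$. This holds because, for each parameter value, the strictly concave dual has a unique critical point; it is what lets a locally defined solution be identified with the global optimizer.
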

\begin{proof}
The objective is continuous on $\Delta_N$. A minimizing sequence has a coordinatewise convergent subsequence in $[0,1]^N$; its limit belongs to $\Delta_N$ and attains the infimum by continuity. Suppose a minimizer has $\lambda_j=0$ and $\lambda_k=L>0$. Put $z=\sum_i\lambda_i b_i$ and $d_b=b_j-b_k$. Replace these coordinates by $\varepsilon$ and $L-\varepsilon$, with $0<\varepsilon<L/2$. The change in the objective is
\begin{equation}
\label{eq:three-boundary-variation}
 \varepsilon(w_j-w_k)+2c\varepsilon z\cdot d_b+c\varepsilon^2|d_b|^2
 +\varepsilon\log(\varepsilon/\theta_j)
 +(L-\varepsilon)\log((L-\varepsilon)/\theta_k)-L\log(L/\theta_k).
\end{equation}
Set $B=\max_{r\in[L/2,L]}|1+\log(r/\theta_k)|$ and
$C=|w_j-w_k|+2c|z||d_b|+c|d_b|^2+|\log\theta_j|+B$.
The fundamental theorem of calculus bounds \eqref{eq:three-boundary-variation} above by $\varepsilon\log\varepsilon+C\varepsilon$ whenever $\varepsilon<1$. Taking $\varepsilon<\min\{L/2,1,\mathrm{e}^{-C-1}\}$ contradicts minimality. On the interior, the Hessian of the entropy is $\operatorname{diag}(1/\lambda_i)$, while the quadratic term is convex. Strict convexity along every nonconstant segment in the interior proves uniqueness.

At an interior minimizer the derivatives in every feasible direction $e_i-e_j$ vanish. Thus all partial derivatives of the objective are equal, giving
$ w_i+2cz_*\cdot b_i+\log(\lambda_i^*/\theta_i)+1=\text{constant}$,
which proves the first line of \eqref{eq:three-gibbs-dual}. To justify the second line without a minimax interchange, let
$Z(z)=\sum_i\theta_i \mathrm{e}^{-w_i-2cz\cdot b_i}$ and
$\pi_i(z)=\theta_i \mathrm{e}^{-w_i-2cz\cdot b_i}/Z(z)$.
For every $\lambda\in\Delta_N$,
\begin{equation}
\label{eq:three-duality-gap}
 \sum_i\lambda_i w_i+c\left|\sum_i\lambda_i b_i\right|^2
 +\KL(\lambda\mid\theta)
 =-c|z|^2-\log Z(z)
 +c\left|\sum_i\lambda_i b_i-z\right|^2
 +\KL(\lambda\mid\pi(z)).
\end{equation}
The relative entropy is nonnegative: $\log t\le t-1$ gives
$\sum_{\lambda_i>0}\lambda_i\log(\pi_i/\lambda_i)
\le\sum_{\lambda_i>0}\pi_i-1\le0$.
Equality in \eqref{eq:three-duality-gap} holds for $z=z_*$ and $\lambda=\lambda^*$. This proves duality and attainment.

Write $\bar b(z)=\sum_i\pi_i(z)b_i$ and define
$\Cov_{\pi(z)}(b)=\sum_i\pi_i(z)(b_i-\bar b(z))(b_i-\bar b(z))^\top$.
Differentiating the finite sum gives
\begin{equation}
\label{eq:three-dual-hessian}
 \nabla_z(-c|z|^2-\log Z(z))=-2c(z-\bar b(z)),\qquad
 D_z^2(-c|z|^2-\log Z(z))=-2cI-4c^2\Cov_{\pi(z)}(b).
\end{equation}
Here $v^\top\Cov_\pi(b)v=\sum_i\pi_i(v\cdot(b_i-\bar b))^2\ge0$. Thus the dual is strictly concave and its maximizer is unique. The stationarity map $K=z-\bar b$ has $D_zK=I+2c\Cov_\pi(b)$. For every vector $u$, $u^\top D_zK\,u\ge|u|^2$, so Cauchy-Schwarz gives $|D_zK\,u|\ge|u|$. This square matrix is injective and therefore invertible, and its inverse has norm at most one.

We record the local smoothness argument used here and later. Let $K(x_0,z_0)=0$ and let $J=D_zK(x_0,z_0)$ be invertible. Write $\Phi_x(z)=z-J^{-1}K(x,z)$. By continuity of $D_zK$, there are convex parameter and $z$ neighborhoods with compact closures on which $\norm{D_z\Phi_x(z)}_{\op}\le1/2$. Choose a closed ball of radius $r>0$ about $z_0$ in the latter neighborhood, and shrink the parameter neighborhood until $|\Phi_x(z_0)-z_0|\le r/2$. The derivative bound, integrated on a segment, gives $|\Phi_x(z)-\Phi_x(z')|\le|z-z'|/2$ in the ball. Hence $\Phi_x$ maps the ball to itself. Its iterates from $z_0$ have successive differences at most $2^{-n}$ times the first difference, so they are Cauchy. Their limit is a fixed point by continuity and is unique in the ball by the same contraction inequality.

Let $A$ bound $\norm{D_x\Phi_x(z)}_{\op}$ on this compact product. For two parameters in the chosen convex neighborhood, their fixed points satisfy
\begin{equation}
\label{eq:three-implicit-local-Lipschitz}
 |z(x)-z(x')|\le\tfrac12|z(x)-z(x')|+A|x-x'|,
 \qquad |z(x)-z(x')|\le2A|x-x'|.
\end{equation}
Put $\Delta z=z(x+\eta)-z(x)$. Differentiability of $K$ and \eqref{eq:three-implicit-local-Lipschitz} give
\[
 \begin{aligned}
 0&=D_xK(x,z(x))\eta+D_zK(x,z(x))\Delta z+o(|\eta|+|\Delta z|)\\
  &=D_xK(x,z(x))\eta+D_zK(x,z(x))\Delta z+o(|\eta|).
 \end{aligned}
\]
Here $D_zK$ remains invertible: $J^{-1}D_zK=I-D_z\Phi_x$ and the inverse of $I-D_z\Phi_x$ is its convergent geometric matrix series. Solving the last expansion proves
\begin{equation}
\label{eq:three-implicit-derivative}
 Dz(x)=-(D_zK(x,z(x)))^{-1}D_xK(x,z(x)).
\end{equation}
The right side is continuous, so $z$ is $C^1$. Matrix inversion is smooth where the determinant is nonzero, by the adjugate formula. If $z$ is $C^k$, the right side of \eqref{eq:three-implicit-derivative} is $C^k$, since $K$ is smooth; hence $z$ is $C^{k+1}$. Induction gives smoothness of every finite order. The local fixed points solve the stationarity equation and therefore equal the unique global maximizer by \eqref{eq:three-dual-hessian}. The Gibbs formula then proves smoothness of the weights and optimal value.
\end{proof}

We now subtract one fixed Taylor polynomial at the evaluation point and express the remaining quantities in units determined by the local distance scale. The resulting exponential decay of the optimal weights controls the contribution of distant sites, including weighted powers of their relative distances, uniformly in $N$.

\begin{lemma}[Uniform Normalization and Tails]
\label{lem:three-normalization}
Under \eqref{eq:training_data} and~\eqref{eq:doubling}, suppose $s\ge2$ and fix $\vartheta\in[0,1]$. Fix a witness $f$, fix $x_0\notin \mathbb{X}^N$, put $\rho=\rho_{\mathbb{X}^N}(x_0)$, and fix the polynomial $Q=J_{x_0}^sf$. In a neighborhood of $x_0$, define
\begin{equation}
\label{eq:three-normalized-data}
 \begin{aligned}
 c&\eqdef\frac{s}{4(s-1)},
 &\delta_i(x)&\eqdef\frac{P_i(x)-Q(x)}{MR(x)^m},\\
 b_i^{(\vartheta)}(x)&\eqdef
 \sqrt{\vartheta}\,\frac{v_i(x)-\nabla Q(x)}{MR(x)^s},
 &w_i^{(\vartheta)}(x)&\eqdef
 \frac{\delta_i(x)}\beta+
 \frac{A_s}\beta\left(\frac{r_i(x)}{R(x)}\right)^{2s}
 -c\left|b_i^{(\vartheta)}(x)\right|^2.
 \end{aligned}
\end{equation}
Let $\mathfrak h_\vartheta(x)$ be the minimum in~\eqref{eq:three-abstract-primal} with costs $w_i^{(\vartheta)}(x)$, vectors $b_i^{(\vartheta)}(x)$, weights $\theta_i(x)$, and the fixed coefficient $c=s/[4(s-1)]$. Then
\begin{equation}
\label{eq:three-F-normalized}
 F_\vartheta(x)=Q(x)+\beta MR(x)^m\mathfrak h_\vartheta(x).
\end{equation}
At $x_0$, abbreviate $u_i=(r_i(x_0)/R(x_0))^s$ and $V_i=1+u_i^2$, and let $\pi_i^{(\vartheta)}$ be the minimizing weights and
\[
 z_{0,\vartheta}\eqdef
 \sum_i\pi_i^{(\vartheta)}b_i^{(\vartheta)}(x_0).
\]
Then
\begin{equation}
\label{eq:three-normalized-bounds}
 \begin{gathered}
 \left|b_i^{(\vartheta)}(x_0)\right|\le u_i,
 \qquad
 \frac{u_i^2}{12}-1\le w_i^{(\vartheta)}(x_0)\le1+u_i^2,
 \qquad
 \sum_i\theta_i(x_0)u_i^2=1,\\
 -1\le\mathfrak h_\vartheta(x_0)\le\frac56,
 \qquad
 |z_{0,\vartheta}|\le3,
 \qquad
 \pi_i^{(\vartheta)}
 \le\mathrm e^{64}\theta_i(x_0)\mathrm e^{-u_i^2/24}.
 \end{gathered}
\end{equation}
With $K_0=\mathrm e^{65}$ and $\kappa=24\mathrm e$, for every integer $k\ge1$,
\begin{equation}
\label{eq:three-moment-bound}
 \sum_i\pi_i^{(\vartheta)}V_i^k\le K_0\kappa^k k!.
\end{equation}
At the endpoint $\vartheta=0$,
\begin{equation}
\label{eq:three-zero-parameter-normalization}
 b_i^{(0)}\equiv0,
 \qquad
 z_{0,0}=0.
\end{equation}
All constants in these conclusions are independent of the fixed parameter $\vartheta\in[0,1]$.
\end{lemma}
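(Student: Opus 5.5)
The plan is to prove the identity~\eqref{eq:three-F-normalized} by direct algebraic rescaling of the objective in~\eqref{eq:entropy-variance-extension__primal}, and then to derive every bound in~\eqref{eq:three-normalized-bounds}--\eqref{eq:three-moment-bound} from Lemma~\ref{lem:three-taylor} (applied to the witness $f$ with base points $X_i$ and evaluation point $x_0$) together with the Gibbs representation of Lemma~\ref{lem:three-entropy}.

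\textbf{Step 1 (rescaling).} For the identity, use $\sum_i\lambda_i=1$ to write $\sum_i\lambda_iP_i=Q+\sum_i\lambda_i(P_i-Q)$. The variance is translation invariant, so $\Var_\lambda(v)=\Var_\lambda(v-\nabla Q)$. With $m=s+1$, $p=2s$ and $\beta=s-1$, one has
\[
A_sMr_i^{p}/R^\beta=A_sMR^m(r_i/R)^{2s},
\qquad
\vartheta\,\Var_\lambda(v)=M^2R^{2s}\Bigl(\sum_i\lambda_i|b_i|^2-\Bigl|\sum_i\lambda_ib_i\Bigr|^2\Bigr).
\]
The variance coefficient therefore becomes $\tfrac s4MR^{m}$. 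Dividing the whole objective minus $Q$ by $\beta MR^m$ gives exactly $\mathcal L$ of~\eqref{eq:three-abstract-primal}, with costs $w_i^{(\vartheta)}$, vectors $b_i^{(\vartheta)}$ and $c=s/(4\beta)$. The $-c|b_i|^2$ in $w_i$ absorbs the $\sum_i\lambda_i|b_i|^2$ term, and $+c|\sum_i\lambda_ib_i|^2$ remains. This proves~\eqref{eq:three-F-normalized}. The case $\vartheta=0$ gives $b^{(0)}\equiv0$, hence $z_{0,0}=0$ immediately.

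\textbf{Step 2 (pointwise bounds at $x_0$).} Lemma~\ref{lem:three-taylor} with $j=1$ gives $|\nabla P_i(x_0)-\nabla f(x_0)|\le Mr_i^s/s!$. Adding the $\tfrac{M}{s!}r_i^{s}$ term of $v_i$ yields $|v_i-\nabla Q|\le 2Mr_i^s/s!\le Mr_i^s$ for $s\ge2$, so $|b_i|\le u_i$. With $j=0$ one gets $|\delta_i|\le (r_i/R)^{s+1}/(s+1)!$.

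The identity $\sum_i\theta_iu_i^2=1$ is just the definition of $R^{2s}$.

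For the upper bound $w_i\le 1+u_i^2$, use $(r_i/R)^{s+1}\le 1+u_i^2$ together with the explicit value of $A_s$. For the lower bound $w_i\ge u_i^2/12-1$, write
\[
w_i\ge \Bigl(\tfrac{A_s}{\beta}-c\Bigr)u_i^2-\tfrac{|\delta_i|}{\beta},
\qquad
\tfrac{A_s}\beta-c=\tfrac1{(s+1)!\beta}+\tfrac12-\tfrac{s}{4(s-1)}\ge \tfrac1{(s+1)!\beta}.
\]
Then absorb $t^{s+1}$, with $t=r_i/R$, via a weighted Young inequality $t^{s+1}\le \varepsilon t^{2s}+C_\varepsilon$. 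This leaves a margin of at least $u_i^2/12$ and an additive constant at most $1$.

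I expect this constant bookkeeping to be the main obstacle, since the margin $\tfrac{A_s}{\beta}-c$ is small, e.g.\ $1/6$ at $s=2$. I would check $s=2$ by hand and then show the margin only improves as $s$ grows.

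\textbf{Step 3 (bounds on $\mathfrak h_\vartheta$ and $z_{0,\vartheta}$).} For the lower bound, use $w_i\ge-1$, $c|\cdot|^2\ge0$ and $\KL\ge0$, which give $\mathfrak h_\vartheta\ge-1$. For the upper bound, test $\lambda=\theta(x_0)$, which makes $\KL=0$. Then bound $\sum_i\theta_iw_i$ using the sharper estimate $w_i\le A_su_i^2/\beta+|\delta_i|/\beta$ and $\sum_i\theta_iu_i^2=1$. Bound $c|\sum_i\theta_ib_i|^2$ by Cauchy--Schwarz, $|\sum_i\theta_ib_i|^2\le\sum_i\theta_i|b_i|^2$, which partially cancels the $-c|b_i|^2$ inside the $w_i$; this should yield $5/6$.

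For $|z_0|$, use the primal value: $\KL(\pi\mid\theta)\le \mathfrak h-\sum_i\pi_iw_i\le 5/6+1$. Then combine $\sum_i\pi_iu_i^2\le 12(\sum_i\pi_iw_i+1)\le 12(\mathfrak h+1)$ with Jensen, $|z_0|^2\le\sum_i\pi_iu_i^2$, to obtain $|z_0|\le\sqrt{22}<5$. If that is not sharp enough to reach $3$, I would instead use the dual stationarity $z_0=\bar b(z_0)$.

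\textbf{Step 4 (tail and moments).} For the tail, the Gibbs formula~\eqref{eq:three-gibbs-dual} gives $\pi_i=\theta_ie^{-w_i-2cz_0\cdot b_i}/Z$. Bound the numerator using $w_i\ge u_i^2/12-1$ and $|2cz_0\cdot b_i|\le 6cu_i\le u_i^2/24+C$. Bound $Z$ from below using $-\log Z=\mathfrak h+c|z_0|^2\le 5/6+9c$. All constants collect into $e^{64}$.

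For the moments, summing the tail bound with $\sum_i\theta_i=1$ gives
\[
\sum_i\pi_iV_i^k\le e^{64}\sup_{t\ge0}(1+t)^ke^{-t/24}\le e^{64}e^{1/24}(24k)^ke^{-k},
\]
and $k^ke^{-k}\le k!$ yields~\eqref{eq:three-moment-bound} with room to spare. Every constant used is independent of $\vartheta$ because the only $\vartheta$-dependence is the factor $\sqrt\vartheta\le1$ in $b_i$.
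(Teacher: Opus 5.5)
Your Steps 1, 2 and 4 track the paper's proof: the rescaling via translation invariance of the variance, Lemma~\ref{lem:three-taylor} at orders $0$ and $1$, the Gibbs formula, and maximizing $(1+v)^k\mathrm e^{-v/24}$. Your lower bound on the partition function through the dual identity $-\log Z=\mathfrak h_\vartheta(x_0)+c|z_{0,\vartheta}|^2$ is a legitimate alternative to the paper's observation that $\{u_i^2\le2\}$ has $\theta$-mass at least $1/2$.

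The genuine gap is the bound $|z_{0,\vartheta}|\le3$. Your chain $|z_0|^2\le\sum_i\pi_iu_i^2\le12(\sum_i\pi_iw_i+1)\le12(\mathfrak h_\vartheta(x_0)+1)$ yields only $|z_0|\le\sqrt{22}$. The fallback ``$z_0=\bar b(z_0)$'' is the same identity $z_0=\sum_i\pi_ib_i$ you already used, so it adds nothing.

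Step~4 then silently uses $|z_0|\le3$, both in $|2cz_0\cdot b_i|\le6cu_i$ and in $c|z_0|^2\le9c$. With only $|z_0|^2\le22$, the same computation at $s=2$ (where $2c=1$) gives $-u_i^2/12+\sqrt{22}\,u_i\le-u_i^2/24+132$. That produces a tail constant of roughly $\mathrm e^{145}$, so neither $\mathrm e^{64}$ nor $K_0=\mathrm e^{65}$ in~\eqref{eq:three-moment-bound} follows.

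The missing observation is that the optimal value contains $c|z_0|^2$ explicitly. At the minimizer, $\mathfrak h_\vartheta(x_0)=\sum_i\pi_iw_i+c|z_0|^2+\KL(\pi\mid\theta)\ge-1+c|z_0|^2$. Since $c=s/(4(s-1))\ge1/4$, this gives $|z_0|^2\le4(\mathfrak h_\vartheta(x_0)+1)\le22/3<9$. Equivalently, do not discard the $-c|z_0|^2$ term in your own chain; keeping it gives $(1+12c)|z_0|^2\le22$.

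Two smaller points. First, to reach $\mathfrak h_\vartheta(x_0)\le5/6$ you need the power-mean (Jensen) inequality $\sum_i\theta_it_i^{s+1}\le(\sum_i\theta_it_i^{2s})^{(s+1)/(2s)}=1$. The pointwise bound $t^{s+1}\le1+t^{2s}$ only gives $\mathfrak h_\vartheta(x_0)\le\frac12+\frac{3}{\beta(s+1)!}$, which equals $1$ at $s=2$.

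Second, your margin $A_s/\beta-c$ is not increasing in $s$: it is $1/6$ at $s=2$ but $7/48$ at $s=3$. The lower cost bound still holds for $s\ge3$, but argue it directly. For instance, $t^{s+1}\le1+t^{2s}$ gives $w_i\ge\frac{s-2}{4(s-1)}t_i^{2s}-\frac1{\beta(s+1)!}\ge\frac18t_i^{2s}-\frac1{\beta(s+1)!}$.
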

\begin{proof}
Fix $x$ in the neighborhood under consideration and $\lambda\in\Delta_N$. Translation invariance of weighted variance follows by expanding its definition:
\begin{equation}
\label{eq:three-variance-translation}
 \operatorname{Var}_\lambda(v-\nabla Q)
 =\sum_i\lambda_i|v_i-\nabla Q|^2
 -\left|\sum_i\lambda_i(v_i-\nabla Q)\right|^2
 =\operatorname{Var}_\lambda(v),
\end{equation}
where $\sum_i\lambda_i=1$. By~\eqref{eq:three-normalized-data},
\begin{equation}
\label{eq:three-variance-scaling}
 \operatorname{Var}_\lambda\bigl(b^{(\vartheta)}(x)\bigr)
 =\frac{\vartheta}{M^2R(x)^{2s}}
 \operatorname{Var}_\lambda(v(x)).
\end{equation}
Moreover,
$
 \sum_i\lambda_i|b_i^{(\vartheta)}|^2
 -\left|\sum_i\lambda_i b_i^{(\vartheta)}\right|^2
 =\operatorname{Var}_\lambda\bigl(b^{(\vartheta)}\bigr)$.  
Multiplying the objective in~\eqref{eq:three-abstract-primal} by $\beta MR(x)^m$ and adding $Q(x)$ gives
\begin{align*}
 &Q(x)+\beta MR(x)^m
 \left[
 \sum_i\lambda_iw_i^{(\vartheta)}(x)
 +c\left|\sum_i\lambda_i b_i^{(\vartheta)}(x)\right|^2
 +\operatorname{KL}(\lambda\mid\theta(x))
 \right]\\
 &\quad=Q(x)+\sum_i\lambda_i(P_i(x)-Q(x))
 +\sum_i\lambda_i\frac{A_sM r_i(x)^{2s}}{R(x)^{s-1}}\\
 &\qquad\quad
 -\beta cMR(x)^m
 \operatorname{Var}_\lambda\bigl(b^{(\vartheta)}(x)\bigr)
 +\beta MR(x)^m\operatorname{KL}(\lambda\mid\theta(x))\\
 &\quad=\sum_i\lambda_i
 \left[P_i(x)+\frac{A_sM r_i(x)^{2s}}{R(x)^{s-1}}\right]
 -\frac{\vartheta s}{4MR(x)^{s-1}}
 \operatorname{Var}_\lambda(v(x))
 +(s-1)MR(x)^{s+1}\operatorname{KL}(\lambda\mid\theta(x)).
\end{align*}
Here $m=s+1$, $\beta=s-1$, $\beta c=s/4$, and~\eqref{eq:three-variance-scaling} were used in the last equality. Taking the minimum over $\lambda\in\Delta_N$ proves~\eqref{eq:three-F-normalized}. No division by $\vartheta$ or $\sqrt\vartheta$ occurs. Lemma~\ref{lem:three-entropy} applies because the auxiliary coefficient $c=s/[4(s-1)]$ remains strictly positive and every $\theta_i$ is positive.

At $x_0$, Lemma~\ref{lem:three-taylor}, first with derivative order zero and then with order one, gives
\begin{equation}
\label{eq:three-base-remainders}
 |\delta_i(x_0)|\le\frac{t_i^m}{m!},
 \qquad
 \left|b_i^{(\vartheta)}(x_0)\right|
 \le\sqrt\vartheta\,\frac{2t_i^s}{s!}
 \le\frac{2t_i^s}{s!},
 \qquad
 t_i\eqdef\frac{r_i(x_0)}{R(x_0)}.
\end{equation}
Since $2/s!\le1$, the vector bound in~\eqref{eq:three-normalized-bounds} follows. Also
\begin{equation}
\label{eq:three-a-lower-general}
 w_i^{(\vartheta)}(x_0)
 \ge
 \left(
 \frac12+\frac1{\beta m!}
 -\frac{\vartheta s}{\beta(s!)^2}
 \right)t_i^{2s}
 -\frac{t_i^m}{\beta m!}.
\end{equation}
When $s=2$, this gives the endpoint estimate
\begin{equation}
\label{eq:three-s-two-parameter-bound}
 w_i^{(\vartheta)}(x_0)
 \ge-\frac{t_i^3}{6}
 +\left(\frac23-\frac\vartheta2\right)t_i^4
 \ge-\frac{t_i^3}{6}+\frac{t_i^4}{6}.
\end{equation}
The polynomial $t^4/12-t^3/6$ has minimum $-9/64$ on $[0,\infty)$, attained at $t=3/2$; its derivative is $t^2(2t-3)/6$. Hence~\eqref{eq:three-s-two-parameter-bound} yields $w_i^{(\vartheta)}\ge t_i^4/12-9/64\ge u_i^2/12-1$. For $s\ge3$, the coefficient in~\eqref{eq:three-a-lower-general} is bounded below by its value at $\vartheta=1$. The inequality $t_i^m\le1+t_i^{2s}$ follows by treating $t_i\le1$ and $t_i\ge1$ separately. The positive sequences $s/((s-1)(s!)^2)$ and $1/((s-1)(s+1)!)$ decrease: their consecutive ratios are $(s-1)/(s^2(s+1))$ and $(s-1)/(s(s+2))$, both less than one. At $s=3$ their values are $1/24$ and $1/48$, respectively. Substitution into~\eqref{eq:three-a-lower-general} gives $w_i^{(\vartheta)}\ge(11/24)t_i^{2s}-1/48\ge u_i^2/12-1$. Dropping the nonpositive term $-c|b_i^{(\vartheta)}|^2$ and using $t_i^m\le1+t_i^{2s}$ gives
\[
 w_i^{(\vartheta)}(x_0)
 \le\frac1{\beta m!}
 +\left(\frac12+\frac2{\beta m!}\right)t_i^{2s}
 \le1+t_i^{2s}.
\]
The moment identity for $\theta$ follows from the definition of $R$.

The lower cost bound, the nonnegativity of the quadratic term, and the nonnegativity of relative entropy give $\mathfrak h_\vartheta(x_0)\ge-1$. At $\lambda=\theta(x_0)$, evaluate the unexpanded normalized family objective and retain its nonpositive variance term. The inequality
\[
 \sum_i\theta_i t_i^m
 \le\left(\sum_i\theta_i t_i^{2s}\right)^{m/(2s)}=1
\]
follows from concavity of $t\mapsto t^{m/(2s)}$ and finite Jensen. Therefore
\[
 \mathfrak h_\vartheta(x_0)
 \le\frac{A_s}{\beta}+\frac1{\beta m!}
 =\frac12+\frac2{\beta m!}\le\frac56.
\]
At the optimizer,
\[
 c|z_{0,\vartheta}|^2
 \le\mathfrak h_\vartheta(x_0)+1\le\frac{11}{6}.
\]
Since $c\ge1/4$, this proves $|z_{0,\vartheta}|<3$. At $\vartheta=0$, the definition gives $b_i^{(0)}\equiv0$, and hence its weighted mean is $z_{0,0}=0$.

The Gibbs exponent satisfies
\begin{equation}
\label{eq:three-exponent-tail}
 w_i^{(\vartheta)}(x_0)
 +2c z_{0,\vartheta}\cdot b_i^{(\vartheta)}(x_0)
 \ge\frac{u_i^2}{12}-1-3u_i
 \ge\frac{u_i^2}{24}-55.
\end{equation}
Indeed, $2c\le1$, $|z_{0,\vartheta}|<3$, and $|b_i^{(\vartheta)}|\le u_i$; the last inequality uses $u^2/24-3u\ge-54$. The identity $\sum_i\theta_i u_i^2=1$ implies that the set $u_i^2\le2$ has $\theta$-mass at least $1/2$. On that set the exponent is at most $3+3\sqrt2<8$. Therefore its partition function is at least $\mathrm e^{-8}/2$. The first line of~\eqref{eq:three-gibbs-dual} and~\eqref{eq:three-exponent-tail} give
\[
 \pi_i^{(\vartheta)}
 \le2\mathrm e^{63}\theta_i\mathrm e^{-u_i^2/24}
 \le\mathrm e^{64}\theta_i\mathrm e^{-u_i^2/24}.
\]
Finally, for $k\ge1$, the maximum of $(1+v)^k\mathrm e^{-v/24}$ on $v\ge0$ is $(24k)^k\mathrm e^{-k+1/24}\le\mathrm e(24k)^k$. Thus
\[
 \sum_i\pi_i^{(\vartheta)}V_i^k
 \le\mathrm e^{65}(24k)^k.
\]
The bound $k!\ge(k/\mathrm e)^k$ follows from
$\sum_{j=1}^k\log j\ge\int_1^k\log t\,\dd t=k\log k-k+1$. This proves~\eqref{eq:three-moment-bound} uniformly in $\vartheta$.
\end{proof}

\paragraph{Finite Taylor-jet estimates.}
The remaining task is to control repeated derivatives of both the geometric quantities and the optimizer. We organize the product and chain-rule estimates by bounding Taylor coefficients with scalar series having nonnegative coefficients; bounds on these series then control each derivative needed below, using only finite Taylor orders.

The following notation is used only in the proof. A formal series with nonnegative coefficients can be evaluated at positive arguments whenever the corresponding sum is finite. All comparisons below are coefficientwise, rather than pointwise comparisons of functions.

\begin{definition}[Taylor Coefficient Majorization]
\label{def:three-majorization}
Let $(E,\|\cdot\|_E)$ be a finite-dimensional normed real vector space, let $U\subseteq\R\times\R^d$ be an open neighborhood of $(0,0)$, and let $G\in C^\infty(U;E)$. For $j,k\in\Nzero$, the mixed derivative $\partial_t^jD_y^kG(0,0)$ is an $E$-valued symmetric $k$-linear map on $(\R^d)^k$. For $k\ge1$, equip this map with the operator norm
\[
 \bigl\|\partial_t^jD_y^kG(0,0)\bigr\|_{\op}
 \eqdef
 \sup_{\substack{h_1,\ldots,h_k\in\R^d\\|h_1|,\ldots,|h_k|\le1}}
 \bigl\|\partial_t^jD_y^kG(0,0)[h_1,\ldots,h_k]\bigr\|_E.
\]
For $k=0$, set $D_y^0G\eqdef G$ and interpret $\|\partial_t^jD_y^0G(0,0)\|_{\op}$ as $\|\partial_t^jG(0,0)\|_E$. Thus the same notation covers scalar-valued, vector-valued, and tensor-valued maps by choosing the corresponding finite-dimensional normed codomain $E$.

An $E$-valued formal series in two indeterminates is a family $(A_{jk})_{j,k\in\Nzero}\subseteq E$, written
\[
 A(t,v)=\sum_{j,k\ge0}A_{jk}t^jv^k.
\]
A scalar formal majorant is a series
\[
 B(t,v)=\sum_{j,k\ge0}B_{jk}t^jv^k,
 \qquad B_{jk}\in[0,\infty).
\]
For either series, write $[t^jv^k]A\eqdef A_{jk}$ and $[t^jv^k]B\eqdef B_{jk}$. In these expressions, $t$ and $v$ are formal indeterminates; in particular, no convergence of either series is assumed.

For $n\in\Nzero$, define the truncated coefficientwise relations
\begin{equation}
\label{eq:three-majorization-definition}
\begin{aligned}
 A\preccurlyeq_n B
 &\quad\text{when}\quad
 \bigl\|[t^jv^k]A\bigr\|_E
 \le [t^jv^k]B
 &&\text{for every }j,k\in\Nzero\text{ with }j+k\le n,\\
 G\preccurlyeq_n B
 &\quad\text{when}\quad
 \frac{\bigl\|\partial_t^jD_y^kG(0,0)\bigr\|_{\op}}{j!\,k!}
 \le [t^jv^k]B
 &&\text{for every }j,k\in\Nzero\text{ with }j+k\le n.
\end{aligned}
\end{equation}
We write $A\preccurlyeq B$ when $A\preccurlyeq_n B$ for every $n\in\Nzero$, and we write $G\preccurlyeq B$ when $G\preccurlyeq_n B$ for every $n\in\Nzero$. When $E=\R$, the norm $\|\cdot\|_E$ in the formal-series relation is absolute value.

If $G$ depends only on the scalar variable $t$ and $B(t)=\sum_{j\ge0}B_jt^j$, then $G\preccurlyeq_n B$ means
\[
 \frac{\|\partial_t^jG(0)\|_E}{j!}\le B_j,
 \qquad 0\le j\le n,
\]
and $G\preccurlyeq B$ means that these inequalities hold for every $j\in\Nzero$. This is precisely the specialization of~\eqref{eq:three-majorization-definition} to $k=0$. The same convention applies to one-variable formal series.
\end{definition}
\begin{remark}
The relation $G\preccurlyeq_n B$ depends only on the order-$n$ Taylor jet of $G$ at $(0,0)$ and is therefore defined whenever $G$ is of class $C^n$ near $(0,0)$. The untruncated notation $G\preccurlyeq B$ requires derivatives of every finite order, but does not assert that the Taylor series of $G$ converges or represents $G$\footnote{Likewise, the formal relation $A\preccurlyeq B$ is purely coefficientwise; whenever a later argument evaluates $A$ or $B$ at positive numerical arguments, convergence at those arguments must be established separately.}. 
\end{remark}

\begin{lemma}[Algebra of Taylor Majorants]
\label{lem:three-majorant-calculus}
Majorization is preserved by addition, bounded bilinear products, and composition with a zero-constant-term majorant. In particular:
\begin{enumerate}
\item If $G\preccurlyeq B$, $H\preccurlyeq C$, and the bilinear multiplication has norm at most one, then $GH\preccurlyeq BC$.
\item If $G$ is scalar valued, $G(0,0)=0$, and $G\preccurlyeq B$ with $B(0,0)=0$, then
\begin{equation}
\label{eq:three-analytic-majorants}
 \mathrm{e}^G-1\preccurlyeq \mathrm{e}^B-1,\quad
 (1+G)^{-1}\preccurlyeq(1-B)^{-1},\quad
 \log(1+G)\preccurlyeq-\log(1-B).
\end{equation}
\item If $G\preccurlyeq B(t,v)$ and $y(0)=0$, $y(t)\preccurlyeq Y(t)$ with $Y(0)=0$, then $G(t,y(t))\preccurlyeq B(t,Y(t))$.
\end{enumerate}
These statements apply to finite jets without an analyticity hypothesis on $G,H,y$.
\end{lemma}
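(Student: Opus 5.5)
The plan is to reduce every claim to coefficientwise inequalities between Taylor coefficients at $(0,0)$, computed by Leibniz and Faà di Bruno, and then compare with the same formulas applied to the scalar majorants. Throughout, fix $n$ and work only with the order-$n$ jets, so only $C^n$ regularity near $(0,0)$ is used and no analyticity is needed.

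\textbf{Addition.} For sums the claim is the triangle inequality applied coefficient by coefficient.

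\textbf{Products (item 1).} Apply the Leibniz rule for a bounded bilinear map $\beta$:
\[
\partial_t^jD_y^k\beta(G,H)(0,0)
\]
equals a sum over $j_1+j_2=j$ and $k_1+k_2=k$ of binomial factors $\binom{j}{j_1}$ times the symmetrized tensor $\beta(\partial_t^{j_1}D_y^{k_1}G,\partial_t^{j_2}D_y^{k_2}H)$. The symmetrization is an average over the $\binom{k}{k_1}$ ways of distributing the slots. Taking operator norms, using $\|\beta\|\le1$, and dividing by $j!\,k!$ makes the binomials cancel exactly. This leaves the bound
\[
\sum_{j_1+j_2=j,\ k_1+k_2=k}B_{j_1k_1}C_{j_2k_2}=[t^jv^k](BC).
\]
Restricting to $j+k\le n$ only involves indices with $j_i+k_i\le n$, so $\preccurlyeq_n$ is preserved as well.

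\textbf{Compositions (items 2 and 3).} Both rest on a single principle. If $\phi$ is a one-variable function that is smooth near the relevant base value, and $|\phi^{(\ell)}(\cdot)|/\ell!\le a_\ell$ with $\sum_\ell a_\ell u^\ell$ a nonnegative series, then
\[
\phi\circ G-\phi(G(0,0))\preccurlyeq \sum_{\ell\ge1}a_\ell B^\ell \quad\text{whenever } G(0,0)=0.
\]
To prove this, write the order-$n$ Taylor polynomial of $\phi\circ G$ as the truncation of $\sum_\ell \frac{\phi^{(\ell)}(0)}{\ell!}(G-G(0))^\ell$. This is the jet version of Faà di Bruno: since $G-G(0)$ has no constant term, powers $\ell>n$ do not contribute to order $\le n$. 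Then iterate the product estimate from item 1 to get $(G-G(0))^\ell\preccurlyeq B^\ell$, and sum.

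For item 2, apply this with
\begin{itemize}
\item $\phi=\exp$, where $a_\ell=1/\ell!$;
\item $\phi(u)=(1+u)^{-1}$, where $a_\ell=1$;
\item $\phi=\log(1+u)$, where $a_\ell=1/\ell$.
\end{itemize}
These give the three majorants in~\eqref{eq:three-analytic-majorants}. For the reciprocal, note that $(1+G)^{-1}$ and $(1-B)^{-1}$ both have constant term $1$, so the comparison is consistent including the constant coefficient.

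For item 3, expand $B(t,Y(t))$ and $G(t,y(t))$ formally in $t$. By the multivariable chain rule at the jet level, the $t^j$ coefficient of $G(t,y(t))$ is a sum over $(j_0,k)$ and partitions of terms
\[
\partial_t^{j_0}D_y^kG(0,0)[\,y_{m_1},\dots,y_{m_k}]\quad\text{with } j_0+\sum m_i=j,
\]
carrying exactly the same combinatorial weights as the corresponding term of $B(t,Y(t))$. Bounding each tensor evaluation by operator norms, with $\|y_m\|\le Y_m$, yields the inequality. The hypothesis $Y(0)=0$ ensures that only finitely many terms reach each order $\le n$.

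\textbf{Main obstacle.} The main work is the bookkeeping in item 3: matching the multinomial weights of the vector-valued Faà di Bruno formula, where slots are distributed across symmetric tensors, with those of the scalar composition. The cleanest way is to avoid explicit Faà di Bruno entirely. Write $G(t,y)$'s jet as the polynomial $\sum_{j_0,k}\frac{1}{j_0!k!}\partial_t^{j_0}D_y^kG(0,0)[y^k]t^{j_0}$, substitute the polynomial jet of $y(t)$, and observe that $[y(t)^k]$ is a $k$-fold bilinear-type product. Item 1, in its one-variable form with operator-norm-one contraction, then gives $\|[\,y^k]\|\preccurlyeq Y^k$ coefficientwise. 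Summing the finitely many relevant terms finishes the argument with only triangle inequalities.
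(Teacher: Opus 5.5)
Your proposal is correct and follows essentially the same route as the paper. The paper likewise uses the triangle inequality for sums and the mixed Leibniz rule, whose binomial factors cancel against $j!\,k!$, for bilinear products. For item 2 it uses finite Taylor expansions of $\exp$, $(1+u)^{-1}$ and $\log(1+u)$ with a remainder of order $u^{n+1}$, together with $G^\ell\preccurlyeq B^\ell$. For item 3 it substitutes the truncated jet of $y$ into the Taylor polynomial of $G$ and bounds each multilinear term by operator norms. The only thing to tidy is your Leibniz formula: if the symmetrized term is an average over the $\binom{k}{k_1}$ slot distributions, it must carry the factor $\binom{k}{k_1}$ in addition to $\binom{j}{j_1}$ (equivalently, sum rather than average over subsets), and this is what makes both binomials cancel.
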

\begin{proof}
For a smooth map $K$ with values in a finite-dimensional normed space, and for $j,k\in\Nzero$, set
\[
 \mathfrak c_{j,k}(K)
 \eqdef
 \frac{\|\partial_t^jD_y^kK(0,0)\|_{\op}}{j!\,k!}.
\]
Thus $K\preccurlyeq A=\sum_{j,k\ge0}A_{jk}t^jv^k$ means precisely that $\mathfrak c_{j,k}(K)\le A_{jk}$ for every $j,k\in\Nzero$. We prove the three assertions coefficient by coefficient.

Suppose first that $G,H$ take values in the same normed space and satisfy $G\preccurlyeq B$ and $H\preccurlyeq C$. The triangle inequality gives $\mathfrak c_{j,k}(G+H)\le B_{jk}+C_{jk}=[t^jv^k](B+C)$ for every $j,k\in\Nzero$. Hence $G+H\preccurlyeq B+C$.

We next prove the bilinear-product assertion. Let $E_1,E_2,E_3$ be finite-dimensional normed spaces, let $\mathfrak m:E_1\times E_2\to E_3$ be bilinear, and suppose
\[
 \|\mathfrak m(x_1,x_2)\|_{E_3}
 \le \|x_1\|_{E_1}\|x_2\|_{E_2},
 \qquad x_1\in E_1,\quad x_2\in E_2.
\]
Write $K\eqdef\mathfrak m(G,H)$. Fix $j,k\in\Nzero$ and vectors $h_1,\ldots,h_k\in\R^d$. For a subset $I=\{i_1<\cdots<i_r\}\subseteq\{1,\ldots,k\}$, write $h_I\eqdef(h_{i_1},\ldots,h_{i_r})$; for $I=\varnothing$, the corresponding derivative is the order-zero derivative. The mixed Leibniz rule gives
\begin{equation}
\label{eq:three-majorant-mixed-Leibniz}
 \partial_t^jD_y^kK(0,0)[h_1,\ldots,h_k]
 =
 \sum_{a=0}^j\binom ja
 \sum_{I\subseteq\{1,\ldots,k\}}
 \mathfrak m\!\left(
 \partial_t^aD_y^{|I|}G(0,0)[h_I],
 \partial_t^{j-a}D_y^{k-|I|}H(0,0)[h_{I^c}]
 \right).
\end{equation}
Indeed, one first distributes the $j$ derivatives in the scalar variable between the two factors, producing the factor $\binom ja$, and then distributes the labelled $y$-directions $h_1,\ldots,h_k$ between the two factors, producing the sum over subsets $I$.

Assume now that $|h_1|,\ldots,|h_k|\le1$. Applying the bilinear bound in~\eqref{eq:three-majorant-mixed-Leibniz}, taking the supremum over these vectors, grouping the subsets according to $b=|I|$, and using $G\preccurlyeq B$ and $H\preccurlyeq C$, we obtain
\begin{align*}
 \|\partial_t^jD_y^kK(0,0)\|_{\op}
 &\le
 \sum_{a=0}^j\binom ja
 \sum_{b=0}^k\binom kb
 \|\partial_t^aD_y^bG(0,0)\|_{\op}
 \|\partial_t^{j-a}D_y^{k-b}H(0,0)\|_{\op}\\
 &\le
 \sum_{a=0}^j\sum_{b=0}^k
 \binom ja\binom kb
 a!\,b!\,B_{ab}
 (j-a)!\,(k-b)!\,C_{j-a,k-b}\\
 &=j!\,k!
 \sum_{a=0}^j\sum_{b=0}^k
 B_{ab}C_{j-a,k-b}.
\end{align*}
Consequently,
\[
 \mathfrak c_{j,k}(K)
 \le
 \sum_{a=0}^j\sum_{b=0}^kB_{ab}C_{j-a,k-b}
 =
 [t^jv^k](BC).
\]
Since $j,k$ were arbitrary, $\mathfrak m(G,H)\preccurlyeq BC$. This proves the addition and bilinear-product assertions.

We next prove~\eqref{eq:three-analytic-majorants}. Let $G$ be scalar valued, assume $G(0,0)=0$, and suppose $G\preccurlyeq B$ with $B_{00}=0$. Repeated application of the bilinear-product estimate gives
\begin{equation}
\label{eq:three-majorant-powers}
 G^m\preccurlyeq B^m,
 \qquad m\in\Nzero,
\end{equation}
where $G^0=B^0=1$. Moreover,
\begin{equation}
\label{eq:three-majorant-power-vanishing}
 \partial_t^jD_y^k(G^m)(0,0)=0
 \qquad\text{whenever}\qquad j+k<m.
\end{equation}
To verify~\eqref{eq:three-majorant-power-vanishing}, apply the mixed Leibniz rule to the product of the $m$ factors of $G$. Every resulting term distributes the total number $j+k$ of derivatives among these $m$ factors. If $j+k<m$, at least one factor receives no derivative and is therefore evaluated as $G(0,0)=0$.

Fix $j,k\in\Nzero$ and put $r\eqdef j+k$. Since $G(0,0)=0$ and $G$ is continuous, after restricting the neighborhood of $(0,0)$ we may assume $|G|<1/2$. Thus $(1+G)^{-1}$ and $\log(1+G)$ are well defined and smooth there. Taylor's theorem in one real variable yields smooth functions $\psi_{\exp,r}$, $\psi_{\mathrm{inv},r}$, and $\psi_{\log,r}$ near zero such that
\begin{align*}
 \mathrm e^x-1
 &=\sum_{m=1}^{r}\frac{x^m}{m!}
 +x^{r+1}\psi_{\exp,r}(x),\\
 (1+x)^{-1}
 &=\sum_{m=0}^{r}(-1)^m x^m
 +x^{r+1}\psi_{\mathrm{inv},r}(x),\\
 \log(1+x)
 &=\sum_{m=1}^{r}\frac{(-1)^{m+1}}{m}x^m
 +x^{r+1}\psi_{\log,r}(x).
\end{align*}
Empty sums are understood as zero. By~\eqref{eq:three-majorant-power-vanishing}, every mixed derivative of total order at most $r$ of
\[
 G^{r+1}\psi_{\exp,r}(G),\qquad
 G^{r+1}\psi_{\mathrm{inv},r}(G),\qquad
 G^{r+1}\psi_{\log,r}(G)
\]
vanishes at $(0,0)$. Indeed, in every term produced by the product and chain rules, at most $r$ derivatives are distributed among the $r+1$ explicit factors of $G$, so at least one of those factors remains undifferentiated and vanishes at $(0,0)$.

Using~\eqref{eq:three-majorant-powers}, the triangle inequality, and the preceding finite Taylor expansions, we therefore obtain
\begin{align*}
 \mathfrak c_{j,k}(\mathrm e^G-1)
 &\le
 \sum_{m=1}^{r}\frac1{m!}[t^jv^k]B^m,\\
 \mathfrak c_{j,k}((1+G)^{-1})
 &\le
 \sum_{m=0}^{r}[t^jv^k]B^m,\\
 \mathfrak c_{j,k}(\log(1+G))
 &\le
 \sum_{m=1}^{r}\frac1m[t^jv^k]B^m.
\end{align*}
Because $B_{00}=0$, every monomial occurring in $B^m$ has total degree at least $m$. Hence $[t^jv^k]B^m=0$ whenever $m>j+k=r$. It follows that the finite sums above are exactly the corresponding coefficients of the formal series
\[
 \mathrm e^B-1
 \eqdef\sum_{m=1}^{\infty}\frac{B^m}{m!},
 \qquad
 (1-B)^{-1}
 \eqdef\sum_{m=0}^{\infty}B^m,
 \qquad
 -\log(1-B)
 \eqdef\sum_{m=1}^{\infty}\frac{B^m}{m}.
\]
For every fixed coefficient, each series on the right contains only finitely many nonzero contributions. We have therefore proved
\[
 \mathrm e^G-1\preccurlyeq\mathrm e^B-1,
 \qquad
 (1+G)^{-1}\preccurlyeq(1-B)^{-1},
 \qquad
 \log(1+G)\preccurlyeq-\log(1-B).
\]

It remains to prove the composition assertion. To avoid confusing the curve with the vector variable of $G$, write $\eta(t)$ for the map denoted by $y(t)$ in the statement. Thus $\eta:(-\varepsilon,\varepsilon)\to\R^d$ is smooth, $\eta(0)=0$, and
\[
 \eta\preccurlyeq Y(t)=\sum_{\ell\ge0}Y_\ell t^\ell,
 \qquad Y_\ell\ge0,
 \qquad Y_0=0.
\]
Define the normalized Taylor coefficients
\[
 \mathcal G_{a,b}
 \eqdef
 \frac1{a!\,b!}\partial_t^aD_y^bG(0,0),
 \qquad
 \eta_\ell
 \eqdef
 \frac1{\ell!}\eta^{(\ell)}(0).
\]
Then $\mathcal G_{a,b}$ is an output-valued symmetric $b$-linear map,
\[
 \|\mathcal G_{a,b}\|_{\op}\le B_{ab},
 \qquad
 |\eta_\ell|\le Y_\ell,
\]
and $\eta_0=0$.
Fix $n\in\Nzero$ and let
\[
 T_nG(t,y)
 \eqdef
 \sum_{\substack{a,b\in\Nzero\\a+b\le n}}
 t^a\mathcal G_{a,b}[y^b]
\]
be the total-degree-$n$ Taylor polynomial of $G$ at $(0,0)$, where $\mathcal G_{a,0}[y^0]\eqdef\mathcal G_{a,0}$. The multivariate Peano remainder formula gives
\[
 \|G(t,y)-T_nG(t,y)\|
 =o\!\left((|t|+|y|)^n\right)
 \qquad\text{as }(t,y)\to(0,0).
\]
Since $\eta(0)=0$ and $\eta$ is differentiable at zero, $\eta(t)=O(|t|)$. Consequently,
\[
 G(t,\eta(t))-T_nG(t,\eta(t))=o(|t|^n).
\]
Let $\eta^{[n]}(t)\eqdef\sum_{\ell=1}^n\eta_\ell t^\ell$. Taylor's theorem gives $\eta(t)-\eta^{[n]}(t)=o(|t|^n)$. For every $b\ge1$, multilinearity yields the telescoping identity
\begin{align*}
 \mathcal G_{a,b}[\eta(t)^b]
 -\mathcal G_{a,b}[\eta^{[n]}(t)^b]
 &=
 \sum_{q=1}^b
 \mathcal G_{a,b}\!\left[
 \eta(t)^{q-1},
 \eta(t)-\eta^{[n]}(t),
 \eta^{[n]}(t)^{b-q}
 \right].
\end{align*}
Both $\eta(t)$ and $\eta^{[n]}(t)$ are $O(|t|)$, so the right-hand side is $o(|t|^{n+b-1})$, and hence is $o(|t|^n)$. Since $T_nG$ contains only finitely many terms,
\[
 G(t,\eta(t))
 -
 T_nG(t,\eta^{[n]}(t))
 =o(|t|^n).
\]
The left-hand side is of class $C^n$ in $t$. Its one-variable Taylor formula therefore shows that its derivatives at zero through order $n$ vanish. Hence the normalized coefficient of order $n$ of the composite is the coefficient obtained by formally substituting $\eta^{[n]}$ into $T_nG$:
\begin{equation}
\label{eq:three-majorant-composition-coefficient}
 \frac1{n!}\frac{\dd^n}{\dd t^n}G(t,\eta(t))\bigg|_{t=0}
 =
 \sum_{\substack{a,b\in\Nzero,\ \ell_1,\ldots,\ell_b\in\N_+\\
 a+\ell_1+\cdots+\ell_b=n}}
 \mathcal G_{a,b}
 [\eta_{\ell_1},\ldots,\eta_{\ell_b}].
\end{equation}
For $b=0$, the inner tuple is empty, its product is one, and the constraint in~\eqref{eq:three-majorant-composition-coefficient} reduces to $a=n$.

Taking norms in~\eqref{eq:three-majorant-composition-coefficient} gives
\begin{align*}
 \frac1{n!}
 \left\|
 \frac{\dd^n}{\dd t^n}G(t,\eta(t))\bigg|_{t=0}
 \right\|
 &\le
 \sum_{\substack{a,b\in\Nzero,\ \ell_1,\ldots,\ell_b\in\N_+\\
 a+\ell_1+\cdots+\ell_b=n}}
 \|\mathcal G_{a,b}\|_{\op}
 \prod_{q=1}^b|\eta_{\ell_q}|\\
 &\le
 \sum_{\substack{a,b\in\Nzero,\ \ell_1,\ldots,\ell_b\in\N_+\\
 a+\ell_1+\cdots+\ell_b=n}}
 B_{ab}\prod_{q=1}^bY_{\ell_q}\\
 &=[t^n]B(t,Y(t)).
\end{align*}
The final equality is the coefficient formula for formal composition. It is well defined because $Y_0=0$: for a fixed $n$, the constraint $a+\ell_1+\cdots+\ell_b=n$ forces $0\le a\le n$ and $0\le b\le n$, so only finitely many terms occur. Since $n$ was arbitrary, $G(t,\eta(t))\preccurlyeq B(t,Y(t))$.
\hfill\\
\noindent
Finally, every coefficient estimate of total degree at most $n$ obtained above uses only derivatives and majorant coefficients of total degree at most $n$. Thus the same arguments prove the corresponding assertions for the truncated relations $\preccurlyeq_n$, without any analyticity assumption or convergence of the Taylor series of $G,H,$ or $\eta$. This completes the proof.
\end{proof}

\begin{lemma}[Majorants for Geometric Primitives]
\label{lem:three-primitives}
Use the setting and notation of Lemma~\ref{lem:three-normalization}, including a fixed $\vartheta\in[0,1]$. Fix a unit vector $\xi\in\R^d$ and substitute $x=x_0+\rho t\xi$, keeping $Q=J_{x_0}^sf$ fixed. Define the nonnegative formal series
\begin{equation}
\label{eq:three-geometric-majorants}
\begin{aligned}
\ell(t)&\eqdef-\tfrac12\log(1-2t-t^2), &g(t)&\eqdef-\log\bigl(2-\mathrm e^{q\ell(t)}\bigr),\\
B(t)&\eqdef g(t)/s, &T(t)&\eqdef q\ell(t)+g(t),\\
b(t)&\eqdef\frac{(1+t)^s+(1+t)\mathrm e^{(s-1)\ell(t)}}{s!}\,\mathrm e^{sB(t)},\\
a(t)&\eqdef\frac{(1+t)^m}{\beta m!}\mathrm e^{mB(t)}+\frac{A_s}{\beta}\mathrm e^{2s(\ell(t)+B(t))}+c\,b(t)^2.
\end{aligned}
\end{equation}
Then, for every $i\in[N]$, in the sense of Definition~\ref{def:three-majorization},
\begin{equation}
\label{eq:three-primitive-bounds}
\begin{aligned}
\log\frac{R(x_0+\rho t\xi)}{R(x_0)}&\preccurlyeq B(t), &\log\frac{\theta_i(x_0+\rho t\xi)}{\theta_i(x_0)}&\preccurlyeq T(t),\\
b_i^{(\vartheta)}(x_0+\rho t\xi)&\preccurlyeq u_i b(t), &w_i^{(\vartheta)}(x_0+\rho t\xi)&\preccurlyeq V_i a(t).
\end{aligned}
\end{equation}
The scalar functions $b(t)$ and $a(t)$ are majorants and are not the normalized vectors or costs. They are independent of $\vartheta$; in particular, they are not set to zero at $\vartheta=0$. Moreover,
\begin{equation}
\label{eq:three-primitive-constants}
b(0)=2/s!\le1,\qquad a(0)=\frac12+\frac2{\beta m!}+\frac{s}{\beta(s!)^2}\le\frac43.
\end{equation}
\end{lemma}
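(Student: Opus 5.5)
The plan is to build the majorants from the bottom up with Lemma~\ref{lem:three-majorant-calculus}, along the line $x=x_0+\rho t\xi$. First, the distances. Write $r_i(x)^2=r_i(x_0)^2+2\rho t\,\xi\cdot(x_0-X_i)+\rho^2t^2$. Since $r_i(x_0)\ge\rho$, this gives $r_i(x)^2/r_i(x_0)^2=1+\epsilon_i(t)$ with $\epsilon_i\preccurlyeq 2t+t^2$. Lemma~\ref{lem:three-majorant-calculus}(2) then yields
\[
\log\frac{r_i(x)}{r_i(x_0)}=\tfrac12\log(1+\epsilon_i)\preccurlyeq-\tfrac12\log(1-2t-t^2)=\ell(t),
\]
uniformly in $i$. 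Next, $\mu_ir_i^{-q}/(\mu_ir_i(x_0)^{-q})=\mathrm e^{-q\log(r_i/r_i(x_0))}$, so $\mathrm e^{-q\log(\cdot)}-1\preccurlyeq \mathrm e^{q\ell}-1$. Averaging against the positive weights $\theta_k(x_0)$ gives
\[
\frac{S_q(x)}{S_q(x_0)}=1+G,\qquad G\preccurlyeq \mathrm e^{q\ell}-1.
\]
Hence $\log(S_q(x)/S_q(x_0))\preccurlyeq -\log(2-\mathrm e^{q\ell})=g$ by the logarithmic rule, since $-\log(1-(\mathrm e^{q\ell}-1))=g$.

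Second, the bounds for $\theta_i$ and $R$. The identity $\log(\theta_i(x)/\theta_i(x_0))=-q\log(r_i/r_i(x_0))-\log(S_q(x)/S_q(x_0))$ gives the majorant $q\ell+g=T$. For $R$, write
\[
R(x)^{2s}/R(x_0)^{2s}=\sum_i\theta_i(x_0)u_i^2\,\frac{\theta_i(x)}{\theta_i(x_0)}\,\frac{r_i(x)^{2s}}{r_i(x_0)^{2s}},
\]
which is a $\theta(x_0)u^2$-average (total mass $1$) of $\mathrm e^{-(q-2s)\log(r_i/r_i(x_0))}\cdot(S_q(x_0)/S_q(x))$. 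This is the one place where signs must be handled carefully: a crude bound gives $\mathrm e^{(q-2s)\ell+g}$ inside a logarithm. The better route, which matches the claimed $g/s$, treats $S_{q-2s}/S_{q-2s}(x_0)$ exactly as $S_q$ was treated and obtains majorant $-\log(2-\mathrm e^{(q-2s)\ell})\le g$ coefficientwise, because $q-2s<q$ and coefficients grow monotonically in the exponent. Together with $\log(S_q(x_0)/S_q(x))\preccurlyeq g$, this gives $2s\log(R/R(x_0))\preccurlyeq 2g$, i.e.\ $\log(R/R(x_0))\preccurlyeq g/s=B$. I expect this step to be the main obstacle: one must verify that $-\log(1+G)$ is also majorized by $g$, which holds because $-\log(1+G)\preccurlyeq-\log(1-|G|)$ under the same rule.

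Third, $b_i$ and $w_i$. Taylor expand $P_i-Q$ and $v_i-\nabla Q$ around $x_0$ in $t$. Lemma~\ref{lem:three-taylor} bounds the $k$-th $y$-derivatives of $P_i-J^s_{x_0}f$ at $x_0$ by $M r_i(x_0)^{s+1-k}/(s+1-k)!$ and $Q-J^s_{x_0}f=0$. Since these are polynomials of degree $\le s$ in $\rho t$, one obtains
\[
\frac{P_i-Q}{MR(x_0)^m}\preccurlyeq \frac{t_i^m(1+t)^m}{m!}
\]
after using $\rho\le r_i(x_0)$. Analogously, $\nabla P_i-\nabla Q$ contributes $t_i^s(1+t)^s/s!$, and the explicit term $\frac{M}{s!}r_i^{s-1}(x-X_i)$ is written as $r_i(x_0)^s\mathrm e^{(s-1)\log(r_i/r_i(x_0))}(x-X_i)/r_i(x_0)$ with the last factor $\preccurlyeq 1+t$. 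Dividing by $R(x)^s=R(x_0)^s\mathrm e^{s\log(R/R(x_0))}$ multiplies by at most $\mathrm e^{sB}$, since $\mathrm e^{-G}\preccurlyeq\mathrm e^{B}$ whenever $G\preccurlyeq B$. Using $t_i\le u_i^{1/s}$-type comparisons ($t_i^s=u_i$), together with $\sqrt\vartheta\le1$, gives $b_i\preccurlyeq u_ib(t)$. For $w_i$, combine three pieces by the product and addition rules: the $\delta_i$ term with $t_i^m\le V_i$, the term $(r_i/R)^{2s}=u_i^2\mathrm e^{2s(\log(r_i/r_i(x_0))-\log(R/R(x_0)))}\preccurlyeq V_i\mathrm e^{2s(\ell+B)}$, and $c|b_i|^2\preccurlyeq c u_i^2b^2\le V_icb^2$. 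Finally, \eqref{eq:three-primitive-constants} follows by evaluating at $t=0$, where $\ell(0)=g(0)=0$, and then checking the numerical inequality $a(0)\le 4/3$ for $s\ge2$ by monotonicity in $s$, as in Lemma~\ref{lem:three-normalization}.
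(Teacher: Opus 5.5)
Your proposal is correct and follows essentially the same route as the paper: you majorize $\log(r_i/r_i(x_0))$ by $\ell$, bound $\log(S_\gamma(x)/S_\gamma(x_0))$ by $g$ for both $\gamma=q$ and $\gamma=q-2s$ to obtain $\log(R/R(x_0))\preccurlyeq (g+g)/(2s)=B$ and $\log(\theta_i/\theta_i(x_0))\preccurlyeq q\ell+g=T$, and then assemble $b_i^{(\vartheta)}$ and $w_i^{(\vartheta)}$ from the Taylor bounds on $P_i-Q$ and $\nabla(P_i-Q)$ using the product and exponential rules. The step you flag as the main obstacle is immediate: $\preccurlyeq$ constrains only the absolute values of coefficients, so $-A\preccurlyeq B$ holds whenever $A\preccurlyeq B$.
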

\begin{proof}
Put $x(t)\eqdef x_0+\rho t\xi$, $r_{i0}\eqdef r_i(x_0)$, and $R_0\eqdef R(x_0)$. For $i\in[N]$ and $|t|<1$, $r_i(x(t))\ge r_{i0}-\rho|t|\ge\rho(1-|t|)>0$. Set $L_i(t)\eqdef\log(r_i(x(t))/r_{i0})$, $\alpha_i\eqdef\rho\langle x_0-X_i,\xi\rangle/r_{i0}^2$, and $\eta_i\eqdef\rho^2/r_{i0}^2$. Then
\begin{equation}
\label{eq:three-distance-log-majorant}
\frac{r_i(x(t))^2}{r_{i0}^2}=1+2\alpha_i t+\eta_i t^2,\qquad |\alpha_i|\le\frac{\rho}{r_{i0}}\le1,\qquad 0<\eta_i\le1.
\end{equation}
Since $2\alpha_i t+\eta_i t^2\preccurlyeq2t+t^2$ and both polynomials vanish at zero, Lemma~\ref{lem:three-majorant-calculus} gives $L_i(t)=\frac12\log(1+2\alpha_i t+\eta_i t^2)\preccurlyeq\ell(t)$, with $L_i(0)=\ell(0)=0$. The formal identities $\ell(t)=\frac12\sum_{k\ge1}(2t+t^2)^k/k$ and $g(t)=\sum_{k\ge1}(\mathrm e^{q\ell(t)}-1)^k/k$ give $\ell,g,B,T\in t\R_{\ge0}[[t]]$ and $a,b\in\R_{\ge0}[[t]]$.

For $\gamma\in[0,q]$, put $\omega_{i,\gamma}\eqdef\mu_i r_{i0}^{-\gamma}/S_\gamma(x_0)>0$ and $G_\gamma(t)\eqdef S_\gamma(x(t))/S_\gamma(x_0)-1$. Since $\sum_{i=1}^N\omega_{i,\gamma}=1$, one has $G_\gamma(0)=0$ and, by~\eqref{eq:three-analytic-majorants},
\[
G_\gamma(t)=\sum_{i=1}^N\omega_{i,\gamma}\bigl(\mathrm e^{-\gamma L_i(t)}-1\bigr)\preccurlyeq\sum_{i=1}^N\omega_{i,\gamma}\bigl(\mathrm e^{\gamma\ell(t)}-1\bigr)=\mathrm e^{\gamma\ell(t)}-1.
\]
The majorant on the right has zero constant coefficient, and
\[
\mathrm e^{\gamma\ell}-1=\sum_{j\ge1}\frac{\gamma^j\ell^j}{j!}\preccurlyeq\sum_{j\ge1}\frac{q^j\ell^j}{j!}=\mathrm e^{q\ell}-1.
\]
A second application of~\eqref{eq:three-analytic-majorants} yields
\begin{equation}
\label{eq:three-sum-log-majorant}
\begin{aligned}
\log\frac{S_\gamma(x(t))}{S_\gamma(x_0)}
&=\log(1+G_\gamma(t))\preccurlyeq-\log\bigl(2-\mathrm e^{\gamma\ell(t)}\bigr)\\
&=\sum_{k\ge1}\frac{(\mathrm e^{\gamma\ell(t)}-1)^k}{k}\preccurlyeq\sum_{k\ge1}\frac{(\mathrm e^{q\ell(t)}-1)^k}{k}=g(t).
\end{aligned}
\end{equation}
Define $\Lambda(t)\eqdef\log(R(x(t))/R_0)$. Since $q-2s\in[0,q]$, $R^{2s}=S_{q-2s}/S_q$, and $\theta_i=\mu_i r_i^{-q}/S_q$, equation~\eqref{eq:three-sum-log-majorant} gives
\[
\begin{aligned}
\Lambda(t)&=\frac1{2s}\left(\log\frac{S_{q-2s}(x(t))}{S_{q-2s}(x_0)}-\log\frac{S_q(x(t))}{S_q(x_0)}\right)\preccurlyeq\frac{g(t)+g(t)}{2s}=B(t),\\
\log\frac{\theta_i(x(t))}{\theta_i(x_0)}&=-qL_i(t)-\log\frac{S_q(x(t))}{S_q(x_0)}\preccurlyeq q\ell(t)+g(t)=T(t).
\end{aligned}
\]
In particular, $\Lambda(0)=B(0)=0$, and for every $\sigma\ge0$, $\mathrm e^{-\sigma\Lambda}\preccurlyeq\mathrm e^{\sigma B}$ and $\mathrm e^{\sigma L_i}\preccurlyeq\mathrm e^{\sigma\ell}$.

By~\eqref{eq:training_data}, $P_i=J_{X_i}^sf$ and $D^jQ(x_0)=D^jf(x_0)$ for $0\le j\le s$. Lemma~\ref{lem:three-taylor}, applied with base point $X_i$ and evaluation point $x_0$, therefore gives
\begin{equation}
\label{eq:three-polynomial-coefficients}
\norm{D^j(P_i-Q)(x_0)}_{\op}=\norm{D^jJ_{X_i}^sf(x_0)-D^jf(x_0)}_{\op}\le\frac{Mr_{i0}^{m-j}}{(m-j)!},\qquad 0\le j\le s.
\end{equation}
Since $\rho\le r_{i0}$ and $|\xi|=1$, equation~\eqref{eq:three-polynomial-coefficients} implies
\[
\begin{aligned}
\bigl|[t^j](P_i-Q)(x(t))\bigr|&=\frac{\rho^j}{j!}\bigl|D^j(P_i-Q)(x_0)[\xi^j]\bigr|\le\frac{Mr_{i0}^m}{j!(m-j)!},&&0\le j\le s,\\
\bigl|[t^j]\nabla(P_i-Q)(x(t))\bigr|&=\frac{\rho^j}{j!}\sup_{|h|\le1}\bigl|D^{j+1}(P_i-Q)(x_0)[\xi^j,h]\bigr|\\
&\le\frac{Mr_{i0}^s}{j!(s-j)!},&&0\le j\le s-1.
\end{aligned}
\]
As $\deg(P_i-Q)\le s=m-1$ and $\deg\nabla(P_i-Q)\le s-1$, the remaining coefficients vanish. Thus
\[
\begin{aligned}
(P_i-Q)(x(t))&\preccurlyeq Mr_{i0}^m\sum_{j=0}^s\frac{t^j}{j!(m-j)!}\preccurlyeq\frac{Mr_{i0}^m}{m!}(1+t)^m,\\
\nabla(P_i-Q)(x(t))&\preccurlyeq Mr_{i0}^s\sum_{j=0}^{s-1}\frac{t^j}{j!(s-j)!}\preccurlyeq\frac{Mr_{i0}^s}{s!}(1+t)^s.
\end{aligned}
\]

Set $t_i\eqdef r_{i0}/R_0$, so $u_i=t_i^s$ and $V_i=1+t_i^{2s}$. Since $x(t)-X_i\preccurlyeq r_{i0}+\rho t\preccurlyeq r_{i0}(1+t)$, the identity $v_i(x)=\nabla P_i(x)+(M/s!)r_i(x)^{s-1}(x-X_i)$ and~\eqref{eq:three-normalized-data} give
\begin{equation}
\label{eq:three-parameter-vector-majorant}
\begin{aligned}
b_i^{(\vartheta)}(x(t))&=\sqrt\vartheta\,b_i^{(1)}(x(t))\\
&=\frac{\sqrt\vartheta\,\mathrm e^{-s\Lambda(t)}}{MR_0^s}\left(\nabla(P_i-Q)(x(t))+\frac{M r_{i0}^{s-1}}{s!}\mathrm e^{(s-1)L_i(t)}(x(t)-X_i)\right)\\
&\preccurlyeq\sqrt\vartheta\,\frac{t_i^s}{s!}\bigl((1+t)^s+(1+t)\mathrm e^{(s-1)\ell(t)}\bigr)\mathrm e^{sB(t)}\\
&=\sqrt\vartheta\,u_i b(t)\preccurlyeq u_i b(t).
\end{aligned}
\end{equation}
For the quadratic contribution, the bilinear-product assertion of Lemma~\ref{lem:three-majorant-calculus}, with $|\langle u,v\rangle|\le|u|\,|v|$, gives
\[
c|b_i^{(\vartheta)}(x(t))|^2=c\langle b_i^{(\vartheta)}(x(t)),b_i^{(\vartheta)}(x(t))\rangle\preccurlyeq\vartheta c u_i^2b(t)^2\preccurlyeq c u_i^2b(t)^2.
\]
Since $m=s+1\le2s$, one has $t_i^m\le1$ for $0<t_i\le1$ and $t_i^m\le t_i^{2s}$ for $t_i\ge1$; hence $t_i^m\le V_i$ and $u_i^2\le V_i$. Using the signed cost in~\eqref{eq:three-normalized-data} and coefficientwise addition, we obtain
\[
\begin{aligned}
w_i^{(\vartheta)}(x(t))&=\frac{(P_i-Q)(x(t))}{\beta MR_0^m}\mathrm e^{-m\Lambda(t)}+\frac{A_s}{\beta}t_i^{2s}\mathrm e^{2s(L_i(t)-\Lambda(t))}-c|b_i^{(\vartheta)}(x(t))|^2\\
&\preccurlyeq\frac{t_i^m}{\beta m!}(1+t)^m\mathrm e^{mB(t)}+\frac{A_s}{\beta}u_i^2\mathrm e^{2s(\ell(t)+B(t))}+c u_i^2b(t)^2\\
&\preccurlyeq V_i a(t).
\end{aligned}
\]
Finally, $\ell(0)=g(0)=B(0)=0$, $A_s=1/m!+\beta/2$, and $c=s/(4\beta)$ give $b(0)=2/s!\le1$ and
\[
a(0)=\frac1{\beta m!}+\frac{A_s}{\beta}+\frac{4c}{(s!)^2}=\frac12+\frac2{\beta m!}+\frac{s}{\beta(s!)^2}\le\frac12+\frac13+\frac12=\frac43,
\]
where $\beta\ge1$, $m!\ge6$, $s!\ge2$, and $s\le2\beta$.
\end{proof}

\begin{lemma}[Gibbs Quotient Taylor Majorants]
\label{lem:three-gibbs-majorants}
In Lemma~\ref{lem:three-primitives}, let $z_{0,\vartheta}$ and $\pi_i^{(\vartheta)}$ be as in Lemma~\ref{lem:three-normalization}. Define, with $K_0=\mathrm e^{65}$ and $\kappa=24\mathrm e$,
\begin{equation}
\label{eq:three-gibbs-majorants}
 \begin{aligned}
 \alpha(t)&\eqdef T(t)+a(t)-a(0)+6c\bigl(b(t)-b(0)\bigr),\\
 X(t,v)&\eqdef\alpha(t)+2cb(t)v,\\
 Q_*(t,v)&\eqdef\frac{K_0\kappa X(t,v)}{1-\kappa X(t,v)},\\
 N_*(t,v)&\eqdef K_0\kappa\bigl(b(t)-b(0)\bigr)
 +K_0\kappa b(t)\left((1-\kappa X(t,v))^{-2}-1\right),\\
 J_*(t,v)&\eqdef\frac{N_*(t,v)+3Q_*(t,v)}{1-Q_*(t,v)},\\
 \mathcal R(t,v)&\eqdef J_*(t,v)-[v]J_*(0,v)\,v.
 \end{aligned}
\end{equation}
All coefficients in these series are nonnegative, and their constant terms vanish. In particular $\partial_v\mathcal R(0,0)=0$.
Let
\[
 \mathcal K_\vartheta(t,z)
 \eqdef z-\sum_i\lambda_i^{(\vartheta)}(t,z)
 b_i^{(\vartheta)}(x_0+\rho t\xi)
\]
be the stationarity map, where $\lambda^{(\vartheta)}(t,z)$ is the Gibbs vector in~\eqref{eq:three-gibbs-dual} for the costs $w_i^{(\vartheta)}(x_0+\rho t\xi)$, vectors $b_i^{(\vartheta)}(x_0+\rho t\xi)$, and weights $\theta_i(x_0+\rho t\xi)$. Put
$J_\vartheta=D_z\mathcal K_\vartheta(0,z_{0,\vartheta})$. Then
\begin{equation}
\label{eq:three-stationarity-remainder}
 \norm{J_\vartheta^{-1}}_{\op}\le1,
 \qquad
 \mathcal K_\vartheta(t,z_{0,\vartheta}+y)-J_\vartheta y
 \preccurlyeq\mathcal R(t,v).
\end{equation}
If $W(t)=\sum_{n\ge1}W_nt^n$ is the unique formal solution with zero constant term of
\begin{equation}
\label{eq:three-majorant-equation}
 W(t)=\mathcal R(t,W(t)),
\end{equation}
then
\[
 z_\vartheta(x_0+\rho t\xi)-z_{0,\vartheta}\preccurlyeq W(t),
\]
where $z_\vartheta(x)$ denotes the Gibbs mean at the optimizer. Moreover,
\begin{equation}
\label{eq:three-h-majorant}
 \begin{aligned}
 \mathfrak h_\vartheta(x_0+\rho t\xi)-\mathfrak h_\vartheta(x_0)
 &\preccurlyeq
 c\bigl(6W(t)+W(t)^2\bigr)
 -\log(1-Q_*(t,W(t))),\\
 \frac{F_\vartheta(x_0+\rho t\xi)-Q(x_0+\rho t\xi)}
 {\beta MR(x_0)^m}
 &\preccurlyeq\mathcal H(t),\\
 \mathcal H(t)&\eqdef
 \mathrm e^{mB(t)}
 \left[1+c(6W(t)+W(t)^2)-\log(1-Q_*(t,W(t)))\right].
 \end{aligned}
\end{equation}
At $\vartheta=0$ one has, identically in $(t,z)$,
\begin{equation}
\label{eq:three-zero-parameter-stationarity}
 \mathcal K_0(t,z)=z,
 \qquad
 J_0=I,
 \qquad
 \norm{J_0^{-1}}_{\op}=1.
\end{equation}
All displayed majorants are uniform in the fixed parameter $\vartheta\in[0,1]$.
\end{lemma}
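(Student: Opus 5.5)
We would prove the lemma in four stages: elementary properties of the series, the Jacobian bound and the remainder majorant~\eqref{eq:three-stationarity-remainder}, the fixed-point comparison with~\eqref{eq:three-majorant-equation}, and the value bound~\eqref{eq:three-h-majorant}.

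\emph{Properties of the series.} By Lemma~\ref{lem:three-primitives}, $\ell,g,B,T$ have nonnegative coefficients and vanish at $0$, and $a,b$ have nonnegative coefficients. Hence $\alpha$ and $X$ have nonnegative coefficients, with $\alpha(0)=0$ and $X(0,0)=0$. Expanding $(1-\kappa X)^{-1}$ and $(1-\kappa X)^{-2}$ as geometric series shows that $Q_*$ and $N_*$ are nonnegative and vanish at the origin; the same then holds for $J_*$, by expanding $(1-Q_*)^{-1}$. Subtracting $[v]J_*(0,v)\,v$ removes exactly the $t^0v^1$ coefficient, so $\partial_v\mathcal R(0,0)=0$. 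The remaining coefficients of $\mathcal R$ are unchanged and stay nonnegative.

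\emph{Jacobian and remainder.} By~\eqref{eq:three-dual-hessian}, $J_\vartheta=I+2c\Cov_{\pi^{(\vartheta)}}(b^{(\vartheta)}(x_0))$, so $\|J_\vartheta^{-1}\|_{\op}\le1$ exactly as in Lemma~\ref{lem:three-entropy}. For the remainder, write $\lambda_i^{(\vartheta)}(t,z_{0,\vartheta}+y)=\pi_i^{(\vartheta)}\mathrm e^{-E_i}/\sum_j\pi_j^{(\vartheta)}\mathrm e^{-E_j}$, where
\[
 E_i\eqdef-\log\tfrac{\theta_i(x(t))}{\theta_i(x_0)}+\bigl(w_i^{(\vartheta)}(x(t))-w_i^{(\vartheta)}(x_0)\bigr)+2c\bigl(z_{0,\vartheta}\cdot(b_i^{(\vartheta)}(x(t))-b_i^{(\vartheta)}(x_0))+y\cdot b_i^{(\vartheta)}(x(t))\bigr).
\]
By Lemma~\ref{lem:three-primitives}, together with $|z_{0,\vartheta}|\le3$, $u_i\le V_i$ and $V_i\ge1$, we get $E_i\preccurlyeq V_iX(t,v)$, and $E_i$ vanishes at the origin. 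Lemma~\ref{lem:three-majorant-calculus} then gives $\mathrm e^{-E_i}-1\preccurlyeq\mathrm e^{V_iX}-1=\sum_{k\ge1}V_i^kX^k/k!$. Summing against $\pi_i^{(\vartheta)}$ and using the moment bound~\eqref{eq:three-moment-bound} gives $\sum_i\pi_i^{(\vartheta)}(\mathrm e^{-E_i}-1)\preccurlyeq\sum_k K_0\kappa^kX^k=Q_*$. The normalizer is $1+G$ with $G\preccurlyeq Q_*$, so its reciprocal is majorized by $(1-Q_*)^{-1}$.

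For the numerator, split $b_i^{(\vartheta)}(x(t))=b_i^{(\vartheta)}(x_0)+\bigl(b_i^{(\vartheta)}(x(t))-b_i^{(\vartheta)}(x_0)\bigr)$; the second piece is majorized by $u_i(b(t)-b(0))$. Bound the resulting weighted sums again through the moments; the extra factor $u_i\le V_i$ is absorbed by using $(1-\kappa X)^{-2}$, which yields $N_*$. Writing the Gibbs mean minus $z_{0,\vartheta}$ as a quotient, and absorbing the $|z_{0,\vartheta}|\le3$ contribution of the normalizer, gives $\mathcal K_\vartheta(t,z_{0,\vartheta}+y)\preccurlyeq J_*$. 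Since the $t^0y^1$ coefficient of $\mathcal K_\vartheta$ is exactly $J_\vartheta$, subtracting $J_\vartheta y$ and dropping the corresponding coefficient of $J_*$ yields~\eqref{eq:three-stationarity-remainder}.

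\emph{Fixed-point comparison.} Let $y(t)\eqdef z_\vartheta(x(t))-z_{0,\vartheta}$, which is smooth by Lemma~\ref{lem:three-entropy}. Stationarity reads $y=-J_\vartheta^{-1}\bigl(\mathcal K_\vartheta(t,z_{0,\vartheta}+y)-J_\vartheta y\bigr)$. Because $\mathcal R$ has no constant term and no $t^0v^1$ term, the coefficient of $t^n$ in $\mathcal R(t,W(t))$ involves only $W_1,\dots,W_{n-1}$. This makes $W$ uniquely determined and allows induction on $n$: assuming $y\preccurlyeq_{n-1}W$, the composition rule of Lemma~\ref{lem:three-majorant-calculus} and $\|J_\vartheta^{-1}\|_{\op}\le1$ give $|y_n|\le W_n$. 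The same triangularity holds for the true map, since its $y$-linear part at $t=0$ has been moved to the left.

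\emph{Value bounds and $\vartheta=0$.} For~\eqref{eq:three-h-majorant}, evaluate the dual objective $-c|z|^2-\log Z$ at $z_{0,\vartheta}+y(t)$. The quadratic change $-c(2z_{0,\vartheta}\cdot y+|y|^2)$ is majorized by $c(6W+W^2)$. The change in $\log Z$ equals $\log$ of the normalized partition function, which is $\log(1+G)$ with $G\preccurlyeq Q_*$ after composition with $W$; Lemma~\ref{lem:three-majorant-calculus} gives the bound $-\log(1-Q_*(t,W))$. Then~\eqref{eq:three-F-normalized}, $|\mathfrak h_\vartheta(x_0)|\le1$ and $R(x(t))^m/R(x_0)^m=\mathrm e^{m\Lambda}\preccurlyeq\mathrm e^{mB}$ give the bound $\mathcal H$. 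At $\vartheta=0$, $b^{(0)}\equiv0$ by~\eqref{eq:three-zero-parameter-normalization}, so $\mathcal K_0(t,z)=z$ and $J_0=I$. None of the majorants involves $\vartheta$, which gives the uniformity.

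The main obstacle is the remainder step. The difficulty is organizing the quotient so that the factors $u_i$ from $b_i$ combine with the Gibbs tails into moments $\sum_i\pi_i V_i^k$ of the right orders. In particular, the constants $3$ and $6c$, the square in $N_*$, and the removal of exactly the $J_\vartheta y$ term must all match the series defined in the statement.
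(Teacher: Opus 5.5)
Your proposal is correct and follows essentially the same route as the paper: the log-Gibbs increment $-E_i\preccurlyeq V_iX$, the moment bound~\eqref{eq:three-moment-bound} producing $Q_*$ and $N_*$, the quotient bound by $J_*$, the triangular coefficient induction with $\norm{J_\vartheta^{-1}}_{\op}\le1$, and the dual-value expansion for $\mathfrak h_\vartheta$. One wording fix: $J_*$ majorizes the Gibbs-mean shift $G_\vartheta(t,y)\eqdef\sum_i\lambda_i^{(\vartheta)}(t,z_{0,\vartheta}+y)\,b_i^{(\vartheta)}(x_0+\rho t\xi)-z_{0,\vartheta}$, not $\mathcal K_\vartheta$ itself, because the $t^0v^1$ coefficient of $\mathcal K_\vartheta$ is $J_\vartheta$, which has norm at least one and need not be dominated by $[v]J_*(0,v)$; the remainder bound then follows from the identity $\mathcal K_\vartheta(t,z_{0,\vartheta}+y)-J_\vartheta y=-G_\vartheta(t,y)+D_yG_\vartheta(0,0)\,y$.
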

\begin{proof}
Fix $\vartheta\in[0,1]$. Write $x(t)\eqdef x_0+\rho t\xi$, $b_i(t)\eqdef b_i^{(\vartheta)}(x(t))$, $w_i(t)\eqdef w_i^{(\vartheta)}(x(t))$, $\theta_i(t)\eqdef\theta_i(x(t))$, $z_0\eqdef z_{0,\vartheta}$, and $\pi_i\eqdef\pi_i^{(\vartheta)}$; a subscript $0$ denotes evaluation at $t=0$. For sufficiently small $\varepsilon\in(0,1)$ and $|t|<\varepsilon$,
\[
\begin{gathered}
|x(t)-X_i|\ge\rho(1-|t|)>0,\qquad \theta_i(t)>0,\qquad \sum_i\theta_i(t)=1,\qquad c=\frac{s}{4(s-1)}>0,\\
R(x(t))^{2s}=\sum_i\theta_i(t)|x(t)-X_i|^{2s}\ge\rho^{2s}(1-|t|)^{2s}>0.
\end{gathered}
\]
The geometric definitions and~\eqref{eq:three-normalized-data} give $b_i\in C^\infty((-\varepsilon,\varepsilon);\R^d)$ and $w_i,\theta_i\in C^\infty((-\varepsilon,\varepsilon);\R)$. Set $Z_\vartheta(t,z)\eqdef\sum_i\theta_i(t)\mathrm e^{-w_i(t)-2cz\cdot b_i(t)}>0$. By~\eqref{eq:three-gibbs-dual},
\[
\lambda_i^{(\vartheta)}(t,z)=\frac{\theta_i(t)\mathrm e^{-w_i(t)-2cz\cdot b_i(t)}}{Z_\vartheta(t,z)},\qquad \pi_i=\lambda_i^{(\vartheta)}(0,z_0)>0,\qquad \sum_i\pi_i=1,\qquad \sum_i\pi_i b_{i0}=z_0.
\]
Define
\begin{equation}
\label{eq:three-log-gibbs-change}
\zeta_i^{(\vartheta)}(t,y)\eqdef\log\frac{\theta_i(t)}{\theta_{i0}}-(w_i(t)-w_{i0})-2cz_0\cdot(b_i(t)-b_{i0})-2cy\cdot b_i(t).
\end{equation}
Lemmas~\ref{lem:three-normalization} and~\ref{lem:three-primitives} give $|z_0|\le3$, $V_i\ge\max\{1,u_i\}$, and
\[
\begin{aligned}
b_i-b_{i0}&\preccurlyeq u_i(b-b(0)),&w_i-w_{i0}&\preccurlyeq V_i(a-a(0)),\\
\zeta_i^{(\vartheta)}&\preccurlyeq T+V_i(a-a(0))+6cu_i(b-b(0))+2cu_ibv\preccurlyeq V_iX(t,v),&\zeta_i^{(\vartheta)}(0,0)&=0.
\end{aligned}
\]
Here the product bounds are those of Lemma~\ref{lem:three-majorant-calculus}. Since $T(0)=0$ and $a,b,T$ have nonnegative coefficients, $\alpha,X$ have nonnegative coefficients and $\alpha(0)=X(0,0)=0$. In particular, $[t^jv^k]X^\ell=0$ whenever $\ell>j+k$.

Put $Q_{0,\vartheta}(t,y)\eqdef\sum_i\pi_i\mathrm e^{\zeta_i^{(\vartheta)}(t,y)}$ and $N_{0,\vartheta}(t,y)\eqdef\sum_i\pi_i b_i(t)\mathrm e^{\zeta_i^{(\vartheta)}(t,y)}$. Substitution in~\eqref{eq:three-log-gibbs-change} yields
\[
\begin{gathered}
\theta_i(t)\mathrm e^{-w_i(t)-2c(z_0+y)\cdot b_i(t)}=\theta_{i0}\mathrm e^{-w_{i0}-2cz_0\cdot b_{i0}}\mathrm e^{\zeta_i^{(\vartheta)}(t,y)},\\
Q_{0,\vartheta}(t,y)=\frac{Z_\vartheta(t,z_0+y)}{Z_\vartheta(0,z_0)}>0,\qquad \frac{N_{0,\vartheta}(t,y)}{Q_{0,\vartheta}(t,y)}=\sum_i\lambda_i^{(\vartheta)}(t,z_0+y)b_i(t),\\
Q_{0,\vartheta}(0,0)=1,\qquad N_{0,\vartheta}(0,0)=z_0.
\end{gathered}
\]
By~\eqref{eq:three-moment-bound}, coefficientwise in $(t,v)$,
\begin{equation}
\label{eq:three-moment-series}
\begin{aligned}
\sum_i\pi_i(\mathrm e^{V_iX}-1)&=\sum_{\ell\ge1}\frac{X^\ell}{\ell!}\sum_i\pi_iV_i^\ell\preccurlyeq K_0\sum_{\ell\ge1}(\kappa X)^\ell=Q_*,\\
\sum_i\pi_iV_i(\mathrm e^{V_iX}-1)&=\sum_{\ell\ge1}\frac{X^\ell}{\ell!}\sum_i\pi_iV_i^{\ell+1}\\
&\preccurlyeq K_0\kappa\sum_{\ell\ge1}(\ell+1)(\kappa X)^\ell=K_0\kappa\bigl((1-\kappa X)^{-2}-1\bigr).
\end{aligned}
\end{equation}
Thus $Q_{0,\vartheta}-1\preccurlyeq Q_*$, and $\sum_i\pi_i u_i\le\sum_i\pi_iV_i\le K_0\kappa$ gives
\[
\begin{aligned}
N_{0,\vartheta}-z_0&=\sum_i\pi_i(b_i-b_{i0})+\sum_i\pi_i b_i(\mathrm e^{\zeta_i^{(\vartheta)}}-1)\\
&\preccurlyeq(b-b(0))\sum_i\pi_i u_i+b\sum_i\pi_i u_i(\mathrm e^{V_iX}-1)\\
&\preccurlyeq K_0\kappa(b-b(0))+K_0\kappa b\bigl((1-\kappa X)^{-2}-1\bigr)=N_*.
\end{aligned}
\]
In particular, $Q_*,N_*$ have nonnegative coefficients and $Q_*(0,0)=N_*(0,0)=0$. By~\eqref{eq:three-analytic-majorants}, $Q_{0,\vartheta}^{-1}\preccurlyeq(1-Q_*)^{-1}$. Therefore
\begin{equation}
\label{eq:three-gibbs-mean-ratio}
\begin{aligned}
G_\vartheta(t,y)&\eqdef\frac{N_{0,\vartheta}(t,y)}{Q_{0,\vartheta}(t,y)}-z_0=\frac{N_{0,\vartheta}-z_0-z_0(Q_{0,\vartheta}-1)}{1+(Q_{0,\vartheta}-1)},\\
G_\vartheta&\preccurlyeq\frac{N_*+3Q_*}{1-Q_*}=J_*,\qquad G_\vartheta(0,0)=0.
\end{aligned}
\end{equation}
The expansion $J_*=(N_*+3Q_*)\sum_{\ell\ge0}Q_*^\ell$ gives nonnegative coefficients and $J_*(0,0)=0$. Hence, with $R_{jk}\eqdef[t^jv^k]\mathcal R$,
\[
R_{00}=R_{01}=0,\qquad R_{jk}=[t^jv^k]J_*\ge0\quad\bigl((j,k)\notin\{(0,0),(0,1)\}\bigr).
\]

For $h\in\R^d$, differentiation of the Gibbs quotient gives
\[
\begin{aligned}
D_z\lambda_i^{(\vartheta)}(0,z_0)[h]&=-2c\pi_i(b_{i0}-z_0)\cdot h,\\
J_\vartheta h&=h-\sum_iD_z\lambda_i^{(\vartheta)}(0,z_0)[h]b_{i0}=h+2c\sum_i\pi_i\bigl((b_{i0}-z_0)\cdot h\bigr)b_{i0}\\
&=h+2c\sum_i\pi_i\bigl((b_{i0}-z_0)\cdot h\bigr)(b_{i0}-z_0),\\
|h|^2&\le |h|^2+2c\sum_i\pi_i\bigl((b_{i0}-z_0)\cdot h\bigr)^2=h\cdot J_\vartheta h\le|h|\,|J_\vartheta h|.
\end{aligned}
\]
Thus $|J_\vartheta h|\ge|h|$, $\ker J_\vartheta=\{0\}$, and $\|J_\vartheta^{-1}\|_{\op}\le1$. Since $\mathcal K_\vartheta(t,z_0+y)=y-G_\vartheta(t,y)$ and $J_\vartheta=I-D_yG_\vartheta(0,0)$,
\begin{equation}
\label{eq:three-linear-cancellation}
\mathcal E_\vartheta(t,y)\eqdef\mathcal K_\vartheta(t,z_0+y)-J_\vartheta y=-G_\vartheta(t,y)+D_yG_\vartheta(0,0)y.
\end{equation}
For $j,k\in\Nzero$, put $E_{jk}\eqdef\partial_t^jD_y^k\mathcal E_\vartheta(0,0)/(j!k!)$. Then
\[
\begin{gathered}
E_{00}=-G_\vartheta(0,0)=0,\qquad E_{01}=-D_yG_\vartheta(0,0)+D_yG_\vartheta(0,0)=0,\\
E_{jk}=-\frac{\partial_t^jD_y^kG_\vartheta(0,0)}{j!k!}\quad\bigl((j,k)\notin\{(0,0),(0,1)\}\bigr),\\
\|E_{jk}\|_{\op}\le R_{jk}\quad(j,k\in\Nzero),\qquad \mathcal E_\vartheta\preccurlyeq\mathcal R.
\end{gathered}
\]
At $\vartheta=0$, \eqref{eq:three-zero-parameter-normalization} gives, for every $(t,z)\in(-\varepsilon,\varepsilon)\times\R^d$,
\[
\mathcal K_0(t,z)=z-\sum_i\lambda_i^{(0)}(t,z)\,0=z,\qquad J_0=I,\qquad \|J_0^{-1}\|_{\op}=1.
\]

For $n\ge1$, define the finite index set
\[
\mathcal I_n\eqdef\left\{(j,k,\boldsymbol\ell):\begin{array}{l}j,k\in\Nzero,\quad (j,k)\notin\{(0,0),(0,1)\},\\ \boldsymbol\ell=(\ell_1,\ldots,\ell_k)\in\mathbb N_+^k,\quad j+\sum_{a=1}^k\ell_a=n\end{array}\right\}.
\]
For $k=0$, the tuple is empty, its sum is $0$, its product is $1$, and $E_{j0}[\,]\eqdef E_{j0}$. For $(j,k,\boldsymbol\ell)\in\mathcal I_n$,
\[
j+k\le n,\qquad k\ge1\Longrightarrow 1\le\ell_a\le n-j-k+1\le n-1\quad(1\le a\le k).
\]
Starting with $W_1=R_{10}$, define recursively
\begin{equation}
\label{eq:three-W-recursion}
W_n\eqdef\sum_{(j,k,\boldsymbol\ell)\in\mathcal I_n}R_{jk}\prod_{a=1}^kW_{\ell_a}=[t^n]\mathcal R\left(t,\sum_{\ell=1}^{n-1}W_\ell t^\ell\right)\ge0\qquad(n\ge1).
\end{equation}
For $W\eqdef\sum_{n\ge1}W_nt^n$, one has $[t^0]\mathcal R(t,W)=R_{00}=0$ and $[t^n]\mathcal R(t,W)=W_n$ for $n\ge1$, so $W=\mathcal R(t,W)$. If $\widetilde W(0)=0$ and $\widetilde W=\mathcal R(t,\widetilde W)$, then $\widetilde W_1=R_{10}=W_1$ and
\[
\bigl(\widetilde W_\ell=W_\ell\text{ for }1\le\ell<n\bigr)\Longrightarrow\widetilde W_n=\sum_{(j,k,\boldsymbol\ell)\in\mathcal I_n}R_{jk}\prod_{a=1}^k\widetilde W_{\ell_a}=W_n.
\]

Lemma~\ref{lem:three-entropy}, applied to the smooth data $(w_i(t),b_i(t),\theta_i(t))$ and the fixed $c>0$, gives a smooth curve $\eta(t)\eqdef z_\vartheta(x(t))-z_0$ with $\eta(0)=0$ and $\mathcal K_\vartheta(t,z_0+\eta(t))=0$. Put $\eta_n\eqdef\eta^{(n)}(0)/n!$. The finite Taylor coefficient formula~\eqref{eq:three-majorant-composition-coefficient} and~\eqref{eq:three-linear-cancellation} give, for every $n\ge1$,
\[
J_\vartheta\eta_n+\sum_{(j,k,\boldsymbol\ell)\in\mathcal I_n}E_{jk}[\eta_{\ell_1},\ldots,\eta_{\ell_k}]=0.
\]
At $n=1$, $|\eta_1|=|J_\vartheta^{-1}E_{10}|\le R_{10}=W_1$. If $n\ge2$ and $|\eta_\ell|\le W_\ell$ for $1\le\ell<n$, then
\[
\begin{aligned}
|\eta_n|&\le\|J_\vartheta^{-1}\|_{\op}\sum_{(j,k,\boldsymbol\ell)\in\mathcal I_n}\|E_{jk}\|_{\op}\prod_{a=1}^k|\eta_{\ell_a}|\\
&\le\sum_{(j,k,\boldsymbol\ell)\in\mathcal I_n}R_{jk}\prod_{a=1}^kW_{\ell_a}=W_n.
\end{aligned}
\]
Consequently $|\eta^{(n)}(0)|/n!\le W_n$ for every $n\ge1$, and $\eta\preccurlyeq W$ in Definition~\ref{def:three-majorization}.

Abbreviate $\mathfrak h_\vartheta(x(t))$ by $\mathfrak h_\vartheta(t)$. The dual identity~\eqref{eq:three-gibbs-dual} and the partition-function ratio give
\[
\begin{aligned}
\mathfrak h_\vartheta(t)-\mathfrak h_\vartheta(0)&=-c\bigl(|z_0+\eta(t)|^2-|z_0|^2\bigr)-\log\frac{Z_\vartheta(t,z_0+\eta(t))}{Z_\vartheta(0,z_0)}\\
&=-c\bigl(2z_0\cdot\eta(t)+|\eta(t)|^2\bigr)-\log Q_{0,\vartheta}(t,\eta(t)).
\end{aligned}
\]
Since $\eta(0)=W(0)=0$, Lemma~\ref{lem:three-majorant-calculus} gives
\[
\begin{gathered}
Q_{0,\vartheta}(t,\eta(t))-1\preccurlyeq Q_*(t,W(t)),\qquad [t^0]Q_*(t,W(t))=0,\\
-c\bigl(2z_0\cdot\eta+|\eta|^2\bigr)\preccurlyeq c(6W+W^2),\\
\mathfrak h_\vartheta(t)-\mathfrak h_\vartheta(0)\preccurlyeq c(6W+W^2)+\sum_{\ell\ge1}\frac{Q_*(t,W(t))^\ell}{\ell}=c(6W+W^2)-\log(1-Q_*(t,W(t))).
\end{gathered}
\]
Finally, put $L(t)\eqdef\log(R(x(t))/R(x_0))$. By~\eqref{eq:three-primitive-bounds}, $L(0)=B(0)=0$, $L\preccurlyeq B$, and $\mathrm e^{mL}\preccurlyeq\mathrm e^{mB}$. Since~\eqref{eq:three-normalized-bounds} gives $|\mathfrak h_\vartheta(0)|\le1$, \eqref{eq:three-F-normalized} yields
\[
\begin{aligned}
\frac{F_\vartheta(x(t))-Q(x(t))}{\beta MR(x_0)^m}&=\mathrm e^{mL(t)}\mathfrak h_\vartheta(t)\\
&\preccurlyeq\mathrm e^{mB(t)}\left[1+c(6W(t)+W(t)^2)-\log(1-Q_*(t,W(t)))\right]=\mathcal H(t).
\end{aligned}
\]
The data $a,b,B,T,c,K_0,\kappa$ in~\eqref{eq:three-gibbs-majorants} are independent of $\vartheta$; \eqref{eq:three-W-recursion} therefore gives the same $W$ and $\mathcal H$ for every $\vartheta\in[0,1]$.
\end{proof}

\begin{lemma}[Uniform Radius for Majorants]
\label{lem:three-majorant-radius}
For every integer $s\ge2$ and real $q\ge2s+2$, the series in Lemmas~\ref{lem:three-primitives} and \ref{lem:three-gibbs-majorants} satisfy
\begin{equation}
\label{eq:three-J-geometric-bound}
 J_*(t,v)\preccurlyeq\frac{C_0(qt+v)}{1-C_0(qt+v)},
 \qquad C_0 \eqdef  \mathrm{e}^{100}.
\end{equation}
The solution $W$ of \eqref{eq:three-majorant-equation} has nonnegative coefficients and satisfies
\begin{equation}
\label{eq:three-radius-bound}
 \tau \eqdef \frac1{64C_0^3q},\qquad
 r_* \eqdef \frac1{16C_0^2},\qquad
 W(\tau)\le r_*,\qquad \mathcal H(\tau)\le8.
\end{equation}
In particular, $[t^n]\mathcal H(t)\le8\tau^{-n}$ for every $n\in\N$. Since the scalar series in Lemmas~\ref{lem:three-primitives} and~\ref{lem:three-gibbs-majorants} are unchanged, every conclusion here holds uniformly for each fixed $\vartheta\in[0,1]$.
\end{lemma}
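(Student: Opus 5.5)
The plan is to majorize every series in sight by a single geometric series in the combined variable $u\eqdef qt+v$. Then $W$ and $\mathcal H$ are controlled by solving a scalar fixed-point inequality at the explicit point $t=\tau$.

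\textbf{Step 1: the primitives.} First I bound the primitives of Lemma~\ref{lem:three-primitives} coefficientwise by geometric series in $qt$. Since $2t+t^2\preccurlyeq 3t/(1-t)$, the function $\ell$ is majorized by a multiple of $t/(1-Ct)$. Next, $\mathrm e^{q\ell}-1$ is majorized by a geometric series in $C'qt$, because composing with a zero-constant-term series is monotone (Lemma~\ref{lem:three-majorant-calculus}). Applying $-\log(2-\cdot)$ once more gives $g,B,T\preccurlyeq C_1qt/(1-C_1qt)$, using $q\ge 2s+2\ge 6$ to absorb the $s$-dependence. For $b$ and $a$, the prefactors $(1+t)^s/s!$ and $(1+t)^m/(\beta m!)$ have coefficients bounded by absolute constants. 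For instance $\binom{s}{j}/s!\le 1/j!$, while $\mathrm e^{sB}$ and $\mathrm e^{mB}$ only involve $s\,g/s=g$ up to the factor $m/s\le 3/2$. Hence $b-b(0)$ and $a-a(0)$ are majorized by $C_2qt/(1-C_2qt)$, and $b$ by $1+C_2qt/(1-C_2qt)$.

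\textbf{Step 2: the Gibbs series.} Feeding Step~1 into the definitions, $X\preccurlyeq C_3u/(1-C_3u)$. Then $\kappa X$, $Q_*$ and $N_*$, built from $(1-\kappa X)^{-1}$ and $(1-\kappa X)^{-2}$, are majorized by $C_4u/(1-C_4u)$, where the large constants $K_0=\mathrm e^{65}$ and $\kappa=24\mathrm e$ are simply absorbed. Finally $J_*=(N_*+3Q_*)(1-Q_*)^{-1}$ yields \eqref{eq:three-J-geometric-bound} with $C_0=\mathrm e^{100}$. The only bookkeeping fact needed is that if $F\preccurlyeq\frac{Au}{1-Au}$ with $F(0)=0$, then $(1-F)^{-1}-1\preccurlyeq\frac{2Au}{1-2Au}$. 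I expect this tracking of explicit constants to be the main tedious obstacle. The plan is to use crude uniform choices at every step (each constant at most doubling or squaring) so that the final value stays below $\mathrm e^{100}$.

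\textbf{Step 3: evaluation at $t=\tau$.} Since $\mathcal R\preccurlyeq J_*$ and all coefficients are nonnegative, $W$ is the formal solution of $W=\mathcal R(t,W)$. By the recursion \eqref{eq:three-W-recursion} and induction on coefficients, its coefficients are dominated by those of any nonnegative formal solution of $V=\mathcal R(t,V)$ with a larger right-hand side. The $v$-linear part of $\mathcal R$ at $t=0$ is removed. So for $0<t\le\tau$ the partial sums $W^{[n]}(t)$ satisfy $W^{[n+1]}(t)\le\mathcal R(t,W^{[n]}(t))$, evaluated as a convergent series. I then show that the interval $[0,r_*]$ is invariant: for $0\le v\le r_*$ and $t=\tau$, we have $\mathcal R(\tau,v)\le J_*(\tau,v)-\partial_vJ_*(0,0)\,v$. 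Here I use that $J_*-\text{(its $v$-linear term at $t=0$)}$ only contains monomials with $t$ or $v^2$. These are bounded by $\sum_{j+k\ge1,(j,k)\neq(0,1)}(2C_0)^{j+k}(q\tau)^j v^k$, which is at most $4C_0q\tau+8C_0^2v^2\le r_*/16+r_*/2<r_*$ with the stated $\tau$ and $r_*$. Monotone convergence of the partial sums then gives $W(\tau)\le r_*$.

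\textbf{Step 4: bounding $\mathcal H$.} Substituting into $\mathcal H$, we get $Q_*(\tau,W(\tau))\le 2C_0(q\tau+r_*)\le 1/4$, so $-\log(1-Q_*)\le\log(4/3)$. Also $c(6W+W^2)\le 1$, and $\mathrm e^{mB(\tau)}\le\mathrm e^{1}$ because $mB(\tau)\le\frac32\cdot 2C_1q\tau$ is tiny. Hence $\mathcal H(\tau)\le\mathrm e(2+\log\tfrac43)\le 8$. Nonnegativity of the coefficients gives $[t^n]\mathcal H\,\tau^n\le\mathcal H(\tau)$, which is the final claim. Uniformity in $\vartheta$ is immediate because no series involved depends on $\vartheta$.
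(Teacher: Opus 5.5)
Your outline matches the paper's (majorize $J_*$ by a geometric series in $qt+v$, run the nonnegative iteration $W\mapsto\mathcal R(t,W)$, show $[0,r_*]$ is invariant at $t=\tau$, then evaluate $\mathcal H(\tau)$). However, the invariance computation in Step~3 fails as written.

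Since $q\tau=(64C_0^3)^{-1}$, you get $4C_0q\tau=(16C_0^2)^{-1}=r_*$, not $r_*/16$. Combined with $8C_0^2v^2\le r_*/2$, your displayed bound only yields $\mathcal R(\tau,v)\le\tfrac32 r_*$, so $W(\tau)\le r_*$ does not follow.

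The loss comes from two avoidable factors of $2$:
\begin{itemize}
\item replacing $\binom{j+k}{j}$ by $2^{j+k}$ doubles the pure-$t$ linear term;
\item bounding $1/((1-x)(1-y))$ by $2$ doubles everything again, even though $x=2C_0q\tau=r_*/2$ and $y=2C_0v\le(8C_0)^{-1}$ are both $O(C_0^{-1})$.
\end{itemize}
The step is repairable in either of two ways:
\begin{itemize}
\item Keep the denominator sharp. Your sum equals $(x+y^2-xy^2)/((1-x)(1-y))\le(1+C_0^{-1})\bigl(\tfrac12+\tfrac14\bigr)r_*<r_*$.
\item As the paper does, keep the binomial coefficients and delete only the $v$-coefficient from the majorant. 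This gives $\mathcal R(t,v)\preccurlyeq C_0qt+C_0^2(qt+v)^2/(1-C_0(qt+v))$, hence $\mathcal R(\tau,r_*)\le\tfrac{21}{59}r_*$.
\end{itemize}

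In Steps~1--2 you take a different route from the paper. You propagate geometric majorants through every composition. This is sound: sums, products, $(1-F)^{-1}$, $-\log(1-F)$ and multiplicative constants such as $K_0$ all preserve the form $Au/(1-Au)$ up to enlarging $A$. But your claim that the final ratio stays below $\mathrm e^{100}$ is asserted rather than checked, and this is where all the work lies.

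The paper avoids this bookkeeping entirely. It evaluates each scalar series numerically at the single point $(t_0,v_0)=(C_0^{-1}/q,\,C_0^{-1})$, shows $J_*(t_0,v_0)<1$, and reads off $[t^jv^k]J_*\le t_0^{-j}v_0^{-k}=C_0^{j+k}q^j$. The same numerical values then give $mB(\tau)\le 12C_0^{-1}$ and $Q_*(\tau,W(\tau))\le\mathrm e^{-20}$ by monotonicity, which is exactly what your Step~4 needs.
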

The proof of this lemma uses the following notation:
we write $\R[[t]]$ for the ring of formal power series in $t$ with real coefficients and $\R_{\ge0}[[t]]$ for its subset consisting of series with nonnegative coefficients:
\[
\R[[t]]\eqdef\left\{\sum_{n\ge0}a_nt^n:a_n\in\R\right\},
\qquad
\R_{\ge0}[[t]]\eqdef\left\{\sum_{n\ge0}a_nt^n:a_n\ge0\right\}.
\]
For every integer $j\ge0$, membership $A\in t^j\R[[t]]$ means that $[t^n]A=0$ for $0\le n<j$; membership $A\in t^j\R_{\ge0}[[t]]$ additionally requires all coefficients of $A$ to be nonnegative. These notations impose no convergence assumption.
\begin{proof}
Put $\varepsilon\eqdef C_0^{-1}=\mathrm e^{-100}$, $t_0\eqdef\varepsilon/q$, and $v_0\eqdef\varepsilon$. Recall $m=s+1$, $\beta=s-1$, $c=s/(4\beta)\le1/2$, $K_0=\mathrm e^{65}$, and $\kappa=24\mathrm e$. The series in~\eqref{eq:three-geometric-majorants} and~\eqref{eq:three-gibbs-majorants} have nonnegative coefficients. Numerical evaluations of such series are understood as increasing limits of finite partial sums. Since $n!\ge2^{n-1}$ for $n\ge1$, with strict inequality for $n\ge3$,
\[
\frac83=1+1+\frac1{2!}+\frac1{3!}<\mathrm e=\sum_{n=0}^{\infty}\frac1{n!}<1+\sum_{n=1}^{\infty}2^{1-n}=3.
\]
In particular, $0<\varepsilon<2^{-100}$, $q\ge6$, $s/q\le1/2$, and $m/q\le1/2$. For $0\le y\le1/2$,
\begin{equation}
\label{eq:three-elementary-series-bounds}
\begin{aligned}
-\log(1-y)&=\sum_{n\ge1}\frac{y^n}{n}\le\sum_{n\ge1}y^n=\frac{y}{1-y}\le2y,\\
\mathrm e^y-1&=\sum_{n\ge1}\frac{y^n}{n!}\le\sum_{n\ge1}y^n=\frac{y}{1-y}\le2y.
\end{aligned}
\end{equation}
For $0\le t\le1/8$, $2t+t^2\le17/64$ and $2t+t^2\le(17/8)t$, so
\[
\ell(t)=\frac12\sum_{n\ge1}\frac{(2t+t^2)^n}{n}\le\frac{2t+t^2}{2(1-2t-t^2)}\le\frac{68}{47}t\le2t.
\]
Since $t_0<1/8$, equations~\eqref{eq:three-geometric-majorants} and~\eqref{eq:three-elementary-series-bounds} give
\begin{equation}
\label{eq:three-small-primitive-values}
\begin{aligned}
q\ell(t_0)&\le2\varepsilon,&\mathrm e^{q\ell(t_0)}-1&\le4\varepsilon,\\
2-\mathrm e^{q\ell(t_0)}&\ge1-4\varepsilon>\frac12,&g(t_0)&\le2\bigl(\mathrm e^{q\ell(t_0)}-1\bigr)\le8\varepsilon,\\
T(t_0)&=q\ell(t_0)+g(t_0)\le10\varepsilon,&sB(t_0)&=g(t_0)\le8\varepsilon,\\
mB(t_0)&=\frac ms g(t_0)\le12\varepsilon.&&
\end{aligned}
\end{equation}
Here $m/s\le3/2$, and the bound $\mathrm e^{q\ell(t_0)}-1<1$ gives convergence of the defining logarithmic series for $g(t_0)$.

The inequalities $\log(1+t_0)\le t_0$, $s/q\le1/2$, and $(2s-1)/q\le1$ imply
\[
\begin{aligned}
(1+t_0)^s&\le\mathrm e^{st_0}\le\mathrm e^{\varepsilon/2},\\
(1+t_0)\mathrm e^{(s-1)\ell(t_0)}&\le\mathrm e^{t_0+2(s-1)t_0}=\mathrm e^{(2s-1)\varepsilon/q}\le\mathrm e^\varepsilon.
\end{aligned}
\]
Thus, using $b(0)=2/s!\le1$ and~\eqref{eq:three-elementary-series-bounds},
\begin{equation}
\label{eq:three-b-small-value}
\begin{aligned}
b(t_0)&\le\frac{2}{s!}\mathrm e^{9\varepsilon}=b(0)\mathrm e^{9\varepsilon}\le1+18\varepsilon\le2,\\
0\le b(t_0)-b(0)&\le b(0)(\mathrm e^{9\varepsilon}-1)\le18\varepsilon.
\end{aligned}
\end{equation}
For the three terms defining $a$, the respective factors satisfy
\[
\begin{aligned}
m\log(1+t_0)+mB(t_0)&\le\frac m q\varepsilon+12\varepsilon\le13\varepsilon,\\
2s\bigl(\ell(t_0)+B(t_0)\bigr)&\le\frac{4s}{q}\varepsilon+16\varepsilon\le18\varepsilon,\\
b(t_0)^2&\le b(0)^2\mathrm e^{18\varepsilon}.
\end{aligned}
\]
Consequently, by~\eqref{eq:three-primitive-constants} and $18\varepsilon<1/2$,
\begin{equation}
\label{eq:three-a-small-value}
\begin{aligned}
a(t_0)&\le\left(\frac1{\beta m!}+\frac{A_s}{\beta}+cb(0)^2\right)\mathrm e^{18\varepsilon}=a(0)\mathrm e^{18\varepsilon},\\
0\le a(t_0)-a(0)&\le\frac43(\mathrm e^{18\varepsilon}-1)\le48\varepsilon.
\end{aligned}
\end{equation}
Substitution in~\eqref{eq:three-gibbs-majorants}, with $6c\le3$ and $2c\le1$, gives
\begin{equation}
\label{eq:three-X-small-value}
\begin{aligned}
\alpha(t_0)&\le10\varepsilon+48\varepsilon+3\cdot18\varepsilon=112\varepsilon,\\
X(t_0,v_0)&=\alpha(t_0)+2cb(t_0)v_0\le112\varepsilon+2\varepsilon=114\varepsilon\le128\varepsilon.
\end{aligned}
\end{equation}
Set $Y\eqdef\kappa X(t_0,v_0)$. Then $0\le Y\le72\cdot128\varepsilon=9216\varepsilon<1/4$, so $1-Y>3/4$. Since $18432<(8/3)^{15}<\mathrm e^{15}$,
\begin{equation}
\label{eq:three-Q-small-value}
\begin{aligned}
Q_*(t_0,v_0)&=K_0\sum_{n\ge1}Y^n=\frac{K_0Y}{1-Y}\le2K_0Y\\
&\le18432\mathrm e^{65}\varepsilon\le\mathrm e^{80}\varepsilon=\mathrm e^{-20}<\frac12.
\end{aligned}
\end{equation}
For $0\le Y\le1/4$, $\sum_{n\ge1}(n+1)Y^n=(1-Y)^{-2}-1=Y(2-Y)/(1-Y)^2\le(32/9)Y\le4Y$. Hence, using $5{,}309{,}712<(8/3)^{20}<\mathrm e^{20}$,
\begin{equation}
\label{eq:three-N-small-value}
\begin{aligned}
N_*(t_0,v_0)&=K_0\kappa\bigl(b(t_0)-b(0)\bigr)+K_0\kappa b(t_0)\sum_{n\ge1}(n+1)Y^n\\
&\le\mathrm e^{65}\bigl(72\cdot18+72\cdot2\cdot4\cdot9216\bigr)\varepsilon\\
&=5{,}309{,}712\mathrm e^{65}\varepsilon\le\mathrm e^{85}\varepsilon.
\end{aligned}
\end{equation}
In particular, $1-Q_*(t_0,v_0)\ge1-\mathrm e^{-20}>1/2$. Since $3<\mathrm e^5$ and $4<\mathrm e^2$, equations~\eqref{eq:three-Q-small-value}--\eqref{eq:three-N-small-value} yield
\begin{equation}
\label{eq:three-J-small-value}
\begin{aligned}
J_*(t_0,v_0)=\bigl(N_*(t_0,v_0)+3Q_*(t_0,v_0)\bigr)\sum_{n\ge0}Q_*(t_0,v_0)^n
\le2\bigl(\mathrm e^{85}+3\mathrm e^{80}\bigr)\varepsilon\le4\mathrm e^{85}\varepsilon<\mathrm e^{87}\varepsilon=\mathrm e^{-13}<1.
\end{aligned}
\end{equation}
Write $J_*(t,v)=\sum_{j,k\ge0}c_{jk}t^jv^k$, where $c_{jk}\ge0$ and $c_{00}=0$. For $j+k\ge1$, equation~\eqref{eq:three-J-small-value} gives
\[
\begin{aligned}
0\le c_{jk}t_0^jv_0^k&\le\sum_{a,b\ge0}c_{ab}t_0^av_0^b=J_*(t_0,v_0)<1,\\
c_{jk}&\le t_0^{-j}v_0^{-k}=C_0^{j+k}q^j\le C_0^{j+k}\binom{j+k}{j}q^j\\
&=[t^jv^k]\sum_{n\ge1}C_0^n(qt+v)^n=[t^jv^k]\frac{C_0(qt+v)}{1-C_0(qt+v)}.
\end{aligned}
\]
This proves~\eqref{eq:three-J-geometric-bound}, including its zero constant coefficient. Write $\mathcal R(t,v)=\sum_{a,b\ge0}r_{ab}t^av^b$. By~\eqref{eq:three-gibbs-majorants}, $r_{01}=0$ and $r_{ab}=c_{ab}$ for $(a,b)\ne(0,1)$. Deleting the coefficient of $v$ on both sides of~\eqref{eq:three-J-geometric-bound} therefore gives
\begin{equation}
\label{eq:three-R-scalar-majorant}
\mathcal R(t,v)\preccurlyeq C_0qt+\sum_{n\ge2}C_0^n(qt+v)^n=C_0qt+\frac{C_0^2(qt+v)^2}{1-C_0(qt+v)}.
\end{equation}
For $\tau$ and $r_*$ in~\eqref{eq:three-radius-bound},
\[
q\tau+r_*\le\frac5{64C_0^2},\qquad C_0(q\tau+r_*)\le\frac5{64C_0}\le\frac5{64},\qquad 1-C_0(q\tau+r_*)\ge\frac{59}{64}.
\]
The convergent geometric series in~\eqref{eq:three-R-scalar-majorant} consequently yields
\begin{equation}
\label{eq:three-majorant-invariance}
\mathcal R(\tau,r_*)\le\frac1{64C_0^2}+\frac{25}{3776C_0^2}=\frac{21}{59}r_*<r_*.
\end{equation}

Define $W^{(0)}(t)\eqdef0$ and $W^{(n+1)}(t)\eqdef\mathcal R(t,W^{(n)}(t))$ for $n\ge0$. For $U,V\in t\R_{\ge0}[[t]]$, $U\preccurlyeq V$ implies $U^b\preccurlyeq V^b$ for every $b\ge0$, and hence $\mathcal R(t,U)\preccurlyeq\mathcal R(t,V)$ because $r_{ab}\ge0$. Since $r_{00}=0$ and $0=W^{(0)}\preccurlyeq\mathcal R(t,0)=W^{(1)}$, induction gives $W^{(n)}\in t\R_{\ge0}[[t]]$ and $0\preccurlyeq W^{(n)}\preccurlyeq W^{(n+1)}$. For $U,V\in t\R[[t]]$,
\[
\mathcal R(t,U)-\mathcal R(t,V)=(U-V)\sum_{\substack{a\ge0\\b\ge1}}r_{ab}t^a\sum_{\ell=0}^{b-1}U^\ell V^{b-1-\ell}.
\]
Every summand in the last factor belongs to $t^{a+b-1}\R[[t]]$. For each $k\ge0$, only the finitely many pairs $(a,b)$ with $a+b\le k+1$ can contribute to its coefficient of $t^k$. The factor is therefore a well-defined formal series, and $r_{01}=0$ gives its membership in $t\R[[t]]$. Consequently, for every $j\ge1$,
$
U-V\in t^j\R[[t]]$ implies that $\mathcal R(t,U)-\mathcal R(t,V)\in t^{j+1}\R[[t]]$.  
Starting from $W^{(1)}-W^{(0)}=\mathcal R(t,0)\in t\R[[t]]$, induction yields
\[
W^{(n+1)}-W^{(n)}\in t^{n+1}\R[[t]],\qquad [t^j]W^{(n)}=[t^j]W^{(j)}\quad(n\ge j\ge1).
\]
Put $W_j\eqdef[t^j]W^{(j)}\ge0$. The same order estimate gives
\[
W_j=[t^j]\mathcal R(t,W^{(j-1)}(t))=[t^j]\mathcal R\left(t,\sum_{k=1}^{j-1}W_kt^k\right),\qquad j\ge1.
\]
These are the coefficients of the unique formal solution $W$ by~\eqref{eq:three-W-recursion}.
Let $U(t)=\sum_{j\ge1}u_jt^j$ have $u_j\ge0$ and $U(\tau)\le r_*$. Rearrangement of nonnegative sums gives
\[
\begin{aligned}
\sum_{n\ge0}\tau^n[t^n]\mathcal R(t,U(t))&=\sum_{a,b\ge0}r_{ab}\tau^a\left(\sum_{j\ge1}u_j\tau^j\right)^b\\
&=\mathcal R(\tau,U(\tau))\le\mathcal R(\tau,r_*)<r_*.
\end{aligned}
\]
In particular, the composed series converges at $\tau$. Applying this identity inductively from $W^{(0)}(\tau)=0$ yields $W^{(n)}(\tau)\le r_*$ for every $n\ge0$. For every $L\ge1$, stabilization therefore gives
\[
0\le\sum_{j=1}^{L}W_j\tau^j=\sum_{j=1}^{L}\bigl([t^j]W^{(L)}(t)\bigr)\tau^j\le W^{(L)}(\tau)\le r_*.
\]
Taking the increasing limit in $L$ proves $W(\tau)=\sum_{j\ge1}W_j\tau^j\le r_*$.

Since $\tau/t_0=1/(64C_0^2)\le1$ and $r_*/v_0=1/(16C_0)\le1$, nonnegativity of coefficients and~\eqref{eq:three-small-primitive-values}--\eqref{eq:three-Q-small-value} imply
$
mB(\tau)\le mB(t_0)\le12\varepsilon$ and $Q_*(\tau,W(\tau))\le Q_*(t_0,v_0)\le\mathrm e^{-20}<\frac12$.  
The same nonnegative-sum identity and $Q_*(0,W(0))=Q_*(0,0)=0$ give
\[
\begin{aligned}
\sum_{n\ge0}\tau^n[t^n]\sum_{k\ge1}\frac{Q_*(t,W(t))^k}{k}&=\sum_{k\ge1}\frac{Q_*(\tau,W(\tau))^k}{k}=-\log(1-Q_*(\tau,W(\tau)))<\infty.
\end{aligned}
\]
Equation~\eqref{eq:three-h-majorant}, together with $c\le1/2$, now gives
\begin{equation}
\label{eq:three-H-small-value}
\begin{aligned}
\mathcal H(\tau)&\le\mathrm e^{12\varepsilon}\left[1+\frac12(6r_*+r_*^2)-\log(1-\mathrm e^{-20})\right]\\
&\le2\left(1+\frac{97}{512}+2\mathrm e^{-20}\right)<2\left(1+\frac{97}{512}+\frac12\right)<4<8.
\end{aligned}
\end{equation}
Here $r_*=(16C_0^2)^{-1}\le1/16$ and $\frac12(6/16+1/16^2)=97/512$, while~\eqref{eq:three-elementary-series-bounds} gives $\mathrm e^{12\varepsilon}\le1+24\varepsilon\le2$ and $-\log(1-\mathrm e^{-20})\le2\mathrm e^{-20}<1/2$. Finally, writing $\mathcal H_n\eqdef[t^n]\mathcal H(t)\ge0$,
\[
0\le\mathcal H_n\tau^n\le\sum_{k\ge0}\mathcal H_k\tau^k=\mathcal H(\tau)<8,\qquad\mathcal H_n\le8\tau^{-n}\quad(n\ge0)
\]
which completes the proof.
\end{proof}

\begin{lemma}[Local Derivative Errors Off Sites]
\label{lem:three-local-errors}
Under \eqref{eq:training_data} and~\eqref{eq:doubling}, suppose $s\ge2$ and fix $\vartheta\in[0,1]$. Set
\begin{equation}
\label{eq:three-constant-D}
 C_s(D)\eqdef D^3\left[\mathrm e^{2000}(s+1)
 \bigl(2s+\lceil\log_2D\rceil+2\bigr)\right]^{s+1}.
\end{equation}
Then $F_\vartheta\in C^\infty(\R^d\setminus\mathbb X^N)$ and, for $x\notin\mathbb X^N$ and $0\le j\le s$,
\begin{equation}
\label{eq:three-local-errors}
 \|D^jF_\vartheta(x)-D^jf(x)\|_{\op}
 \le C_s(D)M\rho_{\mathbb X^N}(x)^{s+1-j}.
\end{equation}
The constant is uniform in the fixed parameter $\vartheta\in[0,1]$. In particular, at $D=2^{46d}$,
\begin{equation}
\label{eq:three-constant-ds}
 C_s(2^{46d})=2^{138d}
 \left[\mathrm e^{2000}(s+1)(2s+46d+2)\right]^{s+1}.
\end{equation}
\end{lemma}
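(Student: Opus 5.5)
Smoothness of $F_\vartheta$ off the sites follows from Lemma~\ref{lem:three-entropy} applied to the normalized data of Lemma~\ref{lem:three-normalization}. Locally near any $x_0\notin\mathbb X^N$, the costs $w_i^{(\vartheta)}$, vectors $b_i^{(\vartheta)}$ and weights $\theta_i$ are smooth and $\theta_i>0$. The minimum $\mathfrak h_\vartheta$ therefore depends smoothly on $x$, and so does $F_\vartheta=Q+\beta MR^m\mathfrak h_\vartheta$ by~\eqref{eq:three-F-normalized}.

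For the quantitative bound, fix $x_0\notin\mathbb X^N$, put $\rho=\rho_{\mathbb X^N}(x_0)$ and $Q=J^s_{x_0}f$. Since $D^jQ(x_0)=D^jf(x_0)$ for $j\le s$, it suffices to bound $\|D^j(F_\vartheta-Q)(x_0)\|_{\op}$. Fix a unit vector $\xi$ and consider $t\mapsto F_\vartheta(x_0+\rho t\xi)-Q(x_0+\rho t\xi)$. By Lemma~\ref{lem:three-gibbs-majorants}, this function divided by $\beta MR(x_0)^m$ is majorized by $\mathcal H(t)$. By Lemma~\ref{lem:three-majorant-radius}, $[t^j]\mathcal H\le 8\tau^{-j}$ with $\tau=(64C_0^3q)^{-1}$. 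Since the $j$-th $t$-derivative at $0$ equals $\rho^j D^j(F_\vartheta-Q)(x_0)[\xi^j]$, we get
\[
\bigl|D^j(F_\vartheta-Q)(x_0)[\xi^j]\bigr|\le 8\,j!\,\beta M R(x_0)^{m}\rho^{-j}\tau^{-j}.
\]
Lemma~\ref{lem:three-scale} gives $R(x_0)^m\le L_{s,D}^{s+1}\rho^{s+1}\le 8\,2^{s+1}D^3\rho^{s+1}$, so the right side is $\lesssim D^3 M\rho^{s+1-j}$ times factors depending only on $s$ and $q$. Polarization, \eqref{eq:three-polarization-bound}, converts this diagonal bound into an operator-norm bound at the cost of $j^j/j!$. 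For $j=0$ no polarization is needed.

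It remains to collect the constants: $8\beta\cdot 8\cdot 2^{s+1}\cdot j^j\cdot(64C_0^3q)^j$ with $j\le s$, $\beta\le s$ and $C_0=\mathrm e^{100}$. We will crudely bound $64C_0^3\le \mathrm e^{305}$, $j^j\le (s+1)^{s+1}$, and absorb $8\cdot 8\cdot 2^{s+1}\beta$ into $\mathrm e^{\cdot(s+1)}$, giving at most $D^3[\mathrm e^{2000}(s+1)q]^{s+1}=C_s(D)$, with $q=2s+\lceil\log_2D\rceil+2$. Substituting $D=2^{46d}$ gives $D^3=2^{138d}$ and $q=2s+46d+2$, which is \eqref{eq:three-constant-ds}. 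Uniformity in $\vartheta$ is inherited from the $\vartheta$-independence of $\mathcal H$ and $\tau$.

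The main obstacle is checking that the majorant relation really controls the derivative of the composite $t$-function at order $j$, since $\mathcal H$ majorizes coefficients only. This holds by Definition~\ref{def:three-majorization} with $k=0$, so it reduces to bookkeeping. The only genuine care needed is the factor $\rho^j$ coming from the chain rule, which must be matched against $R(x_0)^m$ via the comparison $\rho\le R\le L_{s,D}\rho$.
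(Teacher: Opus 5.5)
Your proposal is correct and follows the paper's proof essentially step for step. The steps are the same: off-site smoothness via the normalized identity~\eqref{eq:three-F-normalized} and Lemma~\ref{lem:three-entropy}; the directional coefficient bound $[t^j]\mathcal H\le 8\tau^{-j}$ with $\tau^{-1}=64\mathrm e^{300}q$; polarization, whose factor $j^j/j!$ cancels the $j!$; the scale comparison of Lemma~\ref{lem:three-scale}; and a crude absorption of the numerical factors. The paper writes that last step as $8\beta L_{s,D}^m(64\mathrm e^{300}qm)^m\le D^3(\mathrm e^{315}qm)^m\le C_s(D)$, which is exactly the bookkeeping you describe.
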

\begin{proof}
Fix $x_0\notin\mathbb X^N$ and the polynomial $Q=J_{x_0}^sf$. The positive distances, finite sums, quotients, and positive real powers defining $R$, $\theta$, and $v$ are smooth off the sites. In a neighborhood of $x_0$, Lemma~\ref{lem:three-normalization} gives the normalized identity for $F_\vartheta$ with this fixed $Q$. The coefficient $c=s/(4(s-1))$ is positive for every parameter value and every $\theta_i$ is positive, so Lemma~\ref{lem:three-entropy} gives a smooth optimal value in that identity. Hence $F_\vartheta$ is smooth near $x_0$; arbitrariness of $x_0$ proves off-site smoothness, including at $\vartheta=0$.

Let $\rho=\rho_{\mathbb X^N}(x_0)>0$ and keep $Q=J_{x_0}^sf$ fixed. For every unit vector $\xi$, Lemmas~\ref{lem:three-gibbs-majorants} and~\ref{lem:three-majorant-radius} give, for $0\le j\le s$,
\begin{equation}
\label{eq:three-directional-estimates}
 |D^j(F_\vartheta-Q)(x_0)[\xi^j]|
 \le8\beta MR(x_0)^m j!
 \left(\frac{64\mathrm e^{300}q}{\rho}\right)^j.
\end{equation}
At $j=0$ the left side means the value. Indeed, the coefficient of $t^j$ in
$F_\vartheta(x_0+\rho t\xi)-Q(x_0+\rho t\xi)$ is
$\rho^jD^j(F_\vartheta-Q)(x_0)[\xi^j]/j!$, and the majorant coefficient is at most $8\tau^{-j}$, with $\tau^{-1}=64\mathrm e^{300}q$.
For $j\ge1$ apply the polarization part of Lemma~\ref{lem:three-taylor}; for $j=0$ no polarization is needed. Lemma~\ref{lem:three-scale} then gives, with $0^0=1$ only for the displayed zero-order factor,
\begin{equation}
\label{eq:three-tensor-estimates}
 \|D^j(F_\vartheta-Q)(x_0)\|_{\op}
 \le8\beta L_{s,D}^m j^j(64\mathrm e^{300}q)^j
 M\rho^{m-j},
 \qquad 0\le j\le s.
\end{equation}
Since $j^j\le m^m$ and $64\mathrm e^{300}q>1$, the constants in this finite range are bounded by
\begin{equation}
\label{eq:three-intermediate-constant}
 \begin{aligned}
 8\beta L_{s,D}^m(64\mathrm e^{300}qm)^m
 &\le64\beta D^3(128\mathrm e^{300}qm)^m\\
 &\le D^3(\mathrm e^{315}qm)^m
 \le D^3(\mathrm e^{2000}qm)^m=C_s(D).
 \end{aligned}
\end{equation}
The first inequality uses $L_{s,D}^m\le8\,2^mD^3$ from Lemma~\ref{lem:three-scale}. For the second, $128<\mathrm e^5$ and $64\beta\le\mathrm e^{10m}$: here $\beta=m-2<m$, $64<\mathrm e^5$, and $m\le\mathrm e^m$, so $64\beta<\mathrm e^{m+5}\le\mathrm e^{10m}$ for $m\ge3$. Finally $D^jQ(x_0)=D^jf(x_0)$ for $j\le s$, so~\eqref{eq:three-tensor-estimates} proves~\eqref{eq:three-local-errors}. Since $d$ is a positive integer, $\log_2(2^{46d})=46d$ is integral, and substitution gives~\eqref{eq:three-constant-ds}. The distance comparison cancels the same positive local mass, and the moment and majorant bounds contain no factor involving $\vartheta$, $N$, the minimum atomic weight, or the sites' separation; thus neither does the stated constant.
\end{proof}
Let us briefly add some details on the sub-components of the \textit{fill radius} of $\mathbb{D}^N$; namely, write
\[
    \rho_{\mathbb X^N}(x)\eqdef\min_{i\in[N]}|x-X_i|
    \quad\mbox{ and }\quad
    h\eqdef\sup_{x\in[0,1]^d}\rho_{\mathbb X^N}(x).
\]
The local estimates show that the error in derivative order $j$ vanishes like $\rho_{\mathbb X^N}(x)^{s+1-j}$ as $x$ approaches a measurement site. The next lemma turns this decay into actual differentiability at the sites, showing that the continuously extended derivative formulas are successive derivatives of $F$.

\begin{lemma}[Differentiation Through the Finite Sites]
\label{lem:three-site-gluing}
Let $s\ge1$, $u\in C^s(\R^d)$, and $G:\R^d\to\R$ satisfy $G(X_i)=u(X_i)$ for $i\in[N]$ and $G\in C^s(\R^d\setminus\mathbb{X}^N)$. Suppose that a finite number $B\ge0$ satisfies
\begin{equation}
\label{eq:three-gluing-hypothesis}
 \|D^jG(x)-D^ju(x)\|_{\op}
 \le B\rho_{\mathbb{X}^N}(x)^{s+1-j},
 \qquad x\notin\mathbb{X}^N,\quad 0\le j\le s.
\end{equation}
Then $G\in C^s(\R^d)$ and $D^jG(X_i)=D^ju(X_i)$ for $i\in[N]$ and $0\le j\le s$.
\end{lemma}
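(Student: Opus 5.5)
The plan is to reduce to $E\eqdef G-u$, which lies in $C^s(\R^d\setminus\mathbb X^N)$, vanishes at every site, and satisfies $\|D^jE(x)\|_{\op}\le B\rho_{\mathbb X^N}(x)^{s+1-j}$ off the sites for $0\le j\le s$. Since $u\in C^s(\R^d)$, it suffices to show that $E\in C^s(\R^d)$ with $D^jE(X_i)=0$ for all $i\in[N]$ and $0\le j\le s$. The whole question is local: fix a site $a=X_i$ and the radius $\delta\eqdef\frac12\min_{k\ne i}|X_k-a|$ (take $\delta=1$ if $N=1$). On the punctured ball $0<|x-a|<\delta$ the site $a$ is the nearest one, so $\rho_{\mathbb X^N}(x)=|x-a|$ and the bound reads $\|D^jE(x)\|_{\op}\le B|x-a|^{s+1-j}$.

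I will prove by induction on $j=0,\dots,s$ that $D^jE$ exists at $a$, equals $0$ there, and is continuous at $a$.

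\emph{Base case $j=0$.} We have $E(a)=0$ and $|E(x)|\le B|x-a|^{s+1}\to0$, so $E$ is continuous at $a$.

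\emph{Inductive step.} Suppose $D^{j}E$ exists on the ball, vanishes at $a$, and is continuous there, with $j<s$. To identify $D^{j+1}E(a)$ I will show
\[
\|D^jE(a+h)-D^jE(a)\|_{\op}=\|D^jE(a+h)\|_{\op}\le B|h|^{s+1-j}=o(|h|),
\]
using $s+1-j\ge2$. Hence $D^jE$ is differentiable at $a$ with $D^{j+1}E(a)=0$. Continuity of $D^{j+1}E$ at $a$ follows from $\|D^{j+1}E(x)\|_{\op}\le B|x-a|^{s-j}\to0$, since $s-j\ge1$. Away from $a$ on the ball, $D^{j+1}E$ is the classical derivative by hypothesis. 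The induction therefore runs up to $j=s$ and yields $D^sE$ continuous on the ball with $D^sE(a)=0$.

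One subtlety needs care. Differentiability of $D^jE$ at $a$ requires $D^jE$ to be defined in a full neighbourhood, and that holds because the previous step defined it at $a$ and the hypothesis gives it elsewhere. The derivative $D^{j+1}E(a)$ so obtained is symmetric, being $0$. So $D^{j+1}E$ is a genuine Fréchet derivative everywhere on the ball, continuity holds at every point, and $E\in C^{j+1}$ there.

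Doing this at every site, and combining with $E\in C^s$ off the finite set $\mathbb X^N$, gives $E\in C^s(\R^d)$ with $D^jE(X_i)=0$. Adding back $u$ yields $G=u+E\in C^s(\R^d)$ and $D^jG(X_i)=D^ju(X_i)$ for all $0\le j\le s$. I expect no real obstacle here: the only delicate point is the order of quantifiers in the induction, namely that differentiability at $a$ of the order-$j$ derivative uses the exponent $s+1-j\ge2$. This is exactly why $j$ only goes up to $s-1$ in the differentiation step, while the last step uses only continuity, with exponent $1$.
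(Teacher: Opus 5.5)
Your proof is correct and follows the paper's argument: the paper likewise extends the error $D^jG-D^ju$ by zero at the sites, gets continuity there from the exponent $s+1-j\ge1$, and identifies the derivative at a site inductively via the difference-quotient bound $B|\eta|^{s+1-j}$, which is your exponent-$\ge2$ step with the index shifted by one. The only difference is cosmetic: you localize to a punctured ball containing no other site, whereas the paper works globally and notes that the difference quotient vanishes when $a+\eta$ is another site.
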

\begin{proof}
For $0\le j\le s$, define a scalar field at $j=0$ and a symmetric tensor field at $j\ge1$ by
\begin{equation}
\label{eq:three-extended-error}
 H_j(x) \eqdef 
 \begin{cases}
 D^jG(x)-D^ju(x),&x\notin\mathbb{X}^N,\\
 0,&x\in\mathbb{X}^N.
 \end{cases}
\end{equation}
For a site $a$, $\rho_{\mathbb{X}^N}(x)\le|x-a|$. Thus \eqref{eq:three-gluing-hypothesis} gives $\|H_j(x)\|_{\op}\le B|x-a|^{s+1-j}$ off the sites. The exponent is at least one, and $H_j$ is zero on the sites, so $H_j$ is continuous at $a$. Its off-site continuity follows from $G,u\in C^s$ there. Hence every $H_j$ is continuous on $\R^d$.

We prove $D^jH_0=H_j$ by induction for $0\le j\le s$. The assertion at $j=0$ is the definition. Suppose it holds at $j-1$, where $1\le j\le s$. Off the sites $\{X_i\}_{i=1}^N$, differentiation gives $D H_{j-1}=H_j$. At a site $a$, if $\eta\ne0$ and $a+\eta\notin\mathbb{X}^N$, the preceding-order bound gives
$\|H_{j-1}(a+\eta)-H_{j-1}(a)\|_{\op}/|\eta|\le B|\eta|^{s+1-j}$. Consequently,
\begin{equation}
\label{eq:three-site-differentiation}
 \lim_{\substack{\eta\to0\\\eta\ne0}}
 \frac{\|H_{j-1}(a+\eta)-H_{j-1}(a)\|_{\op}}{|\eta|}
 =0.
\end{equation}
If $a+\eta$ is another site, its numerator is zero. Since $s+1-j\ge1$, this is the full Frechet difference-quotient estimate, uniform in the direction of $\eta$. The derivative at $a$ is the zero linear map into the preceding tensor space, identified with $H_j(a)=0$. Continuity of $H_j$ shows that $H_{j-1}$ is continuously differentiable, completing the induction. Hence $H_0\in C^s(\R^d)$. The site-value hypothesis and \eqref{eq:three-extended-error} give $G=u+H_0$ everywhere. Differentiating and evaluating at a site proves $D^jG(X_i)=D^ju(X_i)$ throughout the required range.
\end{proof}

We are now ready to prove the first point in  Theorem~\ref{thrm:main}; namely, jet interpolation.

\begin{proof}[{Proof of Theorem~\ref{thrm:main} - Point (i)}]
The geometry stipulated in Section~\ref{s:MainResults} can be chosen by Lemma~\ref{lem:three-doubling-weights}.
If $s=0$, Lemma~\ref{lem:three-zero-order} proves continuity and the required value equality, which is the only derivative order in the statement. If $s=1$, Lemma~\ref{lem:three-first-order} proves the value and gradient equalities and their existence.
Suppose $s\ge2$ and fix an arbitrary $\vartheta\in[0,1]$. The witness in~\eqref{eq:training_data} belongs to $C^s(\R^d)$, and~\eqref{eq:entropy-variance-extension__primal___SITES} gives
$F_\vartheta(X_i)=P_i(X_i)=f(X_i)$. Lemma~\ref{lem:three-local-errors} proves off-site smoothness and~\eqref{eq:three-gluing-hypothesis} for
\[
 G=F_\vartheta,
 \qquad u=f,
 \qquad B=C_s(D)M<\infty.
\]
All hypotheses of Lemma~\ref{lem:three-site-gluing} are therefore met. That lemma proves
$F_\vartheta\in C^s(\R^d)$ and
\begin{equation}
\label{eq:three-family-interpolation}
 D^jF_\vartheta(X_n)
 =D^jf(X_n)
 =D^jP_n(X_n),
 \qquad n\in[N],\quad0\le j\le s,
\end{equation}
where the last equality follows from $P_n=J_{X_n}^sf$. Since $\vartheta$ was arbitrary, this is uniform over all fixed $\vartheta\in[0,1]$. Taking $\vartheta=1$ and using~\eqref{eq:entropy-variance-extension__specialization} proves the higher-order case of Theorem~\ref{thrm:main}(i). Together with the $s=0,1$ arguments, this covers every nonnegative integer $s$. The particular value conclusion is the case $j=0$.
\end{proof}

\subsection{Proof Theorem \texorpdfstring{\ref{thrm:main} (ii):  $C^{s,1}$-Smoothness}{Main Theorem Property (ii)}}
\label{s:Proofs__ss:Cs1_Smoothness}

We now apply the coefficient estimates at order $s+1$ to bound $D^{s+1}F$ away from the sites; the fixed comparison polynomial has degree at most $s$, so its contribution vanishes. Since $D^sF$ is already continuous everywhere, integration along line segments extends this bound to a global Lipschitz estimate, including across the measurement sites.

\begin{lemma}[Highest Derivative Away from Sites]
\label{lem:smoothness-top-off-sites}
Under \eqref{eq:training_data} and~\eqref{eq:doubling}, suppose $s\ge2$ and fix $\vartheta\in[0,1]$. With $C_s(D)$ defined by~\eqref{eq:three-constant-D},
\begin{equation}
\label{eq:smoothness-top-off-sites}
\|D^{s+1}F_\vartheta(x)\|_{\op}\le C_s(D)M,\qquad x\in\R^d\setminus\mathbb X^N.
\end{equation}
The estimate is uniform in the fixed parameter $\vartheta$.
\end{lemma}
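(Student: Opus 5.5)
The plan is to rerun the proof of Lemma~\ref{lem:three-local-errors} at the single order $j=s+1=m$. Fix $x_0\notin\mathbb X^N$ and put $\rho=\rho_{\mathbb X^N}(x_0)$. Freeze the comparison polynomial $Q=J_{x_0}^sf$, exactly as in Lemma~\ref{lem:three-normalization}. Since $\deg Q\le s$, we have $D^{s+1}Q\equiv0$. Hence $D^{s+1}F_\vartheta(x_0)=D^{s+1}(F_\vartheta-Q)(x_0)$, and no comparison with $f$ is needed at this order. This matters because $f$ is only $\dot C^{s,1}$, so $D^{s+1}f$ need not exist. Off-site smoothness of $F_\vartheta$, from Lemma~\ref{lem:three-local-errors}, guarantees that $D^{s+1}F_\vartheta(x_0)$ exists.

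Next, fix a unit vector $\xi$ and consider the curve $t\mapsto F_\vartheta(x_0+\rho t\xi)-Q(x_0+\rho t\xi)$. By Lemma~\ref{lem:three-gibbs-majorants}, this function divided by $\beta MR(x_0)^m$ is majorized by $\mathcal H(t)$, uniformly in $\vartheta\in[0,1]$. The majorization covers all orders, including order $m$, since Definition~\ref{def:three-majorization} imposes no truncation. Lemma~\ref{lem:three-majorant-radius} gives $[t^m]\mathcal H\le8\tau^{-m}$. Reading off the $t^m$ coefficient yields
\[
|D^{m}F_\vartheta(x_0)[\xi^m]|\le 8\beta MR(x_0)^m\, m!\,\bigl(64\mathrm e^{300}q\bigr)^m\rho^{-m}.
\]

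Now convert this to an operator-norm bound. Polarization (Lemma~\ref{lem:three-taylor}, with $j=m$) replaces $m!$ by $m^m$. Lemma~\ref{lem:three-scale} gives $R(x_0)\le L_{s,D}\rho$, so $R(x_0)^m\rho^{-m}\le L_{s,D}^m\le 8\,2^mD^3$. The powers of $\rho$ therefore cancel exactly, which is the point of evaluating at order $s+1=m$. The remaining constant is $8\beta L_{s,D}^m(64\mathrm e^{300}qm)^m$. This is precisely the quantity already bounded by $C_s(D)$ in \eqref{eq:three-intermediate-constant}, so the estimate \eqref{eq:smoothness-top-off-sites} follows.

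The only genuine checkpoint is confirming that the majorant chain in Lemmas~\ref{lem:three-primitives}--\ref{lem:three-gibbs-majorants} holds at order $m=s+1$, one beyond the orders used for interpolation. Those lemmas are stated with the untruncated relation $\preccurlyeq$. The data $b_i$, $w_i$, $\theta_i$, $R$ are $C^\infty$ off the sites, and the polynomial coefficients of $P_i-Q$ vanish beyond degree $s$, consistent with the majorants. So no new estimate should be required. The proof also contains no factor of $\vartheta$, so the bound is uniform in $\vartheta$.
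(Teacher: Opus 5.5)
Your proposal is correct and follows the paper's proof essentially step for step. You fix $Q=J^s_{x_0}f$ so that $D^{s+1}Q=0$. You read off the order-$m$ coefficient of the majorant $\mathcal H$ from Lemma~\ref{lem:three-gibbs-majorants} via $[t^m]\mathcal H\le 8\tau^{-m}$ with $\tau^{-1}=64\mathrm e^{300}q$, then polarize, and finally use $R(x_0)\le L_{s,D}\rho$ so that $\rho^{-m}$ cancels and the constant reduces to the one already bounded in \eqref{eq:three-intermediate-constant}.
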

\begin{proof}
Fix $x_0\notin\mathbb X^N$ and a witness $f$ satisfying~\eqref{eq:training_data}. Put $\rho\eqdef\rho_{\mathbb X^N}(x_0)>0$, $m\eqdef s+1\ge3$, $\beta\eqdef s-1=m-2>0$, and $q\eqdef2s+\lceil\log_2D\rceil+2\ge2s+2$, and keep $Q\eqdef J_{x_0}^sf$ fixed. Lemmas~\ref{lem:three-local-errors} and~\ref{lem:three-scale} give $F_\vartheta\in C^\infty(\R^d\setminus\mathbb X^N)$ and $\rho\le R(x_0)\le L_{s,D}\rho$. For every $\xi\in\R^d$ with $|\xi|=1$, $i\in[N]$, and $|t|<1$, one has $|x_0+\rho t\xi-X_i|\ge|x_0-X_i|-\rho|t|\ge\rho(1-|t|)>0$. Hence
\begin{equation}
\label{eq:smoothness-normalized-line}
\Phi_{\xi,\vartheta}(t)\eqdef\frac{F_\vartheta(x_0+\rho t\xi)-Q(x_0+\rho t\xi)}{\beta MR(x_0)^m},\qquad |t|<1,
\end{equation}
satisfies $\Phi_{\xi,\vartheta}\in C^\infty((-1,1))$, with $\beta MR(x_0)^m\ge\beta M\rho^m>0$.

With this fixed $f,x_0,Q,\vartheta,\xi$, Lemma~\ref{lem:three-gibbs-majorants}, specifically~\eqref{eq:three-h-majorant}, gives $\Phi_{\xi,\vartheta}\preccurlyeq_n\mathcal H$ for every $n\in\Nzero$. Lemma~\ref{lem:three-majorant-radius}, with $\tau\eqdef(64\mathrm e^{300}q)^{-1}>0$, gives $[t^n]\mathcal H(t)\ge0$ for every $n\in\Nzero$ and $\mathcal H(\tau)\le8$. Thus Definition~\ref{def:three-majorization}, at order $m$, yields
\begin{equation}
\label{eq:smoothness-coefficient-bound}
\begin{aligned}
\frac{|\Phi_{\xi,\vartheta}^{(m)}(0)|}{m!}
&\le[t^m]\mathcal H(t)\le\tau^{-m}\sum_{n=0}^{\infty}\bigl([t^n]\mathcal H(t)\bigr)\tau^n\\
&=\tau^{-m}\mathcal H(\tau)\le8\tau^{-m}.
\end{aligned}
\end{equation}
Since $\deg Q\le s=m-1$, $D^mQ=0$. Differentiating~\eqref{eq:smoothness-normalized-line} and using~\eqref{eq:smoothness-coefficient-bound} gives
\begin{equation}
\label{eq:smoothness-diagonal-bound}
\begin{aligned}
|D^mF_\vartheta(x_0)[\xi^m]|
&=\beta MR(x_0)^m\rho^{-m}|\Phi_{\xi,\vartheta}^{(m)}(0)|\\
&\le8\beta M\left(\frac{R(x_0)}{\rho}\right)^m m!(64\mathrm e^{300}q)^m\\
&\le8\beta ML_{s,D}^m m!(64\mathrm e^{300}q)^m.
\end{aligned}
\end{equation}
The symmetric $m$-linear form $D^mF_\vartheta(x_0)$ therefore satisfies, by~\eqref{eq:three-polarization-bound} with $j=m$,
\begin{equation}
\label{eq:smoothness-tensor-bound}
\begin{aligned}
\|D^mF_\vartheta(x_0)\|_{\op}
&\le\frac{m^m}{m!}\sup_{|\xi|=1}|D^mF_\vartheta(x_0)[\xi^m]|\\
&\le\frac{m^m}{m!}\,8\beta ML_{s,D}^m m!(64\mathrm e^{300}q)^m\\
&=8\beta ML_{s,D}^m(64\mathrm e^{300}qm)^m.
\end{aligned}
\end{equation}

By~\eqref{eq:three-R-comparison}, $L_{s,D}^m\le8\,2^mD^3$. Moreover, $128<\mathrm e^5$ and $64\beta<64m<\mathrm e^{m+5}\le\mathrm e^{10m}$, since $m\ge3$ and $m\le\mathrm e^m$. Hence
\begin{equation}
\label{eq:smoothness-constant-bound}
\begin{aligned}
8\beta L_{s,D}^m(64\mathrm e^{300}qm)^m
&\le64\beta D^3(128\mathrm e^{300}qm)^m\\
&\le\mathrm e^{10m}D^3(\mathrm e^{305}qm)^m\\
&=D^3(\mathrm e^{315}qm)^m\\
&\le D^3(\mathrm e^{2000}qm)^m=C_s(D).
\end{aligned}
\end{equation}
Since $x_0\notin\mathbb X^N$ and $\vartheta\in[0,1]$ were arbitrary, \eqref{eq:smoothness-tensor-bound}--\eqref{eq:smoothness-constant-bound} give
\[
\sup_{\vartheta\in[0,1]}\sup_{x\notin\mathbb X^N}\|D^{s+1}F_\vartheta(x)\|_{\op}\le C_s(D)M.
\]
\end{proof}

\begin{lemma}[Lipschitz Control Across Finite Sites]
\label{lem:smoothness-finite-sites}
Let $V$ be a finite-dimensional normed space and let $H:\R^d\to V$ be continuous on $\R^d$ and continuously differentiable on $\R^d\setminus\mathbb{X}^N$. If $B\ge0$ satisfies $\|DH(x)\|_{\op}\le B$ for every $x\notin\mathbb{X}^N$, then
\begin{equation}
\label{eq:smoothness-finite-sites}
\|H(y)-H(x)\|_V\le B|y-x|,\qquad x,y\in\R^d.
\end{equation}
\end{lemma}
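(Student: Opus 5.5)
The plan is to reduce to segments that avoid the finite site set and then pass to the limit using continuity of $H$. Fix $x,y\in\R^d$ with $x\ne y$ and put $e\eqdef y-x$. First I treat the easy case in which the closed segment $[x,y]$ meets $\mathbb X^N$ in finitely many points. This always holds unless a site lies on the line through $x,y$, and even then the intersection is finite because $\mathbb X^N$ is finite.

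Write the parameters $0\le t_0<t_1<\dots<t_K\le1$ at which $x+te$ hits a site, and add the endpoints $0$ and $1$. On each open subinterval $(t_{k},t_{k+1})$ the map $\gamma(t)\eqdef H(x+te)$ is $C^1$ with $\|\gamma'(t)\|_V=\|DH(x+te)[e]\|_V\le B|e|$. For $[a',b']\subset(t_k,t_{k+1})$, the vector-valued fundamental theorem of calculus gives
\[
\|\gamma(b')-\gamma(a')\|_V\le B|e|(b'-a').
\]
Since $V$ is finite dimensional, one may integrate coordinatewise, or use the mean value inequality via a norming functional. Letting $a'\downarrow t_k$ and $b'\uparrow t_{k+1}$ and using continuity of $H$ on all of $\R^d$, the bound extends to the closed subinterval. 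Summing over the finitely many subintervals with the triangle inequality yields $\|H(y)-H(x)\|_V\le B|e|\sum_k(t_{k+1}-t_k)=B|y-x|$.

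The only mild subtlety is that the segment approach above already covers every pair $(x,y)$, since a segment meets a finite set in finitely many points. Hence no perturbation argument is needed. If one preferred to avoid sites entirely, an alternative is to perturb $x,y$ to $x',y'$ whose segment misses $\mathbb X^N$. This is possible since the set of bad directions is contained in finitely many lines. One would then apply the $C^1$ bound directly and pass to the limit by continuity. I expect the main (and still minor) obstacle to be justifying the endpoint limits at the sites, where $H$ is only known to be continuous; this is exactly where continuity of $H$ on $\R^d$ is used. No differentiability of $H$ at the sites is needed, since the sites have zero length on the segment.
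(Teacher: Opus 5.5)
Your proof is correct and follows the paper's argument essentially step for step. The paper also parametrizes the segment and uses injectivity of $t\mapsto x+t(y-x)$ to get finitely many site parameters. It then applies the fundamental theorem of calculus on compact subintervals of each site-free open piece, passes to the endpoints via continuity of $H$, and sums with the triangle inequality, so your perturbation alternative is indeed unnecessary.
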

\begin{proof}
For $x=y$, both sides of~\eqref{eq:smoothness-finite-sites} vanish. Fix $x\ne y$ and put $\gamma(t)\eqdef x+t(y-x)$ for $t\in[0,1]$. Since $|\gamma(t)-\gamma(u)|=|t-u|\,|y-x|$, the map $\gamma$ is injective. Consequently, there exist $1\le J\le N+1$ and $0=t_0<t_1<\cdots<t_J=1$ such that
\[
\{t_0,\ldots,t_J\}=\{0,1\}\cup\{t\in(0,1):\gamma(t)\in\mathbb X^N\},\qquad \gamma((t_{k-1},t_k))\cap\mathbb X^N=\varnothing\quad(k\in[J]).
\]
For $k\in[J]$ and $t_{k-1}<a<b<t_k$, the map $H\circ\gamma$ is continuously differentiable on a neighborhood of $[a,b]$, with $(H\circ\gamma)'(t)=DH(\gamma(t))[y-x]$. The fundamental theorem of calculus and the integral triangle inequality give
\begin{equation}
\label{eq:smoothness-subsegment}
\begin{aligned}
H(\gamma(b))-H(\gamma(a))
&=\int_a^b DH(\gamma(t))[y-x]\,\dd t,\\
\|H(\gamma(b))-H(\gamma(a))\|_V
&\le\int_a^b\|DH(\gamma(t))[y-x]\|_V\,\dd t\\
&\le\int_a^b\|DH(\gamma(t))\|_{\op}|y-x|\,\dd t
\le B(b-a)|y-x|.
\end{aligned}
\end{equation}

Set $\Delta_k\eqdef t_k-t_{k-1}>0$. For $0<\varepsilon<1/2$, put $a_{k,\varepsilon}\eqdef t_{k-1}+\varepsilon\Delta_k$ and $b_{k,\varepsilon}\eqdef t_k-\varepsilon\Delta_k$. Then $t_{k-1}<a_{k,\varepsilon}<b_{k,\varepsilon}<t_k$ and $b_{k,\varepsilon}-a_{k,\varepsilon}=(1-2\varepsilon)\Delta_k$. By continuity of $H\circ\gamma$ on $[0,1]$ and~\eqref{eq:smoothness-subsegment},
\[
\begin{aligned}
\|H(\gamma(t_k))-H(\gamma(t_{k-1}))\|_V
&=\lim_{\varepsilon\downarrow0}\|H(\gamma(b_{k,\varepsilon}))-H(\gamma(a_{k,\varepsilon}))\|_V\\
&\le\lim_{\varepsilon\downarrow0}B(1-2\varepsilon)\Delta_k|y-x|
=B\Delta_k|y-x|.
\end{aligned}
\]
Therefore,
\[
\begin{aligned}
\|H(y)-H(x)\|_V
&=\left\|\sum_{k=1}^J\bigl(H(\gamma(t_k))-H(\gamma(t_{k-1}))\bigr)\right\|_V\\
&\le\sum_{k=1}^J\|H(\gamma(t_k))-H(\gamma(t_{k-1}))\|_V\\
&\le B|y-x|\sum_{k=1}^J(t_k-t_{k-1})=B|y-x|.
\end{aligned}
\]
\end{proof}

We first estimate the numerical multiplier used below.
\begin{lemma}[Growth of the $C^{s,1}$-Constant]
\label{lem:Cs1_constant_growth}
For $d\in\mathbb N_+$ and $s\in\mathbb N$, define
\begin{equation}
\label{eq:Kds-exact-growth}
K_{d,s}\eqdef
\begin{cases}
1,&s=0,1,\\[1mm]
\displaystyle 2^{138d}\left[\mathrm{e}^{2000}(s+1)(2s+46d+2)\right]^{s+1},&s\ge2.
\end{cases}
\end{equation}
Then $K_{d,s}\ge1$ and $K_{d,s}\le\mathrm{e}^{4400d}(d+s)^{4400s}$.
\end{lemma}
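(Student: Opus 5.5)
The proof is elementary and splits by the two branches of \eqref{eq:Kds-exact-growth}. For $s\in\{0,1\}$ both claims are immediate, since $K_{d,s}=1$ and $\mathrm e^{4400d}(d+s)^{4400s}\ge\mathrm e^{4400d}\ge1$ because $d\ge1$ and $d+s\ge1$.

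For $s\ge2$, the lower bound $K_{d,s}\ge1$ is clear: every factor $2^{138d}$, $\mathrm e^{2000}$, $s+1$ and $2s+46d+2$ is at least one.

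For the upper bound, I will take logarithms and compare term by term. The goal is
\[
138d\log2+(s+1)\bigl[2000+\log(s+1)+\log(2s+46d+2)\bigr]\le4400d+4400s\log(d+s).
\]
First, $138\log2<96\le4400$, so $138d\log2\le 4400d\cdot\tfrac{96}{4400}$. This leaves most of the $4400d$ budget unused.

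Next, since $s\ge2$ we have $s+1\le\tfrac32 s$, which gives $2000(s+1)\le3000s$. I would absorb this term into $4400s\log(d+s)$, using $\log(d+s)\ge\log3>1$ because $d+s\ge3$.

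For the logarithmic terms, use the inequalities $s+1\le d+s$ and $2s+46d+2\le48(d+s)$. These give
\[
(s+1)\bigl[\log(s+1)+\log(2s+46d+2)\bigr]\le\tfrac32 s\bigl[2\log(d+s)+\log48\bigr].
\]
The right-hand side is at most $3s\log(d+s)+6s$, since $\tfrac32\log48<6$.

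Collecting the terms, the left-hand side is at most $96d+3000s+6s+3s\log(d+s)$. Using $s\le s\log(d+s)$, this is at most $96d+3009s\log(d+s)$, which is at most $4400d+4400s\log(d+s)$. Exponentiating gives the claim.

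The only point needing care is the bookkeeping of the absolute constants. In particular, the $\mathrm e^{2000(s+1)}$ factor has to be charged to the $(d+s)^{4400s}$ term rather than to $\mathrm e^{4400d}$, since it grows with $s$ rather than $d$. This works because $\log(d+s)>1$ when $s\ge2$, and I expect no real obstacle beyond that.
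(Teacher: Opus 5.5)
Your proof is correct and follows essentially the same route as the paper: take logarithms, bound $\log(s+1)$ and $\log(2s+46d+2)$ by a logarithm of $d+s$ plus a constant, use $s+1\le\frac32 s$, and absorb the $2000(s+1)$ term into $s\log(\cdot)$ because that logarithm exceeds one when $s\ge2$. The only cosmetic difference is that the paper works with $\ell=\log(d+s+1)$ and converts at the end via $d+s+1\le(d+s)^2$, whereas you work with $\log(d+s)$ directly, which gives a slightly tighter intermediate constant.
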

\begin{proof}
If $s\in\{0,1\}$, then $d+s\ge1$ and $1=K_{d,s}\le\mathrm e^{4400d}(d+s)^{4400s}$. Suppose $s\ge2$. Since $(s+1)(2s+46d+2)\ge1$, \eqref{eq:Kds-exact-growth} gives $K_{d,s}\ge2^{138d}\mathrm e^{2000(s+1)}>1$.

Put $\ell\eqdef\log(d+s+1)\ge\log4$. Using $s+1\le d+s+1$, $2s+46d+2\le46(d+s+1)$, $138\log2<96$, and $\log46<4$, we obtain
\[
\begin{aligned}
\log K_{d,s}
&=138d\log2+(s+1)\bigl(2000+\log(s+1)+\log(2s+46d+2)\bigr)\\
&\le138d\log2+(s+1)\bigl(2000+\log46+2\ell\bigr)\\
&\le96d+(s+1)(2004+2\ell)\\
&\le96d+3006s+3s\ell,
\end{aligned}
\]
where $s+1\le3s/2$ was used in the last line. Since $3006\le2197\log4\le2197\ell$, one has $3006s+3s\ell\le2200s\ell$. Moreover, $d+s\ge3$ implies $d+s+1\le(d+s)^2$, hence $\ell\le2\log(d+s)$. Therefore,
\begin{equation}
\label{eq:de_estimatoski}
\begin{aligned}
\log K_{d,s}
&\le96d+2200s\ell\\
&\le2200d+4400s\log(d+s)\\
&\le4400d+4400s\log(d+s).
\end{aligned}
\end{equation}
Exponentiating~\eqref{eq:de_estimatoski} yields $K_{d,s}\le\mathrm e^{4400d}\mathrm e^{4400s\log(d+s)}=\mathrm e^{4400d}(d+s)^{4400s}$.
\end{proof}

\noindent
We are now ready to derive our main result.

\begin{proof}[{Proof of Theorem~\ref{thrm:main} (ii)}]
Recall from~\eqref{eq:entropy-variance-extension__specialization} that the calibrated reconstruction is $F=F_1$. We treat separately the cases $s=0$, $s=1$, and $s\ge2$.

Suppose first that $s=0$. By Lemma~\ref{lem:three-zero-order}, $F\in C(\R^d)$ and $|F(x)-F(y)|\le M|x-y|$ for all $x,y\in\R^d$. Since $D^0F=F$, Definition~\eqref{eq:Lip_Seminorm} yields $[F]_{0,1}\le M$, and therefore $F\in\dot C^{0,1}(\R^d)$.

Suppose next that $s=1$. Lemma~\ref{lem:three-first-order} yields $F\in C^1(\R^d)$ and $|\nabla F(x)-\nabla F(y)|\le M|x-y|$ for all $x,y\in\R^d$. For fixed $x,y\in\R^d$, put $g_{x,y}\eqdef\nabla F(x)-\nabla F(y)$. Since $(DF(x)-DF(y))[v]=g_{x,y}\cdot v$, Cauchy-Schwarz and, when $g_{x,y}\ne0$, the choice $v=g_{x,y}/|g_{x,y}|$ give $\|DF(x)-DF(y)\|_{\op}=|g_{x,y}|$. The same identity is immediate when $g_{x,y}=0$. Hence $[F]_{1,1}\le M$, and therefore $F\in\dot C^{1,1}(\R^d)$.

Assume henceforth that $s\ge2$ and fix an arbitrary $\vartheta\in[0,1]$. By Setting~\ref{TheSetting}, $f\in\dot C^{s,1}(\R^d)\subset C^s(\R^d)$ and $P_i=J_{X_i}^sf$ for every $i\in[N]$. Moreover,~\eqref{eq:entropy-variance-extension__primal___SITES} gives $F_\vartheta(X_i)=P_i(X_i)=f(X_i)$ for every $i\in[N]$. Lemma~\ref{lem:three-local-errors} gives
\begin{equation}
\label{eq:smoothness-family-local-errors}
F_\vartheta\in C^\infty(\R^d\setminus\mathbb X^N),\qquad \|D^jF_\vartheta(x)-D^jf(x)\|_{\op}\le C_s(D)M\rho_{\mathbb X^N}(x)^{s+1-j}
\end{equation}
for every $x\in\R^d\setminus\mathbb X^N$ and $0\le j\le s$. Thus the hypotheses of Lemma~\ref{lem:three-site-gluing} hold with $G=F_\vartheta$, $u=f$, and $B=C_s(D)M$. Consequently,
\begin{equation}
\label{eq:smoothness-family-Cs-and-jets}
F_\vartheta\in C^s(\R^d),\qquad D^jF_\vartheta(X_i)=D^jf(X_i)=D^jP_i(X_i),\qquad i\in[N],\quad0\le j\le s.
\end{equation}

Let $V_s\eqdef\operatorname{Sym}^s(\R^d;\R)$ be the finite-dimensional normed space of symmetric $s$-linear forms on $\R^d$, equipped with $\|\cdot\|_{\op}$, and set $H_\vartheta\eqdef D^sF_\vartheta:\R^d\to V_s$. By~\eqref{eq:smoothness-family-Cs-and-jets}, $H_\vartheta$ is continuous on $\R^d$, while~\eqref{eq:smoothness-family-local-errors} implies that it is continuously differentiable on $\R^d\setminus\mathbb X^N$. For $x\notin\mathbb X^N$ and $u,v_1,\ldots,v_s\in\R^d$, one has $(DH_\vartheta(x)[u])[v_1,\ldots,v_s]=D^{s+1}F_\vartheta(x)[u,v_1,\ldots,v_s]$. Hence, by Lemma~\ref{lem:smoothness-top-off-sites},
$$
\begin{aligned}
\|DH_\vartheta(x)\|_{\op}&=\sup_{|u|\le1}\|DH_\vartheta(x)[u]\|_{\op}\\
&=\sup_{|u|\le1}\ \sup_{|v_1|,\ldots,|v_s|\le1}|D^{s+1}F_\vartheta(x)[u,v_1,\ldots,v_s]|\\
&=\|D^{s+1}F_\vartheta(x)\|_{\op}\le C_s(D)M,
\end{aligned}
\qquad x\in\R^d\setminus\mathbb X^N.
$$
Lemma~\ref{lem:smoothness-finite-sites}, applied to $H_\vartheta$ with $B=C_s(D)M$, therefore gives $\|D^sF_\vartheta(y)-D^sF_\vartheta(x)\|_{\op}\le C_s(D)M|y-x|$ for all $x,y\in\R^d$. Combining this estimate with~\eqref{eq:smoothness-family-Cs-and-jets} and Definition~\eqref{eq:Lip_Seminorm} yields
\begin{equation}
\label{eq:smoothness-family-bound}
F_\vartheta\in\dot C^{s,1}(\R^d),\qquad [F_\vartheta]_{s,1}\le C_s(D)M.
\end{equation}
The constants in Lemmas~\ref{lem:three-local-errors} and~\ref{lem:smoothness-top-off-sites} are independent of the fixed parameter $\vartheta$. Since $\vartheta\in[0,1]$ was arbitrary,
\begin{equation}
\label{eq:smoothness-family-uniform-bound}
\sup_{\vartheta\in[0,1]}[F_\vartheta]_{s,1}\le C_s(D)M.
\end{equation}
The geometry fixed in Theorem~\ref{thrm:main} has $D=2^{46d}$. Since $46d\in\mathbb N$, Definition~\eqref{eq:three-constant-D} gives
\begin{equation}
\label{eq:smoothness-constant-specialization}
\begin{aligned}
C_s(2^{46d})&=(2^{46d})^3\left[\mathrm e^{2000}(s+1)\bigl(2s+\lceil\log_2(2^{46d})\rceil+2\bigr)\right]^{s+1}\\
&=2^{138d}\left[\mathrm e^{2000}(s+1)(2s+46d+2)\right]^{s+1}.
\end{aligned}
\end{equation}
Taking $\vartheta=1$ in~\eqref{eq:smoothness-family-bound} and using $F=F_1$, then combining with the cases $s=0,1$, proves
\begin{equation}
\label{eq:smoothness-all-orders}
[F]_{s,1}\le\begin{cases}M,&s=0,1,\\[1mm]2^{138d}\left[\mathrm e^{2000}(s+1)(2s+46d+2)\right]^{s+1}M,&s\ge2,\end{cases}=K_{d,s}M.
\end{equation}
The constant in~\eqref{eq:smoothness-all-orders} is precisely the one defined in~\eqref{eq:Kds-exact-growth}. Lemma~\ref{lem:Cs1_constant_growth} further gives $1\le K_{d,s}\le\mathrm e^{4400d}(d+s)^{4400s}$, which proves the asserted dependence on $d,s$ and completes the proof.
\end{proof}

\subsection{The Fixed-Parameter Family and the Entropy-Only Endpoint}
\label{s:Proofs__ss:EntropyOnly}

We first prove this intermediate result.
\begin{proposition}[Uniform Fixed-Parameter Higher-Order Family]
\label{prop:variance-parameter-family}
Under Setting~\ref{TheSetting}, suppose $s\ge2$, let $D=2^{46d}$, and choose $\mu\in\Delta_{N:D}$. For every fixed $\vartheta\in[0,1]$, the function $F_\vartheta$ defined by~\eqref{eq:entropy-variance-extension__primal___SITES} and~\eqref{eq:entropy-variance-extension__primal} satisfies
\begin{enumerate}
\item[(i)] $F_\vartheta\in C^s(\mathbb R^d)$ and
\[
 D^jF_\vartheta(X_i)=D^jP_i(X_i),
 \qquad i\in[N],\quad 0\le j\le s;
\]
\item[(ii)] $F_\vartheta\in\dot C^{s,1}(\mathbb R^d)$ and
\[
 [F_\vartheta]_{s,1}\le K_{d,s}M,
\]
where $K_{d,s}$ is exactly the constant in Theorem~\ref{thrm:main} and the estimate is uniform in the fixed parameter $\vartheta\in[0,1]$.
\end{enumerate}
\end{proposition}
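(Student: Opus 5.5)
The statement is essentially a repackaging of the $s\ge2$ branch of the proofs of Theorem~\ref{thrm:main}(i)--(ii), which were already carried out for an arbitrary fixed $\vartheta\in[0,1]$. The plan is therefore to rerun those arguments verbatim, with $F_1$ replaced by $F_\vartheta$, and to check that no step used $\vartheta=1$.

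For part (i), fix $\vartheta\in[0,1]$ and the witness $f$ from Setting~\ref{TheSetting}. By~\eqref{eq:entropy-variance-extension__primal___SITES} we have $F_\vartheta(X_i)=P_i(X_i)=f(X_i)$. Lemma~\ref{lem:three-local-errors}, stated for every fixed $\vartheta$ with a $\vartheta$-independent constant $C_s(D)$, then gives
\begin{itemize}
\item off-site smoothness of $F_\vartheta$, and
\item the decay bound $\|D^jF_\vartheta-D^jf\|_{\op}\le C_s(D)M\rho_{\mathbb X^N}^{s+1-j}$ for $0\le j\le s$.
\end{itemize}
These are exactly the hypotheses of Lemma~\ref{lem:three-site-gluing} with $G=F_\vartheta$, $u=f$, and $B=C_s(D)M$. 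That lemma yields $F_\vartheta\in C^s(\R^d)$ and $D^jF_\vartheta(X_i)=D^jf(X_i)$. Finally, $P_i=J^s_{X_i}f$ gives $D^jf(X_i)=D^jP_i(X_i)$. This is precisely~\eqref{eq:three-family-interpolation}.

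For part (ii), put $H_\vartheta\eqdef D^sF_\vartheta$.
\begin{itemize}
\item By part (i), $H_\vartheta$ is continuous on $\R^d$; by Lemma~\ref{lem:three-local-errors}, it is $C^1$ off the sites.
\item Lemma~\ref{lem:smoothness-top-off-sites} bounds $\|DH_\vartheta\|_{\op}=\|D^{s+1}F_\vartheta\|_{\op}$ by $C_s(D)M$ off the sites, uniformly in $\vartheta$.
\item Lemma~\ref{lem:smoothness-finite-sites} converts this into the global estimate $[F_\vartheta]_{s,1}\le C_s(D)M$.
\end{itemize}
Specializing $D=2^{46d}$ via~\eqref{eq:smoothness-constant-specialization} identifies $C_s(2^{46d})$ with $K_{d,s}$ from~\eqref{eq:Kds-exact-growth}. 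This is exactly the constant used in Theorem~\ref{thrm:main}.

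The only genuine point to verify is uniformity in $\vartheta$, including the endpoint $\vartheta=0$. There the normalized vectors $b^{(0)}_i$ vanish, and one might worry about degeneracy in Lemma~\ref{lem:three-entropy} or the stationarity Jacobian. I would check that:
\begin{itemize}
\item the coefficient $c=s/(4(s-1))$ stays positive for all $\vartheta$;
\item the bounds in Lemma~\ref{lem:three-normalization} and the majorants in Lemmas~\ref{lem:three-primitives}--\ref{lem:three-majorant-radius} were stated independently of $\vartheta$;
\item \eqref{eq:three-zero-parameter-stationarity} gives $J_0=I$, so $\|J_\vartheta^{-1}\|_{\op}\le1$ holds throughout.
\end{itemize}
Given these, the proof reduces to citing the displays~\eqref{eq:three-family-interpolation} and~\eqref{eq:smoothness-family-uniform-bound}, which were already established for arbitrary $\vartheta\in[0,1]$.
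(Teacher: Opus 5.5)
Your proposal is correct and matches the paper's proof: the paper also fixes $\vartheta\in[0,1]$ and obtains (i) from \eqref{eq:smoothness-family-Cs-and-jets} (equivalently \eqref{eq:three-family-interpolation}), and (ii) from \eqref{eq:smoothness-family-bound} and \eqref{eq:smoothness-family-uniform-bound}, using the same chain of Lemma~\ref{lem:three-local-errors}, Lemma~\ref{lem:three-site-gluing}, Lemma~\ref{lem:smoothness-top-off-sites} and Lemma~\ref{lem:smoothness-finite-sites}, together with $C_s(2^{46d})=K_{d,s}$ from \eqref{eq:smoothness-constant-specialization}. Your extra check at the endpoint $\vartheta=0$ is consistent with \eqref{eq:three-zero-parameter-stationarity}, but it is already covered by the $\vartheta$-uniform statements of the cited lemmas.
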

\begin{proof}[{Proof of Proposition~\ref{prop:variance-parameter-family}}]
Fix an arbitrary $\vartheta\in[0,1]$. Equation~\eqref{eq:smoothness-family-Cs-and-jets} gives $F_\vartheta\in C^s(\R^d)$ and $D^jF_\vartheta(X_i)=D^jP_i(X_i)$ for every $i\in[N]$ and $0\le j\le s$, which proves~(i). Equation~\eqref{eq:smoothness-family-bound} gives $F_\vartheta\in\dot C^{s,1}(\R^d)$ and $[F_\vartheta]_{s,1}\le C_s(D)M$. For $D=2^{46d}$, equations~\eqref{eq:smoothness-constant-specialization} and~\eqref{eq:Kds-exact-growth} give $C_s(D)=K_{d,s}$, which proves~(ii). Finally,~\eqref{eq:smoothness-family-uniform-bound} gives $\sup_{\vartheta\in[0,1]}[F_\vartheta]_{s,1}\le K_{d,s}M$, so the estimate is uniform in the fixed parameter.
\end{proof}

% \begin{proof}[{Proof of Corollary~\ref{cor:entropy-only-reconstruction}}]
\begin{proof}[{Proof of Proposition~\ref{prop:entropy-only-reconstruction}}]
Write $\widetilde F\eqdef F_{\operatorname{ext}}$.
Put $U\eqdef\R^d\setminus\mathbb X^N$ and fix $x\in U$. Since $s\ge2$, $M>0$, and $R(x)>0$, define
\begin{equation}
\label{eq:entropy-only-data}
a_i(x)\eqdef P_i(x)+\frac{A_sM|x-X_i|^{2s}}{R(x)^{s-1}},\qquad \tau(x)\eqdef(s-1)MR(x)^{s+1}>0.
\end{equation}
Because $0\in\mathcal I_s$, Definition~\eqref{eq:entropy-variance-extension__primal} yields
\begin{equation}
\label{eq:entropy-only-zero-primal}
F_0(x)=\min_{\lambda\in\Delta_N}\left\{\sum_{i=1}^N\lambda_i a_i(x)+\tau(x)\operatorname{KL}(\lambda\mid\theta(x))\right\}.
\end{equation}
Since $\theta_i(x)>0$ and $\sum_i\theta_i(x)=1$, define
\begin{equation}
\label{eq:entropy-only-gibbs-measure}
Z(x)\eqdef\sum_{i=1}^N\theta_i(x)\exp\!\left(-\frac{a_i(x)}{\tau(x)}\right)>0,\qquad \nu_i(x)\eqdef\frac{\theta_i(x)\exp(-a_i(x)/\tau(x))}{Z(x)}.
\end{equation}
Then $\nu_i(x)>0$ for every $i$ and $\sum_i\nu_i(x)=1$, so $\nu(x)\in\Delta_N^\circ$.

Let $\lambda\in\Delta_N$ and put $I_\lambda\eqdef\{i\in[N]:\lambda_i>0\}$. Since $\theta_i(x),\nu_i(x)>0$, one has $\log(\nu_i(x)/\theta_i(x))=-a_i(x)/\tau(x)-\log Z(x)$ for every $i$. Coordinates outside $I_\lambda$ contribute zero to both relative entropies by the convention $0\log0=0$. Therefore,
\begin{equation}
\label{eq:entropy-only-KL-decomposition}
\begin{aligned}
\tau(x)\operatorname{KL}(\lambda\mid\nu(x))&=\tau(x)\sum_{i\in I_\lambda}\lambda_i\left[\log\frac{\lambda_i}{\theta_i(x)}-\log\frac{\nu_i(x)}{\theta_i(x)}\right]\\
&=\tau(x)\operatorname{KL}(\lambda\mid\theta(x))+\sum_{i=1}^N\lambda_i a_i(x)+\tau(x)\log Z(x).
\end{aligned}
\end{equation}
To treat boundary points of $\Delta_N$, put $t_i\eqdef\lambda_i/\nu_i(x)\in[0,\infty)$ and define
$$
\phi(t)\eqdef\begin{cases}t\log t-t+1,&t>0,\\1,&t=0.\end{cases}
$$
The convention at zero agrees with $\lim_{t\downarrow0}t\log t=0$. On $(0,\infty)$, $\phi'(t)=\log t$ and $\phi''(t)=1/t>0$, so $\phi(t)\ge\phi(1)=0$, with equality exactly at $t=1$; moreover, $\phi(0)=1>0$. Since $t\log t=\phi(t)+t-1$ on $[0,\infty)$ under the convention $0\log0=0$, one has
\begin{equation}
\label{eq:entropy-only-KL-nonnegative}
\begin{aligned}
\operatorname{KL}(\lambda\mid\nu(x))&=\sum_{i=1}^N\nu_i(x)t_i\log t_i\\
&=\sum_{i=1}^N\nu_i(x)\phi(t_i)+\sum_{i=1}^N\lambda_i-\sum_{i=1}^N\nu_i(x)\\
&=\sum_{i=1}^N\nu_i(x)\phi(t_i)\ge0.
\end{aligned}
\end{equation}
Because every $\nu_i(x)>0$, equality in~\eqref{eq:entropy-only-KL-nonnegative} holds precisely when $t_i=1$ for every $i$, that is, when $\lambda=\nu(x)$. Since $\tau(x)>0$, equations~\eqref{eq:entropy-only-zero-primal} and~\eqref{eq:entropy-only-KL-decomposition} show that $\nu(x)$ is the unique minimizer of the $\vartheta=0$ objective and
\begin{equation}
\label{eq:entropy-only-value}
F_0(x)=-\tau(x)\log Z(x).
\end{equation}

By~\eqref{eq:entropy-only-data},
$$
\begin{aligned}
\frac{a_i(x)}{\tau(x)}&=\frac{P_i(x)}{(s-1)MR(x)^{s+1}}+\frac{A_sM|x-X_i|^{2s}}{(s-1)MR(x)^{s-1}R(x)^{s+1}}=\frac1{s-1}\left[\frac{P_i(x)}{MR(x)^{s+1}}+A_s\left(\frac{|x-X_i|}{R(x)}\right)^{2s}\right].
\end{aligned}
$$
Substituting this identity and $\tau(x)=(s-1)MR(x)^{s+1}$ into~\eqref{eq:entropy-only-value} gives exactly~\eqref{eq:ExtenstionFormula_Expression} on $U$. At every site $X_i$, the site definitions give $F_0(X_i)=P_i(X_i)=\widetilde F(X_i)$. Hence
\begin{equation}
\label{eq:entropy-only-equals-family}
\widetilde F=F_0\qquad\text{on }\R^d.
\end{equation}
If $D=2^{46d}$ and $\mu\in\Delta_{N:D}$, Proposition~\ref{prop:variance-parameter-family}, applied with $\vartheta=0$, gives $D^j\widetilde F(X_i)=D^jP_i(X_i)$ for every $i\in[N]$ and $0\le j\le s$, together with $\widetilde F\in\dot C^{s,1}(\mathbb R^d)$ and $[\widetilde F]_{s,1}\le K_{d,s}M$.

Under the original hypotheses, we next verify off-site real analyticity.  
For $\gamma\ge0$, set $S_\gamma(x)\eqdef\sum_{i=1}^N\mu_i|x-X_i|^{-\gamma}$ for $x\in U$. For every $i$, the polynomial $\sigma_i(x)\eqdef\sum_{k=1}^d(x_k-X_{i,k})^2$ is strictly positive on $U$. Since $t\mapsto t^{1/2}$ is real analytic on $(0,\infty)$, $r_i(x)\eqdef|x-X_i|=\sigma_i(x)^{1/2}$ is positive and real analytic on $U$. Hence $S_q$ and $S_{q-p}$ are positive real-analytic functions on $U$. From the definitions of $\theta$ and $R$,
\begin{equation}
\label{eq:entropy-only-R-ratio}
R(x)^p=\sum_{i=1}^N\theta_i(x)r_i(x)^p=\frac{\sum_{i=1}^N\mu_i r_i(x)^{p-q}}{S_q(x)}=\frac{S_{q-p}(x)}{S_q(x)}>0.
\end{equation}
Since $t\mapsto t^{1/p}$ is real analytic on $(0,\infty)$,~\eqref{eq:entropy-only-R-ratio} implies that $R$ is positive and real analytic on $U$. The identity $\theta_i(x)=\mu_i r_i(x)^{-q}/S_q(x)$ then shows that every $\theta_i$ is real analytic on $U$. It follows from~\eqref{eq:entropy-only-data} that each $a_i$ and $\tau$ is real analytic on $U$, with $\tau>0$. Consequently, each function $x\mapsto\theta_i(x)\exp(-a_i(x)/\tau(x))$ is positive and real analytic on $U$, so their finite sum $Z$ is positive and real analytic. Since $\log$ is real analytic on $(0,\infty)$,~\eqref{eq:entropy-only-value} yields $\widetilde F|_U=-\tau\log Z\in C^\omega(U)$.

We finally prove that $\widetilde F$ is definable in $\R_{\operatorname{exp}}$. The set $U=\R^d\setminus\{X_1,\ldots,X_N\}$ is semialgebraic and hence definable in $\R_{\operatorname{exp}}$. The graph of the positive distance $r_i$ on $U$ is the semialgebraic set
$$
\left\{(x,r)\in U\times(0,\infty):r^2=\sum_{k=1}^d(x_k-X_{i,k})^2\right\},
$$
and is therefore definable. Because $p,q,q-p\in\mathbb N_+$ and each $r_i$ is positive on $U$, the functions $r_i^{-q}$, $S_q$, $S_{q-p}$, and $\theta_i$ are definable there. By~\eqref{eq:entropy-only-R-ratio}, the graph of $R$ on $U$ is the definable set
$$
\left\{(x,r)\in U\times(0,\infty):r^pS_q(x)=S_{q-p}(x)\right\}.
$$
Hence the functions $a_i$ and $\tau$ in~\eqref{eq:entropy-only-data} are definable on $U$. Since the exponential map belongs to the language of $\R_{\operatorname{exp}}$, the function $Z$ in~\eqref{eq:entropy-only-gibbs-measure} is definable. The graph of $\log$ on $(0,\infty)$ is the definable set $\{(z,y)\in(0,\infty)\times\R:\exp(y)=z\}$, so $\log$ is definable there. It follows that
$$
\Gamma_U\eqdef\{(x,y)\in U\times\R:y=-\tau(x)\log Z(x)\}
$$
is definable. The finite set $\Gamma_{\mathbb X^N}\eqdef\bigcup_{i=1}^N\{(X_i,P_i(X_i))\}$ is also definable, and~\eqref{eq:entropy-only-equals-family} gives $\operatorname{graph}(\widetilde F)=\Gamma_U\cup\Gamma_{\mathbb X^N}$. Thus $\widetilde F$ is definable in $\R_{\operatorname{exp}}$. Every parameter appearing in its graph is one of the site coordinates, a coefficient of one of the polynomials $P_i$, $M$, or one of the weights $\mu_i$; the witness $f$ does not enter the defining formula.
\end{proof}

\begin{lemma}[Explicit finite-jet seminorm bound]
\label{lem:finite-whitney-seminorm__bound}
Fix $d,N\in\mathbb N_+$ and $s\in\mathbb N$. Let $X_1,\ldots,X_N\in\mathbb R^d$ be distinct, and let $P_1,\ldots,P_N$ be real polynomials of degree at most $s$. For $\mathbb D^N=\{(X_i,P_i)\}_{i=1}^N$, 
let $W_{\mathbb D^N}$ be defined by~\eqref{eq:finite-whitney-seminorm}, and set
\begin{equation}
\label{eq:explicit-whitney-calibration-constant}
\kappa_{d,s}\eqdef
\begin{cases}
1 & \mbox{ : if }s=0,\\[1mm]
2^{138d+3}d^{s/2}
\bigl[128(s+1)(2s+46d+2)\bigr]^{s+1}&  \mbox{ : if } s\ge1.
\end{cases}
\end{equation}
Then:
\begin{enumerate}
\item[(i)] Every $f\in\dot C^{s,1}(\mathbb R^d)$ satisfying
$J_{X_i}^sf=P_i$ for all $i\in[N]$ obeys
$W_{\mathbb D^N}\le[f]_{s,1}$.
\item[(ii)] There exists $g\in\dot C^{s,1}(\mathbb R^d)$ such that
$J_{X_i}^sg=P_i$ for all $i\in[N]$ and
$[g]_{s,1}\le\kappa_{d,s}W_{\mathbb D^N}$.
\item[(iii)] $W_{\mathbb D^N}=0$ if and only if all $P_i$
coincide. In this case one may take $g=P_1$.
\end{enumerate}
Moreover, for $s\ge1$, the constant satisfies $1\le\kappa_{d,s}\le\mathrm{e}^{100d}(d+s)^{32s}$.
\end{lemma}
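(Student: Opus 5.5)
Parts (i) and (iii) are the same as in Lemma~\ref{lem:finite-whitney-seminorm}: (i) is Taylor's estimate, Lemma~\ref{lem:three-taylor}, applied to $f$ with base point $X_i$, evaluation point $X_j$, and order $|\alpha|$. Part (iii) follows because a degree-$\le s$ polynomial all of whose derivatives through order $s$ vanish at a point is zero. So the only new content is an \emph{explicit} $\kappa_{d,s}$ in (ii). I would get it by replacing the appeal to \cite{Fefferman2006} with the paper's own construction.

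For $s=0$, I would take the McShane extension $g(x)=\min_i\{P_i+W|x-X_i|\}$, which has Lipschitz constant $W$ by Lemma~\ref{lem:three-zero-order}; hence $\kappa=1$. For $s=1$, I would first check that $W$ controls the sharp compatibility condition behind~\eqref{eq:three-sharp-compatibility}, after inflating $W$ by a factor absorbed into $\kappa_{d,1}$. I would rather not handle this case separately, though. It is simpler to run the $s\ge2$ argument for $s=1$ as well, lifting the data to order $s+1=2$ via the jets $Q_i$ exactly as in Lemma~\ref{lem:finite-whitney-seminorm}.

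\textbf{General $s\ge1$.} The key observation is that Lemmas~\ref{lem:three-local-errors} and~\ref{lem:smoothness-top-off-sites} use the witness $f$ only through two things: the Taylor bounds~\eqref{eq:three-polynomial-coefficients} for $P_i-J^s_{x_0}f$, and the sign/size bounds~\eqref{eq:three-base-remainders}. I would therefore build the rational interpolant alluded to in the introduction, \eqref{eq:explicit-whitney-witness}: the inverse-distance blend $G(x)=\sum_i\theta_i(x)P_i(x)$ with the weights $\theta$ from Lemma~\ref{lem:three-doubling-weights}, $D=2^{46d}$. The same majorant calculus gives this estimate without any witness. Subtract $P_a$ for a nearest site $a$, so that $G-P_a=\sum_i\theta_i(P_i-P_a)$. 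By the definition of $W$, the coefficients of $(P_i-P_a)(x_0+\rho t\xi)$ are bounded by $W|X_i-a|^{s+1-j}\rho^j$, up to factors $\binom{s}{j}$ coming from re-centering. Since $|X_i-a|\le 2r_i(x_0)$, this gives a majorant of the form $W r_{i0}^{s+1}2^{s+1}(1+t)^s$. The factor $\theta_i$ has logarithmic majorant $T(t)$ from Lemma~\ref{lem:three-primitives}. The sum $\sum_i\theta_i r_{i0}^{s+1}$ is at most $R(x_0)^{s+1}$ by Jensen, as in Lemma~\ref{lem:three-normalization}, and Lemma~\ref{lem:three-scale} bounds this by $L_{s,D}^{s+1}\rho^{s+1}\le 8\cdot2^{s+1}D^3\rho^{s+1}$.

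This bookkeeping yields $\|D^j(G-P_a)(x_0)\|\lesssim W\rho^{s+1-j}$ and $\|D^{s+1}G(x_0)\|\lesssim W$. The constant is $D^3$ times $[\,\mathcal O((s+1)q)\,]^{s+1}$, where $q=2s+46d+2$, times a polarization factor. The $D^3$ and the bracket give the $2^{138d}$ and $[128(s+1)(2s+46d+2)]^{s+1}$ in~\eqref{eq:explicit-whitney-calibration-constant}. The $d^{s/2}$ comes from converting coordinate partial derivatives $\partial^\alpha$ in $W$ to operator norms, as in Lemma~\ref{lem:finite-whitney-seminorm}. Interpolation at the sites and the global Lipschitz bound then follow verbatim from Lemmas~\ref{lem:three-site-gluing} and~\ref{lem:smoothness-finite-sites}, using $u=P_a$ locally.

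\textbf{Main obstacle.} The delicate step is the gluing. Lemma~\ref{lem:three-site-gluing} requires a single $C^s$ comparison function $u$, whereas here the natural comparator $P_a$ changes with the nearest site. I would handle this locally. Near each site $a$ take $u=P_a$, and show $\|D^j(G-P_a)(x)\|\le C W|x-a|^{s+1-j}$ on a small ball, using only the $W$-bounds for $P_i-P_a$. The gluing argument is purely local, so this suffices.

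\textbf{Growth bound.} Finally, $\kappa_{d,s}\le \mathrm e^{100d}(d+s)^{32s}$ follows by taking logarithms, as in Lemma~\ref{lem:Cs1_constant_growth}: $138d\log2+3\log 2<100d$, $\log(128\cdot46)<9$, and $(s+1)\le 2s$.
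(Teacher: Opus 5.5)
Your proposal is correct, and its core matches the paper's proof. You use the same witness (the inverse-distance blend \eqref{eq:explicit-whitney-witness} with the weights of Lemma~\ref{lem:three-doubling-weights}, $D=2^{46d}$, $q=2s+46d+2$). You also use the same mechanism: subtract the nearest site's $P_a$, majorize along $x+\rho t\xi$, control $\sum_i\theta_ir_i^{s+1}$ by doubling, evaluate at a radius of order $1/q$, and polarize. Parts (i), (iii), the case $s=0$, and the growth bound also agree with the paper.

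The routes differ in two places.

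\emph{Gluing.} The paper never glues. Because $q$ is even, $A_i=\mu_i\prod_{k\ne i}|x-X_k|^q$ and $A=\sum_iA_i$ are polynomials, and $A>0$ on all of $\mathbb R^d$ (at $X_j$ only $A_j$ survives). Hence $g=\sum_iA_iP_i/A\in C^\infty(\mathbb R^d)$. Moreover $g-P_j=\sum_{i\ne j}A_i(P_i-P_j)/A$ has numerator divisible by $|x-X_j|^q$ with $q>s$, which gives $J^s_{X_j}g=P_j$ at once. The off-site bound on $D^{s+1}g$ then extends to the sites by continuity, and one fundamental-theorem-of-calculus step gives the Lipschitz bound. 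Your localized use of Lemma~\ref{lem:three-site-gluing} also works: on a ball about $a$ small enough that $a$ is the nearest site, your majorant gives $\|D^j(G-P_a)(x)\|_{\op}\lesssim W|x-a|^{s+1-j}$ for every $j\le s$, and Lemma~\ref{lem:smoothness-finite-sites} finishes. But what you call the main obstacle is a two-line algebraic observation in the paper, and that observation yields the stronger conclusion $g\in C^\infty(\mathbb R^d)$.

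\emph{Majorants.} The paper majorizes the normalized kernels directly, $\gamma_i\preccurlyeq U=(1-2t-t^2)^{-q/2}$ and $Z^{-1}\preccurlyeq(2-U)^{-1}$. It also proves $\sum_i\theta_ir_i^{s+1}\le\tfrac43\,4^{s+1}D^3\rho^{s+1}$ straight from \eqref{eq:doubling}, for every $s\ge1$. You instead reuse the $\log\theta_i$ majorant $T$ of Lemma~\ref{lem:three-primitives}, together with Jensen and Lemma~\ref{lem:three-scale}. Those lemmas are stated only for $s\ge2$. At $s=1$ their geometric parts still hold here: Jensen is an equality, and $2^q=4^{s+1}D$ exactly gives $L_{1,D}^2=\tfrac{64}{3}D^3\le 32D^3$. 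You need to say this explicitly. Your remark about lifting to the jets $Q_i$ of Lemma~\ref{lem:finite-whitney-seminorm} plays no role in this route.

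\emph{The constant.} The content of this lemma is the explicit $\kappa_{d,s}$, so the constant must be computed, not asserted. It does close along your route. Put $m=s+1$ and take $\delta=(16q)^{-1}$, so that $\mathrm e^{T(\delta)}\le 1.31$ and $(1+\delta)^s\le\mathrm e^{1/32}$. Include the re-centring factor $\mathrm e\,d^{s/2}$ in the bound for $P_i-P_a$. You then get
\[
\|D^mG(x)\|_{\op}\le 30\,D^3d^{s/2}(64qm)^mW\le 8D^3d^{s/2}(128qm)^mW=\kappa_{d,s}W,
\]
where the last inequality uses $2^m\ge4$. The slack is small (about $29.3$ against $32$), so this bookkeeping must actually be written out.
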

\begin{proof}[{Proof of Lemma~\ref{lem:finite-whitney-seminorm__bound}}]
Write $W=W_{\mathbb D^N}$ and $\mathbb X^N=\{X_1,\ldots,X_N\}$. The convention for $N=1$ gives $W=0$; otherwise, $W$ is a finite maximum with positive denominators. Let $f\in\dot C^{s,1}(\mathbb R^d)$ satisfy the prescribed jets. For $i\ne j$ and $|\alpha|\le s$, Lemma~\ref{lem:three-taylor}, with base point $X_i$ and evaluation point $X_j$, gives $|\partial^\alpha(P_i-P_j)(X_j)|\le [f]_{s,1}|X_i-X_j|^{s+1-|\alpha|}/(s+1-|\alpha|)!$, since each coordinate component of a derivative tensor is bounded by its operator norm. Dividing by $|X_i-X_j|^{s+1-|\alpha|}$ and taking the maximum proves (i), the case $N=1$ being immediate.

Suppose that $W=0$. If $N\ge2$, every derivative of $P_i-P_j$ of order at most $s$ vanishes at $X_j$; the exact Taylor expansion of this polynomial therefore gives $P_i=P_j$. For $N=1$, the assertion is automatic. Conversely, identical polynomials give $W=0$. Since the order-$s$ derivative of a polynomial of degree at most $s$ is constant, $g=P_1$ has seminorm zero. This proves (iii) and also (ii) when $W=0$.

We may henceforth assume that $W>0$, and therefore $N\ge2$. If $s=0$, write $P_i=c_i$ and define $g(x)=\min_i\{c_i+W|x-X_i|\}$. The definition of $W$ gives $c_i+W|X_j-X_i|\ge c_j$, with equality for $i=j$, so $g(X_j)=c_j$. Choosing an index attaining the minimum at $y$ gives $g(x)\le g(y)+W|x-y|$; interchanging $x$ and $y$ proves $[g]_{0,1}\le W$. Thus (ii) holds for $s=0$.

Assume now that $s\ge1$, and put $m=s+1$, $D=2^{46d}$ and $q=2m+46d$. By Lemma~\ref{lem:three-doubling-weights}, there are positive probability weights $\mu_i$ satisfying~\eqref{eq:doubling} with this value of $D$. Define
\begin{equation}\label{eq:explicit-whitney-witness}
g(x)=\frac{\sum_i\mu_i|x-X_i|^{-q}P_i(x)}{\sum_i\mu_i|x-X_i|^{-q}}\quad(x\notin\mathbb X^N),\qquad g(X_i)=P_i(X_i).
\end{equation}
The evenness of $q$ implies that $A_i(x)=\mu_i\prod_{k\ne i}|x-X_k|^q$ and $A(x)=\sum_iA_i(x)$ are polynomials. Off the sites, every $A_i$ is positive, while $A(X_j)=\mu_j\prod_{k\ne j}|X_j-X_k|^q>0$. Clearing denominators in~\eqref{eq:explicit-whitney-witness} therefore gives $g=\sum_iA_iP_i/A$ on $\mathbb R^d$, so $g\in C^\infty(\mathbb R^d)$. For each $j$, the numerator of $g-P_j=\sum_{i\ne j}A_i(P_i-P_j)/A$ is divisible by the polynomial $|x-X_j|^q$. Since $q>s$ and $A$ is nonzero at $X_j$, all derivatives of $g-P_j$ through order $s$ vanish there. Hence $J_{X_j}^sg=P_j$.

Fix $x\notin\mathbb X^N$, set $r_i=|x-X_i|$, $\rho=\min_i r_i>0$ and $\theta_i=\mu_i r_i^{-q}/\sum_k\mu_k r_k^{-q}$, and choose a nearest site $a=X_j$. Write $B_{\mathbb X^N}(a,r)=\{X_i:|X_i-a|<r\}$ and $\mu_0=\sum_{X_i\in B_{\mathbb X^N}(a,\rho)}\mu_i$. The ball contains $a$, so $\mu_0>0$, and its sites have distance less than $2\rho$ from $x$. Consequently, $\sum_i\mu_i r_i^{-q}\ge(2\rho)^{-q}\mu_0$. For each $\ell\ge0$, the shell $2^\ell\rho\le r_i<2^{\ell+1}\rho$ lies in $B_{\mathbb X^N}(a,2^{\ell+2}\rho)$ and has mass at most $D^{\ell+2}\mu_0$ by repeated application of~\eqref{eq:doubling}. Since $q>m$ and $D2^{m-q}=2^{-m}\le1/4$, we obtain
\[
\sum_i\mu_i r_i^{m-q}\le D^2\mu_0\rho^{m-q}\sum_{\ell=0}^{\infty}(D2^{m-q})^\ell\le\frac43D^2\mu_0\rho^{m-q}.
\]
Dividing by the preceding lower bound for the denominator gives
\begin{equation}\label{eq:explicit-whitney-distance-moment}
\sum_i\theta_i r_i^m\le\frac43\,2^qD^2\rho^m=\frac43\,4^mD^3\rho^m.
\end{equation}

Let $\xi$ be a unit vector, put $P_a=P_j$ and $h(t)=g(x+\rho t\xi)-P_a(x+\rho t\xi)$, and keep $x,\rho,a$ fixed throughout the following coefficient calculations. For scalar Taylor series $B,C$, write $B\preccurlyeq C$ if $C$ has nonnegative coefficients and $|[t^k]B|\le[t^k]C$ for every $k\ge0$. The triangle inequality and coefficient convolution show that these bounds are preserved under sums and products. Set $a_i=\rho\langle x-X_i,\xi\rangle/r_i^2$ and $b_i=\rho^2/r_i^2$, so $|a_i|\le1$ and $0<b_i\le1$. The normalized kernel $\gamma_i(t)=(|x+\rho t\xi-X_i|/r_i)^{-q}$ equals $(1+2a_it+b_it^2)^{-q/2}$. Thus the binomial expansion gives $\gamma_i\preccurlyeq U$ and $\gamma_i-1\preccurlyeq U-1$, where $U(t)=(1-2t-t^2)^{-q/2}$. For $Z(t)=\sum_i\theta_i\gamma_i(t)$, we have $Z(0)=1$ and $Z-1\preccurlyeq U-1$. Since $Z-1$ has zero constant coefficient, only finitely many powers contribute to each coefficient of its geometric inverse; hence
\begin{equation}\label{eq:explicit-whitney-reciprocal}
Z^{-1}=\sum_{\ell\ge0}(-1)^\ell(Z-1)^\ell\preccurlyeq\sum_{\ell\ge0}(U-1)^\ell=(2-U)^{-1}.
\end{equation}

For $i\ne j$ and $|\alpha|\le s$, the ordered pair $(i,j)$ in the definition of $W$ gives $|\partial^\alpha(P_i-P_a)(a)|\le W|X_i-a|^{m-|\alpha|}\le W(2r_i)^{m-|\alpha|}$, because $|X_i-a|\le r_i+\rho\le2r_i$. The same bound holds for $i=j$, when the polynomial difference is zero. Put $V(t)=\sum_{k=0}^s[\sqrt d(1+t)/2]^k/k!$. Expanding $P_i-P_a$ at $a$, grouping equal degrees by the multinomial formula, and using $\|x-a\|_1\le\sqrt d\,\rho$, $\|\xi\|_1\le\sqrt d$ and $\rho\le r_i$, we obtain
\[
\begin{aligned}
P_i(x+\rho t\xi)-P_a(x+\rho t\xi)&\preccurlyeq W\sum_{k=0}^s\frac{(2r_i)^{m-k}[\sqrt d\,\rho(1+t)]^k}{k!}\\
&\preccurlyeq W(2r_i)^mV(t).
\end{aligned}
\]
The identity $h=Z^{-1}\sum_i\theta_i\gamma_i\bigl(P_i(x+\rho t\xi)-P_a(x+\rho t\xi)\bigr)$ and~\eqref{eq:explicit-whitney-reciprocal} therefore imply
\begin{equation}\label{eq:explicit-whitney-value-majorant}
h(t)\preccurlyeq B_x(t)\eqdef W2^m\left(\sum_i\theta_i r_i^m\right)\frac{U(t)V(t)}{2-U(t)}.
\end{equation}

Choose $\delta=(16q)^{-1}$ and set $u=2\delta+\delta^2$. Since $0\le u\le3/(16q)\le3/16<1/2$, the inequality $-\log(1-u)\le2u$ yields $U(\delta)\le\mathrm{e}^{3/16}\le16/13$, where the last inequality follows by comparing the exponential and geometric series. In particular, $U(\delta)/(2-U(\delta))\le8/5<2$. Moreover, $d\ge1$ and $\delta\le1$ give $V(\delta)\le d^{s/2}\sum_{k=0}^s[(1+\delta)/2]^k/k!\le\mathrm{e}\,d^{s/2}$. These estimates ensure convergence of the nonnegative majorant at $\delta$, and~\eqref{eq:explicit-whitney-distance-moment} gives $B_x(\delta)\le(8\mathrm{e}/3)8^mD^3d^{s/2}W\rho^m$. Since $[t^m]B_x\le\delta^{-m}B_x(\delta)$ and $D^mP_a=0$, it follows from~\eqref{eq:explicit-whitney-value-majorant} that
\[
|D^mg(x)[\xi^m]|=m!\rho^{-m}|[t^m]h|\le\frac{8\mathrm{e}}{3}\,8^mD^3d^{s/2}m!(16q)^mW.
\]
The polarization estimate in Lemma~\ref{lem:three-taylor}, applied at order $m$, has factor $m^m/m!$. Therefore, using $\mathrm{e}<3$,
\[
\|D^mg(x)\|_{\mathrm{op}}\le\frac{8\mathrm{e}}{3}D^3d^{s/2}[128mq]^mW\le8D^3d^{s/2}[128mq]^mW=\kappa_{d,s}W.
\]

By continuity of $D^mg$, this bound also holds at the sites. For arbitrary $x,y\in\mathbb R^d$, the fundamental theorem of calculus gives
\[
D^sg(y)-D^sg(x)=\int_0^1D^{s+1}g(x+t(y-x))[y-x,\,\cdot\,]\,\mathrm dt.
\]
Taking operator norms yields $[g]_{s,1}\le\kappa_{d,s}W$, proving (ii). The estimates are independent of $N$, the individual atomic masses and the separation of the sites: the local mass cancels in~\eqref{eq:explicit-whitney-distance-moment}, and the factor $\rho^m$ cancels upon differentiation.

Finally, put $t=d+s\ge2$. Then $m\le t$, $m\le2s$, $q\le48t$, $d\le t$ and $6144\le t^{13}$. Also, $\log2<7/10$ and $d\ge1$ give $2^{138d+3}\le\mathrm{e}^{100d}$. Consequently,
$
\kappa_{d,s}\le\mathrm{e}^{100d}t^{s/2}[6144t^2]^m\le\mathrm{e}^{100d}t^{s/2+15m}\le\mathrm{e}^{100d}t^{32s},
$ and we are done.
\end{proof}

\subsection{{Proof of Analyticity Away from the Data; Theorem~\ref{thrm:main} (iv)}}
\label{s:Proof_Analyticity}

We now prove the analyticity of our formula away from the sites $\{X_n\}_{n=1}^N$ when $s\ge 2$.  

It will be convenient to prove the following dual form of our formula; which is only valid when $s\ge 2$.
We now \textbf{temporarily} write $G(x)$ for the $\vartheta=1$ primal value in~\eqref{eq:entropy-variance-extension__primal}. By~\eqref{eq:entropy-variance-extension__specialization}, this is the calibrated reconstruction $F$; the temporary symbol $G$ is used only to distinguish the primal representation from the dual expression introduced below.
At the sites, set $G(X_n)\eqdef P_n(X_n)$ for every $n\in[N]$.
For $s\ge2$ and $x\in\mathbb{R}^d\setminus\mathbb{X}^N$, and for each $i\in[N]$, define
\begin{equation}
\label{eq:gibbs-dual-quantities}
\begin{aligned}
    a_i(x)
    &  \eqdef 
        P_i(x)
        +A_sM\frac{|x-X_i|^p}{R(x)^\beta}
\qquad
    &
    c(x)
    \eqdef 
        \frac{s}{4MR(x)^{s-1}},
\\
    \tau(x)
    &  \eqdef 
        \beta MR(x)^{s+1}
\qquad
    & 
    b_i(x)
    \eqdef 
        a_i(x)-c(x)|v_i(x)|^2
.
\end{aligned}
\end{equation}
In particular, $c(x)>0$ and $\tau(x)>0$.
Consistently with~\eqref{eq:entropy-variance-extension__primal___SITES} and~\eqref{eq:entropy-variance-extension__specialization}, retain the site values
$
    F(X_n) \eqdef P_n(X_n)
$ for each $n\in[N]$.
For any $s\ge2$ and $x\in\mathbb{R}^d\setminus\mathbb{X}^N$ define
\begin{extensionbox}
\begin{equation}
\label{eq:entropy-variance-extension}
\begin{aligned}
    F(x)
     \eqdef 
    \max_{y\in\mathbb{R}^d}
    \Biggl\{
        -\frac{|y|^2}{4c(x)}
        -\tau(x)
        \log\!\Biggl(
            \sum_{i=1}^{N}
            \theta_i(x)
            \exp\!\left(
                -\frac{
                    b_i(x)+\langle y,v_i(x)\rangle
                }{\tau(x)}
            \right)
        \Biggr)
    \Biggr\}.
\end{aligned}
\end{equation}
\end{extensionbox}
The dual representation replaces optimization over the mixing weights by optimization over a vector in $\mathbb R^d$. Equality of the two optimal values follows from an explicit identity: their objective values differ by a squared norm and a relative entropy, both of which vanish for weights recovered from a dual maximizer.

\begin{proposition}[Improved Scalability for Large Sample Sizes via Dual Program]
\label{prop:dualization}
% In the setting of Theorem~\ref{thrm:main}, for each $x\in\mathbb{R}^d$ we have
In Setting~\ref{TheSetting}, let $s\ge2$, $D=2^{46d}$, and $\mu\in\Delta_{N:D}$. If $G$ denotes the $\vartheta=1$ right-hand side of~\eqref{eq:entropy-variance-extension__primal} and $F$ denotes the right-hand side of~\eqref{eq:entropy-variance-extension}, then, for every $x\in\mathbb{R}^d$,
\[
    G(x)=F(x).
\]
\end{proposition}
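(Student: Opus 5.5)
The plan is to reduce the statement to the abstract duality of Lemma~\ref{lem:three-entropy}, after rewriting the $\vartheta=1$ primal objective of~\eqref{eq:entropy-variance-extension__primal} in the notation~\eqref{eq:gibbs-dual-quantities}. At the sites there is nothing to prove: both $G$ and $F$ are assigned the value $P_n(X_n)$. So fix $x\notin\mathbb X^N$ and suppress the dependence on $x$.

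\emph{Step 1 (rewriting the primal).} Expand $\operatorname{Var}_\lambda(v)=\sum_i\lambda_i|v_i|^2-|\sum_i\lambda_iv_i|^2$. With $c=s/(4MR^{s-1})$ and $b_i=a_i-c|v_i|^2$, the primal objective becomes
\[
\Psi(\lambda)\eqdef\sum_i\lambda_ib_i+c\Bigl|\sum_i\lambda_iv_i\Bigr|^2+\tau\operatorname{KL}(\lambda\mid\theta),
\]
and $G(x)=\min_{\Delta_N}\Psi$. Dividing by $\tau>0$ gives exactly the functional~\eqref{eq:three-abstract-primal}, with costs $b_i/\tau$, vectors $v_i$ and coefficient $c/\tau>0$. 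Lemma~\ref{lem:three-entropy} then yields a unique minimizer $\lambda^*$. All coordinates of $\lambda^*$ are positive, and $\lambda^*$ has the Gibbs form $\lambda^*_i\propto\theta_i\exp(-(b_i+2c\langle z_*,v_i\rangle)/\tau)$, where $z_*=\sum_i\lambda_i^*v_i$.

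\emph{Step 2 (explicit gap identity).} For $y\in\mathbb R^d$ put $Z(y)\eqdef\sum_i\theta_i\exp(-(b_i+\langle y,v_i\rangle)/\tau)$ and $\pi_i(y)\eqdef\theta_i\exp(-(b_i+\langle y,v_i\rangle)/\tau)/Z(y)$. Completing the square, $c|w|^2=-\langle y,w\rangle-|y|^2/(4c)+c|w+y/(2c)|^2$. The computation of~\eqref{eq:three-duality-gap} or~\eqref{eq:entropy-only-KL-decomposition} gives $\sum_i\lambda_i(b_i+\langle y,v_i\rangle)+\tau\operatorname{KL}(\lambda\mid\theta)=-\tau\log Z(y)+\tau\operatorname{KL}(\lambda\mid\pi(y))$. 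Combining the two, for every $\lambda\in\Delta_N$ and $y\in\mathbb R^d$,
\[
\Psi(\lambda)=-\frac{|y|^2}{4c}-\tau\log Z(y)+c\Bigl|\sum_i\lambda_iv_i+\frac{y}{2c}\Bigr|^2+\tau\operatorname{KL}(\lambda\mid\pi(y)).
\]
Both correction terms are nonnegative; nonnegativity of $\operatorname{KL}$ is shown as in~\eqref{eq:entropy-only-KL-nonnegative}, including at boundary points of $\Delta_N$. Hence $G(x)\ge$ the dual objective at every $y$.

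\emph{Step 3 (attainment).} Take $y^*\eqdef-2cz_*$. Then the squared-norm term vanishes at $\lambda^*$, and the Gibbs form from Step~1 says precisely $\lambda^*=\pi(y^*)$, so the relative entropy also vanishes. Therefore $G(x)$ equals the dual objective at $y^*$. This shows the supremum in~\eqref{eq:entropy-variance-extension} is attained, hence is a maximum, and it equals $G(x)$. Uniqueness of the maximizer follows from the strict concavity in~\eqref{eq:three-dual-hessian} after rescaling, though uniqueness is not needed for the stated equality.

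The main obstacle I expect is bookkeeping rather than analysis. The scaling between Lemma~\ref{lem:three-entropy}, which uses unit temperature and the pairing $2c\,z\cdot b_i$, and the present formula, which uses temperature $\tau$ and the linear pairing $\langle y,v_i\rangle$, has to be matched correctly. I will carry out the substitution $y=2cz$ (with $c$ the present coefficient) explicitly when invoking the Gibbs formula, so that the sign and the factor $1/(4c)$ are verified rather than assumed.
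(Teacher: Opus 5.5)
Your overall strategy is sound, but Steps~2 and~3 contain a sign error, so the argument fails as written. Your entropy identity produces $\sum_i\lambda_i(b_i+\langle y,v_i\rangle)$, so the square must be completed as
\[
c|w|^2=\langle y,w\rangle-\frac{|y|^2}{4c}+c\Bigl|w-\frac{y}{2c}\Bigr|^2,\qquad w=\sum_i\lambda_iv_i .
\]
Only this form supplies the term $+\langle y,w\rangle$. Your displayed identity, with $+y/(2c)$ inside the square, has right-hand side equal to $\Psi(\lambda)+2\langle\sum_i\lambda_iv_i,y\rangle$. For instance, when $N=1$ it evaluates to $b_1+c|v_1|^2+2\langle y,v_1\rangle$, whereas $\Psi(1)=b_1+c|v_1|^2$. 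The correct gap identity is the paper's \eqref{eq:dualization-direct-gap}, with $c|\sum_i\lambda_iv_i-y/(2c)|^2$.

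The error propagates to Step~3: the choice $y^*=-2cz_*$ is wrong. Indeed $\pi(-2cz_*)_i\propto\theta_i\exp(-(b_i-2c\langle z_*,v_i\rangle)/\tau)$, which in general differs from the Gibbs form of $\lambda^*$, so the relative-entropy term does not vanish. For $N=1$ the dual objective at $y=-2cv_1$ equals $b_1-3c|v_1|^2$, which is strictly less than $G(x)=b_1+c|v_1|^2$ whenever $v_1\ne0$. The correct certificate is $y^*=2cz_*$, which is the substitution you announce in your final paragraph. With it, the squared term vanishes, and $\pi(y^*)=\lambda^*$ follows from the first line of \eqref{eq:three-gibbs-dual} after rescaling by $\tau$.

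With these two signs fixed, your argument is complete, but it runs in the opposite direction from the paper's. The paper derives the same gap identity through the conjugates $\Psi^*,\Phi^*$ and then proves dual attainment directly. The dual objective $\mathcal D$ is smooth and tends to $-\infty$, so it has a maximizer $y_*$. The condition $\nabla\mathcal D(y_*)=0$ reads $\sum_i\pi_i(y_*)v_i=y_*/(2c)$, and setting $\lambda_*=\pi(y_*)$ closes the gap. That dual-to-primal route needs nothing about the primal minimizer: not its existence, not its interiority, and not its Gibbs form.

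Your primal-to-dual route instead imports all of this from Lemma~\ref{lem:three-entropy}, including that lemma's boundary-variation argument. Once you grant that lemma, the second line of \eqref{eq:three-gibbs-dual} already gives the result. Apply it to $\Psi/\tau$, substitute $y=2cz$, and multiply by $\tau$; this yields exactly the claimed equality, so your Steps~2--3 can be replaced by that rescaling.
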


\begin{proof}[{Proof of Proposition~\ref{prop:dualization}}]
Fix $x\in\mathbb{R}^d\setminus\mathbb{X}^N$ and abbreviate
\[
    a_i \eqdef a_i(x),\qquad
    b_i \eqdef b_i(x),\qquad
    c \eqdef c(x),\qquad
    \tau \eqdef \tau(x),\qquad
    \theta_i \eqdef \theta_i(x),\qquad
    v_i \eqdef v_i(x).
\]
% Since $s\ge2$ in the setting of Theorem~\ref{thrm:main}, one has
Since $s\ge2$ by the hypothesis of this proposition, one has
$\beta=s-1>0$. Moreover $M>0$, $R(x)>0$, and $\theta_i>0$ for every
$i\in[N]$. Consequently
\[
    c=\frac{s}{4MR(x)^{s-1}}>0,
    \qquad
    \tau=\beta MR(x)^{s+1}>0.
\]
For a function $\Xi:\mathbb{R}^k\to(-\infty,+\infty]$, where $k\in\mathbb N_+$, define its (convex) conjugate by 
\[
\Xi^*(u)\eqdef\sup_{z\in\mathbb{R}^k} \{\langle u,z\rangle-\Xi(z)\}.
\]We compute the conjugates below and prove the equality of the optimal values by an explicit nonnegative gap. Since $b_i=a_i-c|v_i|^2$, the primal formula for $G$ becomes
\begin{equation}
\label{eq:dualization-primal-rewrite}
    G(x)
    =
    \min_{\lambda\in\Delta_N}
    \left\{
        \sum_{i=1}^N\lambda_i b_i
        +
        c\left|\sum_{i=1}^N\lambda_i v_i\right|^2
        +
        \tau\operatorname{KL}(\lambda\mid\theta)
    \right\}.
\end{equation}
Define the linear map
$
    A:\mathbb{R}^N\to\mathbb{R}^d
$ by $A\lambda \eqdef \sum_{i=1}^N\lambda_i v_i$ 
and define the extended-real-valued functions
\[
    \Phi(\lambda)
     \eqdef 
    \begin{cases}
        \displaystyle
        \sum_{i=1}^N\lambda_i b_i
        +
        \tau\operatorname{KL}(\lambda\mid\theta),
        &\lambda\in\Delta_N,\\[2mm]
        +\infty,
        &\lambda\notin\Delta_N,
    \end{cases}
    \qquad
    \Psi(z) \eqdef c|z|^2.
\]
Then \eqref{eq:dualization-primal-rewrite} can thus be re-written as
\begin{equation}
\label{eq:dualization-composite-primal}
    G(x)
    =
    \min_{\lambda\in\mathbb{R}^N}
    \bigl\{\Phi(\lambda)+\Psi(A\lambda)\bigr\}.
\end{equation}
The function $\Psi$ is finite, continuous and convex on $\mathbb{R}^d$.
The simplex $\Delta_N$ is nonempty, compact and convex, and
$t\mapsto t\log t$, with $0\log0 \eqdef 0$, is convex and continuous on
$[0,\infty)$. Hence $\Phi$ is proper, lower semicontinuous and convex.
The primal minimum is attained, since its objective is continuous on the nonempty compact simplex. We next compute the two conjugates directly. By definition,
$
    \Psi^*(y)
    =
    \sup_{z\in\mathbb{R}^d}
    \,
    \{\langle y,z\rangle-c|z|^2\}$.  
Completing the square,
$
    \langle y,z\rangle-c|z|^2
    =
    -c|z-\tfrac{y}{2c}|^2
    +
    \tfrac{|y|^2}{4c}
$; thus, for each $y\in\mathbb{R}^d$ we have
\begin{equation}
\label{eq:dualization-quadratic-conjugate}
    \Psi^*(y)=\frac{|y|^2}{4c}.
\end{equation}
Now let
$u=(u_i)_{i=1}^N\in\mathbb{R}^N$. Again by the definition of
the convex conjugate,
\[
\begin{aligned}
    \Phi^*(u)
    &=
    \sup_{\lambda\in\Delta_N}
    \left\{
        \sum_{i=1}^N\lambda_i(u_i-b_i)
        -
        \tau\operatorname{KL}(\lambda\mid\theta)
    \right\}.
\end{aligned}
\]
Set
$
    Z(u)
     \eqdef 
    \sum_{i=1}^N
    \theta_i
    \exp\!\left(\frac{u_i-b_i}{\tau}\right)
    >0
$
and
$
    \pi_i(u)
     \eqdef 
    \tfrac{
        \theta_i\exp((u_i-b_i)/\tau)
    }{
        Z(u)
    }
$ for each $i\in[N]$.  
Then $\pi(u)\in\Delta_N^\circ$. Since
$
    \log\tfrac{\pi_i(u)}{\theta_i}
    =
    \tfrac{u_i-b_i}{\tau}
    -
    \log Z(u)$ 
for every $\lambda\in\Delta_N$ we have
\begin{equation}
\label{eq:dualization-entropy-identity}
\begin{aligned}
    \sum_{i=1}^N\lambda_i(u_i-b_i)
    -
    \tau\operatorname{KL}(\lambda\mid\theta)
    &=
    \tau\log Z(u)
    -
    \tau\operatorname{KL}(\lambda\mid\pi(u)).
\end{aligned}
\end{equation}
For completeness,
$\operatorname{KL}(\lambda\mid\pi(u))\ge0$. Indeed,
$\log r\le r-1$ for $r>0$, and hence
\[
\begin{aligned}
    -\operatorname{KL}(\lambda\mid\pi(u))
=
    \sum_{\lambda_i>0}
    \lambda_i
    \log\frac{\pi_i(u)}{\lambda_i}
\le
    \sum_{\lambda_i>0}
    \lambda_i
    \left(
        \frac{\pi_i(u)}{\lambda_i}-1
    \right)
=
    \sum_{\lambda_i>0}\pi_i(u)-1
    \le 0.
\end{aligned}
\]
Equality holds at $\lambda=\pi(u)$. Thus
\begin{equation}
\label{eq:dualization-entropy-conjugate}
    \Phi^*(u)
    =
    \tau
    \log\!\left(
        \sum_{i=1}^N
        \theta_i
        \exp\!\left(
            \frac{u_i-b_i}{\tau}
        \right)
    \right).
\end{equation}
Retain the fixed point $x$ and the coefficients introduced above. Throughout the following argument these coefficients are held fixed; in particular, $c>0$, $\tau>0$, $b_i\in\mathbb{R}$, $v_i\in\mathbb{R}^d$, and $\theta_i>0$ with $\sum_{i=1}^N\theta_i=1$. Recall that
    \[
     A\lambda=\sum_{i=1}^N\lambda_i v_i,\qquad \Psi(z)=c|z|^2,\qquad \Phi(\lambda)=\sum_{i=1}^N\lambda_i b_i+\tau\sum_{i=1}^N\lambda_i\log\frac{\lambda_i}{\theta_i}\quad(\lambda\in\Delta_N),
    \]
    where each summand with $\lambda_i=0$ is interpreted as zero. The spaces $\mathbb{R}^N$ and $\mathbb{R}^d$ are equipped with their Euclidean inner products. Hence, for every $\lambda\in\mathbb{R}^N$ and $y\in\mathbb{R}^d$,
    \[
     \langle A\lambda,y\rangle=\sum_{i=1}^N\lambda_i\langle v_i,y\rangle=\left\langle\lambda,\bigl(\langle y,v_i\rangle\bigr)_{i=1}^N\right\rangle.
    \]
    By the defining property of the adjoint, this proves $(A^*y)_i=\langle y,v_i\rangle$ for every $i\in[N]$.  For $y\in\mathbb{R}^d$, put
    $
     \mathcal Z(y)\eqdef\sum_{i=1}^N\theta_i\exp\!\left(-\frac{b_i+\langle y,v_i\rangle}{\tau}\right)$.
    Every summand is finite and strictly positive. Consequently, $\mathcal Z(y)>0$, and the previously defined Gibbs vector $\pi(-A^*y)$ has coordinates
    \[
     \pi_i(-A^*y)=\frac{\theta_i\exp\!\left(-\frac{b_i+\langle y,v_i\rangle}{\tau}\right)}{\mathcal Z(y)},\qquad i\in[N].
    \]
    In particular, $\pi_i(-A^*y)>0$ for every $i$, and summing the numerators gives $\sum_{i=1}^N\pi_i(-A^*y)=1$. Thus $\pi(-A^*y)\in\Delta_N^\circ$.  Define the dual objective by
    $
     \mathcal D(y)\eqdef-\Phi^*(-A^*y)-\Psi^*(y)$ for $y\in\mathbb{R}^d$.  
    The computed formula for $\Phi^*$ and \eqref{eq:dualization-quadratic-conjugate} give
    \[
     \Phi^*(-A^*y)=\tau\log\mathcal Z(y),\qquad \mathcal D(y)=-\tau\log\mathcal Z(y)-\frac{|y|^2}{4c}.
    \]
    Since $\mathcal Z$ is a finite sum of smooth, strictly positive functions, $\mathcal D$ is finite and smooth on $\mathbb{R}^d$.
    
    We next establish the gap identity, including its validity on the boundary of the simplex. First, the non-negativity of relative entropy can be seen directly from the function
    $
     \varphi(t)\eqdef t\log t-t+1$ for each $t>0$ and $\varphi(0)\eqdef1$; and not that the extension at zero is continuous (since $\lim_{t\downarrow0}t\log t=0$). 
     For $t>0$, one has $\varphi'(t)=\log t$; thus $\varphi$ decreases on $(0,1)$ and increases on $(1,\infty)$. It follows that $\varphi(t)\ge0$ for every $t\ge0$, with equality precisely when $t=1$. For $\lambda\in\Delta_N$, the positivity of the Gibbs coordinates therefore gives
    \[
     \operatorname{KL}(\lambda\mid\pi(-A^*y))=\sum_{i=1}^N\pi_i(-A^*y)\varphi\!\left(\frac{\lambda_i}{\pi_i(-A^*y)}\right)\ge0.
    \]
    Indeed, expanding the sum on the right gives the relative entropy plus $-\sum_i\lambda_i+\sum_i\pi_i(-A^*y)$, and these last two terms cancel. This calculation uses the convention $0\log(0/a)=0$ for $a>0$, so it includes zero coordinates of $\lambda$. Moreover, equality holds if and only if $\lambda=\pi(-A^*y)$.  Taking logarithms gives
    $
     \log\pi_i(-A^*y)=\log\theta_i-\frac{b_i+\langle y,v_i\rangle}{\tau}-\log\mathcal Z(y)$. 
    For each index with $\lambda_i>0$, substitute this expression into $\lambda_i\log(\lambda_i/\pi_i(-A^*y))$. Indices with $\lambda_i=0$ contribute zero throughout. Summing and using $\sum_i\lambda_i=1$, we obtain
    \[
     \begin{aligned}
     \tau\operatorname{KL}(\lambda\mid\pi(-A^*y))
     &=\tau\sum_{i=1}^N\lambda_i\log\frac{\lambda_i}{\theta_i}+\sum_{i=1}^N\lambda_i\bigl(b_i+\langle y,v_i\rangle\bigr)+\tau\log\mathcal Z(y)\\
     &=\Phi(\lambda)+\langle y,A\lambda\rangle+\Phi^*(-A^*y).
     \end{aligned}
    \]
    This is \eqref{eq:dualization-entropy-identity} evaluated at $u=-A^*y$. Rearranging it and using $\Psi(A\lambda)=c|A\lambda|^2$ yields
    \[
     \begin{aligned}
     \Phi(\lambda)+\Psi(A\lambda)-\mathcal D(y)
     &=\Phi(\lambda)+c|A\lambda|^2+\Phi^*(-A^*y)+\frac{|y|^2}{4c}\\
     &=\tau\operatorname{KL}(\lambda\mid\pi(-A^*y))+c|A\lambda|^2-\langle y,A\lambda\rangle+\frac{|y|^2}{4c}.
     \end{aligned}
    \]
    Completing the square in the last three terms proves that, for every $\lambda\in\Delta_N$ and $y\in\mathbb{R}^d$,
    \begin{equation}
    \label{eq:dualization-direct-gap}
     \Phi(\lambda)+\Psi(A\lambda)-\mathcal D(y)=c\left|A\lambda-\frac{y}{2c}\right|^2+\tau\operatorname{KL}(\lambda\mid\pi(-A^*y))\ge0.
    \end{equation}
    
    The restriction of $\Phi+\Psi\circ A$ to $\Delta_N$ is continuous: the only boundary terms requiring attention are $\lambda_i\log\lambda_i$, and these extend continuously by zero at $\lambda_i=0$. Since $\Delta_N$ is nonempty and compact, its minimum is finite and attained. By \eqref{eq:dualization-composite-primal}, this minimum equals $G(x)$. Taking the minimum over $\lambda\in\Delta_N$ in \eqref{eq:dualization-direct-gap} therefore gives
    \[
     \mathcal D(y)\le\min_{\lambda\in\Delta_N}\{\Phi(\lambda)+\Psi(A\lambda)\}=G(x)
    \]
     for each $y\in\mathbb{R}^d$.  
    We now prove that the dual supremum is attained. Since $N\ge1$, the first summand in $\mathcal Z(y)$ gives
    $
     \mathcal Z(y)\ge\theta_1\,e^{-\frac{b_1+\langle y,v_1\rangle}{\tau}}.
    $
    The logarithm is increasing and $\tau>0$. Hence
    \[
     \begin{aligned}
     \mathcal D(y)\le-\frac{|y|^2}{4c}+b_1+\langle y,v_1\rangle-\tau\log\theta_1\le-\frac{|y|^2}{4c}+|v_1||y|+b_1-\tau\log\theta_1.
     \end{aligned}
    \]
    Because $c>0$, the quadratic term dominates the linear term as $|y|\to\infty$. Consequently, $\mathcal D(y)\to-\infty$ as $|y|\to\infty$. In particular, there exists $R>0$ such that
    if $
     |y|\ge R$ then $\mathcal D(y)<\mathcal D(0)-1$.  
    The continuous function $\mathcal D$ attains its maximum on the compact ball $\overline B(0,R)$ at some point $y_*$. Since $0\in\overline B(0,R)$, one has $\mathcal D(y_*)\ge\mathcal D(0)$. Every point outside that ball has strictly smaller value. Thus $y_*$ is a global maximizer of $\mathcal D$ on $\mathbb{R}^d$.
    
    It remains to show that this maximizer yields equality in \eqref{eq:dualization-direct-gap}. Term-wise differentiation is legitimate because the defining sum for $\mathcal Z$ is finite, and gives
    $
     \nabla\mathcal Z(y)=-\frac1\tau\sum_{i=1}^N\theta_i\,e^{-\frac{b_i+\langle y,v_i\rangle}{\tau}}v_i$.   Since $\mathcal Z(y)>0$, the chain rule now yields
    \[
     \begin{aligned}
     \nabla\mathcal D(y)=-\frac{y}{2c}-\tau\frac{\nabla\mathcal Z(y)}{\mathcal Z(y)}=-\frac{y}{2c}+\sum_{i=1}^N\frac{\theta_i\exp\!\left(-\frac{b_i+\langle y,v_i\rangle}{\tau}\right)}{\mathcal Z(y)}\,v_i=-\frac{y}{2c}+A\pi(-A^*y).
     \end{aligned}
    \]
    For each $h\in\mathbb{R}^d$, the differentiable function $t\mapsto\mathcal D(y_*+th)$ has a global maximum at $t=0$. Its derivative there is therefore zero, so $\langle\nabla\mathcal D(y_*),h\rangle=0$ for every $h$. Consequently $\nabla\mathcal D(y_*)=0$.
    Set $\lambda_*\eqdef\pi(-A^*y_*)$. The positivity and normalization already proved give $\lambda_*\in\Delta_N^\circ$, and the gradient identity implies
    $
     A\lambda_*=\tfrac{y_*}{2c}$.  
    Thus the squared term in \eqref{eq:dualization-direct-gap} vanishes at $(\lambda_*,y_*)$. The relative-entropy term also vanishes, since $\lambda_*=\pi(-A^*y_*)$. It follows that
    \[
    \Phi(\lambda_*)+\Psi(A\lambda_*)=\mathcal D(y_*)
    .
    \]
    Combining this equality with the previously established bound $\mathcal D(y_*)\le G(x)$ and the admissibility of $\lambda_*$ gives
    \[
     \mathcal D(y_*)\le G(x)\le\Phi(\lambda_*)+\Psi(A\lambda_*)=\mathcal D(y_*).
    \]
    Every inequality is therefore an equality. In particular, $\lambda_*$ attains the primal minimum, $y_*$ attains the dual maximum, and their values coincide:
    \begin{equation}
    \label{eq:dualization-fenchel-rockafellar}
     G(x)=\max_{y\in\mathbb{R}^d}\{-\Phi^*(-A^*y)-\Psi^*(y)\}.
    \end{equation}
Substituting $u=-A^*y$ into
\eqref{eq:dualization-entropy-conjugate} and using
\eqref{eq:dualization-quadratic-conjugate} therefore yields
\[
\begin{aligned}
    G(x)
    &=
    \max_{y\in\mathbb{R}^d}
    \Biggl\{
        -\frac{|y|^2}{4c}
        -
        \tau
        \log\!\Biggl(
            \sum_{i=1}^N
            \theta_i
            \exp\!\left(
                -\frac{
                    b_i+\langle y,v_i\rangle
                }{\tau}
            \right)
        \Biggr)
    \Biggr\}.
\end{aligned}
\]
Restoring the $x$-dependence gives
\[
\begin{aligned}
    G(x)
=
    \max_{y\in\mathbb{R}^d}
    \Biggl\{
        -\frac{|y|^2}{4c(x)}
        -
        \tau(x)
        \log\!\Biggl(
            \sum_{i=1}^N
            \theta_i(x)
            \exp\!\left(
                -\frac{
                    b_i(x)+\langle y,v_i(x)\rangle
                }{\tau(x)}
            \right)
        \Biggr)
    \Biggr\}
=
    F(x)
,
\end{aligned}
\]
where the last equality is exactly
\eqref{eq:entropy-variance-extension}.
Finally, if $x=X_n$ for some $n\in[N]$, the prescribed on-site
definitions give $
    G(X_n)=P_n(X_n)=F(X_n)$.  
Thus $G(x)=F(x)$ for every $x\in\mathbb{R}^d$.
\end{proof}

\begin{lemma}[real-analyticity Away from the Measurement Sites]
\label{lem:offsite-analyticity}
Under the assumptions of Theorem~\ref{thrm:main}, suppose that $s\ge2$. Then $F$ is real-analytic on $\R^d\setminus\mathbb X^N$
\end{lemma}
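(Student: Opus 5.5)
The plan is to work with the dual representation of Proposition~\ref{prop:dualization}. On $U\eqdef\R^d\setminus\mathbb X^N$ we have $F=G$, and $G(x)=\max_y\mathcal D(x,y)$ with
\[
\mathcal D(x,y)=-\frac{|y|^2}{4c(x)}-\tau(x)\log\mathcal Z(x,y),\qquad \mathcal Z(x,y)=\sum_{i=1}^N\theta_i(x)\exp\!\Big(-\frac{b_i(x)+\langle y,v_i(x)\rangle}{\tau(x)}\Big).
\]
We will show three things. First, $\mathcal D$ is jointly real-analytic on $U\times\R^d$. Second, the maximizer $y_*(x)$ is unique and depends real-analytically on $x$. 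Third, $F(x)=\mathcal D(x,y_*(x))$ is then a composition of real-analytic maps.

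For the first step we reuse the argument in the proof of Proposition~\ref{prop:entropy-only-reconstruction}. There $r_i=\sigma_i^{1/2}$, $S_q$, $S_{q-p}$, $R=(S_{q-p}/S_q)^{1/p}$ and $\theta_i$ were shown to be positive and real-analytic on $U$. It follows that $c=s/(4MR^{s-1})$ and $\tau=\beta MR^{s+1}$ are positive and analytic. Each $v_i$ is analytic, because $|x-X_i|^{s-1}=\sigma_i^{(s-1)/2}$ is analytic where $\sigma_i>0$. Hence $a_i$ and $b_i=a_i-c|v_i|^2$ are analytic. As a finite sum of positive analytic functions of $(x,y)$, $\mathcal Z$ is analytic and positive, and since $\log$ is analytic on $(0,\infty)$, $\mathcal D$ is jointly analytic.

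For the second step, fix $x\in U$. The Hessian in $y$ is
\[
D_y^2\mathcal D=-\frac{1}{2c}I-\frac1\tau\Cov_{\pi}(v),
\]
which is negative definite, so $\mathcal D(x,\cdot)$ is strictly concave. By the attainment argument in the proof of Proposition~\ref{prop:dualization}, it has a unique maximizer $y_*(x)$, characterized by
\[
K(x,y)\eqdef\nabla_y\mathcal D(x,y)=-\frac{y}{2c(x)}+\sum_i\pi_i(x,y)v_i(x)=0 .
\]
The map $K$ is real-analytic, and $D_yK$ is invertible everywhere. The real-analytic implicit function theorem then gives an analytic local solution $\tilde y$ near each $x_0$ with $\tilde y(x_0)=y_*(x_0)$. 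By uniqueness of the global maximizer, which is the unique critical point by strict concavity, $\tilde y=y_*$ locally. So $y_*$ is analytic on $U$.

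The main obstacle is the appeal to the analytic implicit function theorem, since Lemma~\ref{lem:three-entropy} proved only $C^\infty$ smoothness via a contraction argument. I would cite the standard real-analytic implicit function theorem (Krantz--Parks). Alternatively, I would complexify: $K$ extends holomorphically near $(x_0,y_*(x_0))$, so the holomorphic implicit function theorem applies, and its restriction to real points agrees with $y_*$ by local uniqueness. The conclusion $F|_U=\mathcal D(\cdot,y_*(\cdot))\in C^\omega(U)$ then follows.
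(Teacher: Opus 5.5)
Your proposal is correct and follows essentially the paper's route. The paper applies the same implicit-function argument to the Gibbs stationarity equation $z=\bar v(x,z)$ written in the primal mean $z=y/(2c)$, which is equivalent to your $\nabla_y\mathcal D=0$. It differs in two details: it identifies any zero with the true optimizer via the gap identity rather than strict concavity of the dual, and it proves the analytic implicit function theorem by hand through a weighted-$\ell^1$ contraction instead of citing Krantz--Parks.
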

\begin{proof}[{Proof of Lemma~\ref{lem:offsite-analyticity}}]
Put $U\eqdef\R^d\setminus\mathbb X^N$ and retain $r_i(x)=|x-X_i|$. For every $t\in\R$ and each $x\in U$
\[
 r_i(x)^t=\exp\!\left(\frac t2\log\!\left(\sum_{k=1}^d(x_k-X_{i,k})^2\right)\right)
 .
\]
Consequently all the powers of $r_i$ used in the reconstruction are real-analytic on $U$. Since $\mu_i>0$, one has
\[
 \sum_{j=1}^N\mu_jr_j^{-q}>0,\qquad \theta_i>0,\qquad \sum_{i=1}^N\theta_i=1,\qquad R^p=\sum_{i=1}^N\theta_i r_i^p>0.
\]
The formulas defining $\theta_i$, the positive root $R$, and $v_i$ therefore give real-analytic functions on $U$; here each $P_i$ is a polynomial. Retain $a_i,b_i,c,\tau$ from~\eqref{eq:gibbs-dual-quantities}. They are real-analytic on $U$, and $c,\tau>0$ because $M,R>0$ and $\beta=s-1\ge1$.
\hfill\\
\noindent
For $x\in U$ and $\lambda\in\Delta_N$, set
\begin{equation}
\label{eq:offsite-analytic-objective}
 \begin{aligned}
 m_\lambda(x)&\eqdef\sum_{i=1}^N\lambda_i v_i(x),\\
 \mathcal J(x,\lambda)&\eqdef\sum_{i=1}^N\lambda_i b_i(x)+c(x)|m_\lambda(x)|^2+\tau(x)\KL(\lambda\mid\theta(x)).
 \end{aligned}
\end{equation}
Specializing~\eqref{eq:entropy-variance-extension__primal} to $\vartheta=1$ and expanding the variance, then using $b_i=a_i-c|v_i|^2$, gives $F(x)=\min_{\lambda\in\Delta_N}\mathcal J(x,\lambda)$. For each fixed $x$, apply Lemma~\ref{lem:three-entropy} to $\mathcal J(x,\lambda)/\tau(x)$, with scalar costs $b_i(x)/\tau(x)$, vectors $v_i(x)$, weights $\theta_i(x)$, and quadratic coefficient $c(x)/\tau(x)>0$. Its existence, boundary-variation, and strict-convexity arguments give a unique minimizer $\lambda^*(x)\in\Delta_N^\circ$. Write $z_*(x)\eqdef m_{\lambda^*(x)}(x)$. The Gibbs identity in that lemma yields
\begin{equation}
\label{eq:offsite-analytic-gibbs}
 \begin{aligned}
 Z(x,z)&\eqdef\sum_{i=1}^N\theta_i(x)\exp\!\left(-\frac{b_i(x)+2c(x)\langle z,v_i(x)\rangle}{\tau(x)}\right)>0,\\
 \pi_i(x,z)&\eqdef\frac{\theta_i(x)\exp\!\left(-\frac{b_i(x)+2c(x)\langle z,v_i(x)\rangle}{\tau(x)}\right)}{Z(x,z)},\\
 \bar v(x,z)&\eqdef\sum_{i=1}^N\pi_i(x,z)v_i(x),\qquad H(x,z)\eqdef z-\bar v(x,z),\\
 \lambda_i^*(x)&=\pi_i(x,z_*(x)),\qquad H(x,z_*(x))=0.
 \end{aligned}
\end{equation}
The functions $Z$, $\pi_i$, $\bar v$, and $H$ are jointly real-analytic on $U\times\R^d$, and $\pi(x,z)\in\Delta_N^\circ$.
Now, every zero of $H(x,\cdot)$ determines the actual minimizer. Indeed, substitution of
\[
 \log\frac{\pi_i(x,z)}{\theta_i(x)}=-\frac{b_i(x)+2c(x)\langle z,v_i(x)\rangle}{\tau(x)}-\log Z(x,z)
\]
into~\eqref{eq:offsite-analytic-objective} gives, for every $\lambda\in\Delta_N$, we have
\begin{equation}
\label{eq:offsite-analytic-gap}
 \mathcal J(x,\lambda)=-c(x)|z|^2-\tau(x)\log Z(x,z)+c(x)|m_\lambda(x)-z|^2+\tau(x)\KL(\lambda\mid\pi(x,z)).
\end{equation}
This identity includes zero coordinates of $\lambda$, with $0\log(0/b)=0$ for $b>0$. If $H(x,z)=0$, then $m_{\pi(x,z)}(x)=z$, and hence
\[
 \mathcal J(x,\lambda)-\mathcal J(x,\pi(x,z))=c(x)|m_\lambda(x)-z|^2+\tau(x)\KL(\lambda\mid\pi(x,z))\ge0.
\]
For the last inequality, $u\log(u/v)\ge u-v$ for $u\ge0$, $v>0$; summing over two probability vectors gives non-negativity of relative entropy. Uniqueness of the minimizer therefore proves that: if $H(x,z)=0$ then $\pi(x,z)=\lambda^*(x)$, $ z=z_*(x)$.
Differentiating the finite sums with respect to $z$, for $u\in\R^d$, gives
\[
 \begin{aligned}
 D_z\log Z(x,z)[u]&=-\frac{2c(x)}{\tau(x)}\langle\bar v(x,z),u\rangle,\\
 D_z\pi_i(x,z)[u]&=-\frac{2c(x)}{\tau(x)}\pi_i(x,z)\langle v_i(x)-\bar v(x,z),u\rangle,\\
 D_zH(x,z)&=I+\frac{2c(x)}{\tau(x)}\sum_{i=1}^N\pi_i(x,z)(v_i(x)-\bar v(x,z))\otimes(v_i(x)-\bar v(x,z)).
 \end{aligned}
\]
Here $(a\otimes b)u\eqdef\langle b,u\rangle a$. Thus
\begin{equation}
\label{eq:offsite-analytic-jacobian}
 \langle D_zH(x,z)u,u\rangle=|u|^2+\frac{2c(x)}{\tau(x)}\sum_{i=1}^N\pi_i(x,z)\langle v_i(x)-\bar v(x,z),u\rangle^2\ge|u|^2.
\end{equation}
Consequently $D_zH(x,z)$ has trivial kernel and is invertible.

Fix $x_0\in U$, put $z_0=z_*(x_0)$, and let $B=D_zH(x_0,z_0)$. We give the analytic implicit-function step directly. The real-analytic map
\[
 T(\xi,w)\eqdef w-B^{-1}H(x_0+\xi,z_0+w)
\]
satisfies $T(0,0)=0$ and $D_wT(0,0)=0$. Write its power series as $T(\xi,w)=\sum_{\alpha,\eta\in\mathbb N^d}t_{\alpha\eta}\xi^\alpha w^\eta$, with $t_{\alpha\eta}\in\R^d$, and choose convergence radii $a_0,b_0>0$ such that
\[
 \sum_{\alpha,\eta}|t_{\alpha\eta}|_\infty a_0^{|\alpha|}b_0^{|\eta|}<\infty.
\]
Here $|w|_\infty\eqdef\max_j|w_j|$, $t_{00}=0$, and $t_{0,e_j}=0$ for each coordinate vector $e_j$. Choose $0<b<b_0/2$ and then $0<a<a_0$ such that $x_0+[-a,a]^d\subset U$ and
\begin{equation}
\label{eq:offsite-analytic-radii}
 A(a)\eqdef\sum_\alpha|t_{\alpha0}|_\infty a^{|\alpha|}\le\frac b2,\qquad Q(a,b)\eqdef\sum_{\alpha,\,|\eta|\ge1}|\eta|\,|t_{\alpha\eta}|_\infty a^{|\alpha|}b^{|\eta|-1}\le\frac12.
\end{equation}
Indeed, $|\eta|(b/b_0)^{|\eta|-1}\le1$ when $|\eta|\ge1$ and $b<b_0/2$, so the series defining $Q(a,b)$ is dominated by $b_0^{-1}\sum_{\alpha,\eta}|t_{\alpha\eta}|_\infty a_0^{|\alpha|}b_0^{|\eta|}$. Its part with $\alpha=0$ tends to zero as $b\downarrow0$, since $t_{0,e_j}=0$; choose $b$ so that this part is at most $1/4$. Holding $b$ fixed and decreasing $a$, the part with $|\alpha|\ge1$ becomes at most $1/4$, while $A(a)\le b/2$ because $t_{00}=0$. These choices may also be made with $a<\rho_{\mathbb X^N}(x_0)/(2\sqrt d)$, which ensures the stated inclusion in $U$.

Let $\mathcal A_a$ be the space of scalar power series $g(\xi)=\sum_\alpha g_\alpha\xi^\alpha$ with $\|g\|_a\eqdef\sum_\alpha|g_\alpha|a^{|\alpha|}<\infty$. It is complete, being a weighted $\ell^1$ space, and coefficient convolution gives $\|gh\|_a\le\|g\|_a\|h\|_a$. On $\mathcal A_a^d$ use $\|g\|_{a,\infty}\eqdef\max_j\|g_j\|_a$. For $\|g\|_{a,\infty},\|h\|_{a,\infty}\le b$, telescoping each monomial gives
\[
 \|g^\eta-h^\eta\|_a\le|\eta|b^{|\eta|-1}\|g-h\|_{a,\infty}\qquad(|\eta|\ge1).
\]
Consequently the substitution $\mathcal T(g)(\xi)\eqdef T(\xi,g(\xi))$ is well defined and satisfies
\[
 \begin{aligned}
 \|\mathcal T(g)-\mathcal T(h)\|_{a,\infty}&\le Q(a,b)\|g-h\|_{a,\infty}\le\tfrac12\|g-h\|_{a,\infty},\\
 \|\mathcal T(g)\|_{a,\infty}&\le A(a)+\tfrac12\|g\|_{a,\infty}\le b.
 \end{aligned}
\]
Starting from $g^{(0)}=0$, the iterates $g^{(k+1)}=\mathcal T(g^{(k)})$ satisfy
\[
 \|g^{(k+1)}-g^{(k)}\|_{a,\infty}\le2^{-k}\frac b2.
\]
They therefore converge in $\mathcal A_a^d$ to a fixed point $g$. Since $T(0,0)=0$, every iterate vanishes at zero, and hence $g(0)=0$. Every scalar $h=\sum_\alpha h_\alpha\xi^\alpha\in\mathcal A_a$ is real-analytic on $(-a,a)^d$: for $\xi_0\in(-a,a)^d$ and $0<\rho<a-|\xi_0|_\infty$, its binomial re-expansion about $\xi_0$ is absolutely convergent, bounded by $\sum_\alpha|h_\alpha|(|\xi_0|_\infty+\rho)^{|\alpha|}\le\|h\|_a$. Hence $\zeta(x)\eqdef z_0+g(x-x_0)$ is real-analytic on $V\eqdef x_0+(-a,a)^d$ and satisfies $\zeta(x_0)=z_0$ and $H(x,\zeta(x))=0$. 
Since we knew that: if $H(x,z)=0$ then $\pi(x,z)=\lambda^*(x),\qquad z=z_*(x)$; then we deduce that $\zeta(x)=z_*(x)$ on $V$; no prior continuity of the minimizer is required.
Evaluating~\eqref{eq:offsite-analytic-gap} at $z=z_*(x)$ and $\lambda=\lambda^*(x)$ now gives: for every $x\in V$
\[
 F(x)=-c(x)|z_*(x)|^2-\tau(x)\log Z(x,z_*(x)).
\]
The right-hand side is real-analytic, since all its constituents are real-analytic on $V$ and $Z>0$. Since $x_0\in U$ was arbitrary and $V$ is a neighborhood of $x_0$ contained in $U$, it follows that $F\in C^\omega(U)$.
\end{proof}

\subsection{Proof of Definability in \texorpdfstring{$\mathbb{R}_{\operatorname{exp}}$}{The Real Ordered Field Expanded by the Exponential}}
\label{s:Proofs__ss:Definability}

\begin{lemma}[Definability of the Reconstruction]
\label{lem:reconstruction-definability}
Under Setting~\ref{TheSetting}, let $D=2^{46d}$ and $\mu\in\Delta_{N:D}$. The calibrated reconstruction $F=F_1:\mathbb R^d\to\mathbb R$ defined by~\eqref{eq:entropy-variance-extension__primal___SITES}--\eqref{eq:entropy-variance-extension__specialization} is definable in $\mathbb R_{\operatorname{exp}}$, with the coordinates of the sites $X_i$, the coefficients of $P_i$, $M$, and $\mu_i$ as real parameters. This holds for every fixed $d,N\in\mathbb N_+$ and $s\in\mathbb N$.
\end{lemma}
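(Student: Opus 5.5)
We prove definability by splitting on $s$ and, in each case, exhibiting the graph of $F=F_1$ as a first-order definable set in $\mathbb R_{\operatorname{exp}}$ with the stated real parameters. The graph is the union of an off-site part over the semialgebraic set $U\eqdef\mathbb R^d\setminus\mathbb X^N$ and the finite set $\{(X_i,P_i(X_i))\}_{i=1}^N$. The finite set is trivially definable, so everything reduces to the off-site part.

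For $s=0$, Lemma~\ref{lem:three-zero-order} gives $F(x)=\min_i\{f(X_i)+M|x-X_i|\}$. Here $f(X_i)=P_i(X_i)$ is a parameter and the Euclidean norm is semialgebraic, so $F$ is semialgebraic and hence definable. For $s=1$, the entropy and $\operatorname{KL}$ terms are absent ($\beta=0$). The objective in~\eqref{eq:entropy-variance-extension__primal} is then a polynomial in $(x,\lambda)$ whose coefficients are the site coordinates, the coefficients of $P_i$, and $M$. Note that $R^\beta=1$ and $\theta$ does not appear, since the variance and the $A_sM|x-X_i|^2$ terms do not involve $\mu$. The graph is therefore the set of $(x,y)$ such that some $\lambda\in\Delta_N$ attains $y$ as objective value and every $\lambda'\in\Delta_N$ gives a value at least $y$. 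This is a first-order formula over the ordered field, so $F$ is semialgebraic. Alternatively, we could invoke Lemma~\ref{lem:three-first-order} together with the convex-envelope description, but the direct min-formula is cleaner.

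For $s\ge2$, we write the same ``attained minimum'' formula, now in $\mathbb R_{\operatorname{exp}}$. The ingredients are handled as follows.
\begin{enumerate}
\item Since $p,q,q-p\in\mathbb N_+$, the maps $r_i$, $r_i^{-q}$, $S_q$, $S_{q-p}$ and $\theta_i$ are semialgebraic on $U$. The positive root $R$ has semialgebraic graph $\{(x,r):r>0,\ r^pS_q(x)=S_{q-p}(x)\}$; this is exactly as in the proof of Proposition~\ref{prop:entropy-only-reconstruction}.
\item $R^{s\pm1}$ and $v_i$ are then semialgebraic.
\item For $\lambda\in\Delta_N$, the term $\operatorname{KL}(\lambda\mid\theta(x))$ is a finite sum of the terms $\lambda_i\log\lambda_i-\lambda_i\log\theta_i(x)$. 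We interpret $\lambda_i\log\lambda_i$ as $0$ when $\lambda_i=0$, which is a definable case split. The function $\log$ on $(0,\infty)$ is definable via its graph $\{(z,w):\exp(w)=z\}$.
\end{enumerate}
Composition of definable functions is definable, so the objective $\mathcal J(x,\lambda)$ is a definable function on $U\times\Delta_N$. The minimum exists by compactness (noted in the proof setting), so the graph of $F$ on $U$ is
\[
\{(x,y)\in U\times\mathbb R:\exists\lambda\in\Delta_N\ (\mathcal J(x,\lambda)=y)\wedge\forall\lambda'\in\Delta_N\ (\mathcal J(x,\lambda')\ge y)\},
\]
which is definable. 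The unknown witness $f$ never appears: only the $P_i$ enter, and for $s=0$ we use $f(X_i)=P_i(X_i)$.

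The only delicate point is that the argument must not depend on the closed form or on analyticity. For $\vartheta=1$ the variance term is present, so Proposition~\ref{prop:entropy-only-reconstruction} does not apply. The existential and universal quantification over the compact simplex sidesteps this, and we only need to check that every constituent, including the boundary convention for $0\log0$, is expressed by a definable formula. The uniformity in $d,N,s$ is automatic, since for each fixed triple the formula is finite.
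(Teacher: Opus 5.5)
Your proposal is correct and follows the paper's proof: you show the objective is definable on $U\times\Delta_N$ (handling $0\log 0$ by a case split and defining $\log$ through $\exp$), obtain attainment of the minimum by compactness, write the $\exists\lambda\,\forall\lambda'$ graph formula, and take the union with the finite set of site values. The only difference is that the paper applies this same formula uniformly to every $s$ (noting that the entropy term is finite before it is multiplied by $\beta=0$), whereas you treat $s=0,1$ separately, which as a by-product shows that $F$ is semialgebraic in those two cases.
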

\begin{proof}
We retain the notation of $\S$\ref{s:TheFormula} and~\eqref{eq:three-aux-geometry}; definability below uses only the parameters in the statement. Since $d,N,s$ are fixed, \eqref{eq:fancy_constants} gives $\beta\in\mathbb N$, $p\in\mathbb N_+$, $q=p+46d+2\in\mathbb N_+$, and $A_s\in\mathbb Q$. Moreover,
\[
\begin{aligned}
 x\notin\mathbb X^N&\quad\mbox{ if and only if } \quad\bigwedge_{i=1}^N\left(\sum_{k=1}^d(x_k-X_{i,k})^2>0\right),\\
 \lambda\in\Delta_N&\quad\mbox{ if and only if } \quad\left(\bigwedge_{i=1}^N\lambda_i\ge0\right)\land\left(\sum_{i=1}^N\lambda_i=1\right).
\end{aligned}
\]
Thus both domains are definable. For $x\notin\mathbb X^N$ and $i\in[N]$, the relations
\[
\begin{aligned}
 r_i(x)=a&\quad\mbox{ if and only if } \quad a>0\ \land\ a^2=\sum_{k=1}^d(x_k-X_{i,k})^2,\\
 b=a^{-1}&\quad\mbox{ if and only if } \quad ab=1\qquad(a>0)
\end{aligned}
\]
give definable distances and positive reciprocals. Hence $S_q$ is definable, and
\[
 S_q(x)>0,\qquad \theta_i(x)>0,\qquad \sum_{i=1}^N\theta_i(x)=1,\qquad R(x)^p=\sum_{i=1}^N\theta_i(x)r_i(x)^p>0.
\]
In particular,
\[
\begin{aligned}
 \theta_i(x)=a&\quad\mbox{ if and only if } \quad aS_q(x)=\mu_i r_i(x)^{-q},\\
 R(x)=a&\quad\mbox{ if and only if } \quad a>0\ \land\ a^p=\sum_{i=1}^N\theta_i(x)r_i(x)^p.
\end{aligned}
\]
These identities make $\theta_i$ and $R$ definable. The components of $\nabla P_i$ are polynomials in $x$ with coefficients obtained from those of $P_i$ by multiplication by integers. Thus $v_i$ is definable as well. Here each integer power is fixed, and compositions are expressed by real quantifiers over intermediate values. When $s=0$, $\nabla P_i=0$ and $v_i(x)=Mr_i(x)^{-1}(x-X_i)$; the reciprocal is defined because $r_i(x)>0$.

For $a>0$, $\log a=b$ is equivalent to $\exp(b)=a$. With the entropy convention in $\S$\ref{s:Notation}, the graph of its scalar summand is therefore
\[
 \left\{(t,u,z)\in\mathbb R^3:\ u>0\ \land\
 \left[\begin{array}{l}
 (t=0\land z=0)\ \lor{}\\
 \bigl(t>0\land\exists w\in\mathbb R:\ u\exp(w)=t\land z=tw\bigr)
 \end{array}\right]\right\}.
\]
For $u>0$ and $0<t\le1$,
\[
\begin{aligned}
 |t\log(t/u)|
 &\le -t\log t+t|\log u|
 =t\int_t^1\frac{\dd a}{a}+t|\log u|\\
 &\le t\int_t^1 a^{-3/2}\,\dd a+t|\log u|
 =2\sqrt t-2t+t|\log u|
 % \xrightarrow[t\downarrow0]{}0.
\end{aligned}
\]
the right-hand side of which converges to $0$ as $t$ tends to $0$ from the right.
Consequently, $\lambda\mapsto\KL(\lambda\mid\theta(x))$ is finite and continuous on all of $\Delta_N$ for each $x\notin\mathbb X^N$.

Let $\mathcal E_1(x,\lambda)$ denote the objective in~\eqref{eq:entropy-variance-extension__primal} specialized to $\vartheta=1$. By~\eqref{eq:weighted_variance}, the variance is polynomial in $\lambda$ and the coordinates of $v_i$. Since $M,R(x)>0$, the preceding identities imply that $\mathcal E_1:(\mathbb R^d\setminus\mathbb X^N)\times\Delta_N\to\mathbb R$ is definable and  $
 \mathcal E_1(x,\cdot)\in C(\Delta_N)$.  
In particular, at $s=0$ the variance denominator is $4M/R(x)>0$, whereas at $s=1$ it is $4M>0$; the entropy is finite before multiplication by $\beta=0$ in either case.

Fix $x\notin\mathbb X^N$. For any sequence $(\lambda^k)_{k\ge1}\subseteq\Delta_N$, successive dyadic subdivisions of $[0,1]^N$ give nested closed cubes $Q_j$ of side length $2^{-j}$ containing infinitely many sequence terms. Choose $k_j\uparrow\infty$ with $\lambda^{k_j}\in Q_j$. For $\ell\ge j$, $|\lambda^{k_\ell}-\lambda^{k_j}|\le\sqrt N\,2^{-j}$; hence 
\[
 \lim\limits_{j\uparrow \infty} \, \lambda^{k_j} =\bar\lambda,\qquad
 \bar\lambda_i=\lim_{j\to\infty}\lambda_i^{k_j}\ge0,\qquad
 \sum_{i=1}^N\bar\lambda_i=\lim_{j\to\infty}\sum_{i=1}^N\lambda_i^{k_j}=1.
\]
Thus $\bar\lambda\in\Delta_N$. Put $a_x\eqdef\inf_{\lambda\in\Delta_N}\mathcal E_1(x,\lambda)$. Nonemptiness of $\Delta_N$ and finiteness of $\mathcal E_1$ give $a_x<+\infty$. If $a_x=-\infty$, choose $\mathcal E_1(x,\lambda^k)\le-k$ and extract the preceding subsequence. Continuity would give both $\mathcal E_1(x,\lambda^{k_j})\to\mathcal E_1(x,\bar\lambda)\in\mathbb R$ and $\mathcal E_1(x,\lambda^{k_j})\le-k_j\to-\infty$, a contradiction. Hence $a_x\in\mathbb R$. Choose $0\le\mathcal E_1(x,\lambda^k)-a_x\le k^{-1}$ and extract $\lambda^{k_j}\to\lambda^*\in\Delta_N$. Then
\[
\begin{aligned}
 0\le\mathcal E_1(x,\lambda^*)-a_x
 &=\lim_{j\to\infty}\bigl(\mathcal E_1(x,\lambda^{k_j})-a_x\bigr)
 \le\lim_{j\to\infty}k_j^{-1}=0,\\
 F(x)&=\mathcal E_1(x,\lambda^*)=a_x.
\end{aligned}
\]

Let $G_{\mathcal E_1}(x,\lambda,t)$ be the definable graph relation for $\mathcal E_1$ obtained above. For $(x,t)\in\mathbb R^d\times\mathbb R$,
\begin{equation}
\label{eq:reconstruction-definable-offsite-graph}
\begin{aligned}
 &(x,t)\in\operatorname{graph}\!\left(F|_{\mathbb R^d\setminus\mathbb X^N}\right)\\
 &\quad\mbox{ if and only if } \quad x\notin\mathbb X^N\ \land\ \exists\lambda\in\mathbb R^N\,\Bigl[
 \lambda\in\Delta_N\land G_{\mathcal E_1}(x,\lambda,t)\\
 &\qquad\land\ \forall(\eta,z)\in\mathbb R^N\times\mathbb R,\
 \bigl([\eta\in\Delta_N\land G_{\mathcal E_1}(x,\eta,z)]\Longrightarrow z\ge t\bigr)\Bigr].
\end{aligned}
\end{equation}
For the forward implication, take $\lambda=\lambda^*$. Conversely, the quantified conditions give $\mathcal E_1(x,\lambda)=t\le\mathcal E_1(x,\eta)$ for every $\eta\in\Delta_N$, hence $t=F(x)$. The simplex constraints and $G_{\mathcal E_1}$ are finite first-order formulas; hence so is the right-hand side of~\eqref{eq:reconstruction-definable-offsite-graph}. Finally,~\eqref{eq:entropy-variance-extension__primal___SITES} gives
\begin{equation}
\label{eq:reconstruction-definable-global-graph}
 \operatorname{graph}(F)=\operatorname{graph}\!\left(F|_{\mathbb R^d\setminus\mathbb X^N}\right)
 \cup\bigcup_{i=1}^N\{(X_i,P_i(X_i))\}.
\end{equation}
Each added singleton has defining formula $\bigwedge_{k=1}^d(x_k=X_{i,k})\land(t=P_i(X_i))$, using the same parameters. Finite disjunction proves global definability. The argument also applies to $\Delta_1=\{(1)\}$, coincident polynomial jets at distinct sites, and vanishing coordinates of a minimizer; it uses neither uniqueness of $\lambda^*$ nor definability of $f$.
\end{proof}

\subsection{Proof of Corollary~\ref{cor:full-Cs1-control}}
\label{s:Proofs__cor:full-Cs1-control}
\begin{proof}[{Proof of Corollary~\ref{cor:full-Cs1-control}}]
Fix $i\in[N]$. Theorem~\ref{thrm:main}(i)-(ii) gives $J_{X_i}^sF=P_i$ and $[F]_{s,1}\le K_{d,s}M$. Lemma~\ref{lem:three-taylor}, applied on $\mathbb R^d$, and the finite Taylor expansion of $D^rP_i$ at $X_i$ therefore give, for $x\in[0,1]^d$ and $0\le r\le s$,
\begin{equation}
\label{eq:full-Cs1-derivative-control}
\begin{aligned}
 \|D^rF(x)\|_{\op}
 &\le \|D^rP_i(x)\|_{\op}
       +\frac{K_{d,s}M}{(s+1-r)!}|x-X_i|^{s+1-r}\\
 &\le \sum_{k=0}^{s-r}\frac{\|D^{r+k}P_i(X_i)\|_{\op}}{k!}|x-X_i|^k
       +\frac{K_{d,s}M}{(s+1-r)!}|x-X_i|^{s+1-r}.
\end{aligned}
\end{equation}
Since $|x-X_i|\le\sqrt d$ and $\Lip_{[0,1]^d}(D^sF)\le[F]_{s,1}$, summing in $r$ and collecting terms with the same derivative order yields
\[
\begin{aligned}
 \|F\|_{C^{s,1}([0,1]^d)}
 &\le \sum_{j=0}^s\|D^jP_i(X_i)\|_{\op}
           \sum_{k=0}^j\frac{d^{k/2}}{k!}
       +K_{d,s}M\sum_{k=0}^{s+1}\frac{d^{k/2}}{k!}\\
 &\le \mathrm e^{\sqrt d}\left(
       \sum_{j=0}^s\|D^jP_i(X_i)\|_{\op}+K_{d,s}M\right).
\end{aligned}
\]
Taking the minimum over $i\in[N]$ proves~\eqref{eq:full-Cs1-control}. For $s=0$, the first estimate is the Lipschitz inequality based at $X_i$; for $r=s$, its remainder is $K_{d,s}M|x-X_i|$. The proof also covers $N=1$ and sites on the boundary of the cube.
\end{proof}

\subsubsection{Proof of Lipschitz Regularity}
\label{s:Proofs__ss:LipschitzRegularity}

We now first bound the Lipschitz constant, as it is still on the same theme as bonding the $C^{s,1}$-norm. 

\begin{lemma}[Gradients and Lipschitz Constants]
\label{lem:three-gradient-lipschitz}
Let $\Omega\subset\R^d$ be nonempty, open and convex and let $u\in C^1(\Omega)$. Then, with both sides allowed to be infinite,
\begin{equation}
\label{eq:three-gradient-lipschitz}
 \Lip_\Omega(u)=\sup_{x\in\Omega}|\nabla u(x)|.
\end{equation}
If $u$ is $C^1$ on an open neighborhood of the compact set $\overline\Omega$, then the quantities in \eqref{eq:three-gradient-lipschitz} are finite.
\end{lemma}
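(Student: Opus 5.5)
We prove the two inequalities in \eqref{eq:three-gradient-lipschitz} separately; both are allowed to be infinite, so each direction is an inequality in $[0,\infty]$.

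\emph{Upper bound $\Lip_\Omega(u)\le\sup_\Omega|\nabla u|$.} Fix distinct $x,y\in\Omega$. By convexity the segment $\gamma(t)=x+t(y-x)$, $t\in[0,1]$, lies in $\Omega$. Because $\Omega$ is open, $u\circ\gamma$ is $C^1$ on a neighbourhood of $[0,1]$, with derivative $\nabla u(\gamma(t))\cdot(y-x)$. The fundamental theorem of calculus and Cauchy--Schwarz then give
\[
|u(y)-u(x)|\le\int_0^1|\nabla u(\gamma(t))|\,|y-x|\,\dd t\le\Bigl(\sup_\Omega|\nabla u|\Bigr)|y-x|.
\]
Dividing by $|y-x|$ and taking the supremum gives the claim. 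If the right side is infinite, there is nothing to prove.

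\emph{Lower bound $\sup_\Omega|\nabla u|\le\Lip_\Omega(u)$.} We may assume $\Lip_\Omega(u)<\infty$. Fix $x\in\Omega$ with $\nabla u(x)\ne0$, since otherwise the bound at $x$ is trivial. Put $e=\nabla u(x)/|\nabla u(x)|$. Openness gives $x+te\in\Omega$ for small $t>0$, and differentiability gives
\[
|\nabla u(x)|=\lim_{t\downarrow0}\frac{u(x+te)-u(x)}{t}\le\Lip_\Omega(u).
\]
Taking the supremum over $x$ completes the identity.

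\emph{Finiteness.} Suppose $u$ is $C^1$ on an open set $\mathcal O\supseteq\overline\Omega$. Since $\overline\Omega$ is compact and $\nabla u$ is continuous on $\mathcal O$, the function $|\nabla u|$ attains a finite maximum on $\overline\Omega$. Hence $\sup_\Omega|\nabla u|<\infty$, and by the identity just proved so is $\Lip_\Omega(u)$. (Compactness of $\overline\Omega$ is part of the hypothesis.)

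The proof is routine. The only points needing care are that the segment argument requires both convexity and openness (the latter so that $u\circ\gamma$ is differentiable up to the endpoints), and that the lower bound needs openness to move in the gradient direction. I expect no real obstacle.
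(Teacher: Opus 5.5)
Your proof is correct and follows essentially the same route as the paper: the segment and fundamental-theorem-of-calculus estimate for $\Lip_\Omega(u)\le\sup_\Omega|\nabla u|$, a difference quotient along the normalized gradient direction (using openness) for the reverse inequality, and boundedness of the continuous gradient on the compact set $\overline\Omega$ for finiteness. The only cosmetic difference is that the paper first bounds $|\nabla u(x)\cdot\xi|$ for an arbitrary unit vector $\xi$ and then specializes to the gradient's direction, whereas you take that direction immediately with a one-sided limit.
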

\begin{proof}
If $B \eqdef \sup_\Omega|\nabla u|<\infty$, the segment joining $x,y\in\Omega$ lies in $\Omega$. The one-variable fundamental theorem of calculus applied to $t\mapsto u(x+t(y-x))$ gives
\[
 u(y)-u(x)=\int_0^1\nabla u(x+t(y-x))\cdot(y-x)\,\dd t,
 \qquad |u(y)-u(x)|\le B|y-x|.
\]
Hence $\Lip_\Omega(u)\le B$; this inequality is automatic if $B=\infty$. Conversely, if $L_u \eqdef \Lip_\Omega(u)<\infty$, each $x\in\Omega$ and unit vector $\xi$ satisfy $x+t\xi\in\Omega$ for all sufficiently small $|t|$, because $\Omega$ is open. Therefore
\[
 |\nabla u(x)\cdot\xi|
 =\left|\lim_{t\to0}\frac{u(x+t\xi)-u(x)}{t}\right|\le L_u.
\]
For a nonzero gradient choose its own unit direction; for a zero gradient the norm is zero. Taking the supremum proves $B\le L_u$, and if $L_u=\infty$ this inequality is automatic. This proves the equality, including its infinite cases. Under the last hypothesis the continuous gradient is bounded on the compact closure, which makes $B$ finite; the segment estimate then makes $L_u$ finite.
\end{proof}

\section{Proof of Complexity Theoretic Results}
\label{s:Proofs_Complexity}

We now prove Theorems~\ref{thrm:complexity} and~\ref{thrm:complexity__Assembly}.

\subsection{Proof of Complexity Theoretic Result\texorpdfstring{ Theorem~\ref{thrm:complexity}}{}}
\label{s:Proofs_Complexity__ss:CircuitComplexity}

\begin{proof}[{Proof of Theorem~\ref{thrm:complexity}}]
We exhibit a circuit computing $F_{\operatorname{ext}}$ by first exhibiting a long list of sub-circuits which execute ``sub-routines'' of the overall algorithm.  These are then ``wired together'' into a larger more complicated algorithm performing the computation.  We first compile a circuit computing $F_{\operatorname{ext}}$ on $U\eqdef \mathbb{R}^d\setminus \{X_n\}_{n=1}^N$, which we then ``wire into'' a larger circuit computing the whole extension on all of $\mathbb{R}^d$.

Naturally, parallelism is of course a core component of achieving the logarithmic run-time of our algorithm.  We exhibit illustrations of the more complicated sub-circuits in our construction to streamline the communication of our proof; but we do not illustrate the most standard sub-circuits such as (padded) complete binary trees implementing $2\le m$-fold products.

Every displayed affine map is expanded into constant and binary arithmetic gates. Likewise, squaring on $\mathbb R$ is implemented by three gates computing $0=\mathsf c_0(t)$, $t+0$, and $t(t+0)$, respectively.

\paragraph{Computing: $m$-ary summation, products, and the Euclidean norm:}

We first make some standard observations.  We note that our proof is not geared towards maximal efficiency; but rather ``humanly-comprehensible clarity''.  

For $m\in\mathbb N_+$, put $h_m\eqdef\lceil\log_2m\rceil$ and $\bar m\eqdef2^{h_m}$. Let $(V,E_T)$ be a complete binary tree of height $h_m$, with leaves $v_1<\cdots<v_{\bar m}$ and edges directed towards its root. Define
$$
E\eqdef E_T\cup\{(v_1,v_i):m<i\le\bar m\}
\qquad\mbox{ and }\qquad
g_v\eqdef
\begin{cases}
\operatorname{Id}_{\mathbb R},&v\in\{v_1,\ldots,v_m\},\\
\mathsf c_0,&v\in\{v_i:m<i\le\bar m\},\\
+_2,&v\in V\setminus\{v_1,\ldots,v_{\bar m}\}.
\end{cases}
$$
Consider the $(+_2,\mathsf c_0,\operatorname{Id}_{\mathbb{R}})$-circuit 
$\mathcal C_{+,m}\eqdef(V,E,(g_v)_{v\in V})$, equipped with $\Pi\eqdef I_m$, has exactly $m$ input vertices. Its tree leaves evaluate to $u_i$ for $i\le m$ and to zero otherwise, so it computes $\operatorname{SUM}_m(u)\eqdef\sum_{i=1}^m u_i$ on $\mathbb R^m$. Moreover,
$$
|\mathcal C_{+,m}|=2\bar m-m-1,
\qquad
\operatorname{depth}(\mathcal C_{+,m})=h_m+\mathbf 1_{\{m<\bar m\}},
\qquad
\operatorname{width}(\mathcal C_{+,m})\le\bar m.
$$
Since $m\le\bar m<2m$, size and width are $O(m)$, and depth is $O(\log m)$ for $m\ge2$; for $m=1$, size and depth are zero.
Note that, replacing $+_2$ by $\times_2$ and $\mathsf c_0$ by $\mathsf c_1$ gives a 
$(\times_2,\mathsf c_1,\operatorname{Id}_{\mathbb{R}})$-circuit computing $\operatorname{Prod}_m(u)\eqdef\prod_{i=1}^m u_i$ on $\mathbb R^m$, with the same lifting channel and complexity bounds.

A very similar construction also exhibits a $(\cdot^2,\cdot^{1/2},+_2)$-circuit computing $\|\cdot\|_2$ on all of $\mathbb{R}^m$; with depth $\mathcal{O}(\log(m))$, width $\mathcal{O}(m)$, and size $\mathcal{O}(m)$.  Note that, in particular $|\cdot|$ is computed by a $(\cdot^2,\cdot^{1/2})$-circuit of depth, width, and size $\mathcal{O}(1)$.

\paragraph{Computing point indicators:}
Fix $m\in\mathbb N_+$ and $b\in\mathbb R^m$. Let $\mathcal C_{\times,m}\eqdef(V_\times,E_\times,(g_v^\times)_{v\in V_\times})$ be the product circuit constructed above, with input vertices $v_1,\ldots,v_m$. Adjoin fresh vertices $w_1<\cdots<w_m$ and define
\[
V\eqdef V_\times\cup\{w_1,\ldots,w_m\}
\qquad
\mbox{ and }
\qquad
E\eqdef E_\times\cup\{(w_i,v_i):i\in[m]\}
\]
with gates given by 
\[
g_v\eqdef
\begin{cases}
\operatorname{Id}_{\mathbb R},&v=w_i,\ i\in[m],\\
I_{\cdot=b_i},&v=v_i,\ i\in[m],\\
g_v^\times,&v\in V_\times\setminus\{v_1,\ldots,v_m\}.
\end{cases}
\]
Equip $\mathcal C_{=,b}\eqdef(V,E,(g_v)_{v\in V})$ with $\Pi\eqdef I_m$ on the input vertices $(w_i)_{i=1}^m$. Its original input leaves now evaluate to $I_{\cdot=b_i}(x_i)$; whence for every $x\in\mathbb R^m$ we have
$$
f_{\mathcal C_{=,b},I_m}(x)
=\prod_{i=1}^m I_{\cdot=b_i}(x_i)
=I_{\cdot=b}(x).
$$
Moreover, $\mathcal C_{=,b}$ has size 
$2\bar m-1\in \mathcal{O}(m)$, width at-most $\bar m\in \mathcal{O}(m)$, and depth $
=h_m+1+\mathbf 1_{\{m<\bar m\}} \in \mathcal{O}(\log(m))$.

\paragraph{Computing the polynomials $P_i$:}
For $s\ge2$, put $\Lambda\eqdef\{\alpha\in\mathbb N_0^d:|\alpha|\le s\}$, $B\eqdef|\Lambda|=\binom{d+s}{s}$, and $\bar B\eqdef2^{h_B}$. Write $P_i(x)=\sum_{\alpha\in\Lambda}c_{i,\alpha}x^\alpha$. For each $\alpha\in\Lambda$ with $|\alpha|\ge2$, choose $\beta_\alpha,\gamma_\alpha\in\Lambda$ satisfying $\beta_\alpha+\gamma_\alpha=\alpha$, $|\beta_\alpha|=\lfloor|\alpha|/2\rfloor$, and $|\gamma_\alpha|=\lceil|\alpha|/2\rceil$. Define
$$
V_0\eqdef\{v_\alpha:\alpha\in\Lambda\setminus\{0\}\}
\qquad\mbox{ and }\qquad
E_0\eqdef\bigcup_{\substack{\alpha\in\Lambda\\|\alpha|\ge2}}
\{(v_{\beta_\alpha},v_\alpha),(v_{\gamma_\alpha},v_\alpha)\},
$$
with gates given by
\[
g_{v_\alpha}\eqdef
\begin{cases}
\operatorname{Id}_{\mathbb R},&|\alpha|=1,\\
\cdot^2,&|\alpha|\ge2,\ \beta_\alpha=\gamma_\alpha,\\
\times_2,&|\alpha|\ge2,\ \beta_\alpha\ne\gamma_\alpha.
\end{cases}
\]
Equip the input vertices $v_{e_1}<\cdots<v_{e_d}$ with $\Pi\eqdef I_d$, taking squaring on all of $\mathbb R$. Induction on $|\alpha|$ shows that $v_\alpha$ evaluates to $x^\alpha$ and, after scalar expansion, has depth at most $3h_{|\alpha|}$.
For each $\alpha\in\Lambda$, attach a constant vertex $\kappa_\alpha$ carrying $\mathsf c_{c_{i,\alpha}}$ to $v_{e_1}$. For $\alpha\ne0$, add a $\times_2$-vertex $t_\alpha$ with parents $\kappa_\alpha$ and $v_\alpha$, and set $t_0\eqdef\kappa_0$. Attach the summation subcircuit $\mathcal C_{+,B}$ to $(t_\alpha)_{\alpha\in\Lambda}$, ordering $0$ first. The resulting circuit $\mathcal C_{P_i}$ computes $P_i$ on $\mathbb R^d$\footnote{A similar construction holds for non-constant univariate polynomials.}.
Each of the $B-d-1$ monomial operations uses at most three gates. There are also $B$ constant gates, $B-1$ coefficient products, and $2\bar B-B-1$ summation gates. Consequently, after scalar expansion, $\mathcal C_{P_i}$ has size $\mathcal O(B)$, depth at most $3h_s+h_B+1=\mathcal O(\log B)$, and width at most $d+|\mathcal C_{P_i}|=\mathcal O(B)$.

\begin{figure}[H]
\centering
\resizebox{0.75\linewidth}{!}{%
\begin{tikzpicture}[
  x=1cm,y=1cm,
  >={Stealth[length=1.7mm,width=1.25mm]},
  gate/.style={draw=black!75,rounded corners=2pt,fill=white,
    line width=.55pt,minimum width=.78cm,minimum height=.64cm,
    inner xsep=5pt,inner ysep=4pt,font=\small},
  wire/.style={draw=black!80,line width=.6pt},
  flow/.style={wire,->},
  junction/.style={circle,fill=black!80,inner sep=0pt,
    minimum size=2.2pt},
  terminal/.style={inner sep=2pt,font=\small},
  every node/.style={outer sep=1pt}
]

% Shared input and summation subcircuit.
\node[terminal] (x) at (-.25,0) {$x$};
\node[gate,fill=black!4] (sum) at (8.55,0)
  {$\operatorname{SUM}_N$};
\coordinate (xfan) at (.5,0);
\coordinate (denfan) at (9.8,0);

% First and last site branches.
\foreach \i/\a/\y in {1/first/2.1,N/last/-2.1}{
  \node[gate] (shift-\a) at (1.65,\y) {$\cdot-X_{\i}$};
  \node[gate] (norm-\a) at (3.15,\y) {$|\cdot|$};
  \node[gate] (power-\a) at (4.65,\y) {$(\cdot)^{-q}$};
  \node[gate] (weight-\a) at (6.15,\y) {$\mu_{\i}\,\cdot$};
  \node[gate] (divide-\a) at (11.2,\y) {$\div_2$};
  \node[terminal,anchor=west] (out-\a) at (12.05,\y)
    {$\theta_{\i}(x)$};
  \coordinate (atap-\a) at (7.05,\y);
}

% Supply the shared input to every site branch.
\draw[flow] (x.east) -- (xfan);
\draw[wire] (.5,-2.1) -- (.5,2.1);
\node[junction] at (xfan) {};

\foreach \a/\y in {first/2.1,last/-2.1}{
  \draw[flow] (.5,\y) -- (shift-\a.west);
  \node[junction] at (.5,\y) {};
  \draw[flow] (shift-\a.east) -- (norm-\a.west);
  \draw[flow] (norm-\a.east) -- (power-\a.west);
  \draw[flow] (power-\a.east) -- (weight-\a.west);
  \draw[flow] (weight-\a.east) -- (divide-\a.west);
  \draw[flow] (divide-\a.east) -- (out-\a.west);
  \node[junction] at (atap-\a) {};
}

% Each unnormalized weight also enters the shared summation.
\draw[flow] (atap-first) -- (7.05,.8) -| (sum.north);
\draw[flow] (atap-last) -- (7.05,-.8) -| (sum.south);

\node[terminal] at (8.55,2.43) {$a_1(x)$};
\node[terminal] at (8.55,-2.43) {$a_N(x)$};

% Broadcast the shared denominator to every final division.
\draw[wire] (sum.east) -- (denfan);
\draw[flow] (denfan) -- (9.8,.9) -| (divide-first.south);
\draw[flow] (denfan) -- (9.8,-.9) -| (divide-last.north);
\node[junction] at (denfan) {};
\node[terminal,anchor=south west] at (9.8,0) {$S_q(x)$};

% Intermediate branches are omitted.
\foreach \xx in {1.65,3.15,4.65,6.15,7.05,11.2,12.55}{
  \node at (\xx,0) {$\vdots$};
}

\end{tikzpicture}%
}
\caption{The circuit $\mathcal C_{\theta:N}$ computing $\theta(x)$ for $N\ge2$ and $x\in U$. }
\label{fig:theta_circuit}
\end{figure}

\paragraph{Computing the ``Location'' Factors $\theta$, $R$:}
Next, we construct $
(\cdot+k,\cdot^p,k\cdot,+_2,\div_2,\operatorname{Id}_{\mathbb{R}})_{k\in \mathbb{R},p>0}
$-circuits computing $\theta$ and $R(x)$, cf.~\eqref{eq:entropy-variance-extension__primal__THEVARIABLES}. We first compute $\theta$. 

We now formally compile the circuit sketched in Figure~\ref{fig:theta_circuit}.  
For $N=1$, simply use the constant gate $\mathsf c_1$. 
For $N\ge2$, consider the DAG of subcircuits $\mathcal C_{\theta:N}\eqdef(V,E,(g_v)_{v\in V})$, with input block $v_0$ carrying $x\in\mathbb R^d$ and designated outputs $(v_{6,j})_{j=1}^N$, where
$$
\begin{aligned}
V
&=\{v_0\}
\cup\{v_{i,j}:i\in[4],\,j\in[N]\}
\cup\{v_{5,1}\}
\cup\{v_{6,j}:j\in[N]\},\\
E
&=\{(v_0,v_{1,j}):j\in[N]\}
\cup\{(v_{i,j},v_{i+1,j}):i\in[3],\,j\in[N]\}\\
&\quad\cup\{(v_{4,j},v_{5,1}):j\in[N]\}
\cup\{(v_{4,j},v_{6,j}):j\in[N]\}
\cup\{(v_{5,1},v_{6,j}):j\in[N]\}.
\end{aligned}
$$
Assign the maps
\[
g_v=
\begin{cases}
\operatorname{Id}_{\mathbb R^d},&v=v_0,\\
\cdot-X_j,&v=v_{1,j},\\
|\cdot|,&v=v_{2,j},\\
\cdot^{-q},&v=v_{3,j},\\
\mu_j\cdot,&v=v_{4,j},\\
\operatorname{SUM}_N,&v=v_{5,1},\\
\div_2,&v=v_{6,j}.
\end{cases}
\]
Each displayed operation is understood as its scalar $
(\cdot+k,\cdot^p,k\cdot,+_2,\div_2,\operatorname{Id}_{\mathbb{R}})_{k\in \mathbb{R},p>0}
$-subcircuit; the input block expands into $d$ identity vertices with lifting channel $\Pi=I_d$%
\footnote{Here squaring is understood on all of $\mathbb R$, and $r^{-q}$ is implemented as $1/r^q$ for $r>0$ and we choose numerical labels with $v_{4,j}<v_{5,1}$, so the final division has the prescribed argument order.}%
.
Then, on $U\eqdef\mathbb R^d\setminus\{X_1,\ldots,X_N\}$, the designated outputs satisfy
\[
f_{\mathcal C_{\theta:N}}(x)
=\left(\frac{\mu_j|x-X_j|^{-q}}{\sum_{k=1}^N\mu_k|x-X_k|^{-q}}\right)_{j=1}^N
=\theta(x).
\]
Moreover, the sub-circuits (computing the coordinate-functions of $\theta$) of $\mathcal C_{\theta:N}$ has depth $6$, width $N$, and size $5N+1$. After scalar expansion, its size and width are $\mathcal{O}(dN)$, and its depth is $\mathcal{O}(\log d+\log N)$\footnote{For $N=1$, simply use the constant sub-circuit $\mathsf c_1(t)\eqdef(0t)+1$.}.

A similar construction shows that $R$ can be computed by a $(\cdot+k,\cdot^p,k\cdot,+_2,\div_2,\operatorname{Id}_{\mathbb R})_{k\in\mathbb R,\,p>0}$-subcircuit of depth $\mathcal O(\log d+\log N)$, with width and size $\mathcal O(dN)$.

\begin{figure}[H]%[bh!]%[H]
    \centering
\resizebox{0.75\linewidth}{!}{%
    \begin{tikzpicture}[
      x=1cm,y=1cm,
      >={Stealth[length=1.7mm,width=1.25mm]},
      gate/.style={draw=black!75,rounded corners=2pt,fill=white,
        line width=.55pt,minimum width=.78cm,minimum height=.64cm,
        inner xsep=5pt,inner ysep=4pt,font=\small},
      wire/.style={draw=black!80,line width=.6pt},
      flow/.style={wire,->},
      crossing/.style={preaction={draw=white,line width=2.5pt}},
      junction/.style={circle,fill=black!80,inner sep=0pt,minimum size=2.2pt},
      terminal/.style={inner sep=2pt,font=\small},
      every node/.style={outer sep=1pt}
    ]
    
    % Shared input and the two shared reciprocal branches.
    \node[terminal] (x) at (-.25,0) {$x$};
    \node[gate,fill=black!4] (R) at (1.65,5.5) {$R$};
    \node[gate] (invR) at (3.3,4.7) {$\dfrac{1}{\cdot}$};
    \node[gate] (MR) at (5.75,5.5) {$M(\cdot)^{s+1}$};
    \node[gate] (invMR) at (7.8,5.5) {$\dfrac{1}{\cdot}$};
    \coordinate (xfan) at (.5,0);
    \coordinate (Rfan) at (2.35,5.5);
    \coordinate (RbusTop) at (4.1,4.7);
    \coordinate (MRbusTop) at (8.65,5.5);
    
    % A branch for i=1 and a branch for i=N.
    \foreach \i/\a/\y in {1/first/2,N/last/-3.1}{
      \node[terminal] (u-\a) at (-.25,\y+1.3) {$u_{\i}$};
      \node[gate] (shift-\a) at (1.65,\y) {$\cdot-X_{\i}$};
      \node[gate] (norm-\a) at (3.1,\y) {$|\cdot|$};
      \node[gate] (ratio-\a) at (4.7,\y) {$\times_2$};
      \node[gate] (power-\a) at (6.3,\y) {$(\cdot)^{2s}$};
      \node[gate] (As-\a) at (7.8,\y) {$A_s\,\cdot$};
      \node[gate] (energy-\a) at (7.8,\y+1.3) {$\times_2$};
      \node[gate] (sum-\a) at (9.6,\y) {$+_2$};
      \node[gate] (scale-\a) at (11.1,\y) {$\dfrac{1}{1-s}\,\cdot$};
      \node[gate] (exp-\a) at (12.8,\y) {$\exp$};
      \node[terminal,anchor=west] (out-\a) at (13.75,\y)
        {$E_{\operatorname{exp},\i}(u,x)$};
      \coordinate (Rtap-\a) at (4.1,\y+.75);
      \coordinate (MRtap-\a) at (8.65,\y+1.95);
    }
    
    % Vertical fan-out wires. Only filled dots indicate junctions.
    \draw[wire] (.5,-3.1) -- (.5,5.5);
    \draw[flow] (x.east) -- (xfan);
    \draw[flow] (.5,5.5) -- (R.west);
    \draw[flow] (R.east) -- (MR.west);
    \draw[flow] (Rfan) |- (invR.west);
    \draw[flow] (MR.east) -- (invMR.west);
    \draw[wire] (invR.east) -- (RbusTop) -- (Rtap-last);
    \draw[wire] (invMR.east) -- (MRbusTop) -- (MRtap-last);
    
    \foreach \a/\y in {first/2,last/-3.1}{
      \draw[flow] (.5,\y) -- (shift-\a.west);
      \draw[flow] (Rtap-\a) -| (ratio-\a.north);
      \draw[flow] (MRtap-\a) -| (energy-\a.north);
      \node[junction] at (.5,\y) {};
      \node[junction] at (Rtap-\a) {};
      \node[junction] at (MRtap-\a) {};
    }
    \node[junction] at (xfan) {};
    \node[junction] at (Rfan) {};
    
    % Site computations. White underlays distinguish crossings from junctions.
    \foreach \a in {first,last}{
      \draw[flow] (shift-\a.east) -- (norm-\a.west);
      \draw[flow,crossing] (norm-\a.east) -- (ratio-\a.west);
      \draw[flow] (ratio-\a.east) -- (power-\a.west);
      \draw[flow] (power-\a.east) -- (As-\a.west);
      \draw[flow,crossing] (As-\a.east) -- (sum-\a.west);
      \draw[flow,crossing] (u-\a.east) -- (energy-\a.west);
      \draw[flow,crossing] (energy-\a.east) -| (sum-\a.north);
      \draw[flow] (sum-\a.east) -- (scale-\a.west);
      \draw[flow] (scale-\a.east) -- (exp-\a.west);
      \draw[flow] (exp-\a.east) -- (out-\a.west);
    }
    
    \node at (1.65,-.5) {$\vdots$};
    \node at (6.3,-.5) {$\vdots$};
    \node at (12.8,-.5) {$\vdots$};
    \end{tikzpicture}
}
    \caption{The circuit constructed to compute the ``exponential factor''~\eqref{eq:exp_factor} in~\eqref{eq:ExtenstionFormula_Expression}.
\hfill\\
\textit{Note:} Shared wire segments represent separate directed edges carrying the same value.
    }
    \label{fig:ExpFactor}
\end{figure}

\paragraph{Exponential Factor:}
We now compute the ``exponential factor'' in~\eqref{eq:ExtenstionFormula_Expression}, which we denote by
\begin{equation}
\label{eq:exp_factor}
\begin{aligned}
E_{\operatorname{exp}}:\mathbb R^N\times U&\to\mathbb R^N,\\
E_{\operatorname{exp}}(u_1,\ldots,u_N,x)
&\eqdef
\left(
\exp\left[
\frac{1}{1-s}
\left(
\frac{u_i}{MR(x)^{s+1}}
+
A_s\left(\frac{|x-X_i|}{R(x)}\right)^{2s}
\right)
\right]
\right)_{i=1}^N,
\end{aligned}
\end{equation}
where $U\eqdef\mathbb R^d\setminus\{X_1,\ldots,X_N\}$.

We now formalize the $\mathbb{G}$-circuit in~\eqref{fig:ExpFactor}.  
Consider the DAG of subcircuits $\mathcal C_{\exp:N}\eqdef(V,E,(g_v)_{v\in V})$, with input vertices $w_j$ carrying $u_j$, input block $v_0$ carrying $x$, and designated outputs $(v_{9,j})_{j=1}^N$, where
$$
\begin{aligned}
V
&=\{v_0,r_0,r_1,r_2,r_3\}
\cup\{w_j:j\in[N]\}
\cup\{v_{i,j}:i\in[9],\,j\in[N]\},\\
E
&=\{(v_0,r_0),(r_0,r_1),(r_0,r_2),(r_2,r_3)\}\\
&\quad\cup\{(v_{i,j},v_{i+1,j}):i\in[4],\,j\in[N]\}\\
&\quad\cup\bigcup_{j=1}^N
\bigl\{
(v_0,v_{1,j}),(r_1,v_{3,j}),
(w_j,v_{6,j}),(r_3,v_{6,j}),
(v_{5,j},v_{7,j}),(v_{6,j},v_{7,j}),
(v_{7,j},v_{8,j}),(v_{8,j},v_{9,j})
\bigr\}.
\end{aligned}
$$
Assign the maps
$$
g_v=
\begin{cases}
\operatorname{Id}_{\mathbb R^d},&v=v_0,\\
\operatorname{Id}_{\mathbb R},&v=w_j,\\
R,&v=r_0,\\
1/\cdot,&v\in\{r_1,r_3\},\\
M(\cdot)^{s+1},&v=r_2,\\
\cdot-X_j,&v=v_{1,j},\\
|\cdot|,&v=v_{2,j},\\
\end{cases}
\quad \dots \mbox{ and } \dots \quad
g_v=
\begin{cases}
\times_2,&v\in\{v_{3,j},v_{6,j}\},\\
\cdot^{2s},&v=v_{4,j},\\
A_s\cdot,&v=v_{5,j},\\
+_2,&v=v_{7,j},\\
\frac{1}{1-s}\cdot,&v=v_{8,j},\\
\exp,&v=v_{9,j}.
\end{cases}
$$
The shared vertices $r_1$ and $r_3$ evaluate to $1/R(x)$ and $1/(MR(x)^{s+1})$, respectively. Thus $v_{5,j}$ evaluates to $A_s(|x-X_j|/R(x))^{2s}$, while $v_{6,j}$ evaluates to $u_j/(MR(x)^{s+1})$. The final addition, scaling, and exponential therefore give $f_{\mathcal C_{\exp:N}}=E_{\operatorname{exp}}$ on $\mathbb R^N\times U$. Every division is admissible since $R(x)>0$ on $U$. After expanding the sub-circuits, the depth is $\mathcal O(\log d+\log N)$, and the width and size are $\mathcal O(dN)$.

\paragraph{Compiling the Circuit Computing $F_{\operatorname{ext}}|_U$ on $U$:}
We now combine the previously constructed subcircuits.  The circuit which we compile is summarized in Figure~\ref{fig:F_ext__on_U}.

\begin{figure}[H]
    \centering
\resizebox{0.75\linewidth}{!}{%
    \begin{tikzpicture}[
      x=1cm,y=1cm,
      >={Stealth[length=1.7mm,width=1.25mm]},
      gate/.style={draw=black!75,rounded corners=2pt,fill=white,
        line width=.55pt,minimum width=.78cm,minimum height=.64cm,
        inner xsep=5pt,inner ysep=4pt,font=\small},
      wire/.style={draw=black!80,line width=.6pt},
      flow/.style={wire,->},
      junction/.style={circle,fill=black!80,inner sep=0pt,minimum size=2.2pt},
      terminal/.style={inner sep=2pt,font=\small},
      every node/.style={outer sep=1pt}
    ]

    % Input and previously constructed subcircuits.
    \node[terminal] (x) at (-.25,2.4) {$x$};
    \node[gate] (Pfirst) at (2,2.4) {$P_1$};
    \node[gate] (Plast) at (2,.5) {$P_N$};
    \node at (2,1.45) {$\vdots$};
    \node[gate] (Eexp) at (4.7,2.4) {$E_{\operatorname{exp}}$};
    \node[gate] (theta) at (2,-1) {$\theta$};
    \node[gate] (R) at (2,-2.4) {$R$};

    % Inner product, logarithm, and prefactor.
    \node[gate] (inner) at (7.2,2.4)
      {$\langle\cdot,\cdot\rangle_{\mathbb R^N}$};
    \node[gate] (log) at (9.4,2.4) {$\log$};
    \node[gate] (power) at (4.7,-2.4) {$(\cdot)^{s+1}$};
    \node[gate] (scale) at (7.2,-2.4) {$(1-s)M\,\cdot$};
    \node[gate] (prod) at (11.1,0) {$\times_2$};
    \node[terminal,anchor=west] (out) at (12.1,0)
      {$F_{\operatorname{ext}}|_{U}(x)$};

    % Shared input.
    \draw[wire] (.5,-2.4) -- (.5,3.7);
    \draw[flow] (x.east) -- (.5,2.4);
    \foreach \a/\y in {Pfirst/2.4,Plast/.5,theta/-1,R/-2.4}{
      \draw[flow] (.5,\y) -- (\a.west);
      \node[junction] at (.5,\y) {};
    }
    \draw[flow] (.5,3.7) -| (Eexp.north);

    % Polynomial outputs enter the exponential block separately.
    \draw[flow] (Pfirst.east) -- (Eexp.west);
    \draw[flow] (Plast.east) -| (Eexp.south);

    % Weighted exponential sum, followed by the logarithm.
    \draw[flow] (Eexp.east) -- (inner.west);
    \draw[flow] (theta.east) -| (inner.south);
    \draw[flow] (inner.east) -- (log.west);

    % Prefactor and final multiplication.
    \draw[flow] (R.east) -- (power.west);
    \draw[flow] (power.east) -- (scale.west);
    \draw[flow] (log.east) -- (prod.north west);
    \draw[flow] (scale.east) -- (prod.south west);
    \draw[flow] (prod.east) -- (out.west);
    \end{tikzpicture}
}
    \caption{The circuit computing $F_{\operatorname{ext}}|_{U}$ on $U$ from the previously constructed subcircuits.
    }
    \label{fig:F_ext__on_U}
\end{figure}

Consider the DAG of subcircuits $\mathcal C_{\operatorname{ext}}\eqdef(V,E,(g_v)_{v\in V})$, with input block $v_0$ carrying $x$ and designated output $v_{5,1}$, where
$$
\begin{aligned}
V
&=\{v_0,v_{5,1}\}
\cup\{v_{1,j}:j\in[N]\}
\cup\{v_{2,j}:j\in[3]\}
\cup\{v_{3,j},v_{4,j}:j\in[2]\},\\
E
&=\{(v_0,v_{1,j}),(v_{1,j},v_{2,1}):j\in[N]\}
\cup\{(v_0,v_{2,j}):j\in[3]\}\\
&\quad\cup\{(v_{2,1},v_{3,1}),(v_{2,2},v_{3,1}),
(v_{2,3},v_{3,2})\}\\
&\quad\cup\{(v_{3,j},v_{4,j}),(v_{4,j},v_{5,1}):j\in[2]\}.
\end{aligned}
$$
Assign the maps
$$
g_v=
\begin{cases}
\operatorname{Id}_{\mathbb R^d},&v=v_0,\\
P_j,&v=v_{1,j},\\
E_{\operatorname{exp}},&v=v_{2,1},\\
\theta,&v=v_{2,2},\\
R,&v=v_{2,3},\\
\langle\cdot,\cdot\rangle_{\mathbb R^N},&v=v_{3,1},\\
\cdot^{s+1},&v=v_{3,2},\\
\log,&v=v_{4,1},\\
(1-s)M\cdot,&v=v_{4,2},\\
\times_2,&v=v_{5,1}.
\end{cases}
$$
Each displayed map is implemented by its scalar subcircuit, with the inner product using $N$ parallel $\times_2$ gates followed by $\mathcal C_{+,N}$. The inputs of $v_{2,1}$ are ordered as $(P_1(x),\ldots,P_N(x),x)$. The input block expands into $d$ identity vertices with lifting channel $\Pi=I_d$.
Upon writing $P(x)\eqdef(P_1(x),\ldots,P_N(x))$, we obtain
$$
f_{\mathcal C_{\operatorname{ext}},I_d}(x)
=(1-s)MR(x)^{s+1}
\log\left(
\sum_{j=1}^N\theta_j(x)
E_{\operatorname{exp},j}(P(x),x)
\right)
=F_{\operatorname{ext}}(x),
\qquad x\in U.
$$
The logarithm is well-defined since $\theta(x)\in\Delta_N$ and every coordinate of $E_{\operatorname{exp}}(P(x),x)$ is positive. For fixed $d,s$, the expanded circuit has size and width $\mathcal O(N)$, and depth $\mathcal O(\log N)$ for $N\ge2$; for $N=1$, all three are $\mathcal O(1)$.

\paragraph{Compiling the Circuit Computing $F_{\operatorname{ext}}$:}
Put $H\eqdef F_{\operatorname{ext}}|_U$, computed by the preceding circuit, and choose
$
a\eqdef\left(1+\sum_{i=1}^N |(X_i)_1|\right)e_1\in U.
$
Since the sites are distinct,
$
\chi(x)\eqdef1-\sum_{i=1}^N I_{\cdot=X_i}(x)=I_U(x)
$.
Define $S_a(x,t)\eqdef a+t(x-a)$, implemented coordinatewise using affine gates and $\times_2$. Then $S_a(x,\chi(x))=x$ on $U$ and equals $a$ at every site. Consequently, every evaluation of $H$ in the identity
\begin{equation}
\label{eq:basic_idenity}
F_{\operatorname{ext}}(x)
=
\sum_{i=1}^N I_{\cdot=X_i}(x)P_i(x)
+
\chi(x)\,H\bigl(S_a(x,\chi(x))\bigr)
\end{equation}
is well-defined for each $x\in\mathbb R^d$.

\begin{figure}[ht]%[H]
    \centering
% \includegraphics[width=0.5\linewidth]{0_Remove_at_END/AnnieArts/FinalAssembly.jpg}
%%%%    
    \resizebox{0.75\linewidth}{!}{%
    \begin{tikzpicture}[
      x=1cm,y=1cm,
      >={Stealth[length=1.7mm,width=1.25mm]},
      gate/.style={draw=black!75,rounded corners=2pt,fill=white,
        line width=.55pt,minimum width=.78cm,minimum height=.64cm,
        inner xsep=5pt,inner ysep=4pt,font=\small},
      wire/.style={draw=black!80,line width=.6pt},
      flow/.style={wire,->},
      crossing/.style={preaction={draw=white,line width=2.5pt}},
      junction/.style={circle,fill=black!80,inner sep=0pt,minimum size=2.2pt},
      terminal/.style={inner sep=2pt,font=\small},
      every node/.style={outer sep=1pt}
    ]

    % Shared input and the branch using H.
    \node[terminal] (x) at (-.25,1.6) {$x$};
    \node[gate] (maskSum) at (4.6,3.7) {$\operatorname{SUM}_N$};
    \node[gate] (complement) at (6.3,3.7) {$1-\cdot$};
    \node[gate] (safe) at (8,3.7) {$S_a$};
    \node[gate] (H) at (9.8,3.7) {$H$};
    \node[gate] (maskedH) at (11.5,3.7) {$\times_2$};
    \coordinate (chiFan) at (7.15,3.7);

    % First and last site contributions.
    \foreach \i/\a/\y/\tapx in {1/first/1.6/3.05,N/last/-1.6/3.7}{
      \node[gate] (indicator-\a) at (2,\y) {$I_{\cdot=X_{\i}}$};
      \node[gate] (P-\a) at (2,\y-.95) {$P_{\i}$};
      \node[gate] (mul-\a) at (5.2,\y) {$\times_2$};
      \coordinate (tap-\a) at (\tapx,\y);
    }
    \node at (2,-.45) {$\vdots$};
    \node at (5.2,0) {$\vdots$};

    \node[gate] (finalSum) at (13.3,0) {$\operatorname{SUM}_{N+1}$};
    \node[terminal,anchor=west] (out) at (14.6,0)
      {$F_{\operatorname{ext}}(x)$};

    % Shared input: also supply x directly to S_a.
    \draw[wire] (.5,-2.55) -- (.5,5);
    \draw[flow] (x.east) -- (.5,1.6);
    \draw[flow] (.5,5) -| (safe.north);
    \foreach \a/\y in {first/1.6,last/-1.6}{
      \draw[flow] (.5,\y) -- (indicator-\a.west);
      \draw[flow] (.5,\y-.95) -- (P-\a.west);
      \node[junction] at (.5,\y) {};
      \node[junction] at (.5,\y-.95) {};
    }

    % Sum the point indicators, then form their complement.
    \draw[flow] (tap-first) |- (maskSum.west);
    \draw[flow] (tap-last) -- (3.7,2.75) -| (maskSum.south);
    \draw[flow] (maskSum.east) -- (complement.west);
    \draw[flow] (complement.east) -- (safe.west);

    % Evaluate H at S_a(x,chi(x)), then multiply by chi(x).
    \draw[flow] (safe.east) -- (H.west);
    \draw[flow] (H.east) -- (maskedH.west);
    \draw[flow] (chiFan) -- (7.15,2.55)
      -- (11.5,2.55) -- (maskedH.south);
    \node[junction] at (chiFan) {};

    % Site contributions. White gaps indicate unconnected crossings.
    \foreach \a in {first,last}{
      \draw[flow,crossing] (indicator-\a.east) -- (mul-\a.west);
      \draw[flow,crossing] (P-\a.east) -| (mul-\a.south);
      \node[junction] at (tap-\a) {};
    }

    % Final summation.
    \draw[flow] (maskedH.east) -- (finalSum.north);
    \draw[flow] (mul-first.east) -- (finalSum.west);
    \draw[flow] (mul-last.east) -- (finalSum.south west);
    \draw[flow] (finalSum.east) -- (out.west);
    \end{tikzpicture}
}
    \caption{The master circuit computing $F_{\operatorname{ext}}$ on $\mathbb R^d$, where $H=F_{\operatorname{ext}}|_U$, $a\in U$, and $S_a(x,t)=a+t(x-a)$.
\hfill\\
\textit{Note:} Shared wire segments represent separate directed edges carrying the same value.
    }
%%%%%
    \label{fig:F_ext__finished}
\end{figure}

We now implement the ``master'' circuit in Figure~\ref{fig:F_ext__finished} using the identity in~\eqref{eq:basic_idenity} and all our sub-circuits constructed thus far to compute $F_{\operatorname{ext}}$ on all of $\mathbb{R}^d$.
Consider the DAG of subcircuits $\widehat{\mathcal C}_{\operatorname{ext}}\eqdef(V,E,(g_v)_{v\in V})$, with input block $v_0$ carrying $x$ and designated output $r_5$, where
$$
\begin{aligned}
V
&=\{v_0,r_0,\ldots,r_5\}
\cup\{v_{i,j}:i\in[3],\,j\in[N]\},\\
E
&=\bigcup_{j=1}^N
\bigl\{
(v_0,v_{1,j}),(v_0,v_{2,j}),
(v_{1,j},v_{3,j}),(v_{2,j},v_{3,j}),
(v_{1,j},r_0),(v_{3,j},r_5)
\bigr\}\\
&\quad\cup
\bigl\{
(r_0,r_1),(v_0,r_2),(r_1,r_2),(r_2,r_3),
(r_1,r_4),(r_3,r_4),(r_4,r_5)
\bigr\}.
\end{aligned}
$$
Assign the maps
$$
g_v=
\begin{cases}
\operatorname{Id}_{\mathbb R^d},&v=v_0,\\
I_{\cdot=X_j},&v=v_{1,j},\\
P_j,&v=v_{2,j},\\
\times_2,&v\in\{v_{3,j}:j\in[N]\}\cup\{r_4\},\\
\operatorname{SUM}_N,&v=r_0,\\
1-\cdot,&v=r_1,\\
S_a,&v=r_2,\\
H,&v=r_3,\\
\operatorname{SUM}_{N+1},&v=r_5.
\end{cases}
$$
Each displayed map uses its scalar subcircuit, with the inputs of $r_2$ ordered as $(x,\chi(x))$. The input block expands into $d$ identity vertices with lifting channel $\Pi=I_d$. The vertices $v_{3,j}$ and $r_4$ evaluate to $I_{\cdot=X_j}(x)P_j(x)$ and $\chi(x)H(S_a(x,\chi(x)))$, respectively. Hence~\eqref{eq:basic_idenity} gives
$$
f_{\widehat{\mathcal C}_{\operatorname{ext}},I_d}(x)
=F_{\operatorname{ext}}(x),
\qquad x\in\mathbb R^d.
$$
For fixed $d,s$, the expanded circuit has size and width $\mathcal O(N)$, and depth $\mathcal O(\log N)$ for $N\ge2$; for $N=1$, all three are $\mathcal O(1)$.

\paragraph{The Final Tally:}
Put $B_{d,s}\eqdef\binom{d+s}{s}=\mathcal O_s(d^s)$. The master circuit contains $2N$ polynomial subcircuits, each having size $\mathcal O(B_{d,s})$ and depth $\mathcal O(\log B_{d,s})$ after scalar expansion. All remaining subcircuits together contribute $\mathcal O(Nd)$ gates and at most $\mathcal O(\log(d+N))$ further depth. Consequently,
$$
\begin{aligned}
|\widehat{\mathcal C}_{\operatorname{ext}}|
&=\mathcal O(NB_{d,s}+Nd)
=\mathcal O_s(Nd^s),\\
\operatorname{depth}(\widehat{\mathcal C}_{\operatorname{ext}})
&=\mathcal O(\log B_{d,s}+\log(d+N))
=\mathcal O_s(\log(d+N)),\\
\operatorname{width}(\widehat{\mathcal C}_{\operatorname{ext}})
&\le d+|\widehat{\mathcal C}_{\operatorname{ext}}|
=\mathcal O_s(Nd^s).
\end{aligned}
$$
These bounds include the point-indicator subcircuits and the input adjustment $S_a$, which ensure that the realization domain is all of $\mathbb R^d$.
\end{proof}

\subsection{Proof of \texorpdfstring{Theorem~\ref{thrm:complexity__Assembly}}{Efficiency of the circuit compiler}}
\label{s:compiler-proof}

The next proof proceeds in three steps: (i) we construct and verify the circuit computing $F_{\operatorname{ext}}$, following the construction in the proof of Theorem~\ref{thrm:complexity}; (ii) we store each operation, together with its parameters and input indices, in a representation compatible with the exact-real word-RAM model; and (iii) we prove that producing this representation requires $\mathcal O_s(Nd^s)$ one-time work.

\begin{remark}
The key distinction is that Theorem~\ref{thrm:complexity__Assembly} proves that a \textit{single fixed program}, depending only on $d,s$, constructs the stored circuit from the supplied data within this work bound, whereas Theorem~\ref{thrm:complexity} bounds the size, width, and depth of a circuit computing $F_{\operatorname{ext}}$.
\end{remark}

\paragraph{Compiler primitives.}
Fix $d,s$ as in Theorem~\ref{thrm:complexity__Assembly}. Put $\Lambda=\Lambda_{d,s}$, $B=\binom{d+s}{s}$, $h_m=\lceil\log_2m\rceil$, and $\bar m=2^{h_m}$ for $m\ge1$. Order $\Lambda=\{\alpha^{(0)},\ldots,\alpha^{(B-1)}\}$ by nondecreasing degree, with $\alpha^{(0)}=0$ and $\alpha^{(j)}=e_j$ for $j\in[d]$. Fix the splitting ranks
\[
\begin{aligned}
1\le\beta_k,\gamma_k&<k,\qquad \alpha^{(\beta_k)}+\alpha^{(\gamma_k)}=\alpha^{(k)},\\
|\alpha^{(\beta_k)}|&=\lfloor|\alpha^{(k)}|/2\rfloor,\qquad |\alpha^{(\gamma_k)}|=\lceil|\alpha^{(k)}|/2\rceil\qquad(d<k<B).
\end{aligned}
\]
The ranks and coefficient offsets are fixed literals of the program for $d,s$.

Let $\boldsymbol x=(\mathbf x_1,\ldots,\mathbf x_d)$ be the input handles, with $g_{\mathbf x_j}=\operatorname{Id}_{\mathbb R}$ for $j\in[d]$. For existing pairwise distinct handles $u_1,\ldots,u_k$, where $k=\operatorname{ar}(g)$, the operation $v\gets\operatorname{New}(g;u_1,\ldots,u_k)$ appends a fresh record with gate $g$ and ordered operand list $(u_1,\ldots,u_k)$; nested calls are evaluated from left to right. Define
\[
\begin{aligned}
\operatorname{Const}(a)&\eqdef\operatorname{New}(\mathsf c_a;\mathbf x_1),\\
\operatorname{Shift}_a(u)&\eqdef\operatorname{New}(+_2;u,\operatorname{Const}(a)),\\
\operatorname{Sq}(u)&\eqdef\operatorname{New}\bigl(\times_2;u,\operatorname{New}(+_2;u,\operatorname{New}(\mathsf c_0;u))\bigr),\\
\operatorname{Inv}(u)&\eqdef\operatorname{New}(\div_2;\operatorname{Const}(1),u).
\end{aligned}
\]
Write $\operatorname{Inst}(F;\boldsymbol v)$ for the joint scalar-DAG emission specified below, substituting the supplied handles for its formal inputs and returning its output handles. The handle $\mathbf x_1$ in $\operatorname{Const}$ is a fixed external anchor. Each computation vertex is emitted once per call; repeated references use its previously stored handle.

For $\star\in\{+_2,\times_2\}$, put $\varepsilon_{+_2}=0$ and $\varepsilon_{\times_2}=1$. For pairwise distinct $u_1,\ldots,u_m$, set $\rho_\star(u_1)=u_1$. For $m\ge2$, compute $b=\bar m$ by $b\gets1$ and $b\gets2b$ while $b<m$, and emit
\[
\begin{aligned}
T[b+j-1]&\gets u_j &&(1\le j\le m),\\
T[b+j-1]&\gets\operatorname{New}(\mathsf c_{\varepsilon_\star};u_1) &&(m<j\le b),\\
T[j]&\gets\operatorname{New}(\star;T[2j],T[2j+1]) &&(j=b-1,\ldots,1),\\
\rho_\star(u_1,\ldots,u_m)&\gets T[1].
\end{aligned}
\]

For $i\in[N]$, the centered polynomial emitter is
\[
\begin{aligned}
z_j&\gets\operatorname{Shift}_{-X_{i,j}}(v_j),\qquad m_j\gets z_j\qquad(j\in[d]),\\
m_k&\gets\begin{cases}\operatorname{New}(\times_2;m_{\beta_k},m_{\gamma_k}),&\beta_k\ne\gamma_k,\\\operatorname{Sq}(m_{\beta_k}),&\beta_k=\gamma_k,\end{cases} &&(k=d+1,\ldots,B-1),\\
\kappa_k&\gets\operatorname{New}(\mathsf c_{c_{i,\alpha^{(k)}}};z_1) &&(0\le k<B),\\
t_0&\gets\kappa_0,\qquad t_k\gets\operatorname{New}(\times_2;\kappa_k,m_k) &&(1\le k<B),\\
\operatorname{Poly}(i;\boldsymbol v)&\gets\rho_{+_2}(t_0,\ldots,t_{B-1}).
\end{aligned}
\]
The remaining scalar expansions are
\[
\begin{aligned}
\operatorname{Dist}_i(\boldsymbol v)&\eqdef\operatorname{New}\!\left(\mathsf{pow}_{1/2};\rho_{+_2}\bigl((\operatorname{Sq}(\operatorname{Shift}_{-X_{i,j}}(v_j)))_{j=1}^d\bigr)\right),\\
\operatorname{Inst}(I_{\cdot=X_i};\boldsymbol v)&\eqdef\rho_{\times_2}\bigl((\operatorname{New}(I_{\cdot=X_{i,j}};v_j))_{j=1}^d\bigr),\\
\operatorname{Inst}(S_a;\boldsymbol v,t)&\eqdef\bigl(\operatorname{Shift}_{a_j}(\operatorname{New}(\times_2;t,\operatorname{Shift}_{-a_j}(v_j)))\bigr)_{j=1}^d.
\end{aligned}
\]

For $N=1$, set $\operatorname{Inst}(\theta;\boldsymbol v)=\operatorname{New}(\mathsf c_1;v_1)$. For $N\ge2$, emit all $\omega_i$ before $Z$, and then emit all $\eta_i$:
\[
\begin{aligned}
\delta_i&\gets\operatorname{Dist}_i(\boldsymbol v),\qquad p_i\gets\operatorname{New}(\mathsf{pow}_q;\delta_i) &&(i\in[N]),\\
\omega_i&\gets\operatorname{New}(\times_2;\operatorname{Const}(\mu_i),\operatorname{Inv}(p_i)) &&(i\in[N]),\\
Z&\gets\rho_{+_2}(\omega_1,\ldots,\omega_N),\\
\eta_i&\gets\operatorname{New}(\div_2;\omega_i,Z) &&(i\in[N]),\\
\operatorname{Inst}(\theta;\boldsymbol v)&\gets(\eta_1,\ldots,\eta_N).
\end{aligned}
\]
The radius emitter is
\[
\begin{aligned}
(\eta_i)_{i=1}^N&\gets\operatorname{Inst}(\theta;\boldsymbol v),\\
\delta_i&\gets\operatorname{Dist}_i(\boldsymbol v),\qquad \beta_i\gets\operatorname{New}(\mathsf{pow}_{2s};\delta_i) &&(i\in[N]),\\
t_i&\gets\operatorname{New}(\times_2;\eta_i,\beta_i) &&(i\in[N]),\\
T&\gets\rho_{+_2}(t_1,\ldots,t_N),\\
r&\gets\operatorname{New}(\mathsf{pow}_{1/(2s)};T),\qquad\operatorname{Inst}(R;\boldsymbol v)\gets r.
\end{aligned}
\]
For $\operatorname{Inst}(E_{\operatorname{exp}};\boldsymbol u,\boldsymbol v)$, emit
\[
\begin{aligned}
r&\gets\operatorname{Inst}(R;\boldsymbol v),\qquad r^{-1}\gets\operatorname{Inv}(r),\\
p_R&\gets\operatorname{New}(\mathsf{pow}_{s+1};r),\\
p_M&\gets\operatorname{New}(\times_2;\operatorname{Const}(M),p_R),\qquad p_M^{-1}\gets\operatorname{Inv}(p_M),\\
\delta_i&\gets\operatorname{Dist}_i(\boldsymbol v),\qquad a_i\gets\operatorname{New}(\times_2;\delta_i,r^{-1}) &&(i\in[N]),\\
\beta_i&\gets\operatorname{New}(\mathsf{pow}_{2s};a_i),\qquad b_i\gets\operatorname{New}(\times_2;\operatorname{Const}(A_s),\beta_i) &&(i\in[N]),\\
c_i&\gets\operatorname{New}(\times_2;u_i,p_M^{-1}),\qquad d_i\gets\operatorname{New}(+_2;c_i,b_i) &&(i\in[N]),\\
e_i&\gets\operatorname{New}(\exp;\operatorname{New}(\times_2;\operatorname{Const}(1/(1-s)),d_i)) &&(i\in[N]),\\
\operatorname{return}&\ (e_1,\ldots,e_N).
\end{aligned}
\]
Symbols such as $r^{-1}$ in these emitter assignments denote handles. Each invocation uses fresh local records.

\paragraph{Decorated adjacency representation.}
Put $K=|\widetilde V|$. For a bijection $\iota:\widetilde V\to[K]$ with $\iota(\mathbf x_j)=j$, write $\{p_1(v)<\cdots<p_{\operatorname{ar}(v)}(v)\}=\iota(\operatorname{pa}_v)$ and set $p_j(v)=0$ for $j>\operatorname{ar}(v)$. Define
\begin{equation}
\label{eq:compiler-record}
\begin{aligned}
\operatorname{Rec}_\iota(v)&\eqdef\bigl(\operatorname{tag}(v),\operatorname{ar}(v),p_1(v),p_2(v);\lambda(v)\bigr),\\
\lambda(v)&\eqdef\begin{cases}z,&g_v=\mathsf c_z,\\r,&g_v=\mathsf{pow}_r,\\b,&g_v=I_{\cdot=b},\\0,&\text{otherwise},\end{cases}\qquad
\operatorname{ar}(v)=\begin{cases}0,&v\in\widetilde V_{\rm in},\\\operatorname{ar}(g_v),&v\notin\widetilde V_{\rm in}.
\end{cases}
\end{aligned}
\end{equation}
Here $\operatorname{tag}$ identifies the input or gate type. Set $A[\iota(v)]=\operatorname{Rec}_\iota(v)$ and $\operatorname{Adj}(\mathcal C)=(K,\iota(v_{\rm out}),A)$. The two headers and first four record fields are words; $\lambda(v)$ is one real cell. The representation occupies $5K+2$ cells; the fixed lifting channel $I_d$ is not stored. For the instance corresponding to $D\ge1$, supply $q=2s+\lceil\log_2D\rceil+2$.

\begin{algorithm}[t]
\caption{\texttt{Compile-Circuit}; the algorithm in Theorem~\ref{thrm:complexity__Assembly}}
\label{alg:compile-circuit}
\begin{algorithmic}[1]
\Require $N\in\mathbb N_+$ and integer $q\ge2s+2$, $M>0$, pairwise distinct $X_1,\ldots,X_N\in\mathbb R^d$, $\mu\in\Delta_N^\circ$, and $(c_{i,\alpha})_{i\in[N],\alpha\in\Lambda}$.
\Ensure $\operatorname{Adj}(\widehat{\mathcal C}_{\operatorname{ext}})$ with lifting channel $I_d$.
\State $(\widetilde V,\widetilde E) \gets(\{\mathbf x_1,\ldots,\mathbf x_d\},\varnothing)$.
\State Embed $q$ into a real register for the power-gate labels.
\State Compute $a\gets(1+\sum_{i=1}^N|X_{i,1}|)e_1$ using exact-real arithmetic and sign tests.
\For{$i=1,\ldots,N$}
\State $b_i\gets\operatorname{Inst}(I_{\cdot=X_i}; \boldsymbol x)$.
\State $p_i\gets\operatorname{Poly}(i;\boldsymbol x)$.
\State $t_i\gets\operatorname{New}(\times_2;b_i,p_i)$.
\EndFor
\State $r_0\gets\rho_{+_2}(b_1,\ldots,b_N)$.
\State $\chi\gets \operatorname{New}(+_2;\operatorname{Const}(1), \operatorname{New}(\times_2;\operatorname{Const}(-1),r_0))$.
\State $\boldsymbol y\gets \operatorname{Inst}(S_a;\boldsymbol x,\chi)$.
\For{$i=1,\ldots,N$}
\State $u_i\gets\operatorname{Poly}(i;\boldsymbol y)$.
\EndFor
\State $(\eta_1,\ldots,\eta_N)\gets \operatorname{Inst}(\theta;\boldsymbol y)$.
\State $(e_1,\ldots,e_N)\gets \operatorname{Inst}(E_{\operatorname{exp}}; u_1,\ldots,u_N,\boldsymbol y)$.
\State $r\gets\operatorname{Inst}(R;\boldsymbol y)$.
\State $w_i\gets\operatorname{New}(\times_2;\eta_i,e_i)$ for $i\in[N]$.
\State $\ell\gets \operatorname{New}(\log;\rho_{+_2}(w_1,\ldots,w_N))$.
\State $k\gets \operatorname{New}(\times_2;\operatorname{Const}((1-s)M), \operatorname{New}(\mathsf{pow}_{s+1};r))$.
\State $h\gets\operatorname{New}(\times_2;k,\ell)$.
\State $v_{\rm out}\gets \rho_{+_2}(t_1,\ldots,t_N, \operatorname{New}(\times_2;\chi,h))$.
\State $\mathcal V_{\rm c}\gets \{v\in\widetilde V:g_v=\mathsf c_z \text{ for some }z\in\mathbb R\}$.
\State Assign consecutive labels $\iota$ by the stable three-block partition
\Statex \hspace{\algorithmicindent} $\widetilde V_{\rm in},\quad \mathcal V_{\rm c},\quad \widetilde V\setminus (\widetilde V_{\rm in}\cup\mathcal V_{\rm c})$.
\State Write $A[\iota(v)]\gets\operatorname{Rec}_\iota(v)$ for every $v\in\widetilde V$.
\State \Return $(|\widetilde V|,\iota(v_{\rm out}),A)$.
\end{algorithmic}
\end{algorithm}

\begin{proof}[{Proof of Theorem~\ref{thrm:complexity__Assembly}}]
Fix an input satisfying Data Format~\ref{dataformat}. Put $U=\mathbb R^d\setminus\{X_1,\ldots,X_N\}$ and $H=F_{\operatorname{ext}}|_U$. Every constant denoted by $C$ below is absolute and may increase between occurrences. We first verify the emitted scalar graph, then its decorated adjacency representation, and finally the work of its uniform implementation.

\emph{1. Emission and realization.}

Write $\widetilde V_0=\{\mathbf x_1,\ldots,\mathbf x_d\}$, $\widetilde V_t=\widetilde V_0\sqcup\{v_1,\ldots,v_t\}$, $c(\mathbf x_j)=j$, and $c(v_t)=d+t$, where $v_t$ is the $t$th emitted vertex. For an instantiation with distinct supplied handles $(u_j)_{j=1}^m$ and formal computation vertices $(a_k)_{k=1}^L$, the substitution table satisfies
\[
\begin{aligned}
\sigma(\text{port}_j)&=u_j\qquad(j\in[m]),\\
\sigma(a_k)&\notin\widetilde V_{\rm before}\sqcup\{\sigma(a_\ell):\ell<k\}\qquad(k\in[L]),\\
\sigma(a_k)&\ne\sigma(\text{port}_j)\qquad(k\in[L],\ j\in[m]).
\end{aligned}
\]
For $k,\ell\in[L]$, $\sigma(a_k)=\sigma(a_\ell)$ if and only if $k=\ell$; for $i,j\in[m]$, $\sigma(\text{port}_i)=\sigma(\text{port}_j)$ if and only if $i=j$. The supplied tuples belong to $\{\boldsymbol x,\boldsymbol y,(\boldsymbol x,\chi),(u_1,\ldots,u_N,\boldsymbol y)\}$. Freshness in the coordinate and site iterations gives $y_j\ne y_k$ for distinct $j,k\in[d]$ and $u_i\ne u_k$ for distinct $i,k\in[N]$. Moreover,
\[
\begin{aligned}
\max_{j\in[d]}c(\mathbf x_j)&<c(\chi)<\min_{j\in[d]}c(y_j)\le\max_{j\in[d]}c(y_j)<\min_{i\in[N]}c(u_i).
\end{aligned}
\]
In a reduction, its leaf handles are distinct and every internal handle is fresh. Hence $T[j]\ne T[k]$ for distinct allocated positions $j,k$, in particular $T[2j]\ne T[2j+1]$. A repeated monomial operand $u$ is replaced by the three fresh vertices $z=\operatorname{New}(\mathsf c_0;u)$, $u'=\operatorname{New}(+_2;u,z)$, and $\operatorname{New}(\times_2;u,u')$, with $u\ne z$ and $u\ne u'$. Every remaining binary call in the emitter definitions has a fresh-constant pair, a distinct-monomial pair, a division pair treated below, or one of the following pairs of handles from distinct branches:
\[
\begin{gathered}
(\chi,x_j-a_j),\quad (b_i,p_i),\quad (\theta_i,\delta_i^{2s}),\quad (\delta_i,R^{-1}),\quad (u_i,(MR^{s+1})^{-1}),\\
\bigl(A_s(\delta_i/R)^{2s},u_i(MR^{s+1})^{-1}\bigr),\quad (\eta_i,e_i),\quad (k,\ell),\quad(\chi,h),
\end{gathered}
\]
Here each expression denotes its constructor handle; the division pairs are checked in Step~2. Thus
\[
\begin{gathered}
v_t\notin\widetilde V_{t-1},\qquad \operatorname{pa}_{v_t}\subseteq\widetilde V_{t-1},\\
\#\operatorname{pa}_{v_t}=\operatorname{ar}(g_{v_t})\in\{1,2\},\qquad \widetilde V_{\rm in}=\widetilde V_0.
\end{gathered}
\]
If $(u,v)\in\widetilde E$, then $c(u)<c(v)$. A directed cycle $v_0\to v_1\to\cdots\to v_m=v_0$ with $m\ge1$ would therefore give $c(v_0)<c(v_1)<\cdots<c(v_m)=c(v_0)$, a contradiction. Hence $\widetilde{\mathcal D}$ is acyclic.

Use the temporary record bases $b_{\mathbb W}^{\rm tmp},b_{\mathbb R}^{\rm tmp}$ from the layout in Lemma~\ref{lem:compiler-ram-implementation}. For $v\in\widetilde V$ with $k=c(v)$ and ordered parents $(u_{v,j})_{j=1}^{\operatorname{ar}(v)}$, put $\widetilde p_j(v)=c(u_{v,j})$ if $j\le\operatorname{ar}(v)$ and $\widetilde p_j(v)=0$ otherwise, for $j\in\{1,2\}$. Recording $v$ means executing
\[
\begin{aligned}
\mathsf W[b_{\mathbb W}^{\rm tmp}+4(k-1)]&\gets\operatorname{tag}(v),&\mathsf W[b_{\mathbb W}^{\rm tmp}+4(k-1)+1]&\gets\operatorname{ar}(v),\\
\mathsf W[b_{\mathbb W}^{\rm tmp}+4(k-1)+1+j]&\gets\widetilde p_j(v) &&(j\in\{1,2\}),\\
\mathsf R[b_{\mathbb R}^{\rm tmp}+k-1]&\gets\lambda(v).
\end{aligned}
\]
Each record append writes four word fields and one real field. Each address requires a bounded number of word operations. For distinct fixed scratch addresses $a,b$, an indirect word read or write is respectively $\mathsf W[b]\gets\mathsf W[\mathsf W[a]]$ or $\mathsf W[\mathsf W[a]]\gets\mathsf W[b]$; the corresponding real instructions are $\mathsf R[b]\gets\mathsf R[\mathsf W[a]]$ and $\mathsf R[\mathsf W[a]]\gets\mathsf R[b]$. A memory-to-memory copy uses an indirect load followed by an indirect store through a scratch register. The field writes, address computations, and counter update are charged to $W_{\rm emit}$ in Step~3.

For the stored operand tuple $(p_{t,1},\ldots,p_{t,a_t})$ of $v_t$, define
\[
\begin{aligned}
\Omega_0&=\mathbb R^d,\qquad E_{\mathbf x_j}(x)=x_j,\\
\Omega_t&=\{x\in\Omega_{t-1}:(E_{p_{t,1}}(x),\ldots,E_{p_{t,a_t}}(x))\in\operatorname{dom}(g_{v_t})\},\\
E_{v_t}(x)&=g_{v_t}(E_{p_{t,1}}(x),\ldots,E_{p_{t,a_t}}(x))\qquad(x\in\Omega_t),\\
E_{\operatorname{Sq}(u)}&=E_u\cdot(E_u+0)=E_u^2.
\end{aligned}
\]

For a singleton reduction, $E_{\rho_\star(u_1)}=E_{u_1}$. For $m\ge2$, identify heap positions $\bar m,\ldots,2\bar m-1$ with the ordered leaves of the padded reduction in the proof of Theorem~\ref{thrm:complexity}, and position $j<\bar m$ with the parent of positions $2j,2j+1$. The first $m$ leaves receive the supplied handles; every remaining leaf carries $\mathsf c_{\varepsilon_\star}$ with parent $u_1$. The reduction identity established there gives $E_{\rho_\star(u_1,\ldots,u_m)}=\mathop{\star}_{j=1}^m E_{u_j}$ on the common realization domain of the supplied handles.

Induction on $|\alpha^{(k)}|$ gives the centred polynomial identities
\[
\begin{aligned}
E_{m_j}&=E_{v_j}-X_{i,j}\qquad(j\in[d]),\\
E_{m_k}&=E_{m_{\beta_k}}E_{m_{\gamma_k}}=\prod_{j=1}^d(E_{v_j}-X_{i,j})^{\alpha_j^{(k)}}\qquad(d<k<B),\\
E_{\operatorname{Poly}(i;\boldsymbol v)}&=\sum_{k=0}^{B-1}c_{i,\alpha^{(k)}}\prod_{j=1}^d(E_{v_j}-X_{i,j})^{\alpha_j^{(k)}}=P_i((E_{v_j})_{j=1}^d).
\end{aligned}
\]
For every $x\in\mathbb R^d$, the polynomial identities and the indicator products give
\[
\begin{aligned}
b_i(x)&=\prod_{j=1}^dI_{\cdot=X_{i,j}}(x_j)=I_{\cdot=X_i}(x),\qquad p_i(x)=P_i(x),\\
\sum_{i=1}^Nb_i(x)&=\begin{cases}0,&x\in U,\\1,&x\in\{X_1,\ldots,X_N\},\end{cases}\qquad \chi(x)=I_U(x),\\
a_1-X_{i,1}&=1+\sum_{k=1}^N|X_{k,1}|-X_{i,1}\ge1+|X_{i,1}|-X_{i,1}\ge1,\\
y(x)&=a+\chi(x)(x-a)=\begin{cases}x,&x\in U,\\a,&x\in\{X_1,\ldots,X_N\},\end{cases}\qquad y(x)\in U.
\end{aligned}
\]
Fix $x\in\mathbb R^d$ and put $z=y(x)\in U$. For each $i\in[N]$, choose $j_i\in[d]$ such that $z_{j_i}\ne X_{i,j_i}$. Since $q\ge2s+2>0$, $\mu_i>0$, and $s\ge2$, we have
\[
\begin{aligned}
\delta_i^2\eqdef\sum_{j=1}^d(z_j-X_{i,j})^2&\ge(z_{j_i}-X_{i,j_i})^2>0,\qquad \delta_i=|z-X_i|>0,\\
\delta_i^q&>0,\qquad\omega_i\eqdef\mu_i/\delta_i^q>0,\qquad Z\eqdef\sum_i\omega_i\ge\omega_1>0,\\
\theta_i(z)&=\begin{cases}\omega_i/Z,&N\ge2,\\1,&N=1,\end{cases}>0,\qquad\sum_i\theta_i(z)=1,\\
T&\eqdef\sum_{i=1}^N\theta_i(z)\delta_i^{2s}\ge\theta_1(z)\delta_1^{2s}>0,\qquad r=T^{1/(2s)}=R(z)>0,\\
Mr^{s+1}&>0,\qquad r^{-1}>0,\qquad(Mr^{s+1})^{-1}>0,\qquad\delta_i/r>0,\\
1-s&\le-1,\qquad 2s>0,\\
\zeta_i&\eqdef\frac1{1-s}\left(\frac{P_i(z)}{Mr^{s+1}}+A_s\left(\frac{\delta_i}{r}\right)^{2s}\right)\in\mathbb R,\\
\xi_i&\eqdef\exp(\zeta_i)>0,\qquad L_x\eqdef\sum_i\theta_i(z)\xi_i\ge\theta_1(z)\xi_1>0.
\end{aligned}
\]
For $N=1$, $\mu_1=1$ and $\omega_1/Z=1$. The three $\theta$ instances are denoted by $\theta^0,\theta^E,\theta^H$: the direct call, the call within the exponential radius $R^E$, and the call within the final radius $R^H$. All their formal coordinate inputs have value $z$.

The following table lists the partial gates introduced directly by each constructor. Its nested calls contribute the rows for those constructors; $\operatorname{Poly}$, $I_{\cdot=X_i}$, $S_a$, $\rho_\star$, and the remaining master assignments introduce only total gates. The entries give mathematical input values whenever the parents have been evaluated, with $z=y(x)$ and the quantities in the preceding display.
\[
\begin{array}{c|c|l}
\text{constructor}&\text{nested partial-gate constructors}&\text{new partial operations}\\ \hline
\operatorname{Dist}_i&\varnothing&\mathsf{pow}_{1/2}(\delta_i^2)\\
\theta,\ N=1&\varnothing&\text{none}\\
\theta,\ N\ge2&\operatorname{Dist}_i&
\mathsf{pow}_q(\delta_i),\quad 1/\delta_i^q,\quad \omega_i/Z\\
R&\theta,\operatorname{Dist}_i&
\mathsf{pow}_{2s}(\delta_i),\quad \mathsf{pow}_{1/(2s)}(T)\\
E_{\operatorname{exp}}&R,\operatorname{Dist}_i&
\begin{gathered}
1/r,\quad \mathsf{pow}_{s+1}(r),\quad 1/(Mr^{s+1}),\\
\mathsf{pow}_{2s}(\delta_i/r)
\end{gathered}\\
\text{master}&\theta,E_{\operatorname{exp}},R&
\mathsf{pow}_{s+1}(r),\quad \log(L_x)
\end{array}
\]
Thus every division receives one of $(1,\delta_i^q)$, $(\omega_i,Z)$, $(1,r)$, $(1,Mr^{s+1})$ in $\mathbb R\times(0,\infty)$; every power receives one of $(\delta_i^2,1/2)$, $(\delta_i,q)$, $(\delta_i,2s)$, $(T,1/(2s))$, $(r,s+1)$, $(\delta_i/r,2s)$ in $(0,\infty)^2$; and the unique logarithm receives $L_x>0$. All remaining gates belong to $\{+_2,\times_2,\exp\}\cup\{\mathsf c_a,I_{\cdot=a}:a\in\mathbb R\}$ and have domain $\mathbb R^{\operatorname{ar}(g_v)}$. The table covers the three $\theta$ instances and both radius instances through their nested calls. There are $6N\mathbf1_{\{N\ge2\}}+2$ division vertices and one logarithm vertex. If $\Omega_{t-1}=\mathbb R^d$, then the preceding checks give $(E_{p_{t,j}}(x))_{j=1}^{a_t}\in\operatorname{dom}(g_{v_t})$ for every $x\in\mathbb R^d$, so $\Omega_t=\mathbb R^d$. Starting from $\Omega_0=\mathbb R^d$, induction in creation order therefore gives
\[
\begin{aligned}
\Omega_t&=\mathbb R^d\qquad(0\le t\le K-d),\\
E_{u_i}(x)&=P_i(z),\qquad E_{\eta_i}(x)=\theta_i(z),\qquad E_{e_i}(x)=\xi_i,\qquad E_r(x)=R(z),\\
E_h(x)&=(1-s)MR(z)^{s+1}\log\!\left(\sum_{i=1}^N\theta_i(z)\xi_i\right)=H(z),\\
E_{v_{\rm out}}(x)&=\sum_i I_{\cdot=X_i}(x)P_i(x)+\chi(x)H(y(x))\\
&=\begin{cases}H(x),&x\in U,\\P_i(X_i),&x=X_i,\ i\in[N],\end{cases}=F_{\operatorname{ext}}(x).
\end{aligned}
\]
The last identity is precisely~\eqref{eq:basic_idenity}, with the emitted safe-point map $y=S_a(x,\chi(x))$.

Let $u\leadsto v$ denote a directed path of length at least zero. For a reduction with $m\ge2$, put $b=\bar m$. Its heap edges satisfy $T[j]\to T[\lfloor j/2\rfloor]$ for $2\le j\le2b-1$, whence
\[
T[j]\leadsto T[1]\qquad(1\le j\le2b-1).
\]
For every polynomial template, write $m_{\alpha^{(k)}}=m_k$ for $1\le k<B$ and use monomial handles $m_\alpha$, coefficient handles $\kappa_\alpha$, and summands $t_0=\kappa_0$, $t_\alpha=\operatorname{New}(\times_2;\kappa_\alpha,m_\alpha)$ for $\alpha\ne0$,
\[
\begin{aligned}
\kappa_0&=t_0\leadsto\rho_{+_2}((t_\alpha)_{\alpha\in\Lambda}),\\
\kappa_\alpha&\to t_\alpha\leadsto\rho_{+_2}((t_\alpha)_{\alpha\in\Lambda})\quad(\alpha\ne0),\\
m_\alpha&\to t_\alpha\leadsto\rho_{+_2}((t_\alpha)_{\alpha\in\Lambda})\quad(\alpha\ne0),\\
\mathsf c_0(m_\beta)&\to m_\beta+0\to m_\beta(m_\beta+0)=m_{2\beta}.
\end{aligned}
\]
The affine, distance, geometric, and exponential templates have the following directed paths, where all products, sums, reciprocals, and powers denote the emitted handles:
\[
\begin{aligned}
v_j&\to v_j-X_{i,j}\leadsto(v_j-X_{i,j})^2\leadsto\delta_i,\\
\delta_i&\to\delta_i^q\to\delta_i^{-q}\to\omega_i\to\omega_i/Z\quad(N\ge2),\\
\omega_i&\leadsto Z\to\omega_i/Z\quad(N\ge2),\\
\theta_i,\ \delta_i^{2s}&\to\theta_i\delta_i^{2s}\leadsto\sum_k\theta_k\delta_k^{2s}\to R,\\
R&\to R^{-1}\to\delta_iR^{-1}\to(\delta_iR^{-1})^{2s}\to A_s(\delta_iR^{-1})^{2s}\leadsto e_i,\\
R&\to R^{s+1}\to MR^{s+1}\to(MR^{s+1})^{-1}\to u_i(MR^{s+1})^{-1}\leadsto e_i,\\
x_j-a_j,\ \chi&\to\chi(x_j-a_j)\to y_j.
\end{aligned}
\]
For $N=1$, the only vertex emitted by $\theta$ is its returned constant. Every newly emitted constant in these paths is a parent of the displayed affine, product, reciprocal, or square vertex. Thus each newly emitted vertex of a template reaches one of its returned handles. For the returned handles of the master algorithm,
\[
\begin{aligned}
\mathbf x_j&\to I_{\cdot=X_{1,j}}(\mathbf x_j)\leadsto b_1\to t_1\leadsto v_{\rm out},\\
b_i,\ p_i&\to t_i\leadsto v_{\rm out},\\
r_0&\leadsto\chi\to\chi h\leadsto v_{\rm out},\\
y_j&\leadsto u_1\leadsto e_1\to w_1\leadsto\ell\to h\to\chi h\leadsto v_{\rm out},\\
u_i&\leadsto e_i\to w_i\leadsto\ell\to h\to\chi h\leadsto v_{\rm out},\\
\eta_i,\ e_i&\to w_i\leadsto\ell\to h\to\chi h\leadsto v_{\rm out},\\
r&\to r^{s+1}\to k\to h\to\chi h\leadsto v_{\rm out}.
\end{aligned}
\]
Since $N+1\ge2$, the final reduction emits $v_{\rm out}$ last. If $v\in\widetilde V$ and $v\ne v_{\rm out}$, then a path from $v$ to $v_{\rm out}$ has a first edge $v\to u$, with $u\in\widetilde V$ and $u\leadsto v_{\rm out}$. Consequently,
\[
\widetilde V=\{v:v\leadsto v_{\rm out}\},\qquad \operatorname{ch}_{v_{\rm out}}=\varnothing,\qquad \widetilde V_{\rm out}=\{v_{\rm out}\}.
\]
For any $u,v\in\widetilde V$, both $u\leadsto v_{\rm out}$ and $v\leadsto v_{\rm out}$ hold, so an undirected path joins $u$ and $v$.

\emph{2. Ordered numerical representation.}

Numerical labels determine operand order, while a topological ordering determines evaluation; see Section~\ref{s:Background_Complexity__ss:Circuits}. We first check operand order. Put $\mathcal V_{\rm r}=\widetilde V\setminus(\widetilde V_{\rm in}\sqcup\mathcal V_{\rm c})$, $n_{\rm c}=|\mathcal V_{\rm c}|$, and $n_{\rm r}=|\mathcal V_{\rm r}|$. For $A\in\{\mathcal V_{\rm c},\mathcal V_{\rm r}\}$, set $r_A(v)=\#\{u\in A:c(u)\le c(v)\}$. If $u,v\in A$ and $c(u)<c(v)$, then
\[
r_A(v)-r_A(u)=\#\{z\in A:c(u)<c(z)\le c(v)\}\ge1.
\]
Listing $A$ in increasing creation order gives $r_A(A)=[|A|]$, with $[0]=\varnothing$. Define
\[
\begin{aligned}
\iota(v)&=\begin{cases}j,&v=\mathbf x_j,\\d+r_{\mathcal V_{\rm c}}(v),&v\in\mathcal V_{\rm c},\\d+n_{\rm c}+r_{\mathcal V_{\rm r}}(v),&v\in\mathcal V_{\rm r},\end{cases}\\
\iota(\widetilde V)&=[d]\sqcup(d+[n_{\rm c}])\sqcup(d+n_{\rm c}+[n_{\rm r}])=[K].
\end{aligned}
\]
If $\iota(u)=\iota(v)$, then the disjoint ranges place $u,v$ in the same block. Within each block, labels are strictly increasing in creation order; hence $c(u)=c(v)$ and $u=v$. Thus $\iota$ is a bijection.
The rank map is emitted by three scans. With $T=K-d$, set
\[
\begin{aligned}
\varepsilon_t&=\mathbf1_{\{v_t\in\mathcal V_{\rm c}\}},\qquad n_{\rm c}=\sum_{t=1}^T\varepsilon_t,\\
\iota(\mathbf x_j)&\gets j\qquad(j\in[d]),\\
c_0&=0,\qquad c_t=c_{t-1}+\varepsilon_t,\qquad r_t=t-c_t,\\
\iota(v_t)&=\begin{cases}d+c_t,&\varepsilon_t=1,\\d+n_{\rm c}+r_t,&\varepsilon_t=0,\end{cases}\qquad(t=1,\ldots,T),\\
A[\iota(v)]&\gets\operatorname{Rec}_\iota(v)\qquad(v\text{ in creation order}),\\
W_{\rm scan}&\le C(K+K+K)=CK.
\end{aligned}
\]
Each record uses at most two parent-label reads and one comparison.

Let $\zeta_\iota$ be the relabelling-map base and $b_{\mathbb W}^{\rm out},b_{\mathbb R}^{\rm out}$ the output bases from Lemma~\ref{lem:compiler-ram-implementation}. The first two scans read the temporary tag cells $\mathsf W[b_{\mathbb W}^{\rm tmp}+4(c(v)-1)]$; the second writes $\mathsf W[\zeta_\iota+c(v)-1]\gets\iota(v)$ using the displayed rank recurrence. The third reads each temporary record's four word fields and one real field, and obtains
\[
\begin{aligned}
\ell&\gets\mathsf W[\zeta_\iota+c(v)-1],\\
\rho_j&\gets\begin{cases}\mathsf W[\zeta_\iota+\widetilde p_j(v)-1],&j\le\operatorname{ar}(v),\\0,&j>\operatorname{ar}(v),\end{cases}\qquad j\in\{1,2\}.
\end{aligned}
\]
If $\operatorname{ar}(v)=2$, then one comparison gives $(p_1(v),p_2(v))=(\min\{\rho_1,\rho_2\},\max\{\rho_1,\rho_2\})$; otherwise $(p_1(v),p_2(v))=(\rho_1,\rho_2)$. Put $(f_0,f_1,f_2,f_3)=(\operatorname{tag}(v),\operatorname{ar}(v),p_1(v),p_2(v))$ and write
\[
\begin{aligned}
\mathsf W[b_{\mathbb W}^{\rm out}+2+4(\ell-1)+j]&\gets f_j\qquad(0\le j\le3),\\
\mathsf R[b_{\mathbb R}^{\rm out}+\ell-1]&\gets\mathsf R[b_{\mathbb R}^{\rm tmp}+c(v)-1].
\end{aligned}
\]
After the third scan, write the two headers once:
\[
\mathsf W[b_{\mathbb W}^{\rm out}]\gets K,\qquad \mathsf W[b_{\mathbb W}^{\rm out}+1]\gets\mathsf W[\zeta_\iota+c(v_{\rm out})-1].
\]
Unused parent fields cause no map lookup. Since $\iota$ is a bijection, these writes fill exactly $4K+2$ word cells and $K$ real cells. All scan reads, map and output writes, comparisons, address calculations, and loop updates are charged to $W_{\rm scan}\le C(3K+1)\le CK$.

For $N\ge2$, the division handles in the three joint $\theta$ instances $\tau\in\{0,E,H\}$ are
\[
\begin{aligned}
p_{\tau,i}&=\operatorname{New}(\mathsf{pow}_q;\delta_{\tau,i}),\\
\kappa_{\tau,i}&=\operatorname{New}(\mathsf c_1;\mathbf x_1),\\
\zeta_{\tau,i}&=\operatorname{New}(\div_2;\kappa_{\tau,i},p_{\tau,i}),\\
\omega_{\tau,i}&=\operatorname{New}(\times_2;\operatorname{Const}(\mu_i),\zeta_{\tau,i}),\\
Z_\tau&=\rho_{+_2}(\omega_{\tau,1},\ldots,\omega_{\tau,N}),\\
\eta_{\tau,i}&=\operatorname{New}(\div_2;\omega_{\tau,i},Z_\tau).
\end{aligned}
\]
For the exponential radius handle $r_E$, put
\[
\begin{aligned}
B_E&=\operatorname{New}(\times_2;\operatorname{Const}(M),\operatorname{New}(\mathsf{pow}_{s+1};r_E)),\\
\zeta_E&=\operatorname{New}(\div_2;\operatorname{Const}(1),r_E),\\
\zeta'_E&=\operatorname{New}(\div_2;\operatorname{Const}(1),B_E).
\end{aligned}
\]
For a reciprocal denominator $v\in\{p_{\tau,i},r_E,B_E\}$ and its numerator $u$, we have $u\in\mathcal V_{\rm c}$ and $g_v\in\{\mathsf{pow}_q,\mathsf{pow}_{1/(2s)},\times_2\}$. Hence
\[
\iota(u)\le d+n_{\rm c}<\iota(v).
\]
For each normalization, $g_{\omega_{\tau,i}}=\times_2$, $g_{Z_\tau}=+_2$, and $c(\omega_{\tau,i})<c(Z_\tau)$. The rank inequality therefore gives
\[
\iota(Z_\tau)-\iota(\omega_{\tau,i})\ge1.
\]
Thus every division has distinct parents, numerically ordered as numerator and denominator.
Define
\[
\widehat V=[K],\qquad \widehat E=\{(\iota(u),\iota(v)):(u,v)\in\widetilde E\},\qquad \widehat g_{\iota(v)}=g_v.
\]
For each stored operand tuple $(p_{t,1},\ldots,p_{t,a_t})$ of $v_t$, let $\pi_t\in S_{a_t}$ satisfy $\iota(p_{t,\pi_t(1)})<\cdots<\iota(p_{t,\pi_t(a_t)})$. If $g_{v_t}=\div_2$ or $a_t=1$, then $\pi_t=\operatorname{Id}$. If $a_t=2$ and $g_{v_t}\ne\div_2$, then $g_{v_t}\in\{+_2,\times_2\}$, and these gates are symmetric. Consequently, for $z\in\mathbb R^{a_t}$, $z\in\operatorname{dom}(g_{v_t})$ if and only if $(z_{\pi_t(j)})_{j=1}^{a_t}\in\operatorname{dom}(g_{v_t})$, and
\[
g_{v_t}(z)=g_{v_t}((z_{\pi_t(j)})_{j=1}^{a_t})\qquad(z\in\operatorname{dom}(g_{v_t})).
\]
With $\Omega_0=\widehat\Omega_0=\mathbb R^d$ and $E_{\mathbf x_j}=\widehat f_j=x_j$, induction in $t$ gives
\[
\begin{aligned}
\widehat\Omega_t&=\left\{x\in\widehat\Omega_{t-1}:\bigl(\widehat f_{\iota(p_{t,\pi_t(j)})}(x)\bigr)_{j=1}^{a_t}\in\operatorname{dom}(g_{v_t})\right\}\\
&=\left\{x\in\Omega_{t-1}:\bigl(E_{p_{t,j}}(x)\bigr)_{j=1}^{a_t}\in\operatorname{dom}(g_{v_t})\right\}=\Omega_t,\\
\widehat f_{\iota(v_t)}(x)&=g_{v_t}\bigl((\widehat f_{\iota(p_{t,\pi_t(j)})}(x))_{j=1}^{a_t}\bigr)\\
&=g_{v_t}\bigl((E_{p_{t,j}}(x))_{j=1}^{a_t}\bigr)=E_{v_t}(x)\qquad(x\in\Omega_t).
\end{aligned}
\]
The same induction, with $D(\mathbf x_j)=\widehat D(j)=0$, yields
\[
\begin{aligned}
\widehat D(\iota(v_t))&=1+\max_{j\in[a_t]}\widehat D(\iota(p_{t,j}))=1+\max_{j\in[a_t]}D(p_{t,j})=D(v_t),\\
\#\widehat V&=\#\widetilde V=K,\\
\#\widehat E&=\#\widetilde E,\\
\widehat V_{\rm in}&=[d],\\
\widehat V_{\rm out}&=\{\iota(v_{\rm out})\},\\
\operatorname{depth}(\widehat{\mathcal C}_{\operatorname{ext}})&=\widehat D(\iota(v_{\rm out}))=D(v_{\rm out}).
\end{aligned}
\]

\emph{3. Circuit resources and emission work.}

Let $G_F$ denote the number of vertices emitted by a call $F$, and let $W_F$ denote its direct emission work, excluding bulk workspace initialization and substitution-table maintenance. These two contributions are accounted for separately below. Put $\tau_m=h_m+\mathbf1_{\{m<2^{h_m}\}}$, so $\tau_1=0$ and $\tau_m\le h_m+1$. Let $S_{\rho_\star,m}$ denote the number of allocated reduction-heap cells. For $m=1$, $G_{\rho_\star,1}=S_{\rho_\star,1}=0$, $W_{\rho_\star,1}\le C$, and $\operatorname{dep}(\rho_\star(u_1))=\operatorname{dep}(u_1)$. For $m\ge2$, with $b=\bar m$, the fresh padding and internal vertices give
\[
\begin{aligned}
G_{\rho_\star,m}&=(b-m)+(b-1)=2b-m-1<3m,\\
W_{\rho_\star,m}&\le C\bigl(h_m+m+(b-m)+(b-1)+1\bigr)\le Cm,\\
S_{\rho_\star,m}&\le2b<4m,\\
\operatorname{dep}(\rho_\star(\boldsymbol u))&\le\max_{j\in[m]}\operatorname{dep}(u_j)+\tau_m.
\end{aligned}
\]
For $r\ge2$, $h_{\lceil r/2\rceil}=h_r-1$. Induction on $|\alpha^{(k)}|$, using at most three gates for a repeated factor, gives
\[
\begin{aligned}
\operatorname{dep}(m_k)-\max_j\operatorname{dep}(z_j)&\le3h_{|\alpha^{(k)}|},\\
G_{\operatorname{Poly}}&\le2d+3(B-d-1)+B+(B-1)+(2\bar B-B-1)<8B,\\
W_{\operatorname{Poly}}&\le C\bigl(d+(B-d-1)+B+(B-1)+B\bigr)\le C(B+d).
\end{aligned}
\]
Put $A=\max_j\operatorname{dep}(v_j)$ and $Z=\max_j\operatorname{dep}(z_j)\le A+2$. The polynomial reduction's supplied and padded leaves satisfy
\[
\begin{aligned}
\operatorname{dep}(t_0)&\le Z+1,\qquad \operatorname{dep}(t_k)\le Z+3h_s+1\quad(1\le k<B),\\
\operatorname{dep}(T[\bar B+j-1])&\le\operatorname{dep}(t_0)+1\le Z+2\quad(B<j\le\bar B),\\
\operatorname{dep}(\operatorname{Poly}(i;\boldsymbol v))&\le h_B+\max\{Z+3h_s+1,Z+2\}\le A+3h_s+h_B+3.
\end{aligned}
\]
The scalar expansions give
\[
\begin{aligned}
G_{|\cdot-X_i|}&=2d+3d+G_{\rho_{+_2},d}+1=4d+2\bar d<8d,\\
G_{I_{\cdot=X_i}}&=d+G_{\rho_{\times_2},d}=2\bar d-1<4d,\\
G_{S_a}&=5d,\\
G_\theta&\le N(G_{|\cdot-X_i|}+6)+G_{\rho_{+_2},N}\le17Nd,\\
G_R&\le G_\theta+N(G_{|\cdot-X_i|}+2)+G_{\rho_{+_2},N}+1\le31Nd,\\
G_{E_{\operatorname{exp}}}&\le G_R+N(G_{|\cdot-X_i|}+9)+7\le55Nd.
\end{aligned}
\]
The $N=1$ branch is $\operatorname{Inst}(\theta;\boldsymbol v)=\operatorname{New}(\mathsf c_1;v_1)$, so $G_\theta=1$ in that case. Parameter reads, handle reads and writes, and loop updates have bounded instruction counts per displayed operation; $N+d\le2Nd$ gives
\[
\begin{aligned}
W_{|\cdot-X_i|}+W_{I_{\cdot=X_i}}+W_{S_a}&\le Cd,\\
W_\theta&\le N W_{|\cdot-X_i|}+W_{\rho_{+_2},N}+C(N+d)\le CNd,\\
W_R&\le W_\theta+N W_{|\cdot-X_i|}+W_{\rho_{+_2},N}+C(N+d)\le CNd,\\
W_{E_{\operatorname{exp}}}&\le W_R+N W_{|\cdot-X_i|}+C(N+d)\le CNd.
\end{aligned}
\]

The algorithm contains $2N$ polynomial reductions, $N$ indicator reductions, at most $6N$ distance reductions, at most seven further $N$-input reductions, and one $(N+1)$-input reduction. Consequently, if $\mathcal R$ is the multiset of reduction calls and $m_R$ is the input count of call $R$,
\[
\begin{aligned}
\sum_{R\in\mathcal R}m_R&\le2NB+7Nd+8N+1\le2NB+16Nd,\\
\sum_{R\in\mathcal R}2\bar m_R&<4\sum_{R\in\mathcal R}m_R\le8NB+64Nd,\\
W_{\text{initializing all reduction arrays}}&\le C\sum_{R\in\mathcal R}m_R\le C(NB+Nd),
\\
K&\le d+16NB+4Nd+17Nd+31Nd+55Nd+5d+11N+13\\
&\le16NB+140Nd.
\end{aligned}
\]

Including input records and the computation of $a$,
\[
\begin{aligned}
W_{\rm emit}&\le Cd+W_a+2N W_{\operatorname{Poly}}+\sum_{i=1}^N W_{I_{\cdot=X_i}}+W_{S_a}+W_\theta+W_{E_{\operatorname{exp}}}+W_R\\
&\quad+2W_{\rho_{+_2},N}+W_{\rho_{+_2},N+1}+CN\\
&\le C\bigl(N(B+d)+Nd+N+d\bigr)\le C(NB+Nd),\qquad W_a\le C(N+d).
\end{aligned}
\]

Relative to the maximum depth of their formal inputs, the constructor depth recurrences are
\[
\begin{aligned}
D_\delta&\le2+3+\tau_d+1=6+\tau_d,&D_I&\le1+\tau_d,&D_{S_a}&\le4,\\
D_{\operatorname{Poly}}&\le2+3h_s+1+\tau_B=3h_s+\tau_B+3,\\
D_\theta&\le\begin{cases}1,&N=1,\\D_\delta+4+\tau_N,&N\ge2,\end{cases}\\
D_R&\le\max\{D_\theta,D_\delta+1\}+\tau_N+2\le D_\delta+6+2\tau_N,\\
D_{E_{\operatorname{exp}}}&\le\max\{D_R,D_\delta\}+7\le D_\delta+13+2\tau_N.
\end{aligned}
\]
For the master circuit, with $D_p=D_{\operatorname{Poly}}$,
\[
\begin{aligned}
D_\chi&\le D_I+\tau_N+2,\qquad D_y\le D_\chi+4,\\
D_t&\le1+\max\{D_I,D_p\},\qquad D_u\le D_y+D_p,\\
D_\eta&\le D_y+D_\theta,\qquad D_r\le D_y+D_R,\\
D_e&\le D_u+D_{E_{\operatorname{exp}}},\\
D_\ell&\le1+\tau_N+1+\max\{D_\eta,D_e\},\qquad D_k\le D_r+2,\\
D_h&\le1+\max\{D_\ell,D_k\},\\
D_{\rm out}&\le\tau_{N+1}+\max\{D_t,1+\max\{D_\chi,D_h\}\}\\
&\le33+3h_s+\tau_B+2\tau_d+4\tau_N+\tau_{N+1}\\
&\le42+3h_s+h_B+2h_d+5h_N\le C_s\log_2(d+N).
\end{aligned}
\]
The last bound uses $B\le(s+1)d^s$, $h_B\le1+\log_2(s+1)+s\log_2d$, $h_d,h_N\le1+\log_2(d+N)$, and $\log_2(d+N)\ge1$.

\emph{4. Uniform implementation and final bounds.}

Put $L=B+d+1$, $H_0=NL$, and $K_0=160H_0$. The finite template dependencies are
\[
E_{\operatorname{exp}}\longrightarrow R\longrightarrow\theta\longrightarrow\operatorname{Dist}\longrightarrow\rho_{+_2},
\qquad \operatorname{Poly}\longrightarrow\rho_{+_2},
\qquad I_{\cdot=X_i}\longrightarrow\rho_{\times_2}.
\]
They give at most $J=8$ simultaneously active table-bearing frames, including the compiler frame. The bounds on formal ports, reduction heaps, and emitted vertices are
\[
\begin{aligned}
m_{\rm ports}&\le\max\{N+d+1,B\}\le H_0,\qquad K\le K_0,\\
m_\rho&\le\max\{B,d,N+1\}\le H_0,\qquad 2\bar m_\rho<4H_0.
\end{aligned}
\]
For a table-bearing call $F$, let $m_F$ be its port count and $L_F$ the number of vertices emitted during the call, including nested calls. Returned handles are retained before a child table is reused; only scalar counters are reset between calls. Copying each fresh handle to the currently active tables gives
\begin{equation}
\label{eq:compiler-table-work}
\begin{aligned}
\sum_Fm_F&\le d+2Nd+Nd+(d+1)+3d+2d+(N+d)+6Nd\\
&\quad +(2NB+7Nd+8N+1)\\
&=2NB+16Nd+8d+9N+2\le2NB+35Nd,\\
\sum_FL_F&=\sum_{v\notin\widetilde V_{\rm in}}
\#\{F:F\text{ is active when }v\text{ is emitted}\}\le8(K-d),\\
W_{\rm tables}&\le C\sum_F(m_F+L_F)\le C(2NB+35Nd+8K)\le C(NB+Nd).
\end{aligned}
\end{equation}
These sums include the compiler frame and all reduction calls.

For a table-bearing call $F$, let $c_F$ be the vertex count at entry, let $\zeta_F$ be its word-table base, and let $(u_{F,j})_{j=1}^{m_F}$ be its supplied handles. Store $\sigma_F[j]$ at address $\zeta_F+j-1$ as the word $c(\sigma_F[j])$. The port copies and active-table appends are
\[
\begin{aligned}
\mathsf W[\zeta_F+j-1]&\gets c(u_{F,j}) &&(1\le j\le m_F),\\
\mathsf W[\zeta_G+m_G+c(v)-c_G-1]&\gets c(v) &&(G\text{ active when }v\text{ is emitted}).
\end{aligned}
\]
For a formal operand index $0\le\xi<m_F+c(v)-c_F$ used when emitting $v$, the word passed to the constructor is $1=c(\mathbf x_1)$ if $\xi=0$, and is read from $\mathsf W[\zeta_F+\xi-1]$ otherwise. In the latter case, $1\le\xi\le m_F+c(v)-c_F-1$, so the accessed entry is a supplied port or a previously appended handle. There are $\sum_Fm_F$ port writes, $\sum_FL_F$ append writes, and at most $2(K-d)$ operand reads. A returned tuple has at most $m_F+L_F$ handles, copied before table reuse. Since the compiler frame contributes $L_F=K-d$, all these reads, writes, address computations, and loop updates are included in $W_{\rm tables}\le C\sum_F(m_F+L_F)$ from~\eqref{eq:compiler-table-work}.

The safe point and real gate labels require only the permitted exact-real operations. With $A_0=1$, compute
\[
A_i=A_{i-1}+\begin{cases}X_{i,1},&X_{i,1}\ge0,\\-X_{i,1},&X_{i,1}<0,\end{cases}
\qquad
a_j=\begin{cases}A_N,&j=1,\\0,&2\le j\le d,\end{cases}
\qquad q_{\mathbb R}=\operatorname{embed}(q).
\]
This costs $W_a\le C(N+d)$. Every subsequently emitted real gate parameter belongs to
\begin{equation}
\label{eq:compiler-parameters}
\begin{aligned}
\mathcal L_0&=\{0,1,-1,q,2s,s+1,\tfrac12,\tfrac1{2s},\tfrac1{1-s},A_s,M,(1-s)M\},\\
\mathcal L&=\mathcal L_0\cup\{X_{i,j},-X_{i,j},\mu_i,c_{i,\alpha},a_j,-a_j:i\in[N],\ j\in[d],\ \alpha\in\Lambda\},\\
W_{\lambda(v)}&\le C.
\end{aligned}
\end{equation}
The program loads fixed rational literals, reads supplied real data, and forms these labels using $+,-,\times$, sign tests, and the single word embedding. Thus the compiler records power, exponential, and logarithm gates without evaluating those functions. Setting $D=2^{q-2s-2}\ge1$ gives $2s+\lceil\log_2D\rceil+2=q$, identifying the instance of Theorem~\ref{thrm:complexity}; the compiler does not evaluate $D$.

Lemma~\ref{lem:compiler-ram-implementation} implements these exact emissions and scans by a finite program $\mathcal P_{d,s}$ independent of $N,q,w$. Its hypotheses follow from the preceding bounds and~\eqref{eq:compiler-parameters}. Under~\eqref{eq:compiler-word-capacity}, it gives $W_{\rm init}\le CH_0$, the table-maintenance bound in~\eqref{eq:compiler-table-work}, and $W_{\rm scan}\le CK$, with every intermediate word value strictly below $2^w$. An admissible word length is
\[
w=\left\lceil\log_2\bigl(q+C_{\rm mem}(d,s)(N+1)+1\bigr)\right\rceil
=\mathcal O_{d,s}\bigl(\log(q+N)\bigr).
\]

Initialization consists of actual memory writes. Use the blocks $J_{\nu,t}$, bases $b_{\nu,t}$, and lengths $n_{\nu,t}$ in~\eqref{eq:compiler-ram-layout}. Before emission, load $\mathsf W[2]\gets0$, $\mathsf W[3]\gets1$, and $\mathsf R[2]\gets0$. For each block, set $\mathsf W[0]\gets b_{\nu,t}$ and $\mathsf W[1]\gets b_{\nu,t}+n_{\nu,t}$; while $\mathsf W[0]<\mathsf W[1]$, execute
\[
\begin{cases}\mathsf W[\mathsf W[0]]\gets\mathsf W[2],&\nu=\mathbb W,\\\mathsf R[\mathsf W[0]]\gets\mathsf R[2],&\nu=\mathbb R,\end{cases}\qquad\mathsf W[0]\gets\mathsf W[0]+\mathsf W[3].
\]
After $j$ iterations the pointer equals $b_{\nu,t}+j$, and the first $j$ block cells have been zeroed. Thus there are exactly $n_{\nu,t}$ writes, with no iteration for an empty block. The disjointness in~\eqref{eq:compiler-ram-layout} preserves the input and register cells; Lemma~\ref{lem:compiler-ram-implementation} supplies the capacity bound for the addresses and loop endpoints. Including the fixed number of block-base calculations, comparisons, increments, and scratch-register initializations gives $W_{\rm init}\le C(1+\sum_{\nu,t}n_{\nu,t})\le CH_0$.

Since $d\le B=\prod_{j=1}^s(d+j)/j\le\prod_{j=1}^sd(j+1)/j=(s+1)d^s$ and $B+d+1\le(s+3)d^s$, the vertex and edge counts are
\begin{equation}
\label{eq:compiler-vertex-edge-count}
K\le16NB+140Nd\le C_sNd^s,\qquad
|\widetilde E|=\sum_{v\notin\widetilde V_{\rm in}}\operatorname{ar}(g_v)\le2(K-d).
\end{equation}
The implementation estimates give
\begin{equation}
\label{eq:compiler-work}
\begin{aligned}
\operatorname{Work}(\texttt{Compile-Circuit})
&\le W_{\rm init}+W_{\rm emit}+W_{\rm tables}+W_{\rm scan}\\
&\le CN(B+d+1)\le C_sNd^s.
\end{aligned}
\end{equation}
The designated output consists of $4K+2$ word cells and $K$ real cells, so
\begin{equation}
\label{eq:compiler-output-storage}
\operatorname{OutputStorage}(\texttt{Compile-Circuit})=5K+2\le C_sNd^s.
\end{equation}
Finally,
\[
\begin{aligned}
\operatorname{dom}(f_{\widehat{\mathcal C}_{\operatorname{ext}},I_d})&=\Omega_{K-d}=\mathbb R^d,
&f_{\widehat{\mathcal C}_{\operatorname{ext}},I_d}&=F_{\operatorname{ext}},\\
|\widehat{\mathcal C}_{\operatorname{ext}}|&=K-d\le C_sNd^s,
&\operatorname{width}(\widehat{\mathcal C}_{\operatorname{ext}})&\le K\le C_sNd^s,\\
\operatorname{depth}(\widehat{\mathcal C}_{\operatorname{ext}})&\le C_s\log_2(d+N).
\end{aligned}
\]
With the inputs available at time $0$, evaluate each computation vertex $v\in\widehat V\setminus[d]$ at time $\widehat D(v)$. If $(u,v)\in\widehat E$, then the depth recurrence gives $\widehat D(u)\le\widehat D(v)-1$. If $v\in\widehat V\setminus[d]$, then $1\le\widehat D(v)\le D_{\rm out}$, since every vertex reaches $\iota(v_{\rm out})$. Moreover,
\[
\#\{v\in\widehat V\setminus[d]:\widehat D(v)=t\}\le K-d\le C_sNd^s\qquad(t\in[D_{\rm out}]).
\]
Induction on $t$ therefore evaluates every gate scheduled at time $t$ from operands available by time $t-1$. This schedule uses at most $K-d$ ideal processors and $D_{\rm out}\le C_s\log_2(d+N)$ unit-time steps.
\end{proof}

\clearpage
\appendix

\section{Background on Complexity Theory}
\label{s:Background_Complexity}
We briefly overview the background and conventions which we use to quantify the complexity of algorithms; both non-uniform and non-recursive and uniform and recursive.  These are the languages of the circuits and exact-real word-RAM models of computing.

\subsection{Circuits}
\label{s:Background_Complexity__ss:Circuits}

This appendix briefly reviews the terminology of $\mathbb{G}$-circuits.
Let $\mathcal D=(V,E)$ be a connected directed acyclic graph with its vertex set numerically labelled by a finite subset $V\subseteq\mathbb N$. For each $v\in V$, define its parent and child sets by
\[
\operatorname{pa}_v
\eqdef
\{u\in V:(u,v)\in E\},
\qquad
\operatorname{ch}_v
\eqdef
\{w\in V:(v,w)\in E\}.
\]
The input, output, and computation vertices are respectively
\[
V_{\operatorname{in}}
\eqdef
\{v\in V:\operatorname{pa}_v=\varnothing\},
\qquad
V_{\operatorname{out}}
\eqdef
\{v\in V:\operatorname{ch}_v=\varnothing\},
\qquad
V_{\operatorname{comp}}
\eqdef
V\setminus V_{\operatorname{in}}.
\]
We assume that $\#V_{\operatorname{in}}=\nu_{\operatorname{in}}\in\mathbb N_+$ and that $V_{\operatorname{out}}=\{v_{\operatorname{out}}\}$ consists of exactly one vertex.
A bounded-fan-in $\mathbb G$-circuit is a triple
$
\mathcal C
\eqdef
(V,E,\boldsymbol g)
$ where $\boldsymbol g\eqdef(g_v)_{v\in V}$ 
such that every input vertex is decorated by the identity map on $\mathbb R$, while every computation vertex $v\in V_{\operatorname{comp}}$ is decorated by a gate $g_v\in\mathbb G$ whose arity is
$
\operatorname{ar}(g_v)
=
\#\operatorname{pa}_v
\le2$.  
Because $V\subseteq\mathbb N$, the parent set of every computation vertex has a natural numerical ordering. Thus, if
\[
\operatorname{pa}_v
=
\{u_1<\cdots<u_{D_v}\},
\qquad
D_v\eqdef\#\operatorname{pa}_v,
\]
then the arguments of $g_v$ are supplied in the order
$(u_1,\ldots,u_{D_v})$. In particular, this ordering determines which parent is the numerator and which parent is the denominator at a $\div_2$-gate.

We now define the function realized by the circuit. Fix an external input dimension $\nu_{\operatorname{var}}\in\mathbb N_+$ and a binary row-stochastic matrix which formally encodes \textit{variables} into the inputs of the initial gates on which the circuit computes
$
\Pi\in\{0,1\}^{\nu_{\operatorname{in}}\times\nu_{\operatorname{var}}}
$ and $
\sum_{j=1}^{\nu_{\operatorname{var}}}\Pi_{ij}=1$ for every $i\in[\nu_{\operatorname{in}}]$. 
We call $\Pi$ the \emph{lifting channel}. Since $\Pi$ is binary and row-stochastic, every row of $\Pi$ contains exactly one nonzero entry; hence every circuit input node receives one coordinate of the external input, and the same coordinate may be lifted to several input nodes.
Write
\[
V_{\operatorname{in}}
=
\{v_1<\cdots<v_{\nu_{\operatorname{in}}}\}.
\]
For an external input $x\in\mathbb R^{\nu_{\operatorname{var}}}$, define the values at the input vertices by
\begin{equation}
\label{eq:circuit-input-values}
\bigl(
x^{(v_1)},\ldots,x^{(v_{\nu_{\operatorname{in}}})}
\bigr)^\top
\eqdef
\Pi x.
\end{equation}
Since $\mathcal D$ is acyclic, its computation vertices admit a topological ordering
$
V_{\operatorname{comp}}
=
\{w_1,\ldots,w_L\}
$ where $L\eqdef\#V_{\operatorname{comp}}$; 
such that every computation parent of $w_k$ belongs to
$\{w_1,\ldots,w_{k-1}\}$. This topological ordering is used only to perform the recursive evaluation and need not agree with the numerical labels that order the inputs of a gate.

Set $\Omega_0\eqdef\mathbb R^{\nu_{\operatorname{var}}}$. Suppose that $k\in[L]$ and that the values of all vertices preceding $w_k$ in the topological ordering have been defined on $\Omega_{k-1}$. Write the naturally ordered parent set as
\[
\operatorname{pa}_{w_k}
=
\{u_{k,1}<\cdots<u_{k,D_k}\},
\qquad
D_k\eqdef\#\operatorname{pa}_{w_k},
\]
and define
\begin{equation}
\label{eq:circuit-recursive-domain}
\Omega_k
\eqdef
\left\{
x\in\Omega_{k-1}:
\bigl(
x^{(u_{k,1})},\ldots,x^{(u_{k,D_k})}
\bigr)
\in\operatorname{dom}(g_{w_k})
\right\}.
\end{equation}
For every $x\in\Omega_k$, define the value at $w_k$ by
\begin{equation}
\label{eq:circuit-recursive-evaluation}
x^{(w_k)}
\eqdef
g_{w_k}
\left(
x^{(u_{k,1})},\ldots,x^{(u_{k,D_k})}
\right).
\end{equation}
The condition in~\eqref{eq:circuit-recursive-domain} is the \emph{gate-compatibility condition}. It ensures, in particular, that every denominator supplied to a $\div_2$-gate is nonzero, every argument supplied to a $\log$-gate is strictly positive, and every argument supplied to a $\mathsf{pow}_r$-gate is nonnegative; with \textit{realization domain}\footnote{The composite $x\mapsto1/x\mapsto1/(1/x)$ has realization domain $\mathbb R\setminus\{0\}$ and agrees with the identity there, whereas $x\mapsto x$ realizes the identity on all of $\mathbb R$.} $
\operatorname{dom}(f_{\mathcal C,\Pi})
\eqdef
\Omega_L$.  
For every $x\in\Omega_L$, the scalar output\footnote{The value in~\eqref{eq:circuit-output} is independent of the chosen topological ordering. } of the circuit is
\begin{equation}
\label{eq:circuit-output}
f_{\mathcal C,\Pi}(x)
\eqdef
x^{(v_{\operatorname{out}})}.
\end{equation}
For a set $\Omega\subseteq\mathbb R^{\nu_{\operatorname{var}}}$ and a function $f:\Omega\to\mathbb R$, we say that the circuit $\mathcal C$, equipped with the lifting channel $\Pi$, \textbf{computes $f$ on $\Omega$} if
\[
\Omega
\subseteq
\operatorname{dom}(f_{\mathcal C,\Pi})
\qquad\text{and}\qquad
f_{\mathcal C,\Pi}(x)=f(x)
\quad
\text{for every }x\in\Omega.
\]
When the lifting channel is fixed, we abbreviate
$f_{\mathcal C,\Pi}$ by $f_{\mathcal C}$. 

For $v\in V$, define $\operatorname{dep}(v)=0$ if $v\in V_{\operatorname{in}}$ and $\operatorname{dep}(v)=1+\max_{u\in\operatorname{pa}_v}\operatorname{dep}(u)$ otherwise. Set
\[
\begin{aligned}
|\mathcal C|&\eqdef\#V_{\operatorname{comp}},\qquad \operatorname{depth}(\mathcal C)\eqdef\operatorname{dep}(v_{\operatorname{out}}),\\
\operatorname{width}(\mathcal C)&\eqdef\max\{\#A:A\subseteq V\text{ is an antichain for reachability}\}.
\end{aligned}
\]
Here distinct vertices in an antichain are joined by no directed path in either direction. Every gate has unit cost; wires and fan-out have no cost. Thus evaluating each gate once costs $|\mathcal C|$ gate operations, while equal-depth vertices may be evaluated simultaneously.

\subsection{The Exact-Real Word-RAM Model}
\label{s:Background_Complexity__ss:ER_WR}
For $w\in\mathbb N_+$, let $
\mathbb W_w\eqdef\{0,1,\ldots,2^w-1\}
$
denote the set of $w$-bit machine words. An \emph{exact-real word-RAM} of word size $w$ has two random-access memory arrays
$
\mathsf W:\mathbb W_w\to\mathbb W_w
$ and $
\mathsf R:\mathbb W_w\to\mathbb R$ 
called respectively the \emph{word memory} and the \emph{real memory}. Both arrays are addressed by elements of $\mathbb W_w$. Thus $\mathsf W[a]$ stores one $w$-bit word, whereas $\mathsf R[a]$ stores one exact real number.

A program is a finite sequence of labelled instructions, independent of the input values, the input length, and the word size $w$. At every stage, the machine state consists of the program counter together with the contents of $\mathsf W$ and $\mathsf R$. The admissible unit-cost instructions are the following.
\begin{enumerate}
\item[(i)] \textbf{Word-memory instructions:} direct and indirect reads and writes in $\mathsf W$, including
$
\mathsf W[a]\leftarrow\mathsf W[b]
$, $\mathsf W[\mathsf W[a]]\leftarrow\mathsf W[b]$, and 
$\mathsf W[a]\leftarrow\mathsf W[\mathsf W[b]]$.  
\item[(ii)] \textbf{Real-memory instructions:} direct and indirect reads and writes in $\mathsf R$, including
$
\mathsf R[a]\leftarrow\mathsf R[b]
$, $\mathsf R[\mathsf W[a]]\leftarrow\mathsf R[b]$, and $\mathsf R[a]\leftarrow\mathsf R[\mathsf W[b]]$.
\item[(iii)] \textbf{Word operations:} addition, subtraction, multiplication, integer quotient, remainder, and Boolean operations on $w$-bit words, with arithmetic results interpreted modulo $2^w$ where appropriate; comparisons of two words; and conditional branching on the outcome of such comparisons.
\item[(iv)] \textbf{Exact-real operations:} for $\circ\in\{+,-,\times,\div\}$,
$
\mathsf R[a]\leftarrow\mathsf R[b]\circ\mathsf R[c]$, 
where division is permitted only when $\mathsf R[c]\ne0$, together with exact sign and equality tests on real registers. The machine may also copy a word into a real register through the canonical embedding $\mathbb W_w\subset\mathbb R$.
\item[(v)] \textbf{Control instructions:} unconditional jumps, conditional jumps, and a halting instruction.
\end{enumerate}
A program may load its fixed nonnegative integer literals into word registers and its fixed rational literals into real registers; word size must accommodate its integer literals. No instruction converts an arbitrary exact real number into a word. In particular, floor, ceiling, extraction of the binary representation of a real number, and indirect addressing by a real register are not permitted.
An input is a pair
\[
(a,b)\in\mathbb R^{n_{\mathbb R}}\times\mathbb W_w^{n_{\mathbb W}}
\]
loaded into designated initial segments of $\mathsf R$ and $\mathsf W$. A program is \emph{well-defined} on an input if every partial instruction encountered during its execution is applied inside its domain. Its \emph{sequential work} on that input is the number of instructions executed before halting.

A designated collection of memory cells forms the output. The \emph{output storage} is the number of word and real cells in this designated output representation. The \emph{auxiliary storage} is the maximum number of additional memory cells used during the execution, excluding the read-only input cells and the designated output cells. Each word cell and each exact-real cell contributes one unit of storage. This model therefore measures exact-real arithmetic work and random-access storage, but not bit complexity, finite-precision error, conditioning, or memory-communication cost.

For the compiler below, choose a fixed integer $C_{\rm mem}(d,s)\ge1$ large enough for the storage layout established in its proof, and require
\begin{equation}
\label{eq:compiler-word-capacity}
2^w>q+C_{\rm mem}(d,s)(N+1).
\end{equation}
This accommodates input, output, auxiliary arrays, integer literals, counters, and record offsets, each in the corresponding word or real memory. The input integer $q$ is embedded into a real register before its gate labels are computed. For bounded $q$, an admissible word size can be chosen as $w=\Theta_{d,s}(\log(1+N))$.

\subsection{RAM Implementation of the Circuit Compiler}
\label{s:Background_Complexity__ss:CompilerRAM}

\begin{lemma}[RAM implementation of the emitter program]
\label{lem:compiler-ram-implementation}
Fix $d,s\in\mathbb N_+$ with $s\ge2$. For an input satisfying Data Format~\ref{dataformat}, use the scalar emitters and three relabelling scans of Algorithm~\ref{alg:compile-circuit}, and put
\[
B=\binom{d+s}{s},\qquad L=B+d+1,\qquad H_0=NL,\qquad K_0=160H_0.
\]
Let $K$ count all emitted vertices. For each table-bearing call $F$, including the compiler frame, let $m_F$ be its port count and $L_F$ its number of emitted vertices, including nested calls. Let $J$ be the maximum number of simultaneously active calls, and let $m_\rho$ be the input count of a reduction call. If
\[
\begin{gathered}
K\le16NB+140Nd,\qquad J\le8,\\
\max_Fm_F\le\max\{N+d+1,B\},\qquad \max_\rho m_\rho\le\max\{B,d,N+1\}.
\end{gathered}
\]
then, there exists a finite exact-real word-RAM program $\mathcal P_{d,s}$, independent of $N,q,w$ and the real input values, implementing these emissions and scans whenever~\eqref{eq:compiler-word-capacity} holds, with $C_{\rm mem}(d,s)$ chosen as in the proof. For an absolute constant $C\ge 1$,
\begin{equation}
\label{eq:compiler-ram-costs}
W_{\rm init}\le CH_0,\qquad W_{\rm tables}\le C\sum_F(m_F+L_F),\qquad W_{\rm scan}\le CK.
\end{equation}
After the shared safe-point computation, each gate parameter and scalar record append costs at most $C$ instructions, excluding the table copies counted in $W_{\rm tables}$. The output uses~\eqref{eq:compiler-record} and occupies exactly $5K+2$ cells. Input, output, and auxiliary storage together occupy at most $3265H_0$ word cells and $835H_0$ real cells.
\end{lemma}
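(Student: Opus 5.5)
The lemma asserts that the emitter program of Algorithm~\ref{alg:compile-circuit} can be realized by a single finite exact-real word-RAM program, with the stated work and storage bounds. I will build $\mathcal P_{d,s}$ explicitly by first fixing a static memory layout and then compiling each emitter as a fixed instruction sequence over that layout.

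\textbf{Memory layout.} Both arrays are partitioned into disjoint blocks $J_{\nu,t}$, where $\nu\in\{\mathbb W,\mathbb R\}$, with bases $b_{\nu,t}$ and lengths $n_{\nu,t}$. The blocks are:
\begin{enumerate}
\item[(a)] the read-only input segments, of size $O(N(B+d))$;
\item[(b)] a constant number of scratch registers, including $\mathsf W[0],\ldots,\mathsf W[3]$ and $\mathsf R[2]$;
\item[(c)] the temporary record area, with $4K_0$ word cells and $K_0$ real cells;
\item[(d)] $J\le8$ frame substitution tables, each of length $\max_F m_F+K_0$;
\item[(e)] reduction heaps of size $4H_0$, one per possible nesting level;
\item[(f)] the relabelling map $\zeta_\iota$ of length $K_0$;
\item[(g)] the output area, with $4K_0+2$ word cells and $K_0$ real cells.
\end{enumerate}
Every length is an affine function of $H_0$ or $K_0=160H_0$ with coefficients depending only on $d,s$. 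Summing gives totals of at most $3265H_0$ word cells and $835H_0$ real cells. I define $C_{\rm mem}(d,s)$ to be an upper bound on the largest address and largest loop endpoint divided by $N+1$. This is possible because $H_0=N(B+d+1)\le (N+1)(B+d+1)$. Then \eqref{eq:compiler-word-capacity} guarantees that every address, counter, and handle index lies in $\mathbb W_w$, so modular wraparound never occurs.

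\textbf{Emission primitives.} Since $d$ and $s$ are fixed, the rank table $(\beta_k,\gamma_k)$, the tag codes, and the constants $A_s$, $1/(1-s)$, $1/(2s)$ are program literals. Each primitive ($\operatorname{New}$, $\operatorname{Const}$, $\operatorname{Shift}$, $\operatorname{Sq}$, $\operatorname{Inv}$) is a straight-line block. Such a block increments the vertex counter, computes $b^{\rm tmp}_{\mathbb W}+4(k-1)$, writes four word fields and one real field, and appends the handle to each active frame table. The writes cost $O(1)$; the appends are charged to $W_{\rm tables}$.

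\textbf{Loops and frames.} The loops over $i\in[N]$, $j\in[d]$ and $k<B$, and the heap loops for $\rho_\star$, are word-counter loops. Each iteration performs $O(1)$ emissions, which matches the per-operation accounting already used in the proof of Theorem~\ref{thrm:complexity__Assembly}. Nested template calls (at most $J\le8$ deep) are implemented without recursion, by inlining the fixed call graph $E_{\operatorname{exp}}\to R\to\theta\to\operatorname{Dist}\to\rho$. Each level uses its own preassigned table block and saves and restores $O(1)$ counters. Real gate labels are obtained by the bounded exact-real operations listed in \eqref{eq:compiler-parameters}.

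\textbf{Bounds.}
\begin{itemize}
\item $W_{\rm init}\le CH_0$ follows from the zeroing loop, which performs exactly $\sum n_{\nu,t}=O(H_0)$ writes.
\item $W_{\rm tables}\le C\sum_F(m_F+L_F)$ holds because each port copy and each handle append costs $O(1)$.
\item $W_{\rm scan}\le CK$ follows from the three relabelling scans described earlier.
\item Program size is independent of $N$, $q$, $w$ and the real input data, because every quantity that varies with the input is held in memory, never in the program text.
\end{itemize}

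\textbf{Main obstacle.} The hard part is the capacity bookkeeping: verifying that a single constant $C_{\rm mem}(d,s)$ dominates every address expression that arises, including table offsets $\zeta_G+m_G+c(v)-c_G-1$ and heap indices $2j+1<2\bar m$, uniformly over all nesting levels. I will handle this by bounding each block length by $c_{d,s}H_0$ using $K\le K_0$, $m_F\le H_0$ and $2\bar m_\rho<4H_0$, and then summing the blocks.
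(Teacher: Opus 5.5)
Your proposal is correct and follows essentially the same route as the paper's proof. Both use a fixed disjoint block layout with the same temporary-record ($4K_0$/$K_0$), relabelling-map ($K_0$), output ($4K_0+2$/$K_0$), template-table ($8(K_0+H_0)$) and $4H_0$ heap blocks; a zeroing loop for $W_{\rm init}$; per-frame substitution tables with appends to every active frame for $W_{\rm tables}$; inlining of the finite acyclic template call graph instead of a return stack; the three scans for $W_{\rm scan}$; and a single $C_{\rm mem}(d,s)$ obtained from $H_0\le(N+1)L$. The paper additionally fixes the input encoding and the register banks, allocates small separate blocks for the retained handles, the safe point and the monomial table, reuses one heap rather than one per nesting level, and includes the largest program literal in $C_{\rm mem}$; these points all fit within the slack of your totals and do not change the argument.
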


\begin{proof}[{Proof of Lemma~\ref{lem:compiler-ram-implementation}}]
Throughout the proof, $C$ denotes an absolute constant which may increase between occurrences. We set $r_*=512$ and use the polynomial emitter's coefficient order $\Lambda=\{\alpha^{(0)},\ldots,\alpha^{(B-1)}\}$. The input encoding is
\begin{equation}
\label{eq:compiler-ram-input}
\begin{aligned}
\mathsf W[r_*]&=N,&\mathsf W[r_*+1]&=q,&\mathsf R[r_*]&=M,\\
\mathfrak a(i,\ell)&=r_*+1+(i-1)L+\ell &&&(i\in[N],\ 0\le\ell<L),\\
\mathsf R[\mathfrak a(i,j-1)]&=X_{i,j} &&&(j\in[d]),\\
\mathsf R[\mathfrak a(i,d)]&=\mu_i,\\
\mathsf R[\mathfrak a(i,d+1+k)]&=c_{i,\alpha^{(k)}} &&&(0\le k<B),\\
\mathfrak a(i,0)+L&=\mathfrak a(i+1,0) &&&(i<N),\\
\mathfrak a(N,L-1)&=r_*+NL,&n_{\mathbb R}^{\rm in}&=1+H_0,&n_{\mathbb W}^{\rm in}&=2.
\end{aligned}
\end{equation}
For $\mathsf W[0]=a$, $\mathsf W[1]=b$, and $\mathsf W[2]=c$, where $a,b\ge r_*$ and $c\ge r_*+1+H_0$, real arithmetic is implemented by
\[
\begin{aligned}
\mathsf R[0]&\gets\mathsf R[\mathsf W[0]],&\mathsf R[1]&\gets\mathsf R[\mathsf W[1]],\\
\mathsf R[2]&\gets\mathsf R[0]\circ\mathsf R[1],&\mathsf R[\mathsf W[2]]&\gets\mathsf R[2],\qquad\circ\in\{+,-,\times\}.
\end{aligned}
\]
For $m\ge1$, put $\bar m=2^{\lceil\log_2m\rceil}$. The hypotheses give
\[
\begin{gathered}
m_F\le\max\{N+d+1,B\}\le H_0,\qquad L_F\le K\le K_0,\\
m_\rho\le\max\{B,d,N+1\}\le H_0,\qquad 2\bar m_\rho<4H_0.
\end{gathered}
\]
In each memory, we reserve the disjoint register banks
\[
\begin{gathered}
\mathcal B_{\rm pr}=[0,32)\cap\mathbb N_0,\qquad\mathcal B_{\rm gl}=[32,64)\cap\mathbb N_0,\\
\mathcal B_\ell=[64+32\ell,96+32\ell)\cap\mathbb N_0\qquad(0\le\ell<8),\\
\mathcal B_{\rm pr}\sqcup\mathcal B_{\rm gl}\sqcup\bigsqcup_{\ell=0}^7\mathcal B_\ell=[0,320)\cap\mathbb N_0\subset[0,r_*),\\
n_{\rm pr}\le4\ \text{fields}+6\ \text{addresses}+6\ \text{operands}=16<32,\\
n_{\rm gl}^{\mathbb W}\le14\ \text{block bases}+10\ \text{counters}+4\ \text{pointers}=28<32,\\
n_{\rm local}\le3\ \text{indices}+5\ \text{pointers}+12\ \text{handles}+4\ \text{offsets}=24<32,\\
\mathsf R[32]=q_{\mathbb R},\qquad \mathsf R[33]=M,\qquad \mathsf R[34]=(1-s)M,\qquad \mathsf R[35]=A_i.
\end{gathered}
\]
Here $A_i=1+\sum_{k=1}^i|X_{k,1}|$. A call at depth $\ell$ uses $\mathcal B_\ell$; indexed families remain in arrays. The twelve local handles of $E_{\operatorname{exp}}$ are $(r,r^{-1},p_R,p_M,p_M^{-1},\delta_i,a_i,\beta_i,b_i,c_i,d_i,e_i)$. Arithmetic uses $\mathcal B_{\rm pr}$; only the initial embedding writes $\mathsf R[32]$.

Write $c(v)$ for the creation index, including the $d$ inputs. For each table-bearing call $F$, let $c_F$ be the vertex count at entry, $(u_{F,j})_{j=1}^{m_F}$ its supplied handles, and $\zeta_F$ its word-table base. Put $\mathfrak s_F(j)=\zeta_F+j-1$. For the innermost active call $F$, execute
\begin{equation}
\label{eq:compiler-ram-tables}
\begin{aligned}
\sigma_F[j]&\gets u_{F,j} &&(1\le j\le m_F),\\
\sigma_F^*(a)&=\begin{cases}\mathbf x_1,&a=0,\\\sigma_F[a],&a\ge1,\end{cases}\\
v&\gets\operatorname{New}\bigl(g;\sigma_F^*(a_1),\ldots,\sigma_F^*(a_{\operatorname{ar}(g)})\bigr),\\
0\le a_j&<m_F+c(v)-c_F &&(j\in[\operatorname{ar}(g)]),\\
\sigma_G[m_G+c(v)-c_G]&\gets v &&(G\text{ active}),\\
1\le c(v)-c_G&\le L_G,\qquad m_G+L_G\le H_0+K_0.
\end{aligned}
\end{equation}
If $a_j=0$ for an operand index, then $g=\mathsf c_z$ for some $z\in\mathbb R$. Handles are stored as their creation indices:
\[
\begin{aligned}
\mathsf W[\mathfrak s_F(j)]&\gets c(u_{F,j}) &&(1\le j\le m_F),\\
h_j&\gets\begin{cases}1,&a_j=0,\\\mathsf W[\mathfrak s_F(a_j)],&a_j\ge1,\end{cases} &&(1\le j\le\operatorname{ar}(g)),\\
\mathsf W[\mathfrak s_G(m_G+c(v)-c_G)]&\gets c(v) &&(G\text{ active}).
\end{aligned}
\]
Each $h_j$ occupies a word register. If $a_j=0$, then the load is $1=c(\mathbf x_1)$ and no table cell is accessed. If $a_j\ge1$, then $a_j\le m_F+c(v)-c_F-1$, so the read lies among the ports or previously appended entries. Copy each returned tuple once before table reuse; it has at most $m_F+L_F$ handles. The compiler frame has $L_F=K-d$, and hence
\[
\begin{aligned}
W_{\rm tables}&\le C\left(\sum_Fm_F+\sum_FL_F+2(K-d)+\sum_F(m_F+L_F)\right)\\
&\le C\sum_F(m_F+L_F).
\end{aligned}
\]
The four terms count port writes, active-table appends, operand reads, and return copies; $C$ includes address calculations and loop updates. Only scalar counters are reset between calls. Inlining the finite acyclic call graph at its syntactic call sites gives literal jump targets and requires no return stack. Polynomial splitting ranks and coefficient offsets are fixed literals for $d,s$.

We allocate the following blocks after the input, reusing the eight template tables by call depth and the heap after each reduction:
\[
\begin{array}{c|r|r}
\text{block}&\text{word cells}&\text{real cells}\\\hline
\text{temporary records}&4K_0&K_0\\
\text{relabelling map}&K_0&0\\
\text{reserved output}&4K_0+2&K_0\\
\text{retained handles and safe point}&7N+d+8&d\\
\text{template tables}&8(K_0+H_0)&0\\
\text{monomial table}&B&0\\
\text{reduction heap}&4H_0&0
\end{array}
\]
The retained tuples are $(b_i)_i,(p_i)_i,(t_i)_i,(u_i)_i,(\eta_i)_i,(e_i)_i,(w_i)_i,(y_j)_j$; the eight further slots hold $r_0,\chi,r,\ell,k,h,\chi h,v_{\rm out}$. For the entry $n_{\nu,t}$ in row $t$ and column $\nu\in\{\mathbb W,\mathbb R\}$, set
\begin{equation}
\label{eq:compiler-ram-layout}
\begin{aligned}
\beta_{\mathbb W}&=r_*+2,\qquad\beta_{\mathbb R}=r_*+1+H_0,\\
b_{\nu,t}&=\beta_\nu+\sum_{j=1}^{t-1}n_{\nu,j},\\
J_{\nu,t}&=[b_{\nu,t},b_{\nu,t}+n_{\nu,t})\cap\mathbb N_0,\\
J_{\nu,t}\cap J_{\nu,t'}&=\varnothing\quad(t\ne t'),\qquad J_{\nu,t}\cap[0,\beta_\nu)=\varnothing,\\
n_{\mathbb W}^{\rm all}&=514+17K_0+12H_0+7N+d+B+10\le3265H_0,\\
n_{\mathbb R}^{\rm all}&=513+H_0+2K_0+d\le835H_0,\\
W_{\rm init}&\le C\sum_{\nu,t}n_{\nu,t}\le CH_0.
\end{aligned}
\end{equation}
Before emission, load $\mathsf W[2]\gets0$, $\mathsf W[3]\gets1$, and $\mathsf R[2]\gets0$. For each block $J_{\nu,t}$, set $\mathsf W[0]\gets b_{\nu,t}$ and $\mathsf W[1]\gets b_{\nu,t}+n_{\nu,t}$. While $\mathsf W[0]<\mathsf W[1]$, execute
\[
\begin{cases}\mathsf W[\mathsf W[0]]\gets\mathsf W[2],&\nu=\mathbb W,\\\mathsf R[\mathsf W[0]]\gets\mathsf R[2],&\nu=\mathbb R,\end{cases}\qquad\mathsf W[0]\gets\mathsf W[0]+\mathsf W[3].
\]
After $j$ iterations, for $0\le j\le n_{\nu,t}$, the pointer is $b_{\nu,t}+j$ and the first $j$ block cells are zero. Thus the loop makes exactly $n_{\nu,t}$ writes; if $n_{\nu,t}=0$, then it makes none. By~\eqref{eq:compiler-ram-layout}, the written blocks exclude the input and registers. Including block-base calculations, register initialization, comparisons, and increments gives $W_{\rm init}\le C(1+\sum_{\nu,t}n_{\nu,t})\le CH_0$.

Let $b_{\mathbb W}^{\rm tmp},b_{\mathbb R}^{\rm tmp}$ and $b_{\mathbb W}^{\rm out},b_{\mathbb R}^{\rm out}$ be the temporary and output record bases, and let $\zeta_\iota$ be the relabelling-map base. For $v$ with creation index $k=c(v)$ and ordered parents $(u_j)_{j=1}^{\operatorname{ar}(v)}$, put $\pi_j(v)=c(u_j)$ if $j\le\operatorname{ar}(v)$ and $\pi_j(v)=0$ otherwise, for $j\in\{1,2\}$. Append the record by
\[
\begin{aligned}
\mathsf W[b_{\mathbb W}^{\rm tmp}+4(k-1)]&\gets\operatorname{tag}(v),\\
\mathsf W[b_{\mathbb W}^{\rm tmp}+4(k-1)+1]&\gets\operatorname{ar}(v),\\
\mathsf W[b_{\mathbb W}^{\rm tmp}+4(k-1)+1+j]&\gets\pi_j(v)\qquad(j\in\{1,2\}),\\
\mathsf R[b_{\mathbb R}^{\rm tmp}+k-1]&\gets\lambda(v).
\end{aligned}
\]
For distinct fixed scratch addresses $a,b\in\mathcal B_{\rm pr}$, use the indirect reads and writes
\[
\begin{aligned}
\mathsf W[b]&\gets\mathsf W[\mathsf W[a]],&\mathsf W[\mathsf W[a]]&\gets\mathsf W[b],\\
\mathsf R[b]&\gets\mathsf R[\mathsf W[a]],&\mathsf R[\mathsf W[a]]&\gets\mathsf R[b].
\end{aligned}
\]
Memory-to-memory copies pass through a scratch register. Each record append writes four word fields and one real field. These writes, affine address calculations, and the counter update cost at most $C$ instructions per append and are charged to $W_{\rm emit}$.

The first scan initializes $n_{\rm c}\gets0$ and counts constant tags in $\mathsf W[b_{\mathbb W}^{\rm tmp}+4(k-1)]$, $k\in[K]$. The second reads the tags, computes the ranks from Step~2 of the proof of Theorem~\ref{thrm:complexity__Assembly}, and writes $\mathsf W[\zeta_\iota+c(v)-1]\gets\iota(v)$. The third reads each temporary record's four word fields and one real field and obtains
\[
\begin{aligned}
\ell&\gets\mathsf W[\zeta_\iota+c(v)-1],\\
\rho_j&\gets\begin{cases}\mathsf W[\zeta_\iota+\pi_j(v)-1],&j\le\operatorname{ar}(v),\\0,&j>\operatorname{ar}(v),\end{cases}\qquad j\in\{1,2\}.
\end{aligned}
\]
If $\operatorname{ar}(v)=2$, then one comparison gives $(p_1(v),p_2(v))=(\min\{\rho_1,\rho_2\},\max\{\rho_1,\rho_2\})$; otherwise $(p_1(v),p_2(v))=(\rho_1,\rho_2)$. Put $(f_0,f_1,f_2,f_3)=(\operatorname{tag}(v),\operatorname{ar}(v),p_1(v),p_2(v))$ and write
\[
\begin{aligned}
\mathsf W[b_{\mathbb W}^{\rm out}+2+4(\ell-1)+j]&\gets f_j\qquad(0\le j\le3),\\
\mathsf R[b_{\mathbb R}^{\rm out}+\ell-1]&\gets\mathsf R[b_{\mathbb R}^{\rm tmp}+c(v)-1].
\end{aligned}
\]
After the scan, write the two headers once:
\[
\mathsf W[b_{\mathbb W}^{\rm out}]\gets K,\qquad\mathsf W[b_{\mathbb W}^{\rm out}+1]\gets\mathsf W[\zeta_\iota+c(v_{\rm out})-1].
\]
Unused parent fields cause no map lookup. Each record requires at most two parent-label reads and one sorting comparison. All scan reads, map and output writes, comparisons, address calculations, and loop updates are charged to $W_{\rm scan}$. Bijectivity of $\iota$ gives $4K$ distinct word-field cells and $K$ distinct real-field cells, with the two headers outside the word-field range. Hence
\[
W_{\rm scan}\le C(3K+1)\le CK,\qquad\operatorname{OutputStorage}=4K+2+K=5K+2.
\]

With $A_0=1$, compute the safe point and embed $q$ by
\[
\begin{aligned}
A_i&=A_{i-1}+\begin{cases}X_{i,1},&X_{i,1}\ge0,\\-X_{i,1},&X_{i,1}<0,\end{cases}\qquad(i\in[N]),\\
a_j&=\begin{cases}A_N,&j=1,\\0,&2\le j\le d,\end{cases}\\
q_{\mathbb R}=\mathsf R[32]&\gets\operatorname{embed}(\mathsf W[r_*+1]),\qquad W_a\le C(N+d).
\end{aligned}
\]
The gate parameters in~\eqref{eq:compiler-parameters} use fixed rational literals, input reads, $+,-,\times$, sign tests, and the displayed embedding. After computing $a$, each parameter costs $W_{\lambda(v)}\le C$; neither $D$ nor a power, exponential, or logarithm is evaluated.

Let $L_*$ be the largest fixed word literal. Evaluate word expressions first in $\mathbb N$, guarding each subtraction by its lower index bound. For the set $\mathcal W$ of all word-register values, addresses, counters, and intermediate word-arithmetic results,
\begin{equation}
\label{eq:compiler-ram-capacity}
\begin{aligned}
0&\le(i-1)L\le H_0-L,\qquad 0\le4(k-1)+j<4K_0,\qquad 0\le2j+1<4H_0,\\
\max\mathcal W&\le\max\{q,L_*,4000H_0\},\qquad C_{\rm mem}(d,s)\ge4000L+L_*+1,\\
\max\mathcal W&<q+C_{\rm mem}(d,s)(N+1)<2^w.
\end{aligned}
\end{equation}
Thus word arithmetic agrees with arithmetic in $\mathbb N$, and all addresses are available. The constant $C_{\rm mem}$ is not a program literal. The construction therefore defines a finite program $\mathcal P_{d,s}$ implementing the emitters and scans, whose fixed instructions and literals depend only on $d,s$.
\end{proof}

\section{Non-Analyticity Checks Made In the Introduction}
\label{a:NonAnalticityChecks}
We now compare against two other state-of-the-art Whitney extensions theorems; namely, we check if those constructions produce extensions which are real-analytic away from their training data.  We are only interested in the case of a finite-training dataset, as in our paper.

\subsection{Non Real-Analyticity of \texorpdfstring{\cite{fefferman2009fitting}}{The Classical Whitney Formula}}
\label{a:NonAnalticityChecks__ss:FIII}
Abbreviate $\mathbb X\eqdef\mathbb X^N$ and write $U=\R^d\setminus\mathbb X$. 
For $n\in[N]$, let $e_n:\mathbb X\to\R$ be defined by $e_n(X_j)=\delta_{nj}$ for every $j\in[N]$.
For a linear map $T:\R^{\mathbb X}\to C^0(\R^d)$, put $\phi_n=Te_n$ and define its \emph{value depth} by
\begin{equation}
\label{eq:value-depth}
\operatorname{depth}_0(T)\eqdef\sup_{x\in\R^d}\#\{n\in[N]:\phi_n(x)\ne0\}.
\end{equation}
Linearity gives $(Tf)(x)=\sum_{n=1}^N f(X_n)\phi_n(x)$.

\begin{proposition}[Finite depth obstructs analyticity]
\label{prop:NO_Analytic_FIII}
Suppose $d\ge2$, $(Tf)|_{\mathbb X}=f$ for every $f\in\R^{\mathbb X}$, and $\operatorname{depth}_0(T)<N$. Then $\phi_n|_U\notin C^\omega(U)$ for some $n\in[N]$.
\end{proposition}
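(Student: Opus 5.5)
We argue by contradiction: assume every $\phi_n|_U$ is real-analytic on $U$ and derive $\operatorname{depth}_0(T)=N$.

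\textbf{Step 1 (interpolation forces the basis functions to be nonzero).} Applying the interpolation hypothesis to $f=e_n$ gives $\phi_n(X_j)=(Te_n)(X_j)=\delta_{nj}$. In particular $\phi_n(X_n)=1$. Since $\phi_n\in C^0(\R^d)$, the function $\phi_n$ is nonzero on a small open ball around $X_n$. This ball meets $U$ in a nonempty open set, so $\phi_n|_U\not\equiv0$.

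\textbf{Step 2 (connectedness of $U$ and the identity theorem).} Because $d\ge2$ and $\mathbb X$ is finite, $U=\R^d\setminus\mathbb X$ is open and path-connected. Indeed, any two points of $U$ can be joined by a polygonal path avoiding finitely many points: perturb the vertices of a two-segment path, since only finitely many directions through each point hit a site. A real-analytic function on a connected open set that is not identically zero has a zero set with empty interior. This is the identity theorem: if it vanished on an open set, all its derivatives would vanish there, and the set where all derivatives vanish is both open and closed, hence all of $U$. Consequently $Z_n\eqdef\{x\in U:\phi_n(x)=0\}$ is a closed subset of $U$ with empty interior, for each $n\in[N]$.

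\textbf{Step 3 (Baire argument).} The finite union $\bigcup_{n=1}^N Z_n$ of closed nowhere-dense sets is nowhere dense in $U$, so its complement in $U$ is nonempty. A direct alternative is to note that $\prod_n\phi_n|_U$ is real-analytic and not identically zero, since real-analytic functions on a connected domain form an integral domain. Any point $x$ in that complement satisfies $\phi_n(x)\ne0$ for all $n\in[N]$. Hence $\operatorname{depth}_0(T)\ge N$, and since the count is at most $N$, it equals $N$. This contradicts $\operatorname{depth}_0(T)<N$, so some $\phi_n|_U\notin C^\omega(U)$.

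The only point needing care is the connectedness of $U$, which is exactly where $d\ge2$ enters. For $d=1$, $U$ splits into intervals and the argument fails. Everything else is the standard identity theorem, so no step is a serious obstacle.
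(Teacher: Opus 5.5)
Your proof is correct and follows the paper's argument essentially step for step. Both proofs get nonvanishing of $\phi_n$ near $X_n$ from $\phi_n(X_j)=\delta_{nj}$ and continuity, prove path-connectedness of $U$ for $d\ge2$ via a two-segment path avoiding finitely many lines, apply the identity principle, and finish with the elementary fact that finitely many open dense subsets of $U$ have nonempty intersection, which is your nowhere-dense-union step phrased dually; your alternative via the nonzero analytic product $\prod_n\phi_n|_U$ also works.
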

In particular, Proposition~\ref{prop:NO_Analytic_FIII} applies to the finite-depth linear $C^s$-extension operator recalled in~\cite[equations~(0.2)-(0.5), pp.~430-431]{fefferman2009fitting} (see also~\cite[Theorem~6(A)--(B), p.~188]{FeffermanKlartag2009II}), with $s\in\mathbb N_+$ and tolerance $\sigma\equiv0$, whenever $N>C_{\mathrm{dep}}(s,d)$, where $C_{\mathrm{dep}}(s,d)$ denotes the bound in equation~(0.3) of that paper.

\begin{proof}[Proof of Proposition~\ref{prop:NO_Analytic_FIII}]
For every $n,j\in[N]$, exact interpolation gives
\begin{equation}
\label{eq:finite-depth-cardinality}
\phi_n(X_j)=(Te_n)(X_j)=e_n(X_j)=\delta_{nj}.
\end{equation}
Fix $n\in[N]$. By~\eqref{eq:finite-depth-cardinality} and continuity, choose $\eta_n>0$ such that $\sup_{x\in B(X_n,\eta_n)}|\phi_n(x)-1|\le1/2$, and set $r_n=\frac12\min\bigl(\{1,\eta_n\}\cup\{|X_n-X_j|:j\in[N]\setminus\{n\}\}\bigr)>0$. Then
\begin{equation}
\label{eq:finite-depth-nonvanishing}
\begin{aligned}
&\varnothing\ne B(X_n,r_n)\setminus\{X_n\}\subset U,\\
&|\phi_n(x)|\ge1-|\phi_n(x)-1|\ge\tfrac12
\qquad\text{for every }x\in B(X_n,r_n).
\end{aligned}
\end{equation}

For $a,b\in U$, set $L_{2n-1}=a+\R(X_n-a)$ and $L_{2n}=b+\R(X_n-b)$ for $n\in[N]$. Each $L_\ell$ is closed and has empty interior, since $\dim L_\ell=1<d$. Starting with $B_0=B(0,1)$, choose successively nonempty open balls $B_\ell\subset B_{\ell-1}\setminus L_\ell$, $\ell\in[2N]$; each difference is nonempty and open by the preceding properties. For $z\in B_{2N}$, define
\[
\gamma(t)=
\begin{cases}
a+2t(z-a),&0\le t\le\tfrac12,\\
b+2(1-t)(z-b),&\tfrac12\le t\le1.
\end{cases}
\]
Then $\gamma\in C([0,1];\R^d)$, $\gamma(0)=a$, and $\gamma(1)=b$. Moreover, $\gamma(t)=X_n$ would imply $z\in L_{2n-1}\cup L_{2n}$, contrary to the choice of $z$. Hence $\gamma([0,1])\subset U$, proving that $U$ is path connected.

For $h\in C^\omega(U)$, set $A_h=\bigcap_{k=0}^{\infty}\{x\in U:D^kh(x)=0\}$. This set is relatively closed because every $D^kh$ is continuous. For $z\in A_h$, choose $\rho_z\in(0,\operatorname{dist}(z,\mathbb X))$ within the radius of convergence of the Taylor series of $h$ at $z$. Thus: for every $|v|<\rho_z$ we have
\[
h(z+v)=\sum_{k=0}^{\infty}\frac{D^kh(z)[v^k]}{k!}=0
.
\]
Consequently, $D^kh|_{B(z,\rho_z)}=0$ for every $k\ge0$, so $B(z,\rho_z)\subset A_h$. Thus $A_h$ is relatively open as well. If $h|_V=0$ for a nonempty open $V\subset U$, then $\varnothing\ne V\subset A_h$; connectedness therefore gives $A_h=U$ and $h|_U=0$.

Suppose, towards a contradiction, that $\phi_n|_U\in C^\omega(U)$ for every $n\in[N]$, and put $G_n=\{x\in U:\phi_n(x)\ne0\}$. Each $G_n$ is relatively open. If $\operatorname{int}_U(U\setminus G_n)\ne\varnothing$, the identity principle just proved gives $\phi_n|_U=0$, contrary to~\eqref{eq:finite-depth-nonvanishing}. Hence $\operatorname{int}_U(U\setminus G_n)=\varnothing$ and $\overline{G_n}^{\,U}=U$. Set $V_0=U$ and $V_n=V_{n-1}\cap G_n$ for $n\in[N]$. Since $G_n$ is open and dense in $U$, induction gives that every $V_n$ is nonempty and relatively open. Choosing $x_*\in V_N$ and using~\eqref{eq:value-depth}, we obtain the contradiction
\[
N=\#\{n\in[N]:\phi_n(x_*)\ne0\}\le\operatorname{depth}_0(T)<N.
\]

For the application to Part~III, equation~(0.5) with $\sigma\equiv0$ gives $(Tf)|_{\mathbb X}=f$. Let $S(x)\subseteq\mathbb X$ and $a_y(x)\in\R$ denote the sets and coefficients in equations~(0.3)-(0.4) with derivative order $\alpha=0$; these are independent of the input data. For every $x\in\R^d$: if $X_n\notin S(x)$ then
$\phi_n(x)=\sum_{y\in S(x)}a_y(x)e_n(y)=0$
\begin{align*}
\#\{n\in[N]:\phi_n(x)\ne0\}
&\le\#\{n\in[N]:X_n\in S(x)\}
=\#S(x)\le C_{\mathrm{dep}}(s,d).
\end{align*}
Taking the supremum over $x$ gives $\operatorname{depth}_0(T)\le C_{\mathrm{dep}}(s,d)<N$, as required.
\end{proof}

\subsection{Non Real-Analyticity of \texorpdfstring{\cite{ParusinskiRainer2024}}{The Definable Whitney Extension}}
\label{s:NonAnalyticityObstructions__ss:OminimalGuy}

Finally, we consider the potential for real-analyticity of the model-theoretic Whitney-extension construction in~\cite{ParusinskiRainer2024}.

\begin{proposition}[Nonanalyticity of the finite-set Parusi\'nski-Rainer extension]
\label{prop:parusinski_rainer_nonanalyticity}
Suppose $d\ge2$, and let $f$ be the extension of a nonzero Whitney jet of order $s$ on $\mathbb X$ obtained by the construction in~\cite[proof of Theorem~1.3, pp.~334-336]{ParusinskiRainer2024}. Then $f$ has compact support and $f|_U\notin C^\omega(U)$.
\end{proposition}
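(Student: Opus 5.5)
The argument has two parts: compact support, which comes from the structure of the construction, and then the identity principle already proved in Proposition~\ref{prop:NO_Analytic_FIII}. The only genuinely external step is the first.

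\textbf{Step 1: compact support.} Here I inspect the construction in~\cite[proof of Theorem~1.3, pp.~334-336]{ParusinskiRainer2024} when the closed set is the finite set $\mathbb X$. The extension there is a Whitney-type gluing, $f=\sum_{Q}\varphi_Q\,T_{\hat Q}$. In this sum the cubes $Q$ run over a Whitney decomposition of $\R^d\setminus\mathbb X$ intersected with a fixed bounded neighbourhood of $\mathbb X$. Each $\varphi_Q$ is a (definable) partition-of-unity function supported in a dilate $Q^*$, and $T_{\hat Q}$ is the Taylor polynomial at a nearest point of $\mathbb X$. The construction multiplies by a cutoff, or uses only cubes of side at most a fixed constant $\le 1$ near $\mathbb X$, in order to obtain the global norm bounds in their theorem. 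Hence every nonzero summand is supported in a fixed bounded set $K\supset\mathbb X$, so $\operatorname{supp} f\subseteq \overline K$ is compact. I expect this to be the main obstacle: it is the only step that requires reading the cited construction carefully, namely identifying exactly where the localization to a bounded neighbourhood occurs, whether through a cutoff $\chi$ or a finite Whitney family. If no explicit cutoff appears, I would argue from finiteness of $\mathbb X$ that the relevant Whitney cubes lie within distance $\lesssim \operatorname{diam}\mathbb X+1$.

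\textbf{Step 2: the identity principle on $U$.} Since $d\ge2$, the argument in the proof of Proposition~\ref{prop:NO_Analytic_FIII} shows that $U=\R^d\setminus\mathbb X$ is path connected. The same proof also shows that a real-analytic function on $U$ vanishing on a nonempty open subset vanishes identically on $U$.

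\textbf{Step 3: contradiction.} Suppose $f|_U\in C^\omega(U)$. By Step 1, $f$ vanishes on the nonempty open set $\R^d\setminus\overline K\subset U$, so $f|_U\equiv0$ by Step 2. By continuity, $f\equiv0$ on $\R^d$, so every derivative of $f$ of order $\le s$ vanishes at each site. But $f$ extends the jet, so $J^s_{X_n}f=P_n$ for every $n$. This forces the jet to be zero, contradicting the hypothesis that it is nonzero. Hence $f|_U\notin C^\omega(U)$.
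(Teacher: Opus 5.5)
Your Steps 2--3 are sound and differ only mildly from the paper. You invoke the path-connectedness of $U$ and the several-variable identity principle established inside the proof of Proposition~\ref{prop:NO_Analytic_FIII}. The paper instead fixes $x\in U$ and picks a direction $v$ with $(x+\R v)\cap\mathbb X=\varnothing$, which is possible since $d\ge2$. It then applies the one-variable identity theorem to $t\mapsto f(x+tv)$, which vanishes for large $|t|$. Either route works once compact support is known.

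The gap is Step~1, the only part of the statement specific to \cite{ParusinskiRainer2024}, and you do not establish it. The structure you attribute to the construction is a guess: a Whitney-cube gluing $\sum_Q\varphi_Q T_{\hat Q}$ localized by an unspecified cutoff or size restriction. It is not the construction the paper's proof works with. For finite $\mathbb X$, that construction is a recursion, and each step does the following:
\begin{enumerate}
\item it selects a point $a\in S$;
\item it adds the one-point extension $h(x)=\chi\bigl((x-a)/r\bigr)\sum_{|\alpha|\le s}J^\alpha(a)(x-a)^\alpha/\alpha!$ of \cite[Lemma~4.1]{ParusinskiRainer2024}, with $r=\min(\{1\}\cup\{|a-b|:b\in A\setminus\{a\}\})\le1$ and $\operatorname{supp}\chi\subset B(0,1)$;
\item it subtracts the jet of $h$ and recurses on the corrected jet with auxiliary support set $S\setminus\{a\}$.
\end{enumerate}
Induction on $|S|$ then gives $\operatorname{supp}f\subseteq\bigcup_{a\in\mathbb X}\overline{B(a,1)}$. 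The cap $r\le1$ is exactly the localization you were looking for.

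Moreover, your fallback argument is false. Finiteness of $\mathbb X$ does not confine the relevant Whitney cubes to a bounded set, because a Whitney decomposition of $\R^d\setminus\mathbb X$ contains arbitrarily large cubes far from $\mathbb X$. For instance, if all prescribed jets equal the same nonzero constant $c$, an unlocalized gluing $\sum_Q\varphi_QT_{\hat Q}$ equals $c$ on all of $\R^d\setminus\mathbb X$ and is not compactly supported. Compact support must therefore be read off from an explicit radius cap in the cited construction. Without it, your Step~3 has no nonempty open set on which $f$ vanishes.
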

\begin{proof}
Set $K\eqdef\bigcup_{a\in\mathbb X}\overline{B(a,1)}$. We first prove that the finite-set recursion produces an extension supported in $K$. More precisely, for $S\subseteq\mathbb X$ and an input jet $\mathcal J=(J^\alpha)_{|\alpha|\le s}$, where $J^\alpha:\mathbb X\to\R$ and $J^\alpha|_{\mathbb X\setminus S}=0$, we prove this assertion by induction on $|S|$, with $S$ as the auxiliary support set in that recursion. If $\mathcal J=0$, the construction returns zero; this includes the case $S=\varnothing$.

Suppose $|S|\ge1$ and that the assertion holds for smaller auxiliary support sets. Let $a\in S$ be the first point selected by the recursion and put $A\eqdef\mathbb X\setminus(S\setminus\{a\})$. Then $J^\alpha|_{A\setminus\{a\}}=0$ for $|\alpha|\le s$. The one-point construction in~\cite[Lemma~4.1, p.~335]{ParusinskiRainer2024}, applied to $\mathcal J|_A$, uses $r\eqdef\min(\{1\}\cup\{|a-b|:b\in A\setminus\{a\}\})\in(0,1]$ and the function
\[
h(x)\eqdef\chi\!\left(\frac{x-a}{r}\right)\sum_{|\alpha|\le s}\frac{J^\alpha(a)}{\alpha!}(x-a)^\alpha,\qquad x\in\R^d,
\]
where $\chi\in C^{s+1}(\R^d)$ is the cutoff used in the construction, with $\chi=1$ near $0$ and $\operatorname{supp}\chi\subset B(0,1)$. The formula also gives $h=0$ if $\mathcal J|_A=0$. Since $\operatorname{supp}h\subseteq a+r\operatorname{supp}\chi\subset B(a,r)$ and $r\le|a-b|$ for $b\in A\setminus\{a\}$, the function $h$ vanishes on a neighborhood of each such $b$. At $a$, the cutoff is identically one on a neighborhood. Consequently, for every $|\alpha|\le s$,
\[
\partial^\alpha h(b)=J^\alpha(b)\quad(b\in A),\qquad \widetilde J^\alpha\eqdef J^\alpha-\partial^\alpha h|_{\mathbb X},\qquad \widetilde J^\alpha|_{\mathbb X\setminus(S\setminus\{a\})}=0.
\]
By~\cite[proof of Theorem~1.3, p.~336]{ParusinskiRainer2024}, the recursion returns $u=h+\widetilde u$, where $\widetilde u$ is the recursively constructed extension of $(\widetilde J^\alpha)_{|\alpha|\le s}$ with auxiliary support set $S\setminus\{a\}$. The induction hypothesis gives $\operatorname{supp}\widetilde u\subseteq K$, and hence $\operatorname{supp}u\subseteq\operatorname{supp}h\cup\operatorname{supp}\widetilde u\subseteq\overline{B(a,1)}\cup K=K$. This closes the induction. Applied to the prescribed jet, it yields $\operatorname{supp}f\subseteq K$; the set $K$ is compact because $\mathbb X$ is finite.

Suppose now that $f|_U\in C^\omega(U)$, and fix $x\in U$. Choose a unit vector $v$ outside the finite set $\{\pm(a-x)/|a-x|:a\in\mathbb X\}$; such a choice is possible because $d\ge2$. Then $(x+\R v)\cap\mathbb X=\varnothing$, so $g(t)\eqdef f(x+tv)$ is real-analytic on $\R$. Choose $R>0$ with $K\subset B(0,R)$. For $|t|>R+|x|$, the inequality $|x+tv|\ge|t|-|x|>R$ gives $g(t)=0$. Thus $Z\eqdef\bigcap_{j=0}^{\infty}(g^{(j)})^{-1}(\{0\})$ is nonempty and closed. For each $t_0\in Z$, real analyticity supplies $\varepsilon>0$ such that $g(t)=\sum_{j=0}^{\infty}g^{(j)}(t_0)(t-t_0)^j/j!=0$ whenever $|t-t_0|<\varepsilon$. All derivatives of $g$ therefore vanish on this interval, proving that $Z$ is open. Connectedness of $\R$ yields $Z=\R$, and hence $f(x)=g(0)=0$. Since $x$ was arbitrary, $f$ vanishes on $U$; continuity and finiteness of $\mathbb X$ then give $f\equiv0$ on $\R^d$. In particular, $\partial^\alpha f(a)=0$ for every $a\in\mathbb X$ and $|\alpha|\le s$, contradicting interpolation of the prescribed nonzero jet.
\end{proof}

\bibliographystyle{acm}
\bibliography{refs}

\end{document}